\def\P{{\mathbb P}}
\def\Bbb E{\mathbb{E}}
\def\Bbb R{\mathbb{R}}
\newtheorem{definition}{Definition}
\newtheorem{example}{Example}
\newtheorem{corollary}{Corollary}[section]
\makeatletter \@addtoreset{equation}{section}
\newtheorem{lemma}{Lemma}[section]
\newtheorem{theorem}{Theorem}[section]
\newtheorem{proposition}{Proposition}[section]
\newtheorem{remark}{Remark}[section]
\DeclareMathOperator*{\Motimes}{\text{\raisebox{0.25ex}{\scalebox{0.8}{$\bigotimes$}}}}
\font\tencmmib=cmmib10 \skewchar\tencmmib '60
\font\tenmsb=msbm10 
\def\Bbb#1{\hbox{\tenmsb#1}}
\def\lessim{\ \lower4pt\hbox{$
\buildrel{\displaystyle <}\over\sim$}\ }
\def\gessim{\ \lower4pt\hbox{$\buildrel{\displaystyle >}
\over\sim$}\ }
\def\go0{\to 0}
\def\leftitem#1{\item{\hbox to\parindent{\enspace#1\hfill}}}
\def\QED{{$\hfill \bbox$}}
\def\sg{\sigma}
\def\sg2{\sigma^2}
\def\__{_{\infty}}
\def \thetah{\hat{\theta}}
\def \thetat{\tilde{\theta}}
\def \QED{\hfill$\square$}
\numberwithin{equation}{section} 
\newcommand{\1}{{\rm 1}\kern-0.24em{\rm I}}
\begin{document}

\begin{frontmatter}
\title{Estimation of Smooth Functionals in Normal Models: Bias Reduction and Asymptotic Efficiency}
\runtitle{}

\begin{aug}
\author{\fnms{Vladimir} \snm{Koltchinskii}\thanksref{t1}\ead[label=e1]{vlad@math.gatech.edu}} and 
\author{\fnms{Mayya} \snm{Zhilova}\thanksref{m1}\ead[label=e2]{mzhilova@math.gatech.edu}}
\thankstext{t1}{Supported in part by NSF Grant DMS-1810958} 
\thankstext{m1}{Supported in part by NSF Grant DMS-1712990}
\runauthor{V. Koltchinskii and M. Zhilova}

\affiliation{Georgia Institute of Technology\thanksmark{m1}}

\address{School of Mathematics\\
Georgia Institute of Technology\\
Atlanta, GA 30332-0160\\
\printead{e1}\\
\printead*{e2}
}
\end{aug}
\vspace{0.2cm}
{\small \today}
\vspace{0.2cm}

\begin{abstract}
Let $X_1,\dots, X_n$ be i.i.d. random variables sampled from a normal distribution 
$N(\mu,\Sigma)$ in ${\mathbb R}^d$ with unknown parameter $\theta=(\mu,\Sigma)\in \Theta:={\mathbb R}^d\times {\mathcal C}_+^d,$ where ${\mathcal C}_+^d$ is the cone of positively definite covariance operators in ${\mathbb R}^d.$ 
Given a smooth functional $f:\Theta \mapsto {\mathbb R}^1,$ the goal is to estimate $f(\theta)$ based on 
$X_1,\dots, X_n.$ Let 
$$
\Theta(a;d):={\mathbb R}^d\times \Bigl\{\Sigma\in {\mathcal C}_+^d: \sigma(\Sigma)\subset [1/a, a]\Bigr\}, a\geq 1,
$$ 
where $\sigma(\Sigma)$ is the spectrum of covariance $\Sigma.$  
Let $\thetah:=(\hat \mu, \hat \Sigma),$ where $\hat \mu$ is the sample mean 
and $\hat \Sigma$ is the sample covariance, based on the observations $X_1,\dots, X_n.$
For an arbitrary functional $f\in C^s(\Theta),$ $s=k+1+\rho, k\geq 0, \rho\in (0,1],$
we define a functional $f_k:\Theta \mapsto {\mathbb R}$ such that 
\begin{align*}
&
\sup_{\theta\in \Theta(a;d)}\|f_k(\thetah)-f(\theta)\|_{L_2({\mathbb P}_{\theta})}
\lesssim_{s, \beta}
\|f\|_{C^{s}(\Theta)}
\biggr[\biggl(\frac{a}{\sqrt{n}}
\bigvee a^{\beta s}\biggl(\sqrt{\frac{d}{n}}\biggr)^{s}
\biggr)\wedge 1\biggr],
\end{align*}
where $\beta =1$ for $k=0$ and $\beta>s-1$ is arbitrary for $k\geq 1.$ 
This error rate is minimax optimal and similar bounds hold for more general loss functions. 
If $d=d_n\leq n^{\alpha}$ for some $\alpha\in (0,1)$ and $s\geq \frac{1}{1-\alpha},$
the rate becomes $O(n^{-1/2}).$ Moreover, for $s>\frac{1}{1-\alpha},$ the estimators $f_k(\thetah)$ is shown 
to be asymptotically efficient. The crucial part of the construction of estimator $f_k(\thetah)$ is a bias 
reduction method studied in the paper for more general statistical models than normal.
\end{abstract}

\begin{keyword}[class=AMS]
\kwd[Primary ]{62H12} \kwd[; secondary ]{62G20, 62H25, 60B20}
\end{keyword}

\begin{keyword}
\kwd{Efficiency} \kwd{Smooth Functionals}
\kwd{Bias Reduction} \kwd{Bootstrap Chain} \kwd{Random Homotopy} \kwd{Concentration}
\end{keyword}

\end{frontmatter}

\section{Introduction.}
\label{intro}

The main goal of this paper is to develop estimators of general smooth functionals of parameters 
of high-dimensional normal models with minimax optimal risk with respect to convex loss functions
(including quadratic loss). In particular, we are interested in developing efficient estimators with parametric 
$\sqrt{n}$ error rate for sufficiently smooth functionals (under optimal assumptions on their smoothness).  
To achieve this goal, we further develop a general approach to bias reduction in functional estimation problems 
initially studied for particular models in \cite{Jiao, Koltchinskii_2017, Koltchinskii_2018, Koltchinskii_Zhilova}. 
Although, in principle, this approach and the results on bias reduction obtained below could be applicable to more general classes of statistical models than normal model, the development 
of concentration bounds in this more general context poses additional challenging problems and is beyond 
the scope of this paper.

\subsection{Main results.}
\label{MainResults}

Let $X_1,\dots, X_n$ be i.i.d. random variables in ${\mathbb R}^d$ sampled from a normal distribution 
$N(\mu, \Sigma)$ with unknown mean $\mu$ and covariance $\Sigma.$ It will be assumed that the 
space ${\mathbb R}^d$ is equipped with the standard Euclidean inner product and the corresponding 
norm. Let ${\mathcal S}^d$ be the space of all $d\times d$ symmetric operators equipped 
with the operator norm and let ${\mathcal C}_+^d\subset {\mathcal S}^d$ be the cone of all {\it positively definite} covariance operators. The parameter of our model is $\theta=(\mu,\Sigma)$ and the parameter space is $\Theta:= {\mathbb R}^d\times {\mathcal C}_+^d\subset {\mathbb R}^d\times {\mathcal S}^d.$ 
The space ${\mathbb R}^d\times {\mathcal S}^d$ will be equipped with the norm
\footnote{With a little abuse of notation, all the above norms are denoted $\|\cdot\|.$} 
$$
\|(w,W)\|:= \|w\|+ \|W\|, w\in {\mathbb R}^d, W\in {\mathcal S}^d.
$$
Given a smooth functional $f: \Theta \mapsto {\mathbb R},$
the goal is to estimate $f(\theta)$ based on observations $X_1,\dots, X_n.$

Let $\ell: {\mathbb R}\mapsto {{\mathbb R}}_+\cup \{+\infty\}$ be a nonnegative convex loss function such that 
$\ell(u)=\ell (-u), u\in {\mathbb R},$ $\ell(0)=0$ and that $\ell$ is  
nondecreasing on ${\mathbb R}_+.$ Let ${\mathcal L}$ denote the set 
of all such loss functions. In what follows, it will be convenient to use the Orlicz norm $\|\cdot\|_{L_{\ell}({\mathbb P})},$ associated with loss $\ell$ and  defined as follows:
$$
\|\xi\|_{L_{\ell}({\mathbb P})}:= \inf\Bigl\{c>0: {\mathbb E}\ell\Bigl(\frac{|\xi|}{c}\Bigr)\leq 1\Bigr\}.
$$
In particular, if $\ell(u):=|u|^p, u\in {\mathbb R}, p\geq 1,$ then $\|\xi\|_{L_{\ell}({\mathbb P})}=\|\xi\|_{L_p({\mathbb P})}.$
The ``subexponential loss" $\ell(u):= e^{|u|}-1, u\in {\mathbb R}$ is usually denoted $\psi_1$ and the
subgaussian loss $\ell(u):=e^{u^2}-1, u\in {\mathbb R}$ is usually denoted $\psi_2,$ leading 
to $\psi_1$ and $\psi_2$-norms, respectively. In the cases when there is no ambiguity about 
the probability measure(s) involved, we will write   
$\|\cdot\|_{L_{\ell}({\mathbb P})}=\|\cdot\|_{\ell}.$

Let $\thetah=(\hat \mu, \hat \Sigma),$
where
$$
\hat \mu := \bar X = \frac{X_1+\dots + X_n}{n}, \ \ \hat \Sigma:= \frac{1}{n-1}\sum_{j=1}^n (X_j-\bar X)\otimes (X_j-\bar X)
$$
are the sample mean and the sample covariance, respectively. 
Given $\Sigma\in {\mathcal C}_+^d,$ denote by $\sigma(\Sigma)$ the spectrum of matrix $\Sigma.$ 
For $a\geq 1,$ denote
$$
\Theta (a;d):={\mathbb R}^d \times \Bigl\{\Sigma \in {\mathcal C}_+^d: \sigma(\Sigma)\subset [1/a, a]\Bigr\}.
$$

The following result will be proved at the end of the paper (see sections \ref{sec:approx_homot} and \ref{sec:conc}).\footnote{The precise definition of space $C^s(\Theta)$ and $C^s$-norm is given in Section \ref{preliminaries}.}

\begin{theorem}
\label{main_theorem_1}
Suppose $f\in C^s(\Theta)$ for some $s=k+1+\rho, k\geq 0, \rho\in (0,1].$
Let $\ell\in {\mathcal L}$ be a loss function such that $\ell(u)\leq e^{bu}, u\geq 0$
for some constant $b>0.$ Then, there exists a functional $f_k: \Theta\mapsto {\mathbb R}$
(with $f_0=f$) such that
\begin{align}
\label{upper_bound_functional}
&
\sup_{\theta\in \Theta(a;d)}\|f_k(\thetah)-f(\theta)\|_{L_{\ell}({\mathbb P}_{\theta})}
\lesssim_{s, \ell, \beta}
\|f\|_{C^{s}(\Theta)}
\biggr[\biggl(\frac{a}{\sqrt{n}}
\bigvee a^{\beta s}\biggl(\sqrt{\frac{d}{n}}\biggr)^{s}
\biggr)\wedge 1\biggr],
\end{align}
where $\beta=1$ for $k=0$ and $\beta >s-1$ is an arbitrary number for $k\geq 1.$
\end{theorem}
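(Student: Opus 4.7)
My plan is to split
$$
f_k(\thetah) - f(\theta) = \bigl(f_k(\thetah) - \E_\theta f_k(\thetah)\bigr) + \bigl(\E_\theta f_k(\thetah) - f(\theta)\bigr),
$$
and handle the two terms by Gaussian concentration (for the stochastic fluctuation) and an explicit bias reduction (for the deterministic term), combining them via the triangle inequality. The cap by $1$ in \eqref{upper_bound_functional} will follow from the trivial bound $\|f_k(\thetah) - f(\theta)\|_{L_\ell(\P_\theta)} \lesssim \|f\|_{C^s}$.

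For the bias step, I would introduce the bias operator
$$
\mathcal T g(\theta) := \E_\theta g(\thetah) - g(\theta),
$$
so that the bias of $g(\thetah)$ as an estimator of $g(\theta)$ is exactly $\mathcal T g(\theta)$, and then set
$$
f_k := \sum_{j=0}^{k} (-1)^j \mathcal T^j f \qquad (\text{so } f_0 = f).
$$
A telescoping identity gives $(I + \mathcal T) f_k = f + (-1)^k \mathcal T^{k+1} f$, i.e.
$$
\E_\theta f_k(\thetah) - f(\theta) = (-1)^k (\mathcal T^{k+1} f)(\theta).
$$
The central analytic task is then to bound this remainder on $\Theta(a;d)$. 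Using iterated Taylor expansion of $f$ to order $k+1$ with H\"older remainder of exponent $\rho$, and following the computation along the ``bootstrap chain'' $\theta \mapsto \thetah^{(1)} \mapsto \thetah^{(2)} \mapsto \cdots$ of iterated resamplings (which is precisely the random homotopy referenced in the abstract), one controls $\mathcal T^{k+1} f$ by $\|f\|_{C^s}$ times a mixed moment of the products $\prod_j \|\thetah^{(j)} - \thetah^{(j-1)}\|^{\alpha_j}$. Plugging in the Gaussian moment bounds $\E\|\hat\mu - \mu\|^p \lesssim (ad/n)^{p/2}$ and $\E\|\hat\Sigma - \Sigma\|^p \lesssim (a\sqrt{d/n})^p$, and keeping track of the factor $a^{\beta s}$ that accumulates as the chain drifts inside $\Theta(a;d)$ (with $\beta=1$ sufficing for the one-step case $k=0$ and $\beta > s-1$ required for $k \geq 1$), yields the bias bound $\|f\|_{C^s} \cdot a^{\beta s} (d/n)^{s/2}$.

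For the stochastic term I would use Gaussian Lipschitz concentration. Since $f_k$ is a finite linear combination of iterates $\mathcal T^j f$, its $C^1$-norm on $\Theta(a;d)$ is controlled by $C_{s,\beta}\|f\|_{C^s(\Theta)}$, and the map $(X_1,\dots,X_n) \mapsto \thetah$ is Lipschitz with constant of order $a/\sqrt n$ in the appropriate norm. Gaussian concentration then yields a sub-Gaussian tail for $f_k(\thetah) - \E_\theta f_k(\thetah)$ with parameter $\lesssim \|f\|_{C^s} \cdot a/\sqrt n$, and hence an $L_\ell$-bound of the same order for any loss $\ell$ with at most exponential growth. Adding the bias and concentration bounds produces the claimed estimate.

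I expect the hard part to be the bias analysis: producing genuinely sharp bounds on $\mathcal T^{k+1} f$ requires showing that $\mathcal T$ acts boundedly on the appropriate scale of H\"older-type spaces (so that $f_k$ inherits enough smoothness for the concentration step to apply with the right constant), and tracking how the spectral bound $a$ propagates through each iteration of the bootstrap chain without inflating the dimensional factor beyond the optimal $(d/n)^{s/2}$. The bookkeeping of Taylor remainders along the chain, and the transition from $\rho$-H\"older control to $L_\ell$-moments, is where the exponent $\beta > s-1$ for $k \geq 1$ is forced. By comparison, the concentration step and the final triangle-inequality assembly are routine given the random-homotopy and concentration machinery developed in the later sections of the paper.
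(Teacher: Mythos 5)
Your high-level skeleton matches the paper: you define $f_k$ as the alternating partial sum of the bias operator (your $\mathcal T$ is the paper's $\mathcal B = \mathcal T - \mathcal I$), establish the telescoping identity $\E_\theta f_k(\thetah) - f(\theta) = (-1)^k \mathcal B^{k+1} f(\theta)$, and split the $L_\ell$ error into a fluctuation term and a bias term. However, both steps you call routine contain genuine gaps, and they are exactly where the paper must work.

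\textbf{Gap 1 (smoothness of $f_k$).} You assert that $\|f_k\|_{C^1(\Theta(a;d))} \lesssim \|f\|_{C^s}$ ``since $f_k$ is a finite linear combination of iterates $\mathcal T^j f$.'' This is precisely what needs to be proved, and it fails for the natural construction. The bound on $\|\mathcal B^j f\|_{C^{1+\rho}}$ (Theorem \ref{th_H_bound}, Corollary \ref{cor_bd_f_k_hold}) requires a random homotopy $H(\theta;t)$ with uniformly bounded H\"older norms on $\Theta$. The natural homotopy for the normal model (Example \ref{ex_mean_covariance}) involves the map $\Sigma \mapsto \Sigma^{1/2}$, whose derivatives blow up as eigenvalues approach zero, so $\|H\|_{C^{s-1,0}(\Theta\times[0,1])}^{-}$ is infinite over $\Theta = \mathbb R^d\times \mathcal C_+^d$ and the H\"older machinery does not apply. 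The paper's fix is to replace $\Sigma^{1/2}$ by a globally smooth truncation $\gamma(\Sigma)$ that agrees with $\Sigma^{1/2}$ only when $\sigma(\Sigma)\subset[(2a)^{-1},2a]$, producing a modified Markov kernel $Q$ and a modified estimator $\tilde f_k$. One must then add a third term $\|f_k(\thetah)-\tilde f_k(\thetah)\|_\ell$ to your two-term decomposition and control it via a total-variation comparison of the two bootstrap chains plus an exponential deviation bound (Proposition \ref{approx_homotopy}, Corollary \ref{corr_approx_homotopy}). Without this truncation-and-comparison step the argument does not close. Incidentally, this is also where the exponent $\beta>s-1$ comes from for $k\geq 1$: $\|\gamma\|_{C^{s-1+\epsilon}}\lesssim a^{s-3/2+\epsilon}$ propagates through $k$ factors, and one sets $\epsilon = \beta-(s-1)$ at the end. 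Your sketch attributes the $a^{\beta s}$ accumulation to the chain's excursions inside $\Theta(a;d)$, which is not quite the mechanism.

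\textbf{Gap 2 (concentration).} You claim the map $(X_1,\dots,X_n)\mapsto\thetah$ is Lipschitz with constant $\asymp a/\sqrt n$, but this is false for the sample covariance: one has $(L\hat\Sigma)(Z)\lesssim \|\Sigma\|^{1/2}\|\hat\Sigma(Z)\|^{1/2}/\sqrt{n}$ (Lemma \ref{lemma_L_bounds}), which is unbounded in $Z$. Thus $\tilde f_k\circ\hat\theta$ is only locally Lipschitz as a function of the standard Gaussian vector, and the usual Gaussian concentration for Lipschitz functions does not apply directly. The paper uses a two-step Maurey--Pisier argument: since $L(g\circ\hat\theta)\leq \varphi$ pointwise where $\varphi$ is itself Lipschitz, one bounds the $\ell^{\sharp}$-norm of $\varphi$ by $\E\varphi$ plus a concentration term for $\varphi$ (bound \eqref{Maurey_ABC}). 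This still delivers the parametric order $a/\sqrt n$, but it is an extra layer of argument you are omitting. Note also that the concentration step must be applied to $\tilde f_k$, not to $f_k$, since it is $\tilde f_k$ whose Lipschitz constant is controlled.

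In short, your plan is the right road map, but the two places you declare routine are the real content of the proof, and as stated your argument would break down at both.
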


Suppose that $d=d_n\leq n^{\alpha}$ for some $\alpha\in (0,1).$ If $s\geq \frac{1}{1-\alpha},$
then 
$$a^{\beta s}\biggl(\sqrt{\frac{d}{n}}\biggr)^{s}\leq \frac{a^{\beta s}}{\sqrt{n}}$$ 
and bound \eqref{upper_bound_functional} of Theorem \ref{main_theorem_1}
implies that 
\begin{align*}
\sup_{\|f\|_{C^{s}(\Theta)}\leq 1}\sup_{\theta\in \Theta(a;d_n)}\|f_k(\thetah)-f(\theta)\|_{L_{\ell}({\mathbb P}_{\theta})}
=O(n^{-1/2}).
\end{align*}

For quadratic $\ell (u)=u^2, u\in {\mathbb R},$ 
the following result (that itself is a simple corollary of Theorem 2.2 in \cite{Koltchinskii_Zhilova})
shows some form of minimax optimality of estimator $f_k(\thetah).$ 

\begin{theorem}
The following minimax bound holds:
\begin{align}
\label{sup_f_inf_T}
&
\sup_{\|f\|_{C^s(\Theta)}\leq 1}\inf_{T}\sup_{\theta\in \Theta(a;d)}\|T(X_1,\dots, X_n)-f(\theta)\|_{L_2({\mathbb P}_{\theta})}
\gtrsim_{s}
\biggr[\biggl(\frac{1}{\sqrt{n}}
\bigvee \biggl(\sqrt{\frac{d}{n}}\biggr)^{s}
\biggr)\wedge 1\biggr],
\end{align}
where the infimum is taken over all estimators $T(X_1,\dots, X_n).$
\end{theorem}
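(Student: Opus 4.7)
\begin{proofn}{Theorem 2.2 (sketch of proposed proof)}{}
My plan is to establish the two lower-bound terms separately and then take their maximum, observing that the cap at $1$ is trivial since any $f$ with $\|f\|_{C^s(\Theta)}\le 1$ is uniformly bounded (so the zero estimator achieves risk $\lesssim 1$, hence $\lesssim 1$ is a valid lower bound in the saturated regime).

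\emph{Parametric $n^{-1/2}$ lower bound.} I would pick a one-dimensional ``hard'' functional, e.g. $f_{1}(\mu,\Sigma):= \phi(\mu_{1})$, where $\phi\in C^s(\mathbb{R})$ is a fixed compactly supported function with $\|\phi\|_{C^s}\le 1$ and $\phi'(0)\ne 0$. Restrict attention to the one-parameter subfamily
\[
\{\theta_{t}:=(t e_{1},\,I_{d}) : t\in[-1,1]\}\subset \Theta(a;d).
\]
The problem collapses to estimating $\phi(t)$ from $\bar{X}\sim N(te_{1},I_{d}/n)$ whose first coordinate is a sufficient statistic. A standard Le Cam two-point argument between $t=0$ and $t=c/\sqrt n$ (with $c$ small enough to keep the KL divergence bounded) gives an $n^{-1/2}$ lower bound on the $L_{2}$ risk.

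\emph{Smooth-functional $(\sqrt{d/n})^{s}$ lower bound.} Here I reduce directly to Theorem 2.2 of \cite{Koltchinskii_Zhilova}, which states a lower bound of the same form for smooth functionals of a Gaussian mean. The reduction is through a rescaled bump construction: fix $\varphi_{0}\in C_{c}^{\infty}(\mathbb{R}^{d})$ with $\|\varphi_{0}\|_{C^{s}}\le c_{s}$ and $\mathrm{supp}(\varphi_{0})\subset \{u:\|u\|\le 1\}$, set $\delta:= c\sqrt{d/n}$, and define
\[
f_{2}(\mu,\Sigma):= \delta^{s}\,\varphi_{0}(\mu/\delta).
\]
A direct chain-rule computation gives $\|f_{2}\|_{C^{s}(\Theta)}\lesssim_{s} 1$ uniformly in $\delta$, and restricting to $\Sigma=I_d\in\{\Sigma:\sigma(\Sigma)\subset[1/a,a]\}$ (possible since $a\ge 1$) keeps the family inside $\Theta(a;d)$. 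Theorem 2.2 of \cite{Koltchinskii_Zhilova} then provides a perturbation argument (a multi-hypothesis Fano/Assouad-style construction based on perturbations of $\mu$ at scale $\delta$) that yields the claimed $\delta^{s}=(\sqrt{d/n})^{s}$ lower bound on the $L_{2}$ risk for this particular $f_{2}$.

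The only delicate step is the second one. The main obstacle is verifying that the scale $\delta\asymp\sqrt{d/n}$ used in the KZ construction produces hypotheses whose joint law on $(X_{1},\dots,X_{n})$ remains information-theoretically indistinguishable; this is where the $\sqrt{d/n}$ rather than $1/\sqrt{n}$ emerges, and it uses the sharp Gaussian concentration of $\bar X$ with $\|\bar X-\mu\|\asymp\sqrt{d/n}$. Everything else, including the normalization $\|f_{i}\|_{C^s(\Theta)}\le 1$, the restriction to $\Theta(a;d)$, and the trivial $\wedge\, 1$ cap, is bookkeeping: once the two families produce lower bounds of the correct orders, taking the $\sup$ over the normalized ball gives their maximum, proving \eqref{sup_f_inf_T}.
\end{proofn}
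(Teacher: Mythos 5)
Your proposal matches the paper's intent: the paper itself does not prove this statement but notes that it is ``a simple corollary of Theorem~2.2 in \cite{Koltchinskii_Zhilova}'' and that the lower bound is attained by functionals depending only on the mean $\mu$ (with $\Sigma$ held fixed, so the sufficient statistic $\bar X\sim N(\mu,\Sigma/n)$ realizes a Gaussian shift model). Your reduction to that theorem via $\mu$-only functionals with $\Sigma=I_d$ is exactly the intended route, and the $n^{-1/2}$ term is already contained in the cited theorem, so your separate two-point Le Cam argument, while correct, is redundant. One small conflation worth noting: having reduced to the shift model, you should simply invoke the $\sup_f$ statement of \cite{Koltchinskii_Zhilova}, Theorem~2.2, as a black box — fixing a concrete $f_2=\delta^s\varphi_0(\mu/\delta)$ and then saying ``Theorem~2.2 provides the perturbation argument for this particular $f_2$'' mixes invoking the theorem with reproving it; the theorem gives a bound on the supremum over $f$, not a bound for a prescribed functional, and if you insist on a concrete $f_2$ you would need to carry out the Fano/Assouad construction yourself rather than cite the theorem.
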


This bound shows that for $d=d_n\geq n^{\alpha}, \alpha\in (0,1]$ and $s<\frac{1}{1-\alpha},$
there are functionals $f$ with $\|f\|_{C^{s}(\Theta)}\leq 1$ such that $f(\theta)$ could not be estimated with a rate better than $n^{-s(1-\alpha)/2},$ which is slower than $n^{-1/2}.$ In other words, the threshold 
$\frac{1}{1-\alpha}$ on smoothness $s$ needed for the existence of $\sqrt{n}$-consistent estimators of $f(\theta)$ is sharp. Moreover, for $d\asymp n$ (or $\alpha=1$) even consistent estimators do not exist for some functionals $f$ of an arbitrary degree of smoothness $s.$ 

Note also that the lower bound  
\eqref{sup_f_inf_T} is attained for the functionals depending only on the mean $\mu$ and 
for the smallest parameter set $\Theta(1;d)$ for $d=1$ (see Theorem 2.2 in \cite{Koltchinskii_Zhilova}).
We do not know at the moment what is the precise dependence on $a$ in such bounds 
as \eqref{upper_bound_functional}, \eqref{sup_f_inf_T}. 

It turns out that for $s=k+1+\rho\leq 2,$ we have $k=0$ and $f_k = f_0=f,$ 
so $f_k(\thetah)= f(\thetah)$ is just the usual plug-in estimator.  
For $s>2,$ we have $k\geq 1.$ In this case, the plug-in estimator would be suboptimal due to its large bias 
and  a non-trivial bias reduction (for instance, the one leading to our estimator $f_k(\thetah)$) becomes crucial.

In addition to Theorem \ref{main_theorem_1}, we establish asymptotic efficiency of estimator 
$f_k(\thetah)$ provided that $d=d_n\leq n^{\alpha}$ for some $\alpha\in (0,1)$ and $s>\frac{1}{1-\alpha}.$
Let $f:\Theta\mapsto {\mathbb R}$ be a continuously differentiable functional with 
$f_{\mu}^{\prime}(\mu, \Sigma)\in {\mathbb R}^d, f_{\Sigma}^{\prime}(\mu,\Sigma)\in {\mathcal S}^d$ being its 
partial derivatives with respect to $\mu$ and $\Sigma.$ Denote
$$
\sigma_f^2 (\theta) := \|\Sigma^{1/2} f_{\mu}^{\prime}(\mu, \Sigma)\|^2 + 2\|\Sigma^{1/2}f_{\Sigma}^{\prime}(\mu,\Sigma)\Sigma^{1/2}\|_2^2,\ \theta =(\mu,\Sigma)\in \Theta,
$$
$\|\cdot\|_2$ being the Hilbert--Schmidt norm.
For simplicity, the result will be stated and proved only in the case of quadratic loss.

\begin{theorem}
\label{main_theorem_2}
Suppose $d=d_n\leq n^{\alpha}$ for some $\alpha\in (0,1).$ Then, for all $s=k+1+\rho> \frac{1}{1-\alpha}, k\geq 0, \rho\in (0,1],$
\begin{align}
\label{efficient_funct}
&
\sup_{\|f\|_{C^s(\Theta)}\leq 1}\sup_{\theta\in \Theta(a;d_n)}\Bigl| n {\mathbb E}_{\theta}\Bigl(f_k(\thetah)-f(\theta)\Bigr)^2 
- \sigma^2_f(\theta)\Bigr| \to 0
\end{align}
as $n\to\infty.$ Moreover, for all $\sigma_0>0,$
\begin{align}
\label{normal_approx_funct}
&
\sup_{\|f\|_{C^s(\Theta)}\leq 1}
\sup_{\theta\in \Theta(a;d_n), \sigma_f(\theta)\geq \sigma_0}
\sup_{x\in {\mathbb R}}\Bigl|{\mathbb P}_{\theta}\Bigl\{
\frac{\sqrt{n}(f_k(\thetah)-f(\theta))}{\sigma_f(\theta)}\leq x
\Bigr\} -{\mathbb P}\{Z\leq x\}\Bigr|
\to 0
\end{align}
as $n\to\infty,$ where $Z\sim N(0,1).$
\end{theorem}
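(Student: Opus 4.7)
The plan is a delta-method decomposition paired with a uniform CLT for the linear term. Write
\begin{equation*}
f_k(\thetah)-f(\theta) = L_\theta + R_\theta,
\qquad
L_\theta := \langle f_\mu'(\theta),\hat\mu-\mu\rangle + \langle f_\Sigma'(\theta),\hat\Sigma-\Sigma\rangle,
\end{equation*}
with remainder $R_\theta := f_k(\thetah)-f(\theta)-L_\theta$. Because $\hat\mu$ and $\hat\Sigma$ are independent in the Gaussian model, the standard Wishart variance formulas give
\begin{equation*}
n\,\E_\theta L_\theta^2 \;=\; \|\Sigma^{1/2}f_\mu'(\theta)\|^2 + \tfrac{2n}{n-1}\|\Sigma^{1/2}f_\Sigma'(\theta)\Sigma^{1/2}\|_2^2 \;=\; \sigma_f^2(\theta) + O(1/n),
\end{equation*}
uniformly over $\|f\|_{C^s(\Theta)}\le 1$ and $\theta\in\Theta(a;d_n)$, and also $\E_\theta L_\theta=0$ by unbiasedness of $\hat\mu$ and $\hat\Sigma$.

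The core technical step is the uniform remainder estimate
\begin{equation*}
\sup_{\|f\|_{C^s(\Theta)}\le 1}\sup_{\theta\in\Theta(a;d_n)} \sqrt{n}\,\|R_\theta\|_{L_2(\P_\theta)} \;\longrightarrow\; 0.
\end{equation*}
I would prove it by re-running the proof of Theorem~\ref{main_theorem_1} after peeling off the linear piece. The bootstrap-chain construction of $f_k$ is designed so that $|\E_\theta f_k(\thetah)-f(\theta)|=O(a^{\beta s}(\sqrt{d/n})^{s})$, which under the hypothesis $s>1/(1-\alpha)$ and $d_n\le n^\alpha$ is already $o(n^{-1/2})$. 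For the centered stochastic fluctuation $f_k(\thetah)-\E_\theta f_k(\thetah)-L_\theta$ one combines the same bias-reduction rate applied to $f_k-f$ with a second-order Taylor expansion of $f(\thetah)-f(\theta)-L_\theta$, whose $L_2$-norm is controlled by the concentration inequalities of Section~\ref{sec:conc}; both pieces are $o(n^{-1/2})$ under $s>1/(1-\alpha)$. Granted this, \eqref{efficient_funct} follows by expanding $\E_\theta(L_\theta+R_\theta)^2$: the cross term is handled by Cauchy--Schwarz (and, where needed, by the asymptotic chaos orthogonality of $L_\theta$ with the purely nonlinear part of $R_\theta$), and $\E_\theta R_\theta^2$ by the remainder bound.

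For the normal approximation \eqref{normal_approx_funct} I would apply a Berry--Esseen bound to $\sqrt n L_\theta/\sigma_f(\theta)$: the $\hat\mu$-contribution is exactly Gaussian, while the $\hat\Sigma$-contribution is a normalized sum of $n$ i.i.d.\ centered degree-$2$ Gaussian polynomials $\langle f_\Sigma'(\theta),(X_j-\mu)\otimes(X_j-\mu)-\Sigma\rangle$, whose third absolute moments are uniformly bounded via Gaussian hypercontractivity (using $\|\Sigma\|\le a$ and $\|f_\Sigma'\|\le\|f\|_{C^s}$). The assumption $\sigma_f(\theta)\ge\sigma_0$ controls the denominator, yielding a uniform normal approximation for $\sqrt n L_\theta/\sigma_f(\theta)$. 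Since $\sqrt n R_\theta/\sigma_f(\theta)\to 0$ in probability uniformly (Markov's inequality and the remainder bound), a uniform Slutsky-type argument transfers the approximation to $\sqrt n(f_k(\thetah)-f(\theta))/\sigma_f(\theta)$.

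The main obstacle is precisely the uniform remainder bound: Theorem~\ref{main_theorem_1} controls $f_k(\thetah)-f(\theta)$ but not after subtracting $L_\theta$, so one must re-enter the bootstrap-chain/random-homotopy construction of $f_k$, track what remains after removing the linear term, and verify that the concentration estimates of Section~\ref{sec:conc} remain sharp in this refined form without destroying the higher-order bias cancellation built into $f_k$. Once this estimate is in hand, the rest is a uniform version of the classical delta method combined with Berry--Esseen.
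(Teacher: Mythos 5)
Your skeleton — peel off the linear term $L_\theta = \langle f'(\theta),\thetah-\theta\rangle$, verify its variance matches $\sigma_f^2(\theta)/n$ via the Wishart formulas, and then prove $\sqrt n\,\|R_\theta\|_{L_2}\to 0$ uniformly — is the right one, and after regrouping it is essentially the paper's decomposition \eqref{represent_odin}--\eqref{represent_dva}. The Berry--Esseen argument for the degree-$2$ Gaussian chaos, the independence trick needed because $\bar Z$ and $n^{-1}\sum Z_j\otimes Z_j$ are not independent, and the $L_2$-to-Kolmogorov transfer (your ``uniform Slutsky'') all match Lemmas \ref{B_E_type} and \ref{xi_1xi_2}. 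But the place you flag as ``the main obstacle'' is precisely where your sketch has a real gap, not just details to check.

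The gap: your remainder bound needs control of the \emph{Lipschitz constant} of $f_k-f$ (and of the $C^{1+\rho}$ norm of $f_k$) so that Gaussian concentration applies, but those bounds do not hold for $f_k$ as defined, because the random homotopy naturally attached to the normal model (Example \ref{ex_mean_covariance}) involves $\Sigma^{1/2}$, whose derivatives blow up near the boundary of ${\mathcal C}_+^d$, so Theorem \ref{th_H_bound} is vacuous for it. The paper's essential move (Section \ref{sec:approx_homot}) is to replace $\Sigma^{1/2}$ by a globally $C^{s-1}$ cutoff $\gamma(\Sigma)$, obtaining a surrogate $\tilde f_k$ for which $\|\tilde f_k\|_{C^{1+\rho}}\leq 2\|f\|_{C^s}$ and $\|\tilde f_k'-f'\|_{L_\infty}\lesssim a^{s-1+\epsilon}\sqrt{d/n}$ (Corollary \ref{cor_bd_f_k_hold}, Lemma \ref{f_k_f_k_tilde}) are actual theorems, and then to show $f_k(\thetah)-\tilde f_k(\thetah)$ is exponentially negligible via the bootstrap-chain total-variation comparison (Proposition \ref{approx_homotopy}, Corollary \ref{corr_approx_homotopy}). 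Your phrase ``re-enter the bootstrap-chain construction \dots\ and verify that the concentration estimates remain sharp'' does not identify this issue. Relatedly, ``the same bias-reduction rate applied to $f_k-f$'' conflates two very different quantities: the bias $|\E_\theta f_k(\thetah)-f(\theta)|$, of order $(\sqrt{d/n})^s$, and the Lipschitz constant of $\tilde f_k-f$, of order $\sqrt{d/n}$. It is the second that you actually need when bounding the centered fluctuation of $(\tilde f_k-f)(\thetah)$, and the second is much larger. With the truncation to $\tilde f_k$ inserted and your two pieces regrouped into $\langle \tilde f_k'(\theta)-f'(\theta),\thetah-\theta\rangle + [S_{\tilde f_k}(\theta,\thetah-\theta)-\E_\theta S_{\tilde f_k}(\theta,\thetah-\theta)]$, your argument closes and coincides with the paper's.
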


Finally, the local minimax lower bound of the next theorem validates the claim of asymptotic efficiency 
of estimator $f_k(\thetah).$ 

\begin{theorem}
Let $f:\Theta\mapsto {\mathbb R}$ be a differentiable functional 
with derivative $f^{\prime}$ and let\footnote{In the expression $\|f^{\prime}(\theta)-f^{\prime}(\theta_0)\|,$ $\|\cdot\|$ denotes the norm of a linear functional on the space ${\mathbb R}^d\times {\mathcal S}^d.$} 
$$
\omega_{f^{\prime}}(\theta_0;\delta):= \sup_{\|\theta-\theta_0\|\leq \delta}\|f^{\prime}(\theta)-f^{\prime}(\theta_0)\|
$$  
be a local continuity modulus of $f'$ at point $\theta_0\in \Theta.$
For all $\beta>2,$ there exists a constant $D_{\beta}>0$ such that 
for all $\delta>0$ and all $\theta_0\in \Theta (a;d)$ satisfying the condition 
$\{\theta: \|\theta-\theta_0\|\leq \delta\}
\subset \Theta (a;d),$ 
the following bound holds:
\begin{align} 
\inf_{T_n}\sup_{\|\theta-\theta_0\|\leq \delta}\frac{n {\mathbb E}_{\theta}\Bigl(T_n(X_1,\dots, X_n)-f(\theta)\Bigr)^2}{\sigma_f^2(\theta)}
\geq 
1- D_{\beta} \biggl[\frac{a\ \omega_{f'}(\theta_0;\delta)}{\sigma_f(\theta_0)}+ a^{\beta}\delta + \frac{a^2}{\delta^2 n}\biggr],
\end{align}
where the infimum is taken over all estimators $T_n=T_n(X_1,\dots, X_n).$
\end{theorem}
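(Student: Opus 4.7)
My plan is to prove the bound via a one-parameter van Trees (Bayesian Cramér--Rao) inequality along a submodel tangent at $\theta_0$ to the Cramér--Rao efficient direction for $f$. For the normal model the Fisher information at $\theta=(\mu,\Sigma)$ acts on tangent vectors $(v_\mu,V_\Sigma)\in\R^d\times\mathcal{S}^d$ by
\[
I(\theta)[(v_\mu,V_\Sigma),(v_\mu,V_\Sigma)] = \langle \Sigma^{-1}v_\mu, v_\mu\rangle + \tfrac12\,\mathrm{tr}(\Sigma^{-1}V_\Sigma \Sigma^{-1}V_\Sigma).
\]
Writing $\theta_0=(\mu_0,\Sigma_0)$, I would pick the direction $v^* := (\Sigma_0 f_\mu'(\theta_0),\, 2\Sigma_0 f_\Sigma'(\theta_0)\Sigma_0)$; cyclicity of the trace then gives $\langle f'(\theta_0), v^*\rangle = I(\theta_0)[v^*,v^*] = \sigma_f^2(\theta_0)$, so the curve $\theta(t) := \theta_0 + t v^*$ saturates Cramér--Rao at $t=0$ with asymptotic variance exactly $\sigma_f^2(\theta_0)$.

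Next, with a fixed smooth reference density $\pi_0$ on $[-1,1]$ of finite Fisher information $J(\pi_0)$, set $\pi(t) := r^{-1}\pi_0(t/r)$ with $r := \delta/\|v^*\|$, so that $\theta(t) \in \{\|\theta-\theta_0\|\leq \delta\}\subset \Theta(a;d)$ for every $t\in\mathrm{supp}(\pi)$. Lower-bounding the sup-risk by the Bayes risk and applying the van Trees inequality to the scalar functional $t\mapsto f(\theta(t))$ yields, for any estimator $T_n$,
\[
\sup_{\|\theta-\theta_0\|\leq \delta}\E_\theta(T_n-f(\theta))^2
\;\geq\;
\frac{\bigl(\int \langle f'(\theta(t)),\, v^*\rangle\,\pi(t)\,dt\bigr)^2}{n\int I(\theta(t))[v^*,v^*]\,\pi(t)\,dt + J(\pi)}.
\]
The rest of the proof then reduces to estimating three Taylor remainders relative to the target $\sigma_f^2(\theta_0)/n$: (i) the numerator is at least $(\sigma_f^2(\theta_0)-\omega_{f'}(\theta_0;\delta)\|v^*\|)^2$, and careful spectral estimates on $\Theta(a;d)$ give $\|v^*\|/\sigma_f(\theta_0)\lesssim a$, producing the first error term $a\,\omega_{f'}(\theta_0;\delta)/\sigma_f(\theta_0)$; (ii) expanding $\Sigma(t)^{-1}$ around $\Sigma_0^{-1}$ and using $\lambda_{\min}(\Sigma(t))\geq 1/a$ yields $|I(\theta(t))[v^*,v^*]-\sigma_f^2(\theta_0)|\leq C_\beta a^\beta \delta\,\sigma_f^2(\theta_0)$ for any fixed $\beta>2$; (iii) $J(\pi)=\|v^*\|^2 J(\pi_0)/\delta^2$, which after division by $n\sigma_f^2(\theta_0)$ contributes the $a^2/(\delta^2 n)$ term. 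Expanding $1/(1+\cdot)$ and exchanging $\sigma_f^2(\theta)$ for $\sigma_f^2(\theta_0)$ (an operation absorbed into the same three errors by smoothness of $\theta\mapsto \Sigma^{1/2}$ on $\Theta(a;d)$) delivers the stated inequality.

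The hard part will be step (ii): naive differentiation of $V\mapsto \mathrm{tr}(\Sigma^{-1}V\Sigma^{-1}V)$ in $\Sigma$ costs $\|\Sigma^{-1}\|^3\leq a^3$, so achieving the arbitrarily small slack $\beta>2$ allowed in the statement requires a finer expansion that exploits both the quadratic structure of the Fisher quadratic form and the specific choice of $v^*$, rather than a crude operator-norm bound. Obtaining the correct $a$-powers in (i) and (iii) simultaneously, so that the three error contributions take precisely the form claimed, will also require careful spectral bookkeeping, but I expect this to be essentially routine once the van Trees framework and the choice of efficient direction are in place.
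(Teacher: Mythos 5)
The paper does not actually prove this theorem --- it states only that the proof is based on the van Trees inequality and points to \cite{Koltchinskii_2017, Koltchinskii_Nickl, Koltchinskii_Zhilova} --- and your proposal supplies precisely such an argument: a one-parameter van Trees bound along the submodel $\theta_0+tv^*$ in the Cram\'er--Rao efficient direction, with a rescaled compactly supported prior and a three-way error decomposition. Your identification of $v^*$, the verification $\langle f'(\theta_0),v^*\rangle=I(\theta_0)[v^*,v^*]=\sigma_f^2(\theta_0)$, and the estimates $\|v^*\|\lesssim a\,\sigma_f(\theta_0)$, $J(\pi)=J(\pi_0)\|v^*\|^2/\delta^2$ are all correct, as is the observation that the denominator $\sigma_f^2(\theta)$ can be swapped for $\sigma_f^2(\theta_0)$ at the cost of errors of the same three types.

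Regarding the step you flag as hard: the refinement you are looking for is simply conjugation by $\Sigma_0^{\pm 1/2}$, and once done it makes (ii) essentially routine rather than delicate. Put $w:=\Sigma_0^{1/2}f_\mu'(\theta_0)$ and $W:=\Sigma_0^{1/2}f_\Sigma'(\theta_0)\Sigma_0^{1/2}$, so that $\sigma_f^2(\theta_0)=\|w\|^2+2\|W\|_2^2$, and set $A_t:=\Sigma_0^{1/2}\Sigma(t)^{-1}\Sigma_0^{1/2}$. With your choice of $v^*$ the Fisher form along the curve becomes
\[
I(\theta(t))[v^*,v^*]=\langle A_tw,w\rangle+2\,\mathrm{tr}\bigl(A_tWA_tW\bigr),
\]
and since $A_t=\bigl(I+\Sigma_0^{-1/2}(\Sigma(t)-\Sigma_0)\Sigma_0^{-1/2}\bigr)^{-1}$ with $\|\Sigma_0^{-1/2}(\Sigma(t)-\Sigma_0)\Sigma_0^{-1/2}\|\leq a\delta$, one has $\|A_t-I\|\leq a\delta/(1-a\delta)$. (If $a\delta\geq 1/2$ the claimed inequality is vacuous once $D_\beta\geq 2$, so assume $a\delta<1/2$.) Writing $\mathrm{tr}(A_tWA_tW)-\|W\|_2^2=2\,\mathrm{tr}\bigl((A_t-I)W^2\bigr)+\mathrm{tr}\bigl(((A_t-I)W)^2\bigr)$ and using $|\mathrm{tr}((A_t-I)W^2)|\leq\|A_t-I\|\,\|W\|_2^2$ together with $|\langle (A_t-I)w,w\rangle|\leq\|A_t-I\|\,\|w\|^2$ yields $|I(\theta(t))[v^*,v^*]-\sigma_f^2(\theta_0)|\lesssim a\delta\,\sigma_f^2(\theta_0)$ --- an exponent $1$, not $3$, on $a$. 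So ``$\beta>2$'' in the statement is a conservative formulation, not a sharp threshold to chase; the crude operator-norm bound you were worried about is avoided exactly by this conjugation, which also controls the replacement of $\sigma_f^2(\theta_0)$ by $\sigma_f^2(\theta)$ in the denominator.
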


The proof of this minimax bound is based on Van Trees inequality and it will not be provided in this paper (see \cite{Koltchinskii_2017, Koltchinskii_Nickl, Koltchinskii_Zhilova} for the proofs of similar statements). Essentially, the bound shows 
(in the case when dimension $d$ is fixed) 
that, if $\sigma_f(\theta_0)$ is bounded away from zero and $f^{\prime}$ is continuous at $\theta_0,$
then the following version of H\`ajek-LeCam asymptotic minimax lower bound holds:
\begin{align*} 
\lim_{c\to\infty}\liminf_{n\to \infty}\inf_{T_n}\sup_{\|\theta-\theta_0\|\leq \frac{c}{\sqrt{n}}}\frac{n {\mathbb E}_{\theta}\Bigl(T_n(X_1,\dots, X_n)-f(\theta)\Bigr)^2}{\sigma_f^2(\theta)}
\geq 1
\end{align*}
(for this, it is enough to take $\delta=\frac{c}{\sqrt{n}}$). Similar conclusion holds in the 
case when $d=d_n\to \infty$ as $n\to\infty$ (although, in this case, an asymptotic formulation 
of the result would involve a sequence of functionals on $\Theta=\Theta_n$ and 
a sequence of points $\theta_0=\theta_0^{(n)}\in \Theta_n$). 

\begin{remark}
\normalfont
In \cite{Koltchinskii_2017}, the results similar to Theorem \ref{main_theorem_2} on efficient estimation of  
smooth functionals of unknown covariance of Gaussian model with zero mean 
were proved for functionals of the form $\langle f(\Sigma), B\rangle,$ where $f$
is a smooth function in the real line (in fact, a function from Besov space $B^s_{\infty,1}(\mathbb R)$ for 
$s>\frac{1}{1-\alpha}$) and $B$ is an operator with nuclear norm $\|B\|_1$ bounded 
by $1.$ The method of proof developed in \cite{Koltchinskii_2017} relied on special 
properties of Wishart operators in the spaces of orthogonally invariant functions on the cone 
of covariance operators. This method allows one to prove asymptotic efficiency for estimators 
of somewhat more general functionals than $\langle f(\Sigma), B\rangle$ (defined in terms of a certain differential operator acting 
on orthogonally invariant functions of covariance), but it could not be applied to arbitrary 
smooth functionals studied in this paper. Even such functionals as $\sum_{i} \langle f_i(\Sigma), B_i\rangle,$ 
where $\|f_i\|_{B^s_{\infty,1}(\mathbb R)}\leq 1$ and $\sum_{i}\|B_i\|_1\leq 1,$ seem to be 
beyond the scope of methods of \cite{Koltchinskii_2017}, but could be handled using Theorem \ref{main_theorem_2}.
\end{remark}

\begin{remark}
\normalfont
A number of special functionals in various problems of high-dimensional statistics, in particular, 
in problems related to estimation of spectral characteristics of unknown covariance, could be represented 
in terms of smooth functionals. In particular, if $C\subset \sigma(\Sigma)$ 
is a cluster of the spectrum of $\Sigma$ ``well separated" from the rest of the spectrum,
the spectral projection $P_C$ on the direct sum of eigenspaces corresponding to the eigenvalues of $\Sigma$ in the cluster $C$ could be easily represented as $f(\Sigma)$ for a smooth function $
f$ in the real line (that is equal to $1$
on $C$ and vanishes outside of a neighborhood of $C$). One can then apply the methods of estimation 
of smooth functionals developed in this paper to functionals of the form $\langle f(\Sigma), B\rangle$
or $\|f(\Sigma)-A\|_2^2$ (for given operators $A,B$) that often occur in estimation and testing problems
in principal component analysis. Of course, in many cases, one can also develop more specialized methods 
for these special problems (see \cite{Koltchinskii_Lounici_bilinear, Koltchinskii_Lounici_AOS, Koltchinskii_Nickl}). 
\end{remark}

The construction of estimator $f_k(\thetah)$ of theorems \ref{main_theorem_1}, \ref{main_theorem_2} 
is based on a method of bias reduction considered earlier 
in  \cite{Jiao}, \cite{Koltchinskii_2017}. This method will be further developed in the current paper in a general framework described below.  
Let $(\Theta, {\mathcal B}_{\Theta})$ be a measurable space 
and let $\thetah = \thetah (X)$ be an estimator of parameter $\theta$ based on an observation $X\sim P_{\theta}, \theta \in \Theta$ in some measurable space $(S,{\mathcal A}).$
Define 
$$
P(\theta;A) := {\mathbb P}_{\theta}\{\thetah \in A\},\ A\in {\mathcal A}.
$$
Assume that $P:\Theta\times {\mathcal B}_{\Theta}\mapsto [0,1]$ is a Markov kernel.  
Clearly, $P(\theta,\cdot)$ is the distribution of estimator $\thetah (X), X\sim P_{\theta}, \theta\in \Theta.$

Consider a measurable function $f:\Theta \mapsto {\mathbb R}$ and suppose we want to construct  
an estimator of $f(\theta)$ based on $\thetah.$ In order to find an estimator of the form $g(\hat \theta)$
with a small bias, we need to solve approximately the equation ${\mathbb E}_{\theta} g(\thetah)=f(\theta), \theta \in \Theta.$ Define the following integral operator with respect to Markov kernel $P:$
$$
({\mathcal T}g)(\theta):= {\mathbb E}_{\theta} g(\thetah)= \int_{\Theta}g(t)P(\theta;dt).
$$
It could be viewed, for instance, as an operator from the space $L_{\infty}(\Theta)$ of bounded 
measurable functions on $\Theta$ into itself. 
Let ${\mathcal B}:={\mathcal T}-{\mathcal I}.$ Informally, the solution of equation ${\mathcal T}g=({\mathcal I}+{\mathcal B})g=f$ could be represented as Neumann series:  $g=({\mathcal I}-{\mathcal B}+{\mathcal B}^2-\dots)f.$ Define 
$
{\mathcal E}_k := \sum_{j=0}^k (-1)^j {\mathcal B}^j 
$
and
$$
f_k(\theta):= ({\mathcal E}_k f)(\theta)= \sum_{j=0}^k (-1)^j ({\mathcal B}^j f)(\theta), \theta\in \Theta.
$$
Then, the bias of estimator $f_k(\thetah)$ of $f(\theta)$ is equal to 
\begin{align}
\label{bias_formula}
&
\nonumber
{\mathbb E}_{\theta} f_k(\thetah)-f(\theta) = ({\mathcal T}f_k)(\theta)-f(\theta)
\\
&
=(\mathcal{I}+{\mathcal B})\sum_{j=0}^k (-1)^j ({\mathcal B}^j f)(\theta)=
(-1)^k ({\mathcal B}^{k+1} f)(\theta), \theta \in \Theta.
\end{align}
If $\thetah$ is close to $\theta$ with a high probability, then operator ${\mathcal T}$ is close 
to identity and operator ${\mathcal B}$ is small. Thus, one could expect that, for a sufficiently 
large $k,$ the function $({\mathcal B}^{k+1} f)(\theta), \theta \in \Theta$ would be even of smaller order,
resulting in a bias reduction for estimator $f_k(\thetah).$ Also observe that 
$$
{\mathcal B} f(\theta)= {\mathbb E}_{\theta}f(\thetah) - f(\theta)
$$
is the bias of the plug-in estimator $f(\thetah)$ of $f(\theta).$ To reduce this bias, 
one can use a plug-in estimator ${\mathcal B}f(\thetah)$ of ${\mathcal B}f(\theta)$
and subtract it from $f(\thetah),$ yielding the estimator 
$$
f_1(\thetah) = f(\thetah)- ({\mathcal B}f)(\thetah),
$$
whose bias is equal to $-({\mathcal B}^2 f)(\theta).$
Iterating this procedure $k$ times yields the estimator $f_k(\thetah).$

Define a Markov chain 
$\{\thetah^{(k)}:k\geq 0\}$ with transition kernel $P$ and with $\thetah^{(0)}=\theta.$ 
For this chain, $\thetah^{(0)}\coloneqq \theta,\, \thetah^{(1)} \coloneqq \thetah$ and, conditionally
on $\thetah^{(0)}, \dots, \thetah^{(k)},$ $\thetah^{(k+1)}\sim P(\thetah^{(k)},\cdot), k\geq 0,$
which could be viewed as iterative applications of parametric bootstrap to the estimator $\hat \theta$
of parameter $\theta.$ In what follows, $\{\thetah^{(k)}:k\geq 0\}$ will be called \it a bootstrap chain. \rm
It easily follows from Chapman-Kolmogorov equation 
that 
\begin{align}
\label{T^k}
({\mathcal T}^{k} g)(\theta)={\mathbb E}_{\theta} g(\thetah^{(k)}), k\geq 0.
\end{align}
By Newton's binomial formula and representation \eqref{T^k},
\begin{align}
\label{Newton_binom}
&
\nonumber
({\mathcal B}^k f)(\theta)= (({\mathcal T}-{\mathcal I})^k f)(\theta)=
\sum_{j=0}^k (-1)^{k-j} {k\choose j} ({\mathcal T}^j f)(\theta)
\\
&
={\mathbb E}_{\theta}\sum_{j=0}^k (-1)^{k-j} {k\choose j} f(\thetah^{(j)}).
\end{align}
Note that representation \eqref{Newton_binom} provides a way to compute estimator $f_k(\thetah)$ (based on the computation of ${\mathcal B}^j f(\thetah), j\leq k$) using Monte 
Carlo simulation.
The sum $\sum_{j=0}^k (-1)^{k-j} {k\choose j} f(\thetah^{(j)})$ is the $k$-th order 
difference of function $f$ computed along the sample path of the 
Markov chain $\{\hat \theta^{(j)}: j\geq 0\}.$ In order to bound ${\mathcal B}^k f(\theta),$ we need to control the average sizes of such $k$-th order differences for a sufficiently smooth function $f.$ 

If $f$ is a function 
in the real line and $\Delta_h f(x):= f(x+h)-f(x)$ is its first order difference with step $h,$ then it is well known 
from classical analysis that, for the $k$-th order difference of $f,$ 
\begin{align}
\label{kthdiff}
\Delta_h^k f(x)= \sum_{j=0}^k (-1)^{k-j} {k\choose j} f(x+jh)= f^{(k)}(x)h^k + o(h^k)\ {\rm as}\ h\to 0
\end{align}
provided that $f$ is $k$ times continuously differentiable. 

Suppose now $\Theta$ is a subset of a Banach space $E$ and, for $\theta\in \Theta, X\sim P_{\theta}$
and for some small $\delta>0,$ 
$$
\sup_{\theta\in \Theta}P_{\theta}\{\|\thetah-\theta\|\geq \delta\} = \sup_{\theta\in \Theta}P(\theta; \Theta \setminus B(\theta;\delta))
$$ 
is a small number.\footnote{$B(\theta;\delta):=\{\theta':\|\theta'-\theta\|<\delta\}.$} In this case, the bootstrap chain $\{\thetah^{(k)}: k\geq 0\}$ moves with small steps (of the order at most $\delta$) with a high probability since 
$$
{\mathbb P}\{\|\thetah^{(k+1)}-\thetah^{(k)}\|\geq \delta|\thetah^{(k)}\}=P(\thetah^{(k)};\Theta\setminus B(\thetah^{(k)};\delta))
$$ 
is small. The basic question is whether for a $C^k$ function $f$ on $\Theta$
we would have an analogue of property \eqref{kthdiff} in the sense that  
\begin{align*}
{\mathcal B}^k f(\theta)={\mathbb E}_{\theta}\sum_{j=0}^k (-1)^{k-j} {k\choose j} f(\thetah^{(j)})=O(\delta^k).
\end{align*}
In view of \eqref{bias_formula}, it would mean that, for a $C^{k+1}$ function $f,$ the bias 
of the estimator $f_k(\thetah)$ of $f(\theta)$ would be of the order $O(\delta^{k+1}),$
which provides a way to reduce the bias of estimation of $f(\theta)$ by orders of magnitude comparing with the error rate $\delta$ of estimator $\thetah$ itself (provided that $f$ is sufficiently smooth). 
This approach to bias reduction has been already used, in particular, in \cite{Koltchinskii_2017, Koltchinskii_2018, Koltchinskii_Zhilova}. In \cite{Koltchinskii_2017}, it was used in a problem of estimation of 
functionals of unknown covariance $\Sigma$ of a normal model $X_1,\dots, X_n$ i.i.d. $\sim N(0;\Sigma)$
in ${\mathbb R}^d.$ In this case, the operator ${\mathcal T}$ is so called Wishart operator well studied 
in the theory of Wishart matrices. The analysis of the bias reduction problem in \cite{Koltchinskii_2017}
relied heavily on the properties of Wishart operators (especially, on the spaces of orthogonally invariant functions of matrices). This approach had some limitations. In particular, it was impossible to establish 
risk bounds and asymptotic efficiency for general smooth functionals of covariance,
but only for some classes of smooth functionals (such as functionals 
of the form $\langle f(\Sigma),B\rangle,$ where $f$ is a smooth function in the real line and $B$ is a matrix 
with bounded nuclear norm). On the other hand, in \cite{Koltchinskii_Zhilova}, the problem was studied 
in the case of general Gaussian shift models $X=\theta +\xi,$ where $\theta$ is an unknown mean vector in a separable 
Banach space $E$ and $\xi\sim N(0;\Sigma)$ is a mean zero Gaussian noise in $E$ with given covariance operator $\Sigma.$ Due to the simplicity of operator ${\mathcal T}$ in this case,\footnote{$(Tg)(\theta)={\mathbb E}g(\theta+\xi), \theta \in E$} it was possible to solve the problem for general 
smooth functionals.  

In the current paper, we develop certain analytic tools that allow us to provide bounds on H\"older 
norms of functions ${\mathcal B}^k f$ and $f_k={\mathcal E}_k f$ needed to control the bias of estimator $f_k(\thetah)$
of $f(\theta)$ and also to establish concentration properties of this estimator
(see sections \ref{random_homotopies}, \ref{sec:Holder_norms} and \ref{sec:bootstrap_chain}
and, especially, Theorem \ref{th_H_bound}). In particular, these results are applicable to general 
smooth functionals of parameters of normal model. 
Our approach is based on certain coupling techniques 
(random homotopies) that provide a way to represent or approximate (in distribution) the estimator $\thetah$
and the bootstrap chain $\{\thetah^{(k)}: k\geq 0\}$ in terms of certain smooth functions on the parameter space 
$\Theta.$ A random homotopy between $\theta$ and $\thetah$ is a smooth stochastic process 
$H(\theta;t), \theta \in \theta, t\in [0,1]$ with values in $\Theta$ such that $H(\theta;0)=\theta, \theta\in \Theta$ and 
$H(\theta;1)\overset{d}{=} \thetah(X), X\sim P_{\theta}, \theta \in \Theta.$ 
Superpositions of i.i.d. copies of stochastic process $H$ are then used to represent 
the bootstrap chain. Random homotopies are also used to provide representations 
of functions ${\mathcal B}^k f$ and to bound their H\"older norms. In particular, under 
proper smoothness assumptions on $H,$ we show that, for $s=k+1+\rho, \rho \in (0,1],$ ${\mathcal E}_k=\sum_{j=0}^k (-1)^j {\mathcal B}^j$ 
is a bounded linear operator from $C^{s}(\Theta)$ into $C^{1+\rho}(\Theta),$
which provides a way to study bias and concentration properties of estimator $f_k(\thetah).$ 

We apply the technique of random homotopies to study estimators of smooth functionals 
of parameters of high-dimensional normal models with nearly optimal error rates. 
This technique is rather general in nature and, in principle,
it could be used to develop efficient estimators of smooth functionals for more general classes of models, in  
particular, high-dimensional exponential families. This, however, would require the development of 
concentration bounds for estimators $f_k(\thetah)$ in more general settings than normal models, which is a challenging 
problem beyond the scope of this paper.

The following notations have been already used in this section and will be used throughout 
the paper. For $A,B\geq 0,$ $A\lesssim B$ means that there exists an absolute constant
$C>0$ such that $A\leq CB.$ The notation $A\gtrsim B$ is equivalent to $B\lesssim A$
and $A\asymp B$ is equivalent to $A\lesssim B$ and $A\gtrsim B.$ When constant 
$C$ depends on some parameter(s) $p,$ we might use subscript $p$ to emphasize 
this dependence, say, $A\lesssim_{p} B.$

\subsection{Related work.}
\label{Relatedwork}

The problem of estimation of functionals of parameters of high-dimensional and infinite-dimensional 
(nonparametric) statistical models has a long history. Early references include \cite{Levit_1, Levit_2, Ibragimov}
and an incomplete list of further important references includes \cite{Bickel_Ritov, Ibragimov_Khasm_Nemirov,
Girko, Girko-1, Donoho_1, Donoho_2, Donoho_Nussbaum, Nemirovski_1990, Birge, Laurent, Lepski, 
Nemirovski, Cai_Low_2005a, Cai_Low_2005b, Klemela, Robins, van der Vaart, C_C_Tsybakov, Robins_1, Han, Mukherjee}. The problem has been 
most often studied for special statistical models (such as Gaussian sequence model, Gaussian white noise
model or density estimation model) and special functionals (such as (unbounded) linear functionals, quadratic 
functionals, some norms in classical Banach spaces, some classes of integral functionals of unknown density). 
In \cite{Ibragimov_Khasm_Nemirov, Nemirovski_1990, Nemirovski}, a problem of estimation of general smooth 
functionals of an unknown function (signal) observed in a Gaussian white noise was studied. The complexity of the 
problem was characterized by the rate of decay of Kolmogorov diameters of the parameter space of the model.
The goal was to determine sharp thresholds on the smoothness of the functional such that 
its efficient estimation with parametric error rate becomes possible when the degree of smoothness 
is above the threshold. This approach is close to the one developed in our paper. 

The difficulty of functional estimation problem in high-dimensional and nonparametric settings is related to the fact that natural plug-in estimators $f(\thetah)$ (with $\thetah$ being the maximum likelihood estimator or its regularized versions with optimal error rates) fail to achieve optimal rates of estimation of $f(\theta)$ as soon as the dimension or other 
relevant complexity characteristics of the problem become sufficiently large. In such case, the development of better estimators is a challenge with bias reduction becoming an important part of the problem. There are very 
few general approaches to this problem. One of them is based on the notion of higher order influence functions
and utilizing the techniques of $U$-statistics to construct estimators with reduced bias and optimal convergence rates. This approach was initiated in \cite{Robins} (see also \cite{van der Vaart}, \cite{Robins_1},
\cite{Mukherjee} and further references therein).   
Unfortunately, as it was pointed out in these papers, higher order influence functions do not always exist and,
due to this difficulty, the method is usually not universally applicable to all smooth functionals.  

The approach to bias reduction studied in our paper was considered earlier in \cite{Jiao, Koltchinskii_2017, 
Koltchinskii_Zhilova}. In \cite{Jiao}, it was studied in the case of estimation of smooth functions of parameter 
$\theta$ of classical binomial model $X\sim B(n;\theta), \theta\in [0,1]$ (although, the authors were motivated 
by the problems of estimation of functionals of high-dimensional parameters). In this case, the operator ${\mathcal T}$ defined in Section \ref{MainResults} maps function $g$ into the Bernstein polynomial of degree $n,$
approximating this function. Bounds on functions ${\mathcal B}^k f$ needed to control the bias of estimator 
$f_k(\thetah)$ (with $\thetah=X/n$ being the frequency) were proved in \cite{Jiao} based on some results 
in classical approximation theory. In \cite{Koltchinskii_2017}, the problem of estimation of 
a special class of smooth functionals of high-dimensional covariance of a normal model was 
studied, the operator ${\mathcal T}$ was so called Wishart operator and the analysis of the bias  
reduction problem was largely based on special properties of such operators. 
Some other approaches to efficient estimation of special functionals of covariance 
in normal models (such as linear forms of principal components) were developed 
in recent papers \cite{Koltchinskii_Lounici_bilinear, Koltchinskii_Xia, Koltchinskii_Nickl}.

The paper is organized as follows. In Section \ref{preliminaries}, we introduce 
basic definitions and facts concerning multilinear forms, tensor products and smoothness in linear normed spaces (including the definition of H\"older spaces). In Section \ref{random_homotopies},
the notion of random homotopy between parameter of statistical model and 
its estimator is introduced. In Section \ref{sec:Holder_norms}, we develop 
bounds on H\"older norms of functions ${\mathcal B}^k f$ and $f_k.$
In Section \ref{sec:bootstrap_chain}, random homotopies are used to provide 
representations and approximations of bootstrap chains. In Section \ref{sec:RepresentB^k}, we develop representation formulas for functions
${\mathcal B}^k f$ (which also provide an alternative derivation of bounds on their 
H\"older norms). In Section \ref{normal_proofs}, concentration bounds for estimators
$f_k(\thetah)$ are proved in the case of normal models and the proof of Theorem 
\ref{main_theorem_1} is completed.

\section{Preliminaries: multilinear forms, tensor products and differentiability.}
\label{preliminaries}

For a linear space $E,$ let $E^{\#}$ denote its algebraic dual (the space of all linear functionals on $E$).
For $x\in E$ and $u\in E^{\#},$ we will use inner product notation $\langle x,u\rangle$ for the value 
of functional $u$ on vector $x.$ If $E$ is a Banach space, let $E^{\ast}$ denote its topological 
dual (the space of all continuous linear functionals on $E$). We will use the notation $\|\cdot\|$
for the norms of $E, E^{\ast}$ and other Banach spaces (sometimes, providing it with subscripts 
to avoid a confusion). Given linear spaces $E_1,\dots, E_k, F,$ let ${\mathcal L}_k(E_1,\dots, E_k;F)$
be the space of all $F$-valued $k$-linear forms $M[x_1,\dots, x_k], x_1\in E_1,\dots, x_k\in E_k$
(in other words, all the mappings $M: E_1\times \dots \times E_k\mapsto F$ linear with respect to 
each of their $k$ variables). If $E_1,\dots, E_k, F$ are Banach spaces, let 
${\mathcal M}_k(E_1,\dots, E_k;F)$ be the space of all $F$-valued continuous (bounded) $k$-linear forms $M: E_1\times \dots \times E_k\mapsto F.$ Such forms have bounded operator norms:
$$
\|M\|:= \sup_{\|x_1\|\leq 1, \dots, \|x_k\|\leq 1}\|M[x_1, \dots, x_k]\|<\infty.
$$
In what follows, the spaces ${\mathcal M}_k(E_1,\dots, E_k;F)$ of bounded multilinear 
forms are always equipped with operator norms.  
For $F={\mathbb R},$ we will write 
$$
{\mathcal L}_k(E_1,\dots, E_k):={\mathcal L}_k(E_1,\dots, E_k;{\mathbb R})
\ {\rm and}\  
{\mathcal M}_k(E_1,\dots, E_k):={\mathcal M}_k(E_1,\dots, E_k;{\mathbb R}).
$$
For $k=0,$ we have ${\mathcal M}_0(F)={\mathcal L}_0(F)=F$ and,  
for $k=1,$ ${\mathcal L}_1(E)= E^{\#},$ 
${\mathcal M}_1(E)= E^{*}.$ Sometimes, it could be convenient 
to view $0$-forms (vectors) as functions of an empty (blank) variable 
and to write $x=x[\ ], x\in F.$
If $E_1=\dots=E_k=E,$ we write ${\mathcal L}_k(E;F):= {\mathcal L}_k(E,\underset{k}{\dots} ,E;F)$
and ${\mathcal M}_k(E;F):= {\mathcal M}_k(E,\underset{k}{\dots} ,E;F).$
We denote by ${\mathcal L}_k^s(E;F)$ and ${\mathcal M}_k^s(E;F)$ the subspaces 
of symmetric multilinear forms and symmetric continuous multilinear forms, resp. 

One can identify $k$-linear forms $M\in {\mathcal L}_k(E_1,\dots, E_k;F)$ with linear 
mappings from $E_1\otimes \dots \otimes E_k$ into $F,$ where $E_1\otimes \dots \otimes E_k$
is the (algebraic) tensor product of linear spaces $E_1,\dots, E_k$ (if $E_1=\dots = E_k=E,$
we denote $E^{\otimes k}:= E\otimes \underset{k}{\dots} \otimes E$).
Similarly, continuous (bounded) 
linear forms $M\in {\mathcal M}_k(E_1,\dots, E_k;F)$ could be identified with bounded linear operators 
from a topological tensor product $E_1\otimes \dots \otimes E_k$ into $F.$ For simplicity, assume 
that $k=2$ (the generalization to the case $k>2$ is straightforward). Given $x_1\in E_1, x_2 \in E_2,$ the tensor product $x_1\otimes x_2$ could be defined 
as the linear functional $x_1\otimes x_2\in {\mathcal L}_2(E_1,E_2)^{\#}$ such that 
$$
\langle M, x_1\otimes x_2\rangle = M[x_1,x_2], M\in {\mathcal L}_2(E_1,E_2).
$$ 
The algebraic tensor product $E_1\otimes E_2$ is then defined as the linear span 
(in ${\mathcal L}_2(E_1,E_2)^{\#}$) of the set of functionals $x_1\otimes x_2, x_1\in E_1, x_2\in E_2.$ It is easy 
to check that the mapping $E_1\times E_2\ni (x_1,x_2)\mapsto x_1\otimes x_2\in E_1\otimes E_2$
is bilinear. It is also well known (so called ``universal mapping property") that any bilinear form $M\in {\mathcal L}_2(E_1,E_2;F)$
could be identified (up to a linear isomorphism) with a linear mapping (or $1$-linear form) 
$\tilde M\in {\mathcal L}_1(E_1\otimes E_2;F)$ such that 
$$
M[x_1,x_2]=\tilde M [x_1\otimes x_2], x_1\in E_1, x_2\in E_2
$$
(in what follows, with a little abuse of notation, we will write $M$ instead of $\tilde M$). 

Note that, given $u_1\in E_1^{\#}, u_2\in E_2^{\#},$ one can define a bilinear functional 
$$
M_{u_1,u_2}[x_1,x_2]:= \langle x_1,u_1\rangle \langle x_2,u_2\rangle, x_1\in E_1, x_2\in E_2.
$$   
This allows one to identify the tensor product $x_1\otimes x_2$ with the bilinear form 
$$
E_1\otimes E_2\ni (u_1,u_2) \mapsto \langle x_1,u_1\rangle \langle x_2,u_2\rangle =:(x_1\otimes x_2)[u_1,u_2]\in {\mathbb R},
$$
or with the linear mapping $x_1\otimes x_2: E_2^{\#}\mapsto E_1:$
$$
(x_1\otimes x_2)u:=x_1\langle x_2,u\rangle.
$$

There are many different ways to define topological tensor products of Banach spaces 
(or, more generally, linear topological spaces). In what follows, we will use so called 
\it projective \rm tensor products. For Banach spaces $E_1, E_2,$ define 
$$
\|x\|= \|x\|_{\pi}:=\inf\Bigl\{\sum_{i} \|x_i^{(1)}\|\|x_i^{(2)}\|: x=\sum_{i}x_i^{(1)}\otimes x_i^{(2)}\Bigr\},
x\in E_1\otimes E_2,
$$ 
where the infimum is taken over all the representations of $x\in E_1\otimes E_2$
as a finite sum $\sum_{i}x_i^{(1)}\otimes x_i^{(2)}.$ The completion of 
$E_1\otimes E_2$ with respect to this norm is called the projective tensor
product $E_1\otimes_{\pi} E_2.$ In what follows, we will drop the subscript 
$\pi$ in the above notation. It is well known that the projective tensor product 
norm is a cross-norm, so, it possesses the following properties:
$$
\|x_1\otimes x_2\|= \|x_1\|\|x_2\|, x_1\in E_1, x_2\in E_2\ {\rm and}\ 
\|u_1\otimes u_2\|= \|u_1\|\|u_2\|, u_1\in E_1^{\ast}, u_2\in E_2^{\ast}.
$$  
It is known that the projective norm is the only norm in $E_1\otimes E_2$
for which the universal mapping property extends to continuous bilinear  
forms: any continuous bilinear form $M\in {\mathcal M}_2(E_1,E_2;F)$
could be identified (up to an isomorphism) with a continuous linear mapping (or continuous $1$-linear form) 
$\tilde M\in {\mathcal M}_1(E_1\otimes E_2;F)$ such that 
$$
M[x_1,x_2]=\tilde M [x_1\otimes x_2], x_1\in E_1, x_2\in E_2.
$$
In what follows, we write interchangeably multilinear forms as $M[x_1,\dots, x_k]$
or as $M[x_1\otimes \dots \otimes x_k].$

If $E_1:=E, E_2:={\mathbb R},$ it is obvious that $x\otimes c=(cx)\otimes 1= c(x\otimes 1), x\in E, c\in {\mathbb R}$
and, moreover, the spaces $E\otimes {\mathbb R}$ and $E$ are isometric. This observation allows one to drop the factor ${\mathbb R}$ in tensor products of Banach spaces (and to replace the tensor product $x\otimes c$ by the vector $cx$). 

For $M_1 \in {\mathcal L}_k(E_1,\dots, E_k;F_1)$ and $M_2\in {\mathcal L}_l(E_{k+1},\dots, E_{k+l};F_2),$
define their tensor product $M_1\otimes M_2: {\mathcal L}_{k+l}(E_1,\dots, E_k, E_{k+1}, \dots, E_{k+l};F_1\otimes F_2)$
as follows:
\begin{align*}
&
(M_1\otimes M_2)[x_1,\dots, x_k,x_{k+1}, \dots, x_{k+l}]= M_1[x_1,\dots, x_k] M_2[x_{k+1}, \dots, x_{k+l}],
\\
&
x_j\in E_j, j=1,\dots, k+l.
\end{align*}
Obviously, if $M_1, M_2$ are continuous multilinear forms, so is $M_1\otimes M_2$
and 
\begin{align*}
&
\|M_1\otimes M_2\|=\|M_1\|\|M_2\|, 
\\
&
M_1\in {\mathcal M}_k(E_1,\dots, E_k;F_1), M_2\in {\mathcal M}_l(E_{k+1},\dots, E_{k+l};F_2).
\end{align*}
This definition easily extends to tensor products of several multilinear forms. 

Using representations of multilinear forms as linear transformations of tensor products, it is easy 
to write various superpositions of multilinear forms. For instance, if $M_1\in {\mathcal M}_2(E_1,E_2;F_1),$
$M_2\in {\mathcal M}_2(E_3,E_4;F_2),$ $M_3\in {\mathcal M}_2(F_1, F_2;F),$ then 
$M_3\circ (M_1\otimes M_2)\in {\mathcal M}_4(E_1,E_2, E_3, E_4;F)$ and 
\begin{align*}
&
(M_3\circ (M_1\otimes M_2))[x_1,x_2,x_3,x_4]=M_3[(M_1\otimes M_2)[(x_1\otimes x_2)\otimes (x_3\otimes x_4)]]
\\
&
=M_3[M_1[x_1\otimes x_2]\otimes M_2[x_3\otimes x_4]]= M_3[M_1[x_1,x_2], M_2[x_3,x_4]]. 
\end{align*}
It also immediate that 
$$
\|M_3\circ (M_1\otimes M_2)\| \leq \|M_1\|\|M_2\|\|M_3\|.
$$
If we view $0$-forms (vectors) $x_1,\dots, x_4$ as functions of an empty (blank) variable,
we can also write 
$$
M_3[M_1[x_1,x_2], M_2[x_3,x_4]]= M_3\circ (M_1\otimes M_2)\circ ((x_1\otimes x_2)\otimes (x_3\otimes x_4)).
$$

A function $f: D\subset E\mapsto F$ is called Fr\`echet differentiable at an interior point $x_0\in D$ iff there exists a
bounded linear operator $f^{\prime}(x_0)=(Df)(x_0): E\mapsto F$ (Fr\`echet derivative of $f$) such that 
$$
f(x_0+h)-f(x_0)=f^{\prime}(x_0)h + o(\|h\|)\ {\rm as}\ h\to 0.
$$
One can also view $f^{\prime}(x_0)=(Df)(x_0)$ as a $1$-linear form from the space 
${\mathcal M}_1(E;F)={\mathcal M}_1^s(E;F).$
The gradient notation $(\nabla f)(x_0)$ is also often used for the derivative 
$f^{\prime}(x_0)$ (especially, if $E={\mathbb R}^d, F={\mathbb R}$). 
Higher order Fr\`echet derivatives are then defined recursively as symmetric multilinear 
forms. Namely, assuming that $f^{(0)}(x)=f(x),$ that 
$f^{(k-1)}(x)=(D^{k-1} f)(x)\in {\mathcal M}_{k-1}^s(E;F)$ has been already defined in a neighborhood 
$U$ of point $x_0$ and 
that the mapping $U\ni x\mapsto f^{(k-1)}(x)\in {\mathcal M}_{k-1}^s(E;F)$ is Fr\`echet 
differentiable at point $x_0,$ we define $f^{(k)}(x_0) = (f^{(k-1)})'(x_0)$ as the Fr\`echet derivative of function $f^{(k-1)}(x)$ at point $x_0.$ 
It could be viewed as a bounded linear operator from $E$ into ${\mathcal M}_{k-1}^s(E;F),$
or, equivalently, as a bounded $k$-linear form $f^{(k)}(x_0)=(D^k f)(x_0)\in {\mathcal M}_k^s(E;F).$
We call $f$ $k$-times Fr\`echet continuously differentiable in an open set $U\subset E$ 
if it is defined and $k$ times Fr\`echet differentiable in $U$
and the function $U\ni x\mapsto f^{(k)}(x)\in {\mathcal M}_k^s(E;F)$ is continuous.
In all these definitions, we assume that the spaces of multilinear forms  
are equipped with operator norms.
  
Finally, we need to introduce H\"older spaces $C^s(U;F)$ of functions from an open subset $U\subset E$ to $F$ 
of smoothness $s>0.$ Let $s=k+\alpha, k\geq 0, \alpha \in (0,1].$ 
For $f:U\mapsto F,$ 
define 
\begin{align*}
&
\|f\|_{C^s(U;F)} :=
\\
& 
\max\biggl(\sup_{x\in U}\|f(x)\|, \max_{0\leq j\leq k-1}\sup_{x, x'\in U, x\neq x'} \frac{\|f^{(j)}(x)-f^{(j)}(x')\|}{\|x-x'\|},
\sup_{x,x'\in U, x\neq x'}\frac{\|f^{(k)}(x)-f^{(k)}(x')\|}{\|x-x'\|^{\alpha}}\biggr),
\end{align*}
where the norms of the derivatives mean operator norms of multilinear forms. 
Let $C^s(U;F)$ denote the space of $k$ times Fr\`echet continuously differentiable 
functions $f$ from $U$ into $F$ with $\|f\|_{C^s(U;F)}<+\infty.$

\begin{remark}
\normalfont
Note that this definition is not quite standard. It is common to define $C^s$-norms in terms of $\sup_{x\in U} \|f^{(j)}(x)\|$ up to the maximal order of the derivatives that exist. It is well known that 
$$
\sup_{x\in U} \|f^{(j)}(x)\|\leq  \sup_{x,x'\in U, x\neq x'}\frac{\|f^{(j-1)}(x)-f^{(j-1)}(x')\|}{\|x-x'\|}
$$
and, if $U$ is convex,  
$$
\sup_{x\in U} \|f^{(j)}(x)\|=\sup_{x,x'\in U, x\neq x'}\frac{\|f^{(j-1)}(x)-f^{(j-1)}(x')\|}{\|x-x'\|}.
$$
In the case of convex $U,$ the definition of $C^s$-norms we use coincides with more standard.  
\end{remark}

In the cases when the set $U$ and/or the space $F$ are known from the context (in particular, when $F={\mathbb R}$), we often write $C^s(U)$ or $C^s$  instead of $C^s(U;F).$

We will also use the following seminorm
\begin{align*}
&
\|f\|_{C^s(U;F)}^{-}:=
\max\biggl(\max_{0\leq j\leq k-1}\sup_{x, x'\in U, x\neq x'} \frac{\|f^{(j)}(x)-f^{(j)}(x')\|}{\|x-x'\|},
\sup_{x,x'\in U, x\neq x'}\frac{\|f^{(k)}(x)-f^{(k)}(x')\|}{\|x-x'\|^{\alpha}}\biggr)
\end{align*}
that could be finite even when function $f$ is not uniformly bounded.

\begin{remark}
\normalfont
For a continuous function $f:{\mathbb  R}\mapsto {\mathbb R},$ one can define its 
value $f(A)$ on a symmetric matrix $A\in {\mathcal S}^d$ (or, more generally, on self-adjoint 
operators in a Hilbert space) via standard continuous functional calculus. Assume that the space ${\mathcal S}^d$ is equipped 
with the operator norm. Using the methods of \cite{Aleksandrov_Peller}, 
it could be proved that 
$$
\|f\|_{C^s({\mathcal S}^d)}\lesssim_s \|f\|_{B^{s}_{\infty,1}({\mathbb R})},
$$
where $\|f\|_{C^s({\mathcal S}^d)}$ is the $C^s$-norm of function ${\mathcal S}^d\ni A\mapsto f(A)\in {\mathcal S}^d$ and $\|f\|_{B^{s}_{\infty,1}({\mathbb R})}$ is a Besov norm of $f:{\mathbb R}\mapsto {\mathbb R}$
(see \cite{Koltchinskii_2017}). This allows one to control H\"older norms of such functions of self-adjoint operators (in particular, of covariance operators).  
\end{remark}

For an arbitrary space $T$ and a Banach space $E,$ $L_{\infty}(T)$ denotes the space of uniformly 
bounded functions $f:T\mapsto E$ equipped with the norm:
$$
\|f\|_{L_{\infty}}:= \sup_{t\in T}\|f(t)\|.
$$ 
For a metric space $(T,d),$ ${\rm Lip}_d(T)$ denotes the space of Lipschitz functions on $T$
with the norm 
$$
\|f\|_{{\rm Lip}_{d}(T)}:= \|f\|_{L_{\infty}}\bigvee \sup_{t\neq t'}\frac{\|f(t)-f(t')\|}{d(t,t')}.
$$
Sometimes we also deal with the space ${\rm Lip}_{d,\rho}(T):= {\rm Lip}_{d^{\rho}}(T)$
of H\"older functions on $T$ with exponent $\rho\in (0,1]$ equipped with the norm 
$\|f\|_{{\rm Lip}_{d,\rho}(T)}:=\|f\|_{{\rm Lip}_{d^{\rho}}(T)}.$ It is easy to check that 
\begin{align}
\label{hoeld_lip}
\|f\|_{{\rm Lip}_{d,\rho}(T)}\leq 2 \|f\|_{{\rm Lip}_{d}(T)},\ f\in {\rm Lip}_d(T).
\end{align}

We will use the following simple proposition about differentiation under the expectation sign. Its proof 
is based on the dominated convergence theorem and is elementary.

\begin{proposition}
\label{diff_under_exp}
Let $E,F$ be Banach spaces and let $T \subset E$ be an open set equipped with the metric $d$ of Banach space $E.$ Let $\xi (t), t \in T$
be a stochastic process with values in $F.$ If $\xi$ is $k$ times continuously differentiable a.s. and, for all 
$j=0,\dots, k-1,$ 
$$
{\mathbb E}\|D^{j} \xi\|_{{\rm Lip}_d(T)}<\infty,
$$
then $T\ni t \mapsto {\mathbb E}\xi(t)$ is $k$ times continuously differentiable 
with 
$$
D^{k}{\mathbb E} \xi(t) = {\mathbb E} (D^k \xi)(t), t\in T.
$$
\end{proposition}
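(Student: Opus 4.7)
The plan is to proceed by induction on $k$, with the substantive work concentrated in the case $k=1$ and the general statement following by a routine bootstrap. The case $k=0$ is vacuous, since no hypothesis is imposed when $k=0$.

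For $k=1$, I would fix $t\in T$ and choose $r>0$ with $B(t;r)\subset T$, which is possible because $T$ is open. For $h\in E$ with $\|h\|<r$ the segment $[t,t+h]$ lies in $T$, and almost surely the fundamental theorem of calculus along this segment gives
\begin{align*}
\xi(t+h)-\xi(t)-(D\xi)(t)[h] = \int_0^1 \bigl((D\xi)(t+sh)-(D\xi)(t)\bigr)[h]\,ds.
\end{align*}
The integrand is dominated uniformly in $s$ by $2\|\xi\|_{{\rm Lip}_d(T)}\|h\|$, which is integrable by hypothesis, and is $o(\|h\|)$ pointwise almost surely by continuous Fr\'echet differentiability of $\xi$. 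Fubini together with the dominated convergence theorem then yield $\mathbb{E}\xi(t+h)-\mathbb{E}\xi(t)=\mathbb{E}(D\xi)(t)[h]+o(\|h\|)$, which gives Fr\'echet differentiability of $\mathbb{E}\xi$ at $t$ with derivative $\mathbb{E}(D\xi)(t)$. Continuity of this derivative in the operator norm on $\mathcal{M}_1(E;F)$ then follows from a second application of dominated convergence: for $t_n\to t$ one has $(D\xi)(t_n)\to (D\xi)(t)$ in operator norm almost surely, with dominating envelope $\|\xi\|_{{\rm Lip}_d(T)}\in L^1$.

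For the inductive step, I apply the $k=1$ case just established to the $\mathcal{M}_1(E;F)$-valued process $\eta:=D\xi$, which is $(k-1)$-times continuously differentiable and satisfies $\mathbb{E}\|D^j\eta\|_{{\rm Lip}_d(T)}=\mathbb{E}\|D^{j+1}\xi\|_{{\rm Lip}_d(T)}<\infty$ for $j=0,\dots,k-2$, so the inductive hypothesis at order $k-1$ applies to $\eta$ and yields $D^{k-1}\mathbb{E}\eta(t)=\mathbb{E}(D^{k-1}\eta)(t)=\mathbb{E}(D^k\xi)(t)$. The $k=1$ case applied directly to $\xi$ gives $D\mathbb{E}\xi(t)=\mathbb{E}\eta(t)$, and composing these shows that $\mathbb{E}\xi$ is $k$-times continuously differentiable with $D^k\mathbb{E}\xi(t)=\mathbb{E}(D^k\xi)(t)$. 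The only point requiring care, which I do not expect to be hard, is the bookkeeping around the dominating envelopes: one must verify that $\|D^j\xi\|_{{\rm Lip}_d(T)}$ genuinely controls the relevant increments of $D^j\xi$ through mean-value bounds along segments inside small balls contained in the open set $T$, and that the continuity built into ``continuously differentiable'' is understood in operator norm on the appropriate space of symmetric multilinear forms, which is precisely the convention adopted in Section~\ref{preliminaries}.
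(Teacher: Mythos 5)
Your proof is correct, and it carries out exactly the dominated-convergence argument that the paper only sketches ("its proof is based on the dominated convergence theorem and is elementary"): the $k=1$ case via the fundamental theorem of calculus along segments inside small balls, domination by $\|\xi\|_{{\rm Lip}_d(T)}$, and then induction by applying the result to $D\xi$. The bookkeeping you flag at the end is fine, since $\|D^j\xi(u)\|\leq\|D^j\xi\|_{{\rm Lip}_d(T)}$ for every $u\in T$ and the Bochner-integral version of dominated convergence gives convergence in the operator norm on ${\mathcal M}_1(E;F)$.
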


\begin{remark}
\normalfont
Note that we often use generic notations for function spaces and norms 
not necessarily providing them with subscripts (that would make the notations too complicated). 
The meaning of these notations should be clear from the context. 
In more ambiguous cases, we 
try to provide necessary clarifications.
For instance, the notation $\|\cdot\|$
is used for the norm of the underlying Banach space $E,$ for the norm of its dual space $E^{\ast}$
and for the norms of spaces of bounded multilinear forms ${\mathcal M}_k (E;F), k\geq 1.$ In the case of normal 
model (our main example), the same notation is used for the standard Euclidean norm in ${\mathbb R}^d,$ 
for the operator norm in the space of symmetric operators ${\mathcal S}^d$ and for the norm of the space 
${\mathbb R}^d\times {\mathcal S}^d,$ as defined in Section \ref{MainResults}. However, the Hilbert--Schmidt 
norm in ${\mathcal S}^d$ is defined by $\|\cdot\|_2$ and the nuclear norm by $\|\cdot\|_1$ (to distinguish them 
from the operator norm used by default).
For a function 
$f:U\subset E\mapsto F,$ where $E,F$ are Banach spaces,
$\|f^{(k)}\|_{L_{\infty}}$ means 
$$
\|f^{(k)}\|_{L_{\infty}}= \sup_{x\in U}\|f^{(k)}(x)\|,
$$
where $\|f^{(k)}(x)\|$ is (by default) the operator norm of the $k$-linear form 
$f^{(k)}(x)\in {\mathcal M}_k^s(E;F)$ (as it was the case in the definition of $C^s$-norms).
Similarly, for $d(x,x')=\|x-x'\|, x, x'\in E,$
$$
\|f^{(k)}\|_{{\rm Lip}_d(U)} =  \|f^{(k)}\|_{L_{\infty}}\bigvee \sup_{x,x'\in U, x\neq x'}
\frac{\|f^{(k)}(x)-f^{(k)}(x')\|}{\|x-x'\|}.
$$
\end{remark}

\section{Bias reduction and random homotopies.}
\label{random_homotopies}

Assume that $\Theta$ is a subset of a Banach space $E.$ 
A random homotopy is, roughly, 
a stochastic process $H(\theta;t), \theta \in \Theta, t\in [0,1]$ that continuously transforms the parameter
$\theta$ (for $t=0$) into a random variable with the same distribution as its estimator $\thetah $ (for $t=1$).

\begin{definition}
Given 
a probability space $(\Omega, {\mathcal F}, {\mathbb P}),$ let $H: \Theta\times [0,1]\times\Omega \mapsto \Theta$ be an a.s. continuous stochastic process  
such that, for all $\theta \in \Theta,$ 
\begin{gather*}
H(\theta;0) \coloneqq \theta, \  H(\theta;1) \overset{d}{=} \thetah,\ {\it where}\ \thetah \sim P(\theta;\cdot).
\end{gather*}
$H$ will be called a random homotopy between the parameter $\theta$ and its estimator $\thetah.$
\end{definition}
In what follows, it will be usually assumed that the process $H$ satisfies further smoothness assumptions
(for instance, it is $C^k$ for some $k\geq 1$). We provide below several examples of random homotopies.
 
 \begin{example} {\bf Random shift model.} \normalfont
 Let $X=\theta + \xi$ be an observation of an unknown 
 parameter $\theta \in \Theta= E$ in random noise $\xi$ with ${\mathbb E}\xi =0$ and with known distribution. 
 We will call this model a {\it random shift model.} In particular, if $\xi \sim N(0;\Sigma)$
 is a Gaussian random vector with mean zero and known covariance operator $\Sigma,$ it will be called 
 a Gaussian shift model. Let $\thetah = \thetah (X):=X$ (in the case of Gaussian shift model in 
 ${\mathbb R}^d$ with $\Sigma=\sigma^2 I_d,$ it is the maximum 
 likelihood estimator (MLE)). A random homotopy could be defined simply as 
 $H(\theta;t)= \theta + t\xi, \theta \in E, t\in [0,1].$
 More generally, let $\Theta\subset E$ be a subset of Banach space $E$ and let 
 $$
 \thetah = \thetah(X) = P_{\Theta}(X):= {\rm argmin}_{t\in \Theta} \|X-t\|
 $$
be the metric projection of $X$ onto $\Theta$ (assume, for simplicity, that the minimal point 
exists and is unique). This is again the MLE in the case of Gaussian shift model with spherically symmetric noise
in the space ${\mathbb R}^d$ (equipped with the standard Euclidean norm). 
We can now define a random homotopy as 
 $$
 H(\theta;t)= P_{\Theta}(\theta + t\xi), \theta \in \Theta, t\in [0,1].
 $$ 
\end{example}

\begin{example}{\bf Unknown covariance.} 
\normalfont
Let $X_1,\dots, X_n$ be i.i.d. observations of a random vector 
$X$ in $E={\mathbb R}^d$ with mean zero and unknown covariance $\Sigma={\mathbb E}(X\otimes X).$
Moreover, assume that $X=\Sigma^{1/2}Z,$ where $Z$ is a random vector with mean zero, covariance 
matrix $I_d$ and known distribution. In particular, $Z$ could be standard normal implying that $X\sim N(0,\Sigma).$
Let $\hat \Sigma:= n^{-1}\sum_{j=1}^n X_j\otimes X_j$ be the 
sample covariance based on observations $X_1,\dots, X_n.$ In this case, we can define a random 
homotopy as 
$$
H(\Sigma;t):= \Sigma^{1/2}\Biggl((1-t)I_d+ t n^{-1}\sum_{j=1}^n Z_j\otimes Z_j\Biggr) \Sigma^{1/2}, \Sigma\in {\mathcal C}_+^d, t\in [0,1],
$$
where $Z_1,\dots, Z_n$ are i.i.d. copies of $Z.$
\end{example}

\begin{example}\label{ex_mean_covariance}{\bf Unknown mean and covariance.}
\normalfont
Let $X_1,\dots, X_n$ be i.i.d. observations of a random vector 
$X$ in $E={\mathbb R}^d$ with unknown mean $\mu={\mathbb E}X$ and unknown covariance $\Sigma={\mathbb E}(X-{\mathbb E} X)\otimes (X-{\mathbb E}X).$ Assume that $X=\mu +\Sigma^{1/2}Z,$
where $Z$ is a random vector with mean zero, covariance 
matrix $I_d$ and known distribution. In particular, $Z$ could be standard normal implying that $X\sim N(\mu,\Sigma).$ Let 
$$
\bar X:=n^{-1}\sum_{j=1}^n X_j\ {\rm and}\ \hat \Sigma:= \frac{1}{n-1}\sum_{j=1}^n (X_j-\bar X)
\otimes (X_j-\bar X)
$$ 
be the sample mean and 
the sample covariance based on observations $X_1,\dots, X_n.$ 
We can define a random 
homotopy as 
\begin{align*}
&
H((\mu,\Sigma);t):=
\Bigl(\mu + t\Sigma^{1/2}\bar Z,  
\Sigma^{1/2}\Bigl((1-t)I_d+ t \hat \Sigma_Z\Bigr)\Sigma^{1/2}\Bigr),
\\
&
\mu\in {\mathbb R}^d, \Sigma\in {\mathcal C}_+^d, 
t\in [0,1],
\end{align*}
where $Z_1,\dots, Z_n$ are i.i.d. copies of $Z,$ 
$$
\bar Z:=n^{-1}(Z_1+\dots+Z_n), 
\hat \Sigma_Z:=\frac{1}{n-1}\sum_{j=1}^n (Z_j-\bar Z)\otimes (Z_j-\bar Z).
$$
\end{example}

\begin{example}
{\bf More general couplings.}
\normalfont
A more general class of examples could be described as follows. 
Let $(S,d)$ be a metric space with measure $P$ on its Borel $\sigma$-algebra
and let $\xi\sim P.$ Consider a measurable space $(T,{\mathcal B}_T)$ 
and let $g_{\theta}:S\mapsto T, \theta\in \Theta$ be a family of measurable 
functions. Suppose that $P_{\theta}:= P\circ g_{\theta}^{-1}, \theta\in \Theta$
(implying that $g_{\theta}(\xi)\sim P_{\theta}$). Finally, let $\gamma: \Theta \times \Theta \times [0,1]\mapsto 
\Theta$ be a fixed smooth function. Given $\theta, \theta'\in \Theta,$ 
$\gamma(\theta, \theta'; t), t\in [0,1]$ provides a smooth path in $\Theta$ between the points 
$\theta$ and $\theta'.$
For instance, if $\Theta$ is a convex subset of $E,$ one can take 
$$
\gamma (\theta, \theta', t):= (1-t) \theta + t\theta', \theta, \theta'\in \Theta, t\in [0,1].
$$
Given an estimator $\thetah (X)$ of parameter $\theta\in \Theta$ based on an observation 
$X\sim P_{\theta},$ 
one can define a random homotopy as follows:
$$
H(\theta;t):= \gamma(\theta, \thetah(g_{\theta}(\xi)), t), \theta\in \Theta, t\in [0,1].
$$
Smoothness of $H(\theta;t)$ would follow from proper smoothness of the mapping 
$\Theta\ni\theta\mapsto \thetah(g_{\theta}(x))\in \Theta, x\in S.$

In the case when $S$ is a compact Riemannian manifold and $P_{\theta}, \theta \in \Theta$
is a statistical model, where measures $P_{\theta}$ are absolutely continuous with respect to the normalized  Riemannian 
volume $P$ with smooth densities $p_{\theta}$ bounded away from zero, one can use well known 
\it Moser's 
coupling \rm (see \cite{Villani}) to construct smooth mappings $g_{\theta}:S\mapsto S$ such that $P_{\theta}=P\circ g_{\theta}^{-1}.$ To this end, let $p$ denote 
the density of measure $P$ (in fact, $p=1$) and let $u_{\theta}$ be a solution 
of Poisson equation 
$
\Delta u = p-p_{\theta}.
$
Define a vector field
$$
v_{\theta}(t;x):= \frac{\nabla u_{\theta}(x)}{(1-t)p(x)+t p_{\theta}(x)}, x\in S, t\in [0,1].
$$
and let $T^{t}(x)=T^t_{\theta}(x)$ be the flow on the manifold $S$ generated by $v_{\theta}.$
Then one can define $g_{\theta}(x):=T^1_{\theta}(x), x\in S.$
In the case when $(\theta,x) \mapsto p_{\theta}(x)$ and $x\mapsto \thetah(x)$
are smooth, this allows us to construct a smooth random homotopy between $\theta$ and $\thetah.$

There exists also a version of Moser's coupling (and the corresponding smooth random homotopies) in the case of non-compact Riemannian manifold
$S$ with Riemannian volume $\mu$ and with reference measure $P(dx)=e^{-V(x)}\mu(dx),$ where $V$ is a smooth function of $S.$
In this case, $p=1$ (as before) and $u_{\theta}$ is a solution of the equation 
$$
\Delta u - \langle \nabla V,\nabla u\rangle = p-p_{\theta}.
$$
\end{example}

\vskip 2mm

Let $\Theta\subset E$ be an open subset and let $H(\theta;t), \theta\in \Theta, t\in [0,1]$ be a random homotopy between $\theta$ and $\thetah$
and suppose that stochastic process $H$ is a.s. $k+1$ times continuously differentiable in $\Theta\times [0,1].$ 
Denote $\dot H (\theta;t):= \frac{d}{dt} H(\theta;t).$ 

It will be convenient to use the following norms for functions $V:\Theta \times [0,1]\mapsto F$ with values in a Banach space $F$
such that $V(\cdot;t)\in C^s(\Theta), t\in [0,1]$ for some $s>0:$ 
$$
\|V\|_{C^{s,0}(\Theta\times [0,1])}:= \sup_{t\in [0,1]}\|V(\cdot;t)\|_{C^{s}(\Theta)}.
$$
Denote 
$$
C^{s,0}(\Theta\times [0,1]):=\Bigl\{V:\Theta \times [0,1]\mapsto F: \|V\|_{C^{s,0}(\Theta\times [0,1])}<+\infty\Bigr\}.
$$
Similarly, define 
$$
\|V\|_{C^{s,0}(\Theta\times [0,1])}^{-}:= \sup_{t\in [0,1]}\|V(\cdot;t)\|_{C^{s}(\Theta)}^{-}.
$$

The following result will play an important role in the paper. 

\begin{theorem}
\label{th_H_bound}
Suppose that $\Theta\subset E$ is an open set, 
$f\in C^s(\Theta)$ for some $s=k+1+\rho, k\geq 1, \rho\in (0,1]$
and  
\begin{align}
\label{moment_of_H_deriv_AAA}
{\mathbb E}\Bigl(\|H\|_{C^{s-1,0}(\Theta\times [0,1])}^{-}\vee 1\Bigr)^{s-1}
\|\dot H\|_{C^{s-1,0}(\Theta\times [0,1])}<\infty.
\end{align} 
Then, for some constant $D_s\geq 1$ and for all $1\leq j\leq k$  
\begin{align}
\label{bound_on_B^k_deriv_Lip_AAA}
\|B^{j} f\|_{C^{1+\rho}(\Theta)} \leq D_s \|f\|_{C^{s}(\Theta)}
\biggl({\mathbb E}\Bigl(\|H\|_{C^{s-1,0}(\Theta\times [0,1])}^{-}\vee 1\Bigr)^{s-1}
\|\dot H\|_{C^{s-1,0}(\Theta\times [0,1])}\biggr)^j.
\end{align}
\end{theorem}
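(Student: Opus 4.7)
The plan is to derive an integral representation of $\mathcal{B}f$ through the random homotopy $H$, prove a single-step Hölder bound showing that $\mathcal{B}$ loses exactly one order of smoothness while incurring the expected multiplicative factor, and then iterate this $k$ times.

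The starting point is the identity that, since $H(\theta;0)=\theta$ and $H(\theta;1)\stackrel{d}{=}\hat\theta$ under $\mathbb{P}_\theta$, for any $g\in C^{1}(\Theta)$ the fundamental theorem of calculus applied to $t\mapsto g(H(\theta;t))$ gives
$$
(\mathcal{B}g)(\theta)=\mathbb{E}\bigl[g(H(\theta;1))-g(H(\theta;0))\bigr]=\mathbb{E}\int_{0}^{1}g'(H(\theta;t))[\dot H(\theta;t)]\,dt.
$$
Differentiation in $\theta$ can be moved inside $\mathbb{E}$ and $\int_0^1$ via Proposition~\ref{diff_under_exp}, using the moment hypothesis~(\ref{moment_of_H_deriv_AAA}) to dominate the Lipschitz norms of all intermediate derivatives of the integrand.

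Next I would establish the following single-step estimate: for every integer $m$ with $2\le m\le k+1$ and every $g\in C^{m+\rho}(\Theta)$,
$$
\|\mathcal{B}g\|_{C^{(m-1)+\rho}(\Theta)}\le C_s\,\|g\|_{C^{m+\rho}(\Theta)}\,\mathbb{E}\Bigl[\bigl(\|H\|_{C^{s-1,0}(\Theta\times[0,1])}^{-}\vee 1\bigr)^{s-1}\|\dot H\|_{C^{s-1,0}(\Theta\times[0,1])}\Bigr].
$$
Pointwise in $\omega$ and $t\in[0,1]$, the proof combines (a) a Faà di Bruno–type composition estimate in Hölder spaces,
$$
\|g'\circ H(\cdot;t)\|_{C^{(m-1)+\rho}(\Theta)}\lesssim_{s}\|g'\|_{C^{(m-1)+\rho}(\Theta)}\bigl(\|H(\cdot;t)\|_{C^{(m-1)+\rho}(\Theta)}^{-}\vee 1\bigr)^{(m-1)+\rho},
$$
and (b) a product/contraction rule for pairing a multilinear-form-valued $C^{\sigma}$ field with a vector-valued $C^{\sigma}$ field,
$$
\|g'(H(\cdot;t))[\dot H(\cdot;t)]\|_{C^{(m-1)+\rho}(\Theta)}\lesssim_{s}\|g'\circ H(\cdot;t)\|_{C^{(m-1)+\rho}(\Theta)}\,\|\dot H(\cdot;t)\|_{C^{(m-1)+\rho}(\Theta)}.
$$
Combining with $\|g'\|_{C^{(m-1)+\rho}}\le\|g\|_{C^{m+\rho}}$, taking expectation and integrating in $t$, and finally using $(m-1)+\rho\le s-1$ to absorb the $H$- and $\dot H$-seminorms into their $C^{s-1,0}$ counterparts yields the displayed bound. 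These composition and product estimates are precisely what Section~\ref{sec:Holder_norms} is intended to supply.

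Iterating this single-step bound $k$ times, starting from $f\in C^{s}(\Theta)$ with $s=k+1+\rho$, gives $\mathcal{B}^{k}f\in C^{1+\rho}(\Theta)$ and the case $j=k$ of~(\ref{bound_on_B^k_deriv_Lip_AAA}). For $j<k$, iterating only $j$ times lands in $C^{(k-j+1)+\rho}(\Theta)$, and its $C^{1+\rho}$-norm is controlled by the $C^{(k-j+1)+\rho}$-norm on convex $\Theta$ through the standard split of the Hölder quotient of $(\mathcal{B}^{j}f)'$ into the regimes $\|\theta-\theta'\|\le 1$ (bounded by the Lipschitz norm of $(\mathcal{B}^{j}f)'$) and $\|\theta-\theta'\|>1$ (bounded by $2\sup\|(\mathcal{B}^{j}f)'\|$, itself controlled by the Lipschitz norm of $\mathcal{B}^{j}f$). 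Setting $D_s:=C_s^{\,k}$ times the constant absorbing these domination steps produces~(\ref{bound_on_B^k_deriv_Lip_AAA}) for every $j\in\{1,\dots,k\}$.

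The main technical obstacle is the Faà di Bruno composition estimate in Hölder spaces of multilinear-form-valued functions on an open subset of a Banach space: one must track operator norms through the combinatorial expansion of $D^{m-1}(g'\circ H)$ (with terms of the form $g^{(j+1)}(H)[H^{(i_1)},\dots,H^{(i_j)}]$) and, separately, handle the top-order fractional Hölder exponent $\rho$ in the remainder. Once these Hölder-calculus rules are established, the iteration and the monotonicity argument that reduces every intermediate factor to the uniform $\mathbb{E}[(\|H\|_{C^{s-1,0}}^{-}\vee 1)^{s-1}\|\dot H\|_{C^{s-1,0}}]$ are routine.
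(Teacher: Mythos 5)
Your proposal is correct and follows essentially the same route as the paper: the paper's Proposition~\ref{bound_on_norm_B} is exactly your single-step operator bound $\|{\mathcal B}\|_{C^{s}\mapsto C^{s-1}}\lesssim_s {\mathbb E}(\|H\|_{C^{s-1,0}}^{-}\vee 1)^{s-1}\|\dot H\|_{C^{s-1,0}}$, proved via the integral representation ${\mathcal B}g(\theta)={\mathbb E}\int_0^1 g'(H(\theta;t))[\dot H(\theta;t)]\,dt$ and a Fa\`a di Bruno expansion of $\frac{d}{dt}D^k f(H(\theta;t))$, and Corollary~\ref{cor:f_k_Hoeld} carries out the iteration. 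The only cosmetic difference is that the paper expands the full integrand in one stroke rather than splitting into your separate composition step (a) and contraction step (b); also note that the final embedding $C^{s-j}\hookrightarrow C^{1+\rho}$ requires no convexity of $\Theta$, since the paper's $C^s$-norm is defined via difference quotients rather than $\sup\|f^{(j)}\|$ (cf.~\eqref{hoeld_lip}).
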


As a corollary, we have a simple way to control the smoothness of function $f_k:$

\begin{corollary}
\label{cor_bd_f_k_hold}
Suppose the conditions of Theorem \ref{th_H_bound} hold 
and 
\begin{align}
\label{cond_leq_0.5}
D_s {\mathbb E}\Bigl(\|H\|_{C^{s-1,0}(\Theta\times [0,1])}^{-}\vee 1\Bigr)^{s-1}
\|\dot H\|_{C^{s-1,0}(\Theta\times [0,1])}\leq 1/2,
\end{align} 
then 
\begin{align*}
\|f_k\|_{C^{1+\rho}(\Theta)} \leq 2 \|f\|_{C^s(\Theta)}.
\end{align*}
\end{corollary}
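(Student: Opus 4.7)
The plan is to treat this as a direct bookkeeping consequence of Theorem \ref{th_H_bound}, with only an elementary geometric-series argument to finish. First I would unpack the definition
\[
f_k \;=\; \sum_{j=0}^{k}(-1)^{j}\,\mathcal{B}^{j}f
\]
and apply the triangle inequality in $C^{1+\rho}(\Theta)$ to get
\[
\|f_k\|_{C^{1+\rho}(\Theta)} \;\leq\; \|f\|_{C^{1+\rho}(\Theta)} \;+\; \sum_{j=1}^{k}\|\mathcal{B}^{j}f\|_{C^{1+\rho}(\Theta)}.
\]
For the $j=0$ term, since $s = k+1+\rho \geq 1+\rho$, the norms are nested and $\|f\|_{C^{1+\rho}(\Theta)} \leq \|f\|_{C^{s}(\Theta)}$.

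Next I would introduce the shorthand
\[
\varepsilon \;:=\; \mathbb{E}\Bigl(\|H\|_{C^{s-1,0}(\Theta\times[0,1])}^{-}\vee 1\Bigr)^{s-1}\|\dot H\|_{C^{s-1,0}(\Theta\times[0,1])},
\]
so that Theorem \ref{th_H_bound} directly yields, for every $1\leq j\leq k$,
\[
\|\mathcal{B}^{j}f\|_{C^{1+\rho}(\Theta)} \;\leq\; D_{s}\,\|f\|_{C^{s}(\Theta)}\,\varepsilon^{j}.
\]

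The only step that needs attention is converting the smallness hypothesis into a summable geometric bound. Since $D_{s}\geq 1$ and $D_{s}\varepsilon\leq 1/2$ by \eqref{cond_leq_0.5}, we have $\varepsilon\leq 1/(2D_{s})\leq 1/2$, and therefore
\[
D_{s}\varepsilon^{j} \;=\; (D_{s}\varepsilon)\,\varepsilon^{\,j-1} \;\leq\; \tfrac{1}{2}\cdot\bigl(\tfrac{1}{2}\bigr)^{j-1} \;=\; \bigl(\tfrac{1}{2}\bigr)^{j}, \qquad 1\leq j\leq k.
\]
Combining this with the preceding display gives
\[
\|f_k\|_{C^{1+\rho}(\Theta)} \;\leq\; \|f\|_{C^{s}(\Theta)}\biggl(1+\sum_{j=1}^{k}\bigl(\tfrac{1}{2}\bigr)^{j}\biggr) \;\leq\; 2\,\|f\|_{C^{s}(\Theta)},
\]
which is the claimed bound.

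There is no genuine obstacle here: the corollary is a pure book-keeping consequence of Theorem \ref{th_H_bound} together with the fact that the factor $D_{s}$ appearing in front of $\varepsilon^{j}$ in the $C^{1+\rho}$-bound is the \emph{same} constant as the one absorbed into the smallness condition \eqref{cond_leq_0.5}, so the geometric series telescopes cleanly. The only minor point worth checking while writing the proof is the monotonicity $\|f\|_{C^{1+\rho}}\leq\|f\|_{C^{s}}$, which is immediate from the definition of the $C^{s}$-norm given in Section \ref{preliminaries}.
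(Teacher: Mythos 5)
Your proof is correct and follows the same route as the paper's: triangle inequality applied to the defining sum for $f_k$, then bound $\|\mathcal{B}^j f\|_{C^{1+\rho}}$ via Theorem \ref{th_H_bound} and sum a geometric series dominated by $\sum_j 2^{-j}\leq 2$. The paper compresses the intermediate step into a single display, whereas you spell out explicitly why $D_s\varepsilon^j\leq 2^{-j}$ (using $D_s\geq 1$ to get $\varepsilon\leq 1/2$) and why the $j=0$ term is handled by $\|f\|_{C^{1+\rho}}\leq\|f\|_{C^{s}}$; both points are implicitly used in the paper's one-line chain.
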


\begin{proof}
In view of bound \eqref{bound_on_B^k_deriv_Lip_AAA} and condition \eqref{cond_leq_0.5},
\begin{align*}
&
\|f_k\|_{C^{1+\rho}(\Theta)} = \biggl\|\sum_{j=0}^k (-1)^j {\mathcal B}^j f\biggr\|_{C^{1+\rho}(\Theta)}
\leq \sum_{j=0}^k \|{\mathcal B}^j f\|_{C^{1+\rho}(\Theta)}
\\
&
\leq \sum_{j=0}^k 2^{-j} \|f\|_{C^s(\Theta)}
\leq 2\|f\|_{C^s(\Theta)}.
\end{align*}
\qed
\end{proof}

Another consequence of \eqref{bound_on_B^k_deriv_Lip_AAA} is the following bound 
on the bias of estimator $f_k(\thetah).$

\begin{theorem}
Suppose the conditions of Theorem \ref{th_H_bound} hold. 
Then, for all $\theta\in \Theta,$ 
\begin{align}
\label{bias_XXX}
\nonumber
|{\mathbb E}_{\theta} f_k(\thetah)-f(\theta)|
\lesssim_s 
\|f\|_{C^{s}(\Theta)}
&
\biggl({\mathbb E}\Bigl(\|H\|_{C^{s-1,0}(\Theta\times [0,1])}^{-}\vee 1\Bigr)^{s-1}
\|\dot H\|_{C^{s-1,0}(\Theta\times [0,1])}\biggr)^k 
\\
&
\times \biggl(\biggl\|{\mathbb E}\int_0^1 \dot H (\theta;t) dt\biggr\|+ {\mathbb E}\|\dot H\|_{L_{\infty}(\Theta\times [0,1])}^{1+\rho}\biggr).
\end{align}
\end{theorem}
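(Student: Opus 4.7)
My plan is to start from the exact bias formula derived earlier in the paper,
\begin{align*}
{\mathbb E}_{\theta}f_k(\thetah)-f(\theta) = (-1)^k({\mathcal B}^{k+1}f)(\theta),
\end{align*}
so that the task reduces to bounding $|({\mathcal B}^{k+1}f)(\theta)|$ pointwise. The key idea is to split the $k+1$ iterations of ${\mathcal B}$ into two pieces: absorb the first $k$ iterations into Theorem \ref{th_H_bound}, which controls $\|{\mathcal B}^{k}f\|_{C^{1+\rho}(\Theta)}$, and treat the remaining single application of ${\mathcal B}$ by hand using the random homotopy. Doing the last step by hand is precisely what allows the small centered quantity $\|{\mathbb E}\int_0^1 \dot H(\theta;t)\,dt\|$ to appear in place of the cruder moment $M:={\mathbb E}(\|H\|_{C^{s-1,0}(\Theta\times[0,1])}^- \vee 1)^{s-1}\|\dot H\|_{C^{s-1,0}(\Theta\times[0,1])}$; simply quoting Theorem \ref{th_H_bound} with exponent $j=k+1$ would give $M^{k+1}$ and lose this gain.

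Set $g := {\mathcal B}^{k}f$. Theorem \ref{th_H_bound} yields $\|g\|_{C^{1+\rho}(\Theta)}\leq D_s\|f\|_{C^s(\Theta)}M^k$. Since $\thetah\overset{d}{=}H(\theta;1)$ and $H(\theta;0)=\theta$, the chain rule applied pathwise together with Fubini gives
\begin{align*}
({\mathcal B}g)(\theta) = {\mathbb E}\bigl[g(H(\theta;1))-g(H(\theta;0))\bigr] = {\mathbb E}\int_0^1 g'(H(\theta;t))[\dot H(\theta;t)]\,dt,
\end{align*}
where the smoothness of $g\in C^{1+\rho}$ and of $H$ (already used in Theorem \ref{th_H_bound}) together with the moment condition \eqref{moment_of_H_deriv_AAA} provide the integrability needed for the interchange.

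Next I anchor the integrand at the deterministic base point $\theta=H(\theta;0)$:
\begin{align*}
g'(H(\theta;t))[\dot H(\theta;t)] = g'(\theta)[\dot H(\theta;t)] + \bigl(g'(H(\theta;t))-g'(\theta)\bigr)[\dot H(\theta;t)].
\end{align*}
Integrating in $t$ and taking expectation, the first summand collapses by linearity of $g'(\theta)$ and Fubini to $g'(\theta)\bigl[{\mathbb E}\int_0^1 \dot H(\theta;t)\,dt\bigr]$, which is bounded by $\|g\|_{C^{1+\rho}(\Theta)}\cdot \|{\mathbb E}\int_0^1 \dot H(\theta;t)\,dt\|$. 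For the second summand I use $\rho$-H\"older continuity of $g'$ combined with the trivial pathwise estimate $\|H(\theta;t)-\theta\| = \|\int_0^t \dot H(\theta;s)\,ds\|\leq \|\dot H\|_{L_\infty(\Theta\times[0,1])}$ to get
\begin{align*}
\bigl\|g'(H(\theta;t))-g'(\theta)\bigr\|\cdot \|\dot H(\theta;t)\| \leq \|g\|_{C^{1+\rho}(\Theta)}\,\|\dot H\|_{L_\infty(\Theta\times[0,1])}^{1+\rho}.
\end{align*}
Taking expectations, combining the two estimates, and substituting the bound on $\|g\|_{C^{1+\rho}(\Theta)}$ yields exactly \eqref{bias_XXX}.

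The only genuinely delicate step is the anchoring decomposition in the last paragraph: replacing $g'(H(\theta;t))$ by $g'(\theta)$ at the leading order, rather than bounding the integrand crudely by $\|g\|_{C^1}\|\dot H\|_{L_\infty}$, is what preserves the mean-of-$\dot H$ factor in the final bound. This is essential downstream, since for the natural random homotopies in the normal model (Example \ref{ex_mean_covariance}) the quantity ${\mathbb E}\int_0^1 \dot H(\theta;t)\,dt$ carries the parametric $a/\sqrt{n}$ rate that appears in Theorem \ref{main_theorem_1}, while the $L_\infty$-moment of $\dot H$ only yields the slower $\sqrt{d/n}$-type rate. The remainder of the argument is a direct combination of the chain rule, H\"older regularity, and the already established Theorem \ref{th_H_bound}.
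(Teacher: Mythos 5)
Your proof is correct and mirrors the paper's own argument precisely: both reduce to $({\mathcal B}^{k+1}f)(\theta)$ via \eqref{bias_formula}, represent $({\mathcal B}({\mathcal B}^k f))(\theta)$ as ${\mathbb E}\int_0^1 ({\mathcal B}^k f)'(H(\theta;t))[\dot H(\theta;t)]\,dt$, anchor the derivative at $\theta$ to isolate $\|{\mathbb E}\int_0^1\dot H(\theta;t)\,dt\|$, handle the remainder by $\rho$-H\"older continuity of $({\mathcal B}^k f)'$, and then invoke bound \eqref{bound_on_B^k_deriv_Lip_AAA} with $j=k$. Your explanatory remark about why the anchoring is essential (to keep the centered mean-of-$\dot H$ term rather than degrade to an $L_\infty$-moment) is an accurate reading of the role this lemma plays downstream.
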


\begin{proof}
Note that 
\begin{align*}
&
({\mathcal B}^{k+1} f)(\theta)
={\mathbb E}_{\theta} ({\mathcal B}^k f)(\thetah)-({\mathcal B}^k f)(\theta)
\\
&
= {\mathbb E} \Bigl(({\mathcal B}^k f)(H(\theta;1))-({\mathcal B}^k f)(H(\theta;0))\Bigr)
={\mathbb E}\int_0^1 ({\mathcal B}^k f)'(H(\theta;t))[\dot H(\theta;t)]dt
\\
&
= ({\mathcal B}^k f)'(\theta)\biggl[{\mathbb E}\int_0^1 \dot H (\theta;t) dt\biggr]
+ {\mathbb E}\int_0^1 \Bigl(({\mathcal B}^k f)'(H(\theta;t))-({\mathcal B}^k f)'(\theta)\Bigr)[\dot H(\theta;t)]dt.
\end{align*}
Thus, using \eqref{bias_formula} and the bound 
$\|H(\theta;t)-\theta\|\leq \|\dot H\|_{L_{\infty}(\Theta\times [0,1])}, \theta\in \Theta, t\in [0,1],$
we get
\begin{align*}
&
\nonumber
|{\mathbb E}_{\theta} f_k(\thetah)-f(\theta)|=|({\mathcal B}^{k+1} f)(\theta)|
\\
&
\leq 
\|{\mathcal B}^k f\|_{C^{1+\rho}(\Theta)}
\biggl(\biggl\|{\mathbb E}\int_0^1 \dot H (\theta;t) dt\biggr\|+ {\mathbb E}\|\dot H\|_{L_{\infty}(\Theta\times [0,1])}^{1+\rho}\biggr)
\end{align*}
It remains to use bound \eqref{bound_on_B^k_deriv_Lip_AAA}
for $j=k$ to complete the proof.

\qed
\end{proof}

\begin{remark}
\normalfont
For $s=1+\rho, \rho\in (0,1]$ and $k=0,$ bound \eqref{bias_XXX} takes the following 
form:
\begin{align}
\label{bias_XXX_AAA'''}
|{\mathbb E}_{\theta} f(\thetah)-f(\theta)|
\lesssim_s 
\|f\|_{C^{s}(\Theta)}
\biggl(\biggl\|{\mathbb E}\int_0^1 \dot H (\theta;t) dt\biggr\|+ {\mathbb E}\|\dot H\|_{L_{\infty}(\Theta\times [0,1])}^{1+\rho}\biggr).
\end{align}
\end{remark}

If $\Theta\subset E$ is convex and $G(\theta), \theta \in \Theta$ is a stochastic process 
with values in $\Theta$ such that $G(\theta) \overset{d}{=}\hat \theta(X), X\sim P_{\theta},$
then one can define 
$$H(\theta;t):= \theta + tE(\theta), \theta \in \Theta, t\in [0,1],$$
where $E(\theta):= G(\theta)-\theta, \theta\in \Theta.$ In this case,
$$\dot H(\theta;t)= E(\theta), (DH)(\theta;t) = I+t(DE)(\theta),$$
the norm $\|\cdot\|_{C^{s-1}(\Theta)}^{-}$ of function $\theta$ is equal to $1,$
and bound \eqref{bias_XXX} becomes 
\begin{align}
\label{bias_UVW}
\nonumber
|{\mathbb E}_{\theta} f_k(\thetah)-f(\theta)|
\lesssim_s 
\|f\|_{C^{s}(\Theta)}
&
\biggl({\mathbb E}\Bigl(\|E\|_{C^{s-1}(\Theta)}\vee 1\Bigr)^{s-1}
\|E\|_{C^{s-1}(\Theta)}\biggr)^k 
\\
&
\times \biggl(\|{\mathbb E}E(\theta)\|+ {\mathbb E}\|E\|_{L_{\infty}(\Theta)}^{1+\rho}\biggr).
\end{align}
If, in addition $\thetah$ is an unbiased estimator of $\theta,$ we have 
${\mathbb E}E(\theta)={\mathbb E}_{\theta}\thetah -\theta=0,$
implying 
\begin{align}
\label{bias_ZZZ}
|{\mathbb E}_{\theta} f_k(\thetah)-f(\theta)|
\lesssim_s 
\|f\|_{C^{s}(\Theta)}
&
\biggl({\mathbb E}\Bigl(\|E\|_{C^{s-1}(\Theta)}\vee 1\Bigr)^{s-1}
\|E\|_{C^{s-1}(\Theta)}\biggr)^k {\mathbb E}\|E\|_{L_{\infty}(\Theta)}^{1+\rho}.
\end{align}

In view of the importance of this problem, we develop in the following sections two different approaches to bounding 
H\"older norms of ${\mathcal B}^k f$ leading to slightly different bounds.
One of these approaches is more involved and will 
be discussed in detail in Section \ref{sec:RepresentB^k}. It is based on representing 
${\mathcal B}^k f$ in terms of a Markov chain with transition kernel $P(\theta, \cdot), \theta \in \Theta$
(bootstrap chain) and on further representation of this Markov chain as a superposition of 
i.i.d. copies of random homotopy $H.$
This leads to explicit formulas for functions ${\mathcal B}^k f$ for sufficiently smooth function $f$
that could be of independent interest.  
Another approach (discussed in Section \ref{sec:Holder_norms}) does not rely 
on explicit representation formulas for ${\mathcal B}^k f,$ but rather on some bounds 
for the norm $\|{\mathcal B}\|_{C^{s}\mapsto C^{s-1}}$ of operator ${\mathcal B}.$

\section{Bootstrap chains: representations as superpositions of random homotopies.}
\label{sec:bootstrap_chain}


We will now develop representations of bootstrap chains $\{\thetah^{(k)}:k\geq 0\},$ 
introduced in Section \ref{MainResults}, in terms of superpositions  
of independent random homotopies. This approach relies on some ideas 
that originated in dynamical systems literature (see, e.g., \cite{Jost} and references therein).
It will be assumed throughout the section that $\Theta\subset E$ is an open subset.

Let $H_{1},H_{2}, \dots $ be i.i.d. copies of the process $H(\theta;t), \theta\in \Theta, t\in [0,1].$ 
Introduce the following sequence of functions: 
\begin{align*}
G_{1}(\theta;t_{1}) &\coloneqq H_{1}(\theta;t_{1}), \\
G_{2}(\theta;t_{1},t_{2}) &\coloneqq H_{2}(G_{1}(\theta;t_{1});t_{2})= H_{2}(H_{1}(\theta;t_{1});t_{2}),\\
G_{k}(\theta;t_{1},t_{2},\dots,t_{k}) &\coloneqq H_{k}(G_{k-1}(\theta;t_{1},\dots,t_{k-1});t_{k})
\\&=
 H_{k}(H_{k-1}(\dots H_{2}( H_{1}(\theta;t_{1});t_{2}), \dots ;t_{k-1});t_{k}),
\end{align*}
where $t_{1},\dots,t_{k} \in [0,1]$. 
In other words, we can define the superposition of stochastic processes $F_1: \Theta\times [0,1]^{l}\times \Omega \mapsto \Theta$ and $F_2: \Theta\times [0,1]^{m}\times \Omega \mapsto \Theta$ as 
the process $F_1\bullet F_2: \Theta\times[0,1]^{l+m} \times \Omega \times \Theta$ 
such that 
$$
(F_1\bullet F_2)(\theta; t_{1},\dots, t_{l}, t_{l+1},\dots, t_{l+m})\coloneqq 
F_1(F_2(\theta;  t_{l+1},\dots, t_{l+m}); t_{1},\dots, t_{l}).
$$ 
With this notation, $G_k = H_k\bullet H_{k-1}\bullet \dots \bullet H_1, k\geq 1.$
It will be also convenient to set $G_0(\theta):= \theta, \theta\in \Theta.$
Note that $\thetah^{(0)}\coloneqq \theta, \thetah^{(1)} \overset{d}{=} G_{1}(\theta; 1)$. This property is generalized in the following lemma.

\begin{lemma}
\label{represent_homotopy}
Consider $\thetat^{(0)}\coloneqq \theta$, $\thetat^{(k)} \coloneqq G_{k}(\theta;1,\dots,1)$ for $k\geq 1$. 
It holds that 
\begin{gather*}
(\thetat^{(k)}, k\geq 0 ) \overset{d}{=} (\thetah^{(k)}, k\geq 0)
\end{gather*}
and
\begin{gather*}
\thetah^{(l)}\overset{d}{=}
G_{k}(\theta;t_{1},\dots, t_{k}) 
\overset{d}{=}
(H_{l}\bullet\dots \bullet H_{1})(\theta;1,\dots,1)
\end{gather*}
for any $0\leq l \leq k $ and $(t_{1},\dots, t_{k})\in \{0,1\}^k$  such that $\sum_{j=1}^{k} t_{j}=l$.
\end{lemma}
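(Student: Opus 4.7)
The plan is to proceed by induction on $k$, exploiting two structural facts about the random homotopy: first, that $H_i(\theta';0)=\theta'$ deterministically for every $\theta'\in\Theta$, so a coordinate $t_i=0$ makes the $i$-th stage act as the identity; and second, that $H_i(\theta';1)\overset{d}{=}\thetah$ with $\thetah\sim P(\theta';\cdot)$, so a coordinate $t_i=1$ performs a $P$-transition using a fresh, independent source of randomness (since the $H_i$ are i.i.d.\ and independent of the past).

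For the base case $k=1$, the two possible choices of $t_1\in\{0,1\}$ give $G_1(\theta;0)=\theta=\thetah^{(0)}$ and $G_1(\theta;1)=H_1(\theta;1)\overset{d}{=}\thetah^{(1)}$, matching the right-hand side with an empty superposition and with $H_1(\theta;1)$, respectively. For the inductive step, write
\[
G_k(\theta;t_1,\dots,t_k)=H_k\bigl(G_{k-1}(\theta;t_1,\dots,t_{k-1});t_k\bigr),
\]
set $l':=\sum_{j<k}t_j$, and split on $t_k$. If $t_k=0$, the outer application is the identity, so $G_k=G_{k-1}(\theta;t_1,\dots,t_{k-1})$ and the inductive hypothesis (with $l=l'$) gives the claim. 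If $t_k=1$, then $l=l'+1$; conditionally on $G_{k-1}=\theta'$, the random variable $H_k(\theta';1)$ is distributed as $P(\theta';\cdot)$ and is independent of $H_1,\dots,H_{k-1}$. Combining this with the inductive hypothesis (which identifies $G_{k-1}(\theta;t_1,\dots,t_{k-1})$ in distribution with $(H_{l'}\bullet\cdots\bullet H_1)(\theta;1,\dots,1)$ and hence with $\thetah^{(l')}$) and with the Markov property of the bootstrap chain (so that one more $P$-transition produces $\thetah^{(l'+1)}=\thetah^{(l)}$), yields both of the claimed distributional equalities at step $k$.

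The first statement of the lemma is then the special case $t_1=\dots=t_k=1$ upgraded from one-dimensional marginals to the joint law of $(\thetat^{(k)})_{k\geq 0}$: this upgrade is obtained by applying the same induction simultaneously to all partial superpositions $G_1,G_2,\dots,G_m$ for any fixed $m$, and observing that both $(\thetat^{(k)})_{k=0}^m$ and $(\thetah^{(k)})_{k=0}^m$ are Markov chains started at $\theta$ with the same transition kernel $P$, since the conditional law of $\thetat^{(k)}=H_k(\thetat^{(k-1)};1)$ given $\thetat^{(0)},\dots,\thetat^{(k-1)}$ is $P(\thetat^{(k-1)};\cdot)$ by independence of $H_k$ from $H_1,\dots,H_{k-1}$.

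The main obstacle I anticipate is purely a matter of bookkeeping with the sources of randomness: when $t_i=0$ the variable $H_i$ is still present in the superposition as a random object, and one must check that discarding it from the relevant $\sigma$-algebra does not affect the conditional distribution of later stages. This is handled cleanly by the i.i.d.\ assumption on the $H_i$'s together with the pointwise identity $H_i(\cdot;0)\equiv \mathrm{id}_\Theta$, which together imply that any $t_i=0$ coordinate can be removed without changing the joint distribution of the remaining superposition — reducing the general $(t_1,\dots,t_k)$ case to the pure $l$-fold superposition $(H_l\bullet\cdots\bullet H_1)(\theta;1,\dots,1)$.
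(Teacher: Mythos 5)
Your proposal is correct and rests on the same two structural facts the paper uses: that $t_i=0$ makes $H_i$ act as the identity, and that the $H_i$ are i.i.d.\ so that a superposition of $l$ of them at $t=1$ can be relabeled to $(H_l\bullet\cdots\bullet H_1)(\theta;1,\dots,1)$. For the first (joint) statement, both you and the paper argue that $\{\thetat^{(k)}\}$ is a Markov chain with transition kernel $P$ started at $\theta$, hence equal in law to $\{\thetah^{(k)}\}$; there is no difference here. For the second (marginal) statement, the paper proceeds in one step: define $J=\{j:t_j=1\}=\{j_1<\dots<j_l\}$, note $G_k(\theta;t_1,\dots,t_k)=(H_{j_l}\bullet\cdots\bullet H_{j_1})(\theta;1,\dots,1)$ because the zero coordinates collapse to the identity, and then invoke i.i.d.-ness to relabel $(H_{j_1},\dots,H_{j_l})$ as $(H_1,\dots,H_l)$. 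You instead run an induction on $k$, splitting on $t_k\in\{0,1\}$: the $t_k=0$ branch is handled by the induction hypothesis directly, and the $t_k=1$ branch uses the fresh, independent $P$-transition together with the Markov property of the bootstrap chain. Both arguments are valid; the paper's one-step collapse-and-relabel is a bit more economical, whereas your induction is more explicit about the conditional independence bookkeeping at each stage. Notably, in your final paragraph you essentially restate the paper's collapse-and-relabel argument as the way to handle the $t_i=0$ coordinates, so the two proofs converge to the same mechanism.
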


\begin{proof} 
By the definition of the sequences $\thetat^{(k)}$ and $G_k,$ 
$$
\thetat^{(k)}=G_{k}(\theta;1,\dots,1)=H_k(\thetat^{(k-1)},1).
$$
Using this fact and the definition of random homotopy $H_k,$ we get that, given $\thetat^{(0)}, \dots, \thetat^{(k-1)},$ $\thetat^{(k)}\sim P(\thetat^{(k-1)},\cdot).$
Therefore, $\{\thetat^{(k)}: k\geq 0\}$ is a Markov chain with transition kernel $P$ and 
$\thetat^{(0)}=\theta,$ implying the first part of lemma's statement.

To prove the second part, 
let $J\coloneqq \{1\leq j\leq k: t_{j} =1\}=\{j_{1}<\dots <j_{l} \}.$ Then 
\begin{align*}
&
G_k(\theta;t_1,\dots, t_k)=(H_{j_{l}}\bullet\dots \bullet H_{j_{1}})(\theta;t_{j_{1}},\dots,t_{j_{l}})
\\
&
\overset{d}{=}(H_{l}\bullet\dots \bullet H_{1})(\theta;1,\dots,1)\overset{d}{=}G_l(\theta;1,\dots, 1)
\overset{d}{=}\thetah^{(l)},
\end{align*}
because of the i.i.d. property of $\{H_{i}\}$ and the definitions of $G_{k}.$

\QED
\end{proof}

For functions $\varphi (t_1,\dots, t_k), (t_1,\dots, t_k)\in [0,1]^k,$
denote 
$$
\Delta_i \varphi (t_1,\dots, t_k) := \varphi(t_1,\dots, t_k)_{|t_i=1}- \varphi(t_1,\dots, t_k)_{|t_i=0}. 
$$
Note that, by generalized Newton-Leibnitz formula,
$$
\Delta_1\dots \Delta_k \varphi  = \int_0^1 \dots \int_0^1 
\frac{\partial^k \varphi(t_1,\dots, t_k)}{\partial t_1\dots \partial t_k}dt_1\dots dt_k,
$$
provided that $\varphi $ is a $C^k$-function.

Using \eqref{Newton_binom} and the second claim of Lemma \ref{represent_homotopy}, we get  
\begin{align}
\label{finite_differ_B^k}
&
\nonumber
({\mathcal B}^k f)(\theta)={\mathbb E}_{\theta}\sum_{j=0}^k (-1)^{k-j} {k\choose j} f(\thetah^{(j)})
\\
&
\nonumber
= 
 {\mathbb E}\sum_{j=0}^k (-1)^{k-j} \sum_{(t_1,\dots, t_k)\in \{0,1\}^k, \sum_{i=1}^k t_i=j}f(G_k(\theta;t_1,\dots, t_k))
\\
&
\nonumber
= 
{\mathbb E}\sum_{(t_1,\dots, t_k)\in \{0,1\}^k} (-1)^{k-\sum_{i=1}^k t_i}f(G_k(\theta;t_1,\dots, t_k))
\\
&
={\mathbb E}\Delta_1\dots \Delta_k f(G_k(\theta;t_1,\dots, t_k)). 
\end{align}
It remains to represent $\Delta_1\dots \Delta_k f(G_k(\theta;t_1,\dots, t_k))$ as 
\begin{align}
\label{finite_differ_int}
\Delta_1\dots \Delta_k f(G_k(\theta;t_1,\dots, t_k))= 
\int_0^1 \dots \int_0^1 
\frac{\partial^k f(G_k(\theta;t_1,\dots, t_k))}{\partial t_1\dots \partial t_k}dt_1\dots dt_k
\end{align}
to obtain from \eqref{finite_differ_B^k} and \eqref{finite_differ_int} the following 
proposition.

\begin{proposition}
\label{prop_B^k_AAA}
Let $U_1,\dots, U_k$ be i.i.d. random variables with uniform distribution in $[0,1]$
(independent of random homotopies $H_1,\dots, H_k$). 
Suppose that $\Theta$ is an open set, function $f$ is $k$ times continuously differentiable in $\Theta,$ 
random homotopy $H$ is $k$ times continuously differentiable in $\Theta\times [0,1]^d$ a.s.
and  
$$
{\mathbb E}\Bigl|\frac{\partial^k}{\partial t_1\dots \partial t_k}f(G_k(\theta;U_1,\dots, U_k))\Bigr|<\infty,
\theta \in \Theta.
$$
Then
\begin{align}
\label{B^k_int}
({\mathcal B}^k f)(\theta)= {\mathbb E} \frac{\partial^k}{\partial t_1\dots \partial t_k}f(G_k(\theta;U_1,\dots, U_k)), \theta\in \Theta.
\end{align}
\end{proposition}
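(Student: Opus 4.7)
The plan is to combine the two displays \eqref{finite_differ_B^k} and \eqref{finite_differ_int} that already appear just before the statement, and then apply Fubini's theorem under the stated integrability hypothesis to move the expectation inside the integral over $[0,1]^k$. Almost all of the work is done in the preceding paragraph; the proposition is really just a clean packaging of those two identities.

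More concretely, I would first note that, under the smoothness hypotheses on $H$ and $f,$ the random function
\[
\varphi(t_1,\dots,t_k)\;:=\;f\bigl(G_k(\theta;t_1,\dots,t_k)\bigr)
\]
is a.s. $k$ times continuously differentiable on $[0,1]^k,$ by the chain rule: $G_k$ is a superposition of the $C^k$ random homotopies $H_1,\dots,H_k,$ and $f\in C^k(\Theta).$ The generalized Newton--Leibniz formula then gives
\[
\Delta_1\cdots\Delta_k\,\varphi(t_1,\dots,t_k)\;=\;\int_0^1\!\!\cdots\!\int_0^1 \frac{\partial^k \varphi(t_1,\dots,t_k)}{\partial t_1\cdots\partial t_k}\,dt_1\cdots dt_k,
\]
which is exactly \eqref{finite_differ_int}. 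Substituting into \eqref{finite_differ_B^k} yields
\[
({\mathcal B}^k f)(\theta)\;=\;{\mathbb E}\int_0^1\!\!\cdots\!\int_0^1 \frac{\partial^k}{\partial t_1\cdots\partial t_k}f\bigl(G_k(\theta;t_1,\dots,t_k)\bigr)\,dt_1\cdots dt_k.
\]

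Next I would invoke Fubini's theorem to interchange the expectation with the $[0,1]^k$-integral. This is legitimate precisely under the integrability assumption in the proposition, namely
\[
{\mathbb E}\Bigl|\tfrac{\partial^k}{\partial t_1\cdots\partial t_k}f\bigl(G_k(\theta;U_1,\dots,U_k)\bigr)\Bigr|<\infty,
\]
which, by the definition of the uniform distribution on $[0,1]^k$ and the independence of $(U_1,\dots,U_k)$ from $(H_1,\dots,H_k),$ is equivalent to the finiteness of the joint integral of the absolute value of $\partial^k \varphi/\partial t_1\cdots\partial t_k$ with respect to the product of Lebesgue measure on $[0,1]^k$ and ${\mathbb P}.$ After the exchange, the iterated integral over $[0,1]^k$ may be rewritten as an expectation with respect to $(U_1,\dots,U_k),$ giving \eqref{B^k_int}.

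The only genuinely delicate step is verifying the regularity needed for Newton--Leibniz and the integrability needed for Fubini, but both are granted directly by the hypotheses. No new analytic tools beyond what has already been set up in this section are required, so I anticipate no serious obstacle — the proof should be short.
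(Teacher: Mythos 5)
Your proof is correct and follows essentially the same route the paper takes: combine \eqref{finite_differ_B^k} with the generalized Newton--Leibniz representation \eqref{finite_differ_int}, then exchange expectation and the $[0,1]^k$-integral by Fubini under the stated integrability hypothesis, and finally read the $k$-fold Lebesgue integral as an expectation over the independent uniforms $U_1,\dots,U_k$. The paper presents the proposition as an immediate consequence of those two displays with no separate proof block, and your writeup merely makes the Fubini step and the almost-sure $C^k$ regularity of $(t_1,\dots,t_k)\mapsto f(G_k(\theta;t_1,\dots,t_k))$ explicit, which is a welcome clarification rather than a deviation.
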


In the cases when random homotopies exactly representing bootstrap chains lack necessary 
smoothness, it makes sense to use instead approximation of the bootstrap chain $\{\hat \theta^{(k)}: k\geq 0\}$ by superpositions of i.i.d. random homotopies. Suppose $H(\theta;t), \theta \in \Theta, t\in [0,1]$ is 
a stochastic process with values in $\Theta$ such that $H(\theta;0)=\theta, \theta \in \Theta$ and 
$H(\theta;1)\sim Q(\theta;\cdot)$ for some Markov kernel $Q$ on the space $\Theta.$ Let 
$\{\thetat^{(k)}: k\geq 0\}$ be the corresponding Markov chain defined in terms of superpositions 
of i.i.d. copies of $H$ (defined as in Lemma \eqref{represent_homotopy}). Let 
$$
P^{(k)}(\theta;A):= {\mathbb P}_{\theta}\{\thetah^{(k)}\in A\},\ \  Q^{(k)}(\theta;A):= 
{\mathbb P}_{\theta}\{\thetat^{(k)}\in A\}, \theta\in \Theta, A\in {\mathcal B}_{\Theta}
$$
be the corresponding $k$-step transition kernels. Our goal is to provide a bound on 
the total variation distance between the measures $P^{(k)}(\theta;\cdot), Q^{(k)}(\theta;\cdot)$
in terms of the corresponding total variation distance between $P(\theta;\cdot), Q(\theta;\cdot).$

Suppose $(\Theta,d)$ is a metric space with Borel $\sigma$-algebra ${\mathcal B}_{\Theta}$
and, for $A\in {\mathcal B}_{\Theta},$ let $A_{\delta}$ denote the $\delta$-neighborhood of $A.$
Then, the following simple proposition holds.

\begin{proposition}
\label{approx_homotopy}
For any set $A\in {\mathcal B}_{\Theta}$ and $\delta>0,$
\begin{align*}
&
\sup_{\theta\in A}\|P^{(k)}(\theta;\cdot)-Q^{(k)}(\theta;\cdot)\|_{TV}
\\
&
\leq k \sup_{\theta\in A_{k\delta}}\|P(\theta;\cdot)-Q(\theta;\cdot)\|_{TV}+
2k \sup_{\theta \in A_{k\delta}}{\mathbb P}_{\theta}\{d(\hat \theta;\theta)\geq \delta\}. 
\end{align*}
\end{proposition}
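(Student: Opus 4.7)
The plan is to couple the two bootstrap chains sharing the starting point $\theta$ and to control separately the time at which the coupled chains split and the probability that the chain takes a large step. Construct $(X_j,Y_j)_{j=0}^{k}$ with $X_0=Y_0=\theta$ such that $X_j\sim P^{(j)}(\theta;\cdot)$ and $Y_j\sim Q^{(j)}(\theta;\cdot)$: whenever $X_{j-1}=Y_{j-1}=z$, draw $(X_j,Y_j)$ from the maximal coupling of $P(z;\cdot)$ and $Q(z;\cdot)$, so that
$$
{\mathbb P}(X_j\neq Y_j\mid X_{j-1}=Y_{j-1}=z)=\tfrac12\|P(z;\cdot)-Q(z;\cdot)\|_{TV};
$$
when $X_{j-1}\neq Y_{j-1}$, couple arbitrarily. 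The coupling inequality then yields $\|P^{(k)}(\theta;\cdot)-Q^{(k)}(\theta;\cdot)\|_{TV}\leq 2{\mathbb P}(X_k\neq Y_k)$.

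The key second ingredient is the ``small-step'' event $F_j:=\{d(X_i,X_{i-1})<\delta:\ 1\leq i\leq j\}$. On $F_j$, the triangle inequality combined with $X_0=\theta\in A$ gives $X_i\in A_{i\delta}\subset A_{k\delta}$ for all $i\leq j$. Letting $\tau:=\inf\{j\geq 1:X_j\neq Y_j\}$, I would decompose
$$
{\mathbb P}(X_k\neq Y_k)={\mathbb P}(\tau\leq k)\leq\sum_{j=1}^{k}{\mathbb P}(\tau=j,\,F_{j-1})+{\mathbb P}(F_{k-1}^c),
$$
using that $\{\tau=j,\,F_{j-1}^c\}\subset F_{j-1}^c\subset F_{k-1}^c$ for $j\leq k$. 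On $\{\tau=j\}\cap F_{j-1}$ the common value $X_{j-1}=Y_{j-1}$ lies in $A_{k\delta}$, and the coupling identity together with Markov conditioning gives $\sum_{j=1}^{k}{\mathbb P}(\tau=j,F_{j-1})\leq\tfrac{k}{2}\sup_{\theta'\in A_{k\delta}}\|P(\theta';\cdot)-Q(\theta';\cdot)\|_{TV}$.

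For the tail ${\mathbb P}(F_{k-1}^c)$ I would induct on $j$, using the disjoint decomposition $F_j^c=F_{j-1}^c\sqcup\bigl(F_{j-1}\cap\{d(X_j,X_{j-1})\geq\delta\}\bigr)$. Conditioning on $X_{j-1}$ and using that $X_{j-1}\in A_{k\delta}$ on $F_{j-1}$ together with $P(X_{j-1};\Theta\setminus B(X_{j-1};\delta))={\mathbb P}_{X_{j-1}}\{d(\hat\theta;X_{j-1})\geq\delta\}$,
$$
{\mathbb P}(F_j^c)\leq{\mathbb P}(F_{j-1}^c)+\sup_{\theta'\in A_{k\delta}}{\mathbb P}_{\theta'}\{d(\hat\theta;\theta')\geq\delta\},
$$
so iterating from ${\mathbb P}(F_0^c)=0$ gives ${\mathbb P}(F_{k-1}^c)\leq k\sup_{\theta'\in A_{k\delta}}{\mathbb P}_{\theta'}\{d(\hat\theta;\theta')\geq\delta\}$. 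Combining the two estimates and multiplying by $2$ yields the stated bound.

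The only real subtlety is that applying a one-step TV bound or a one-step large-jump bound at step $j$ requires the $X$-chain to already be inside $A_{k\delta}$, which is not almost sure. The small-step event $F_j$ is exactly the device that breaks this apparent circularity, at the price of the additional $2k\sup_{\theta'}{\mathbb P}_{\theta'}\{d(\hat\theta;\theta')\geq\delta\}$ term in the final bound.
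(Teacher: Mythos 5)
Your proof is correct, and it takes a genuinely different route from the paper's. The paper argues analytically: it writes the total variation distance as a supremum over test functions $f$ with $\|f\|_{L_\infty(\Theta)}\leq 1$, splits the Chapman--Kolmogorov integral $\int\int f(t)\,(P^{(k-1)}-Q^{(k-1)})(s;dt)\,P(\theta;ds)$ according to whether the first landing site $s$ lies in $A_\delta$ or $A_\delta^c$, adds the one-step discrepancy term $\int\int f(t)\,Q^{(k-1)}(s;dt)\,(P-Q)(\theta;ds)$, and iterates $k$ times; no coupling is ever constructed, and the chains enter only through their transition kernels. You instead realize a joint law of the two chains by maximal coupling at each step before they split, and reduce the total variation bound to $2\,\mathbb{P}(X_k\neq Y_k)$, controlled through the splitting time $\tau$ and the small-step event $F_j$. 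The two arguments are essentially dual: your $F_{j-1}^c$ plays precisely the role of the $A_\delta^c$ slice in the paper's integral decomposition, and the maximal-coupling identity $\mathbb{P}(X_j\neq Y_j\mid X_{j-1}=Y_{j-1}=z)=\tfrac12\|P(z;\cdot)-Q(z;\cdot)\|_{TV}$ substitutes for the paper's direct test-function bound on the one-step kernel discrepancy. They produce identical constants (your induction actually gives the marginally sharper factor $k-1$ rather than $k$ for the small-step tail, since $\mathbb{P}(F_0^c)=0$). Two minor points are worth making explicit in a final write-up: the maximal coupling must be chosen measurably in the base point $z$, which is automatic on Polish $\Theta$ but is a hypothesis the analytic route never needs; and after time $\tau$ the coupling should still preserve the marginals (e.g., draw $X_j$ and $Y_j$ independently from their respective kernels), because the inequality $\|P^{(k)}(\theta;\cdot)-Q^{(k)}(\theta;\cdot)\|_{TV}\leq 2\,\mathbb{P}(X_k\neq Y_k)$ requires $X_k\sim P^{(k)}(\theta;\cdot)$ and $Y_k\sim Q^{(k)}(\theta;\cdot)$. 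In exchange for these technicalities, the coupling argument makes the probabilistic source of each error term fully visible, which the paper's iterated integral manipulation obscures.
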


\begin{proof} 
Note that, for all $f:\Theta \mapsto {\mathbb R}$ with $\|f\|_{L_{\infty}(\Theta)}\leq 1,$
$$
\int_{\Theta} f(t)P^{(k)}(\theta;dt)=\int_{\Theta}\int_{\Theta} f(t)P^{(k-1)}(s;dt)P(\theta;ds)
$$
and 
$$
\int_{\Theta} f(t)Q^{(k)}(\theta;dt)=\int_{\Theta}\int_{\Theta} f(t)Q^{(k-1)}(s;dt)Q(\theta;ds).
$$
For arbitrary $A\in {\mathcal B}_{\Theta}$ and $\delta>0,$ we have 
\begin{align*}
&
\int_{\Theta} f(t)P^{(k)}(\theta;dt)-\int_{\Theta} f(t)Q^{(k)}(\theta;dt)
\\
&
=
\int_{\Theta}\int_{\Theta} f(t)(P^{(k-1)}(s;dt)-Q^{(k-1)}(s;dt))P(\theta;ds)
\\
&
+ 
\int_{\Theta}\int_{\Theta} f(t)Q^{(k-1)}(s;dt)(P(\theta;ds)-Q(\theta;ds))
\\
&
=
\int_{A_{\delta}}\int_{\Theta}f(t) (P^{(k-1)}(s;dt)-Q^{(k-1)}(s;dt))P(\theta;ds)
\\
&
+
\int_{A_{\delta}^c} \int_{\Theta}f(t)(P^{(k-1)}(s;dt)-Q^{(k-1)}(s;dt))P(\theta;ds)
\\
&
+ 
\int_{\Theta}\int_{\Theta} f(t)Q^{(k-1)}(s;dt)(P(\theta;ds)-Q(\theta;ds)).
\end{align*}
This implies 
\begin{align*}
&
\sup_{\theta\in A}\|P^{(k)}(\theta;\cdot)-Q^{(k)}(\theta;\cdot)\|_{TV}=
\sup_{\theta\in A}\sup_{\|f\|_{L_{\infty}(\Theta)}\leq 1}
\biggl|\int_{\Theta} f(t)P^{(k)}(\theta;dt)-\int_{\Theta} f(t)Q^{(k)}(\theta;dt)\biggr|
\\
&
\leq 
\sup_{\theta \in A_{\delta}} \|P^{(k-1)}(\theta;\cdot)-Q^{(k-1)}(\theta;\cdot)\|_{TV}
+ 2\sup_{\theta\in A}P(\theta; A_{\delta}^c) + \sup_{\theta\in A}\|P(\theta;\cdot)-Q(\theta;\cdot)\|_{TV}
\\
&
\leq 
\sup_{\theta \in A_{\delta}} \|P^{(k-1)}(\theta;\cdot)-Q^{(k-1)}(\theta;\cdot)\|_{TV}
+ 2\sup_{\theta\in A}{\mathbb P}_{\theta}\{d(\hat \theta;\theta)\geq \delta\} + 
\sup_{\theta\in A}\|P(\theta;\cdot)-Q(\theta;\cdot)\|_{TV}.
\end{align*}
Iterating the above bound $k$ times yields:
\begin{align*}
&
\sup_{\theta\in A}\|P^{(k)}(\theta;\cdot)-Q^{(k)}(\theta;\cdot)\|_{TV}
\\
&
\leq 
2k \sup_{\theta \in A_{k\delta}}{\mathbb P}_{\theta}\{d(\hat \theta;\theta)\geq \delta\}
+ k \sup_{\theta\in A_{k\delta}}\|P(\theta;\cdot)-Q(\theta;\cdot)\|_{TV}.  
\end{align*}
\qed
\end{proof}

Let 
$$
(\tilde {\mathcal T} f)(\theta):= \int_{\Theta} f(t)Q(\theta;dt), \theta \in \Theta
$$ 
and $\tilde {\mathcal B}:= \tilde {\mathcal T}-{\mathcal I}.$ 
Also denote 
$$
\tilde f_k(\theta):= \sum_{j=0}^k (-1)^j (\tilde {\mathcal B}^{j}f)(\theta), \theta \in \Theta.
$$
The following corollary is immediate.

\begin{corollary}
\label{corr_approx_homotopy}
For any set $A\in {\mathcal B}_{\Theta}$ and $\delta>0,$
\begin{align*}
&
\sup_{\theta\in A}|{\mathcal T}^k f(\theta)- \tilde {\mathcal T}^k f(\theta)|
\\
&
\leq k \|f\|_{L_{\infty}(\Theta)}
\Bigl[\sup_{\theta\in A_{k\delta}}\|P(\theta;\cdot)-Q(\theta;\cdot)\|_{TV}+
2\sup_{\theta \in A_{k\delta}}{\mathbb P}_{\theta}\{d(\hat \theta;\theta)\geq \delta\}\Bigr],
\end{align*}
\begin{align*}
&
\sup_{\theta\in A}|{\mathcal B}^k f(\theta)- \tilde {\mathcal B}^k f(\theta)|
\\
&
\leq k2^k \|f\|_{L_{\infty}(\Theta)}
\Bigl[\sup_{\theta\in A_{k\delta}}\|P(\theta;\cdot)-Q(\theta;\cdot)\|_{TV}+
2\sup_{\theta \in A_{k\delta}}{\mathbb P}_{\theta}\{d(\hat \theta;\theta)\geq \delta\}\Bigr]
\end{align*}
and 
\begin{align*}
&
\sup_{\theta\in A}|f_k(\theta)- \tilde f_k(\theta)|
\\
&
\leq k^2 2^k \|f\|_{L_{\infty}(\Theta)}
\Bigl[\sup_{\theta\in A_{k\delta}}\|P(\theta;\cdot)-Q(\theta;\cdot)\|_{TV}+
2\sup_{\theta \in A_{k\delta}}{\mathbb P}_{\theta}\{d(\hat \theta;\theta)\geq \delta\}\Bigr].
\end{align*}
\end{corollary}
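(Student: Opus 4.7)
The first bound is almost immediate. Iterating the Chapman–Kolmogorov identity that underlies the definitions of $\mathcal{T}$ and $\tilde{\mathcal T}$ gives $({\mathcal T}^{j}f)(\theta)=\int_{\Theta} f(t)\,P^{(j)}(\theta;dt)$ and $(\tilde{\mathcal T}^{j}f)(\theta)=\int_{\Theta} f(t)\,Q^{(j)}(\theta;dt)$ for every $j\geq 0$ (this is \eqref{T^k} applied to both kernels). Hence
\[
|({\mathcal T}^{k}f - \tilde{\mathcal T}^{k}f)(\theta)|
\;\leq\; \|f\|_{L_{\infty}(\Theta)}\,\|P^{(k)}(\theta;\cdot)-Q^{(k)}(\theta;\cdot)\|_{TV},
\]
and taking $\sup_{\theta\in A}$ together with Proposition \ref{approx_homotopy} delivers the first inequality.

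For the second bound, I would expand via Newton's binomial formula \eqref{Newton_binom}:
\[
{\mathcal B}^{k}=\sum_{j=0}^{k}(-1)^{k-j}\binom{k}{j}{\mathcal T}^{j},\qquad
\tilde{\mathcal B}^{k}=\sum_{j=0}^{k}(-1)^{k-j}\binom{k}{j}\tilde{\mathcal T}^{j},
\]
subtract, and apply the first bound to each term ${\mathcal T}^{j}-\tilde{\mathcal T}^{j}$. The key observation is the monotonicity $A_{j\delta}\subseteq A_{k\delta}$ for $j\leq k$, which lets us replace every $A_{j\delta}$-supremum on the right-hand side by the single $A_{k\delta}$-supremum at level $k$. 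Combining with the combinatorial identity $\sum_{j=0}^{k}j\binom{k}{j}=k\,2^{k-1}\leq k\,2^{k}$ gives the stated factor $k\,2^{k}$.

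For the third bound, I would use $f_{k}-\tilde f_{k}=\sum_{j=0}^{k}(-1)^{j}({\mathcal B}^{j}-\tilde{\mathcal B}^{j})f$, take $\sup_{\theta\in A}$, and apply the second bound at each index $j\leq k$, again using $A_{j\delta}\subseteq A_{k\delta}$ to align the neighborhoods. The resulting constant is $\sum_{j=0}^{k}j\,2^{j}$, which is bounded by $k^{2}2^{k}$ (since each term is at most $k\,2^{k}$ and there are $k+1$ of them; a slightly sharper bookkeeping gives $(k-1)2^{k+1}+2$, still $\leq k^{2}2^{k}$).

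I do not foresee a genuine obstacle here: the whole corollary is a mechanical consequence of Proposition \ref{approx_homotopy} combined with the linear/binomial expansions of $\mathcal{T}^{k}-\tilde{\mathcal T}^{k}$, $\mathcal{B}^{k}-\tilde{\mathcal B}^{k}$, and $f_{k}-\tilde f_{k}$. The only points requiring minor care are the combinatorial constants (where loose estimates suffice to match the stated $k2^{k}$ and $k^{2}2^{k}$) and the use of the inclusion $A_{j\delta}\subseteq A_{k\delta}$ to collapse all neighborhood suprema to the single level $A_{k\delta}$.
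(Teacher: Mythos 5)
Your proof is correct and is exactly the ``immediate'' argument the paper has in mind: the first bound follows from the dual pairing $({\mathcal T}^{k}f)(\theta)=\int f\,dP^{(k)}(\theta;\cdot)$ together with the total-variation bound of Proposition~\ref{approx_homotopy}, and the second and third follow by expanding ${\mathcal B}^k$ and $f_k$ via the binomial and partial-sum identities, using $A_{j\delta}\subseteq A_{k\delta}$ to align the neighborhoods and the elementary bounds $\sum_{j\le k} j\binom{k}{j}=k2^{k-1}\le k2^k$ and $\sum_{j\le k}j2^{j}\le k^22^k$ for the constants. Nothing to add.
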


\section{Bounds on H\"older norms of ${\mathcal B}^k f.$}
\label{sec:Holder_norms}

In this section, we prove a bound on the norm $\|{\mathcal B}\|_{C^s\mapsto C^{s-1}}$  of ${\mathcal B}$ as an operator from $C^{s}(\Theta)$ into $C^{s-1}(\Theta)$ for some $s=k+1+\rho,$ $k\geq 1, \rho\in (0,1].$
It will be assumed throughout the section that $\Theta\subset E$ is an open subset, that 
$H(\theta;t), \theta\in \Theta, t\in [0,1]$ is a random homotopy  between 
$\theta$ and $\thetah$ and that it is $k+1$ times continuously differentiable in $\Theta\times [0,1]$ with 
probability $1.$ The following bound will be proved. 

\begin{proposition}
\label{bound_on_norm_B}
Let $s=k+1+\rho$ for $k\geq 1$ and $\rho\in (0,1].$
Suppose that 
\begin{align*}
{\mathbb E}(\|H\|_{C^{s-1,0}(\Theta\times [0,1])}^{-}\vee 1)^{s-1}\|\dot H\|_{C^{s-1,0}(\Theta\times [0,1])} <\infty.
\end{align*}
Then
\begin{align*}
\|{\mathcal B}\|_{C^s\mapsto C^{s-1}} \leq 
4(k+1)^{k+2}
{\mathbb E}(\|H\|_{C^{s-1,0}(\Theta\times [0,1])}^{-}\vee 1)^{s-1}\|\dot H\|_{C^{s-1,0}(\Theta\times [0,1])} .
\end{align*}
In other words, for all $f\in C^s(\Theta),$
\begin{align*}
\|{\mathcal B} f\|_{C^{s-1}(\Theta)} \leq 4 (k+1)^{k+2}
{\mathbb E}(\|H\|_{C^{s-1,0}(\Theta\times [0,1])}^{-}\vee 1)^{s-1}
\|\dot H\|_{C^{s-1,0}(\Theta\times [0,1])} \|f\|_{C^s(\Theta)}.
\end{align*}
\end{proposition}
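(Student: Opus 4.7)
The plan is to exploit the random-homotopy representation $(\mathcal{B}f)(\theta) = \mathbb{E}[f(H(\theta;1)) - f(H(\theta;0))]$. By the Newton--Leibniz formula in $t$,
\[
(\mathcal{B}f)(\theta) = \mathbb{E}\int_0^1 G(\theta;t)\,dt, \qquad G(\theta;t):=f^{\prime}(H(\theta;t))[\dot H(\theta;t)].
\]
Proposition~\ref{diff_under_exp} then allows me to commute $D^{j}_\theta$ with $\mathbb{E}\int_0^1(\cdot)\,dt$ for every $0\leq j\leq k$; the Lipschitz-moment conditions it requires are verified a posteriori from the Fa\`a di Bruno expansion together with the moment hypothesis~\eqref{moment_of_H_deriv_AAA}. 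The proof thus reduces to uniform estimates on $D^{j}_\theta G(\theta;t)$ for $0\leq j\leq k$ and a $\rho$-H\"older estimate on $D^{k}_\theta G$.

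Applying the Leibniz rule to the pairing $f^{\prime}(H)[\dot H]$ and Fa\`a di Bruno to $D^{i}_\theta[f^{\prime}(H(\theta;t))]$ gives
\[
D^{j}_\theta G(\theta;t) = \sum_{i=0}^{j}\binom{j}{i}\sum_{\pi}c_\pi\, f^{(|\pi|+1)}(H(\theta;t))\Bigl[\bigotimes_{B\in\pi}D^{|B|}_\theta H(\theta;t),\ D^{j-i}_\theta\dot H(\theta;t)\Bigr],
\]
where the inner sum runs over set partitions $\pi$ of $\{1,\dots,i\}$. Every summand is a product of one factor $f^{(\ell)}(H)$ with $\ell\leq k+1$, of $|\pi|\leq k$ factors $D^{|B|}_\theta H$ with $|B|\leq k$, and a single factor $D^{j-i}_\theta\dot H$ with $j-i\leq k$. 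Sup bounds on these factors are read off from the definitions: $\|f^{(\ell)}\|_\infty\leq\|f\|_{C^{s}}$; $\sup_\theta\|D^{m}_\theta H(\cdot;t)\|\leq\|H\|_{C^{s-1,0}(\Theta\times [0,1])}^{-}$ for $1\leq m\leq k$ (using that the Lipschitz-seminorm entries of $\|H\|_{C^{s-1}}^{-}$ dominate the sup of Fr\'echet derivatives of order $\geq 1$ on convex pieces of $\Theta$); and $\sup_\theta\|D^{m}_\theta\dot H(\cdot;t)\|\leq\|\dot H\|_{C^{s-1,0}(\Theta\times [0,1])}$ for $m\leq k$.

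For the $\rho$-H\"older bound on $D^{k}G$, I would telescope the difference $D^{k}G(\theta;t)-D^{k}G(\theta^\prime;t)$ over the factors of each summand. Three situations arise: (i) differencing $f^{(k+1)}(H)$ uses the $\rho$-H\"older regularity of $f^{(k+1)}$ together with the Lipschitz bound on $H(\cdot;t)$ in $\theta$, giving a contribution of order $\|f\|_{C^s}(\|H\|^{-})^{\rho}\|\theta-\theta^\prime\|^{\rho}$; (ii) differencing $D^{k}_\theta H$ or $D^{k}_\theta\dot H$ invokes the top-order H\"older component of the relevant seminorm directly; (iii) for any ``middle'' factor with only a Lipschitz bound $L$ and a sup bound $M$, one interpolates $\min(L\|\theta-\theta^\prime\|,2M)\leq 2(L\vee M)\|\theta-\theta^\prime\|^{\rho}$ by splitting on whether $\|\theta-\theta^\prime\|\leq 1$ or $>1$. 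Multiplying in the sup bounds of the remaining factors, each summand is controlled by $C\,\|f\|_{C^s}(\|H\|_{C^{s-1,0}}^{-}\vee 1)^{s-1}\|\dot H\|_{C^{s-1,0}}\|\theta-\theta^\prime\|^{\rho}$; the exponent $s-1=k+\rho$ captures that at most $|\pi|+\rho\leq k+\rho$ factors of $\|H\|^{-}$ accumulate in the worst case, and the $\vee 1$ absorbs non-integer powers and the regime $\|H\|^{-}<1$. The Lipschitz bounds on $D^{j}\mathcal{B}f$ for $j\leq k-1$ are identical with $\rho$ replaced by $1$, and $\|\mathcal{B}f\|_\infty$ is immediate from the integral representation.

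Finally, summing the Fa\`a di Bruno weight $\sum_{i=0}^{k}\binom{k}{i}B_i=B_{k+1}\leq (k+1)^{k+1}$, at most $k+2$ telescoping splits per summand, and the interpolation constant $2$ yields the prefactor $2(k+2)(k+1)^{k+1}\leq 4(k+1)^{k+2}$ announced in the statement. The main obstacle is the Lipschitz-to-$\rho$-H\"older interpolation on middle-order factors: it must be organized so that the accumulated exponent of $\|H\|^{-}$ does not exceed $s-1$ in any split, which is precisely what the $\vee 1$ device in the final bound accommodates; carrying this out uniformly across all $(i,\pi)$ and all choices of split is the bulk of the technical work.
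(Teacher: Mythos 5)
Your proof is correct and follows essentially the same route as the paper's: both start from the Newton--Leibniz representation ${\mathcal B}f(\theta)=\mathbb{E}\int_0^1 f^{\prime}(H(\theta;t))[\dot H(\theta;t)]\,dt$, expand the mixed derivative $D^j_\theta\,\partial_t\,f(H(\theta;t))$ via a Fa\`a di Bruno formula that isolates a single $\dot H$-factor and tensor powers of $D^m H$, bound each factor by the relevant seminorms using the Lipschitz-to-H\"older interpolation encoded in \eqref{hoeld_lip}, and then commute with $\mathbb{E}\int_0^1$ via Proposition~\ref{diff_under_exp}. The only cosmetic difference is the order of differentiation (you differentiate in $t$ first, the paper in $\theta$ first), and the constant bookkeeping lands in the same place.
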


The next corollary is immediate.

\begin{corollary}
\label{cor:f_k_Hoeld}
Under the assumption of Proposition \ref{bound_on_norm_B},
for all $j=1,\dots, k,$
\begin{align*}
\|{\mathcal B}^j\|_{C^s\mapsto C^{s-j}} \leq 
\biggl(4 (k+1)^{k+2}
{\mathbb E}(\|H\|_{C^{s-1,0}(\Theta\times [0,1])}^{-}\vee 1)^{s-1}\|\dot H\|_{C^{s-1,0}(\Theta\times [0,1])}\biggr)^j,
\end{align*}
or, equivalently, for all $f\in C^s(\Theta),$
\begin{align*}
\|{\mathcal B}^j f\|_{C^{s-j}(\Theta)} \leq 
\biggl(4 (k+1)^{k+2}
{\mathbb E}(\|H\|_{C^{s-1,0}(\Theta\times [0,1])}^{-}\vee 1)^{s-1}\|\dot H\|_{C^{s-1,0}(\Theta\times [0,1])}\biggr)^j \|f\|_{C^{s}(\Theta)}.
\end{align*}
\end{corollary}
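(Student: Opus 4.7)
The plan is to iterate Proposition~\ref{bound_on_norm_B} $j$ times along the chain of H\"older spaces
\[
C^s(\Theta)\xrightarrow{{\mathcal B}}C^{s-1}(\Theta)\xrightarrow{{\mathcal B}}C^{s-2}(\Theta)\xrightarrow{{\mathcal B}}\cdots\xrightarrow{{\mathcal B}}C^{s-j}(\Theta),
\]
take the product of the resulting operator-norm bounds via submultiplicativity, and show that every factor is uniformly dominated by the constant $4(k+1)^{k+2}\,{\mathbb E}\bigl(\|H\|_{C^{s-1,0}(\Theta\times[0,1])}^{-}\vee 1\bigr)^{s-1}\|\dot H\|_{C^{s-1,0}(\Theta\times[0,1])}$ appearing on the right-hand side of the corollary. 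The equivalent bound on $\|{\mathcal B}^jf\|_{C^{s-j}(\Theta)}$ is then immediate.

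At step $i\in\{1,\dots,j\}$, ${\mathcal B}$ acts from $C^{s-i+1}(\Theta)$ into $C^{s-i}(\Theta)$. Writing the source smoothness as $s-i+1=(k-i+1)+1+\rho$ puts it in the form required by Proposition~\ref{bound_on_norm_B} with parameters $k':=k-i+1$ and $\rho'=\rho$; the hypothesis $k'\ge 1$ holds because $i\le j\le k$. The proposition therefore yields
\[
\|{\mathcal B}\|_{C^{s-i+1}\mapsto C^{s-i}}\le 4(k-i+2)^{k-i+3}\,{\mathbb E}\bigl(\|H\|_{C^{s-i,0}(\Theta\times[0,1])}^{-}\vee 1\bigr)^{s-i}\|\dot H\|_{C^{s-i,0}(\Theta\times[0,1])}.
\]
The combinatorial factor is trivially dominated: $4(k-i+2)^{k-i+3}\le 4(k+1)^{k+2}$ for every $i\ge 1$. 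For the random factor I would invoke monotonicity of the H\"older seminorms $\|\cdot\|_{C^{r,0}}^{-}$ in $r$, together with the observation that the cutoff $\|H\|_{C^{s-1,0}}^{-}\vee 1\ge 1$ neutralises the decrease of the exponent $s-i\le s-1$, so that
\[
\bigl(\|H\|_{C^{s-i,0}}^{-}\vee 1\bigr)^{s-i}\|\dot H\|_{C^{s-i,0}}\le \bigl(\|H\|_{C^{s-1,0}}^{-}\vee 1\bigr)^{s-1}\|\dot H\|_{C^{s-1,0}}
\]
pointwise. Taking expectations and multiplying the $j$ factor bounds via $\|{\mathcal B}^j\|_{C^s\mapsto C^{s-j}}\le\prod_{i=1}^{j}\|{\mathcal B}\|_{C^{s-i+1}\mapsto C^{s-i}}$ then delivers the claim.

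The argument is essentially a bookkeeping iteration, and the main (still modest) obstacle is carrying out the pointwise comparison of the H\"older seminorms cleanly, without losing multiplicative constants that would inflate the bound at each step. In the setting of the paper this is painless thanks to the convexity of $\Theta$ (which, by the remark in Section~\ref{preliminaries}, identifies $\sup_x\|f^{(j)}(x)\|$ with the Lipschitz seminorm of $f^{(j-1)}$) and to the comparison \eqref{hoeld_lip} between H\"older and Lipschitz norms at a single order, both of which let the needed inequality between $\|H\|_{C^{s-i,0}}^{-}$ and $\|H\|_{C^{s-1,0}}^{-}$ be absorbed without extra factors when combined with the $\vee 1$ cutoff.
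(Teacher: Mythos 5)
Your approach is exactly the paper's: write $\|{\mathcal B}^j\|_{C^s\mapsto C^{s-j}}\leq\prod_{i=1}^{j}\|{\mathcal B}\|_{C^{s-i+1}\mapsto C^{s-i}}$ and apply Proposition \ref{bound_on_norm_B} to each factor. One small caveat worth noting: the pointwise comparison
$(\|H\|_{C^{s-i,0}}^{-}\vee 1)^{s-i}\|\dot H\|_{C^{s-i,0}}\leq(\|H\|_{C^{s-1,0}}^{-}\vee 1)^{s-1}\|\dot H\|_{C^{s-1,0}}$
you invoke is not quite factor-free, since passing from a Lipschitz modulus of $D^{k-i+1}H$ (or $D^{k-i+1}\dot H$) to its $\rho$-H\"older modulus via \eqref{hoeld_lip} costs a factor $2$ that the $\vee\,1$ cutoff does not absorb; the paper's one-line proof is equally silent on this point, and the resulting constant inflation is harmless since it is swallowed by the unspecified $D_s$ of Theorem \ref{th_H_bound}.
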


\begin{proof}
Observe that 
\begin{align*}
\|{\mathcal B}^j\|_{C^s\mapsto C^{s-j}}
\leq \|{\mathcal B}\|_{C^{s}\mapsto C^{s-1}}\|{\mathcal B}\|_{C^{s-1}\mapsto C^{s-2}}\dots 
\|{\mathcal B}\|_{C^{s-j+1}\mapsto C^{s-j}}
\end{align*} 
and use the bound of Proposition \ref{bound_on_norm_B}.
%

\qed
\end{proof}

The method of the proof of Proposition \ref{bound_on_norm_B} as well
as the proofs in Section \ref{sec:RepresentB^k} relies on Fa\`a di Bruno type 
calculus developed in the literature on combinatorics (see, e.g., \cite{Hardy}).

\begin{proof}
Suppose $f$ is $k+1$ times continuously differentiable in $\Theta.$ 
Under the assumptions on $H$ and $f,$ the function $f\circ H$ is $k+1$ times continuously differentiable in 
$\Theta\times [0,1]$ with probability $1.$ 
Given $h_1,\dots, h_k\in E,$ let 
$$
\theta_{t_1,\dots, t_k} := \theta + \sum_{j=1}^k t_j h_j, t_j\in {\mathbb R}, j=1,\dots, k.
$$
For all $\theta \in \Theta,$ the function $(t, t_1, \dots, t_k)\mapsto (f\circ H)(\theta_{t_1,\dots, t_k};t)$
is $k+1$ times continuously differentiable in the set 
$[0,1]\times U_{\theta},$ where 
$$U_{\theta}:=\{(t_1,\dots, t_k)\in {\mathbb R}^k: \theta_{t_1,\dots, t_k}\in \Theta\}.$$ 
Note that $U_{\theta}$ is open and $(0,\dots, 0)\in U_{\theta}.$

Let $t_0:=t$ and $\bar T_k := \{t_0, t_1,\dots, t_k\}.$ For $T=\{t_{i_1}, \dots, t_{i_l}\}\subset \bar T_k$
and function $V(t_0, t_1,\dots, t_k),$
denote 
$$
\partial_{T} V := \frac{\partial^l V}{\partial t_{i_1} \dots \partial t_{i_l}}. 
$$

For a finite set $F,$ let ${\mathcal D}_F$ be the set of all partitions 
$\Delta:=(\Delta_1,\dots, \Delta_j)$ of set $F$ into disjoint nonempty subsets $\Delta_1, \dots , \Delta_j$ for some $j\geq 1.$ 
We set $|\Delta|:=j.$ The partitions that differ only by the order of their subsets will be considered identical. 
We will also use the notation ${\mathcal D}_{F,j}:=\{\Delta \in {\mathcal D}_F: |\Delta|=j\}.$ 
For a partition $\Delta:=(\Delta_1,\dots, \Delta_j)\in {\mathcal D}_{\bar T_k}$ 
and a function $V(t,t_1,\dots, t_k)$ (with values in a Banach space), 
denote 
$$
\partial_{\Delta} V := \partial_{\Delta_1}V \otimes \dots \otimes \partial_{\Delta_j} V.
$$ 

Observe that 
\begin{align*}
&
\nonumber
\frac{d}{dt} D^k f(H(\theta,t))[h_1\otimes \dots \otimes h_k]=
\partial_{\bar T_k}f(H(\theta_{t_1,\dots, t_k}, t))_{|t_1=\dots=t_k=0} 
\end{align*}

Our first goal is to derive a formula for the partial derivative $\partial_{\bar T_k}f(H(\theta_{t_1,\dots, t_k}, t)).$

\begin{lemma}
For all $\theta\in \Theta, (t,t_1,\dots, t_k) \in [0,1]\times U_{\theta},$ the following formula holds:
\begin{align}
\label{deriv_f_H}
&
\nonumber
\partial_{\bar T_k}f(H(\theta_{t_1,\dots, t_k}, t)) = 
\sum_{\Delta\in {\mathcal D}_{\bar T_k}} (D^{|\Delta|} f)(H(\theta_{t_1,\dots, t_k},t)) 
[\partial_{\Delta} H(\theta_{t_1,\dots, t_k},t)]
\\
&
=\sum_{j=1}^{k+1} \sum_{\Delta\in {\mathcal D}_{\bar T_k, j}} (D^{j} f)(H(\theta_{t_1,\dots, t_k},t)) [\partial_{\Delta_1} H(\theta_{t_1,\dots, t_k},t)\otimes \dots \otimes \partial_{\Delta_j} H(\theta_{t_1,\dots, t_k},t)].
\end{align}
\end{lemma}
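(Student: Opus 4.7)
The plan is to prove the identity by induction on $k$, which amounts to a multivariate Faà di Bruno formula adapted to the ``frozen second variable'' setup where $H$ depends jointly on $\theta_{t_1,\dots,t_k}$ and $t=t_0$. Under the hypotheses ($f \in C^{k+1}(\Theta)$ and $H$ a.s.\ $C^{k+1}$ on $\Theta\times[0,1]$), the map $(t_0,t_1,\dots,t_k)\mapsto f(H(\theta_{t_1,\dots,t_k},t_0))$ has continuous mixed partials of order $k+1$ on $[0,1]\times U_\theta$, so the iterated derivative $\partial_{\bar T_k}$ is well-defined and symmetric in the order of differentiation.

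The base case $k=0$ is just the chain rule: $\partial_{t_0} f(H(\theta,t_0)) = Df(H(\theta,t_0))[\partial_{t_0} H(\theta,t_0)]$, corresponding to the unique partition $\Delta=(\{t_0\})$ of $\bar T_0=\{t_0\}$. For the induction step, assume \eqref{deriv_f_H} holds with $k$ replaced by $k-1$, viewing the resulting expression as a smooth function of $t_k$ through $\theta_{t_1,\dots,t_k}=\theta_{t_1,\dots,t_{k-1}}+t_k h_k$, and apply $\partial_{t_k}$ term by term. For a fixed partition $\Delta=(\Delta_1,\dots,\Delta_j)\in {\mathcal D}_{\bar T_{k-1}}$, the Leibniz rule gives
\begin{align*}
\partial_{t_k}\Bigl((D^{j}f)(H)\bigl[\partial_{\Delta_1}H\otimes\cdots\otimes\partial_{\Delta_j}H\bigr]\Bigr)
&= (D^{j+1}f)(H)\bigl[\partial_{t_k}H\otimes\partial_{\Delta_1}H\otimes\cdots\otimes\partial_{\Delta_j}H\bigr] \\
&\quad + \sum_{i=1}^{j} (D^{j}f)(H)\bigl[\partial_{\Delta_1}H\otimes\cdots\otimes\partial_{\Delta_i\cup\{t_k\}}H\otimes\cdots\otimes\partial_{\Delta_j}H\bigr],
\end{align*}
where the chain rule for the outer factor produces the first term (since $D^{j}f\circ H$ is $C^1$ in each $t_i$ with derivative $(D^{j+1}f)(H)[\partial_{t_i}H\otimes\cdot]$) and the product rule on the tensor factors produces the remaining $j$ terms.

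The combinatorial core is then a bijection: every $\Delta'\in {\mathcal D}_{\bar T_k}$ arises from a unique $\Delta\in {\mathcal D}_{\bar T_{k-1}}$ in exactly one of two disjoint ways. Either $\{t_k\}$ is a singleton block of $\Delta'$ (in which case $\Delta$ is $\Delta'$ minus that singleton, contributing to the first term above), or $t_k$ lies in some larger block $B$ of $\Delta'$, in which case $\Delta$ is obtained from $\Delta'$ by removing $t_k$ from $B$ (contributing to the $i$th term where $\Delta_i=B\setminus\{t_k\}$). Summing over all $\Delta\in {\mathcal D}_{\bar T_{k-1}}$ and applying the inductive hypothesis yields exactly $\sum_{\Delta'\in {\mathcal D}_{\bar T_k}}(D^{|\Delta'|}f)(H)[\partial_{\Delta'}H]$, completing the induction.

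The main obstacle is purely notational: keeping track of the symmetric multilinear structure of $D^{j}f$ so that permuting the order of tensor factors in $\partial_\Delta H$ (which is implicit in identifying the set partition $\Delta$ with its ordered listing) is harmless, and verifying the combinatorial bijection above cleanly. The analytical ingredients — chain rule for Fréchet derivatives, product rule for multilinear forms, symmetry of higher mixed partials under $C^{k+1}$ regularity — are standard once the bookkeeping is in place.
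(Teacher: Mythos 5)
Your proof is correct and follows essentially the same route as the paper. The paper also proceeds by induction (in the form of the auxiliary claim $\partial_{\bar T_l}f(H(\theta_{t_1,\dots,t_k},t))=\sum_{\Delta\in{\mathcal D}_{\bar T_l}}(D^{|\Delta|}f)(H)[\partial_\Delta H]$ for $l=0,\dots,k$, with $k$ held fixed rather than growing), with the chain rule as the base case, and the inductive step consisting of exactly the two contributions you identify — one from the chain rule applied to the outer $D^{|\Delta|}f\circ H$ producing the new singleton block $\{t_{l+1}\}$, and one from the product rule on the tensor factors producing $\Delta_i\cup\{t_{l+1}\}$ — together with the same observation that these two operations give a bijection onto ${\mathcal D}_{\bar T_{l+1}}$. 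Your minor re-parameterization (replacing $\theta$ by $\theta+t_kh_k$ to reduce to the $(k-1)$-variable case) is harmless and equivalent to the paper's device of fixing $k$ and inducting on the set of differentiation variables.
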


\begin{proof}
In fact, we will prove by induction that, for all $l\leq k,$
\begin{align}
\label{partialone_A_A}
\partial_{\bar T_l}f(H(\theta_{t_1,\dots, t_k}, t))= 
\sum_{\Delta\in {\mathcal D}_{\bar T_l}} (D^{|\Delta|}f)(H(\theta_{t_1,\dots, t_k}, t))[\partial_{\Delta}H(\theta_{t_1,\dots, t_k}, t)]. 
\end{align}
Indeed, 
for $l=0,$ by the chain rule, we have
\begin{align*}
\partial_{\{t\}}f(H(\theta_{t_1,\dots, t_k}, t))= Df(H(\theta_{t_1,\dots, t_k}, t))[\partial_{\{t\}} H(\theta_{t_1,\dots, t_k}, t)],
\end{align*}
which is equivalent to \eqref{partialone_A_A}. Assuming that \eqref{partialone_A_A} holds 
for some $l<k$ and denoting $\bar H:=H(\theta_{t_1,\dots, t_k}, t),$ we have 
\begin{align*}
&
\partial_{\bar T_{l+1}}f(\bar H)= \partial_{\{t_{l+1}\}}\partial_{\bar T_l}f(\bar H)
=
\partial_{\{t_{l+1}\}}\sum_{\Delta\in {\mathcal D}_{\bar T_l}} (D^{|\Delta|}f)(\bar H)[\partial_{\Delta} \bar H]
\\
&
=\sum_{\Delta\in {\mathcal D}_{\bar T_l}} \partial_{\{t_{l+1}\}}(D^{|\Delta|}f)(\bar H)[\partial_{\Delta} \bar H]
\\
&
=
\sum_{\Delta\in {\mathcal D}_{\bar T_l}} (D^{|\Delta|+1}f)(\bar H)
[\partial_{\Delta} \bar H \otimes 
\partial_{\{t_{l+1}\}} \bar H]
+ \sum_{\Delta\in {\mathcal D}_{\bar T_l}} (D^{|\Delta|}f)(\bar H)
\biggl[\partial_{\{t_{l+1}\}}\partial_{\Delta}\bar H\biggr]
\\
&
=
\sum_{\Delta\in {\mathcal D}_{\bar T_l}} (D^{|\Delta|+1}f)(\bar H)
[\partial_{\Delta} \bar H\otimes 
\partial_{\{t_{l+1}\}} \bar H]
+ \sum_{\Delta\in {\mathcal D}_{\bar T_l}} (D^{|\Delta|}f)(\bar H)
\biggl[\partial_{\{t_{l+1}\}}\Motimes_{i=1}^{|\Delta|}\partial_{\Delta_i}\bar H\biggr]
\\
&
= 
\sum_{\Delta\in {\mathcal D}_{\bar T_l}} (D^{|\Delta|+1}f)(\bar H)[\partial_{\Delta} \bar H \otimes 
\partial_{\{t_{l+1}\}} \bar H]
+ \sum_{\Delta\in {\mathcal D}_{\bar T_l}} (D^{|\Delta|}f)(\bar H)\biggl[\sum_{i=1}^{|\Delta|}\partial_{\tilde \Delta^{(i)}} \bar H\biggr]
\\
&
= \sum_{\Delta\in {\mathcal D}_{\bar T_l}} (D^{|\Delta|+1}f)(\bar H)[\partial_{\tilde \Delta} \bar H]
+
\sum_{\Delta\in {\mathcal D}_{\bar T_l}} \sum_{i=1}^{|\Delta|}
(D^{|\Delta|}f)(\bar H)\Bigl[\partial_{\tilde \Delta^{(i)}} \bar H\Bigr],
\end{align*}
where, for a partition $\Delta=(\Delta_1,\dots, \Delta_j)\in {\mathcal D}_{\bar T_l},$
$$
\tilde \Delta :=(\Delta_1,\dots, \Delta_j, \{t_{l+1}\}) \in  {\mathcal D}_{\bar T_{l+1}} 
$$
and, for $1\leq i\leq j,$ 
$$
\tilde \Delta^{(i)} = (\Delta_1, \dots, \Delta_i \cup \{t_{l+1}\}, \dots, \Delta_j)\in {\mathcal D}_{\bar T_{l+1}}.
$$
Note that any partition in ${\mathcal D}_{\bar T_{l+1}}$ could be obtained (in a unique way) as 
an image of a partition $\Delta\in {\mathcal D}_{\bar T_{l}}$ under one of the mappings 
$\Delta\mapsto \tilde \Delta,$ $\Delta\mapsto \tilde \Delta_i, 1\leq i\leq j.$
This easily implies \eqref{partialone_A_A} for $l+1.$

\qed
\end{proof}

Recall that we identify partitions $\Delta=(\Delta_1, \dots, \Delta_j)\in {\mathcal D}_{\bar T_k, j}$
with different order of subsets. In formula \eqref{deriv_f_H}, an arbitrary order could be chosen. However, it will be convenient to assume in what follows that $t=t_0\in \Delta_1$ and that 
$|\Delta_2|\geq \dots \geq |\Delta_j|.$  
Let $T=\{t_{i_1}, \dots, t_{i_l}\}\subset \bar T_k$ and let $I:=\{i_1,\dots, i_l\}.$
Denote 
$$
h_I := h_{i_1}\otimes \dots \otimes h_{i_l}.
$$
For $I=\emptyset,$ set $h_I:=1.$
Then 
$$
\partial_T H(\theta_{t_1,\dots, t_k},t)= 
\begin{cases}
D^{l}H (\theta_{t_1,\dots, t_k},t)[h_I]\ \ \ \ \ \ \ \ \ {\rm if}\ 0\not\in I\\
D^{l-1}\dot H (\theta_{t_1,\dots, t_k},t)[h_{I\setminus \{0\}}]\ \ {\rm if}\ 0\in I.
\end{cases}
$$
Let $\bar I_k:= \{0,1,\dots, k\}.$
Denote, for $j=1,\dots, k+1,$ 
$$
{\mathcal K}_j := \Bigl\{(k_1,\dots , k_j): k_i\geq 1, i=1,\dots, j, k_2\geq \dots \geq k_j, \sum_{i=1}^j k_i=k+1\Bigr\}
$$
and, for $(k_1,\dots, k_j)\in {\mathcal K}_j,$
$$
{\mathcal D}_{\bar I_k, k_1,\dots, k_j}
:=\Bigl\{(I_1,\dots, I_j)\in {\mathcal D}_{\bar I_k,j}, |I_1|=k_1, \dots , |I_j|=k_j, I_1\ni 0\Bigr\}. 
$$
Clearly,
$$
{\rm card}({\mathcal D}_{\bar I_k, k_1,\dots, k_j})= \frac{k!}{(k_1-1)! k_2! \dots k_j!}.
$$
With these notations, it easily follows from \eqref{deriv_f_H} that 
\begin{align*}
&
\partial_{\bar T_k}f(H(\theta_{t_1,\dots, t_k}, t)) = 
\\
&
\sum_{j=1}^{k+1} \sum_{(k_1,\dots, k_j)\in {\mathcal K}_j} (D^j f)(H(\theta_{t_1,\dots, t_k},t))
\Bigl[(D^{k_1,\dots, k_j}H)(\theta_{t_1,\dots, t_k},t)[h_{k_1,\dots, k_j}]\Bigr], 
\end{align*}
where 
\begin{align*}
&
(D^{k_1,\dots, k_j}H)(\theta,t)
= (D^{k_1-1}\dot H)(\theta,t)
\otimes (D^{k_2} H)(\theta,t) \otimes \dots \otimes (D^{k_j} H)(\theta,t)
\end{align*}
and 
$$
h_{k_1,\dots, k_j} := \sum_{(I_1,\dots, I_j)\in {\mathcal D}_{\bar I_k, k_1,\dots, k_j}} h_{I_1\setminus \{0\}} \otimes h_{I_2}
\otimes \dots \otimes h_{I_j}.
$$
Thus, for all $\theta\in \Theta,$ 
\begin{align}
\label{ddtD^k}
&
\nonumber
\frac{d}{dt} D^k f(H(\theta,t))[h_1\otimes \dots \otimes h_k]=
\partial_{\bar T_k}f(H(\theta_{t_1,\dots, t_k}, t))_{|t_1=\dots=t_k=0} 
\\
&
=\sum_{j=1}^{k+1} \sum_{(k_1,\dots, k_j)\in {\mathcal K}_j} (D^j f)(H(\theta,t))
\Bigl[(D^{k_1,\dots, k_j}H)(\theta,t)[h_{k_1,\dots, k_j}]\Bigr].
\end{align}

Assume that $f\in C^{s}(E)$ for some $s=k+1+\rho,$ $k=1,2,\dots, $ $\rho\in (0,1].$
For all $j=1,\dots, k+1,$ 
\begin{align*}
\|(D^{j} f)(H)\|_{L_{\infty}(\Theta\times [0,1])} \leq \|f\|_{C^s}
\end{align*}
and, for all $\theta, \theta'\in \Theta,$ 
\begin{align*}
&
\|(D^j f)(H(\theta,t))-(D^j f)(H(\theta',t))\|
\leq 
2\|f\|_{C^{s}} \|H(\theta,t)-H(\theta',t)\|^{\rho}
\\
&
\leq 
2\|f\|_{C^{s}} (\|H\|_{C^{1,0}}^{-})^{\rho}\|\theta-\theta'\|^{\rho} 
\leq 2\|f\|_{C^{s}} (\|H\|_{C^{s-1,0}}^{-}\vee 1)^{\rho}\|\theta-\theta'\|^{\rho},
\end{align*}
where we used bound \eqref{hoeld_lip}.
Note also that 
\begin{align*}
&
\|D^{k_1-1}\dot H\|_{L_{\infty}}
\leq \|\dot H\|_{C^{s-1,0}}, 1\leq k_1\leq k+1,
\\
&
\|D^{k_i}H\|_{L_{\infty}}\leq \|H\|_{C^{s-1,0}}^{-},
1\leq k_i\leq k, 2\leq i\leq j.
\end{align*}
Moreover, using again bound \eqref{hoeld_lip}, we get
\begin{align*}
&
\|D^{k_1-1} \dot H\|_{C^{\rho,0}} \leq 2 \|\dot H\|_{C^{s-1,0}}, 1\leq k_1\leq k+1
\\
&
\|D^{k_i} H\|_{C^{\rho,0}} \leq 2\|H\|_{C^{s-1,0}}^{-}, 1\leq k_i\leq k, 2\leq i\leq j. 
\end{align*}
It follows from the definition of $D^{k_1,\dots, k_j}H$ that 
\begin{align*}
&
\|D^{k_1,\dots, k_j}H\|_{L_{\infty}} \leq \|D^{k_1-1}\dot H\|_{L_{\infty}}\prod_{i=2}^j \|D^{k_i}H\|_{L_{\infty}}
\leq \|\dot H\|_{C^{s-1,0}} (\|H\|_{C^{s-1,0}}^{-})^{j-1}
\\
&
\leq (\|H\|_{C^{s-1,0}}^{-}\vee 1)^k \|\dot H\|_{C^{s-1,0}}, j=1,\dots, k+1.
\end{align*}
Moreover, for all $\theta, \theta'\in \Theta,$
\begin{align*}
&
\|(D^{k_1,\dots, k_j}H)(\theta,t)-(D^{k_1,\dots, k_j}H)(\theta',t)\|
\\
&
\leq 
\|(D^{k_1-1}\dot H)(\theta,t)-(D^{k_1-1}\dot H)(\theta',t)\|
\|(D^{k_2} H)(\theta,t)\| \dots \|(D^{k_j} H)(\theta,t)\| 
\\
&
+
\|(D^{k_1-1}\dot H)(\theta',t)\|
\|(D^{k_2} H)(\theta,t)-(D^{k_2} H)(\theta',t)\| \dots \|(D^{k_j} H)(\theta,t)\| + \dots 
\\
&
+
\|(D^{k_1-1}\dot H)(\theta',t)\|
\|(D^{k_2} H)(\theta',t)\| \dots \|(D^{k_j} H)(\theta,t)-(D^{k_j} H)(\theta',t)\| 
\\
&
\leq 
\Bigl(2(\|H\|_{C^{s-1,0}}^{-}\vee 1)^{j-1} \|\dot H\|_{C^{s-1,0}}
\\
&
\ \ \ \ \ \ 
+2(j-1)(\|H\|_{C^{s-1,0}}^{-}\vee 1)^{j-2} (\|H\|_{C^{s-1,0}}^{-}\vee 1)\|\dot H\|_{C^{s-1,0}}\Bigr) 
\|\theta -\theta'\|^{\rho}
\\
&
\leq 
2(k+1)(\|H\|_{C^{s-1,0}}^{-}\vee 1)^{k} \|\dot H\|_{C^{s-1,0}} \|\theta -\theta'\|^{\rho}.
\end{align*}
It easily follows from the bounds above that
\begin{align*}
& 
\biggl|(D^j f)(H(\theta,t))\Bigl[(D^{k_1,\dots, k_j}H)(\theta,t)
[h_{k_1,\dots, k_j}]\Bigr]-
(D^j f)(H(\theta',t))\Bigl[(D^{k_1,\dots, k_j}H)(\theta',t)
[h_{k_1,\dots, k_j}]\Bigr]\biggr|
\\
&
\leq 
2(k+2)\|f\|_{C^{s}} (\|H\|_{C^{s-1,0}}^{-}\vee 1)^{s-1}\|\dot H\|_{C^{s-1,0}}
\|h_{k_1,\dots, k_j}\|\|\theta -\theta'\|^{\rho}.
\end{align*}
Also observe that 
$$
\sup_{\|h_1\|\leq 1, \dots, \|h_k\|\leq 1}\|h_{k_1,\dots, k_j}\| \leq 
{\rm card}({\mathcal D}_{\bar I_k, k_1,\dots, k_j})= \frac{k!}{(k_1-1)! k_2! \dots k_j!}.
$$
Using representation \eqref{ddtD^k}, it is now easy to show the following H\"older condition 
on the function $\theta\mapsto \frac{d}{dt} D^k f(H(\theta,t)):$
\begin{align*}
&
\biggl\|\frac{d}{dt} D^k f(H(\theta,t))-\frac{d}{dt} D^k f(H(\theta',t))\biggr\|
\\
&
\leq 
\sum_{j=1}^{k+1} \sum_{(k_1,\dots, k_j)\in {\mathcal K}_j} \frac{k!}{(k_1-1)! k_2! \dots k_j!}
2(k+2)\|f\|_{C^{s}}(\|H\|_{C^{s-1,0}}^{-}\vee 1)^{k+\rho} \|\dot H\|_{C^{s-1,0}} 
\|\theta-\theta'\|^{\rho}
%
\\
&
\leq 4 (k+1)^{k+2}\|f\|_{C^{s}}(\|H\|_{C^{s-1,0}}^{-}\vee 1)^{s-1}\|\dot H\|_{C^{s-1,0}} \|\theta-\theta'\|^{\rho}.
\end{align*}
Similarly, it could be shown that for all $j=0,\dots, k-1,$
\begin{align*}
&
\biggl\|\frac{d}{dt} D^j f(H(\theta,t))-\frac{d}{dt} D^j f(H(\theta',t))\biggr\|
\\
&
\leq 2 (k+1)^{k+2}\|f\|_{C^{s}}(\|H\|_{C^{s-1,0}}^{-}\vee 1)^{s-1}\|\dot H\|_{C^{s-1,0}} \|\theta-\theta'\|
\end{align*}
(in fact, a slightly better bound holds). Since also 
$$
\Bigl\|\frac{d}{dt}f(H(\theta,t))\Bigr\|\leq \|f\|_{C^1} \|\dot H\|_{L_{\infty}}
\leq \|f\|_{C^s} (\|H\|_{C^{s-1,0}}^{-}\vee 1)^{s-1}\|\dot H\|_{C^{s-1,0}}, 
$$
we can conclude that 
\begin{align*}
&
\biggl\|\frac{d}{dt} f(H(\cdot,t))\biggr\|_{C^{s-1}} \leq 
4(k+1)^{k+2}\|f\|_{C^{s}} (\|H\|_{C^{s-1,0}}^{-}\vee 1)^{s-1}\|\dot H\|_{C^{s-1,0}}.
\end{align*}
Also note that, for all $j\leq k,$ 
\begin{align*}
&
(D^j f)(H(\theta,1))-(D^j f)(H(\theta,0)) 
=\int_0^1\frac{d}{dt} D^j f(H(\theta,t))dt.
\end{align*}
As a consequence, we have 
\begin{align}
\label{Hoelder_f_H}
&
\nonumber
\|f(H(\cdot;1))- f(H(\cdot;0))\|_{C^{s-1}} 
= \biggl\|\int_0^1\frac{d}{dt} f(H(\cdot,t)) dt\biggr\|_{C^{s-1}}
\leq \int_{0}^1 \biggl\|\frac{d}{dt} f(H(\cdot,t))\biggr\|_{C^{s-1}} dt 
\\
&
\leq 4 (k+1)^{k+2}\|f\|_{C^{s}}(\|H\|_{C^{s-1,0}}^{-}\vee 1)^{s-1}\|\dot H\|_{C^{s-1,0}}.
\end{align}
It follows that 
\begin{align}
\label{expect_Hoelder_f_H}
&
\nonumber
{\mathbb E}\|f(H(\cdot;1))- f(H(\cdot;0))\|_{C^{s-1}}
\\
&
\leq 4 (k+1)^{k+2}\|f\|_{C^{s}}{\mathbb E}(\|H\|_{C^{s-1,0}}^{-}\vee 1)^{s-1}\|\dot H\|_{C^{s-1,0}}<\infty.
\end{align}
Using Proposition \ref{diff_under_exp}, it is easy to justify differentiation 
under the expectation sign and to prove that the function 
\begin{align*}
{\mathcal B}f(\theta) = {\mathbb E}\Bigl[f(H(\theta;1))- f(H(\theta;0))\Bigr], \theta \in \Theta
\end{align*}
is $k$ times Fr\`echet continuously differentiable with derivatives 
\begin{align*}
(D^j {\mathcal B}f)(\theta)= {\mathbb E}\Bigl[D^j f(H(\theta;1))- D^j f(H(\theta;0))\Bigr], j\leq k.
\end{align*} 
Therefore, bound \eqref{expect_Hoelder_f_H}, easily implies that 
\begin{align*}
\|{\mathcal B}f\|_{C^{s-1}} \leq 4 (k+1)^{k+2}
{\mathbb E} (\|H\|_{C^{s-1,0}}^{-}\vee 1)^{s-1}\|\dot H\|_{C^{s-1,0}} \|f\|_{C^s},
\end{align*}
which completes the proof.

\qed
\end{proof}

\section{Representation formulas for ${\mathcal B}^k f.$}
\label{sec:RepresentB^k}

In this section, we discuss a different approach to controlling H\"older norms of ${\mathcal B}^k f.$
Our starting point will be formula \eqref{B^k_int} 
and we will develop certain representations of partial derivatives 
$\frac{\partial^k f(G_k(\theta;t_1,\dots, t_k))}{\partial t_1\dots \partial t_k}$
involved in this formula to obtain more explicit formulas for functions  
${\mathcal B}^k f.$ To calculate higher order derivatives of superpositions of random homotopies 
involved in the definition of function $G_k,$ we need to develop a version of Fa\`a di Bruno 
type calculus (already used in Section \ref{sec:Holder_norms}) in the context of our problem that relies on some combinatorial structures introduced below. It is assumed throughout the section that $\Theta\subset E$
is an open subset.

\subsection{Trees of multilinear forms.}

We will start with the notion of a {\it tree of multilinear forms} defined as follows.

\begin{definition} 
\normalfont
Let $\tau$ be a rooted tree. We will label the vertices of this tree with multilinear 
forms so that the following conditions hold: 
\begin{enumerate}

\item If $r$ is the root of $\tau$ and  ${\rm deg}(r)=l,$ then $r$ is labeled with an $l$-linear form.

\item If $v$ is a vertex of $\tau,$ $v\neq r$ and ${\rm deg}(v)=l+1,$ then $v$ is labeled with 
an $l$-linear form. 

\item Moreover, if $v$ is a vertex with children $v_1,\dots, v_l$ and $M_v, M_{v_1},\dots, M_{v_l}$
are the corresponding multilinear forms, then $M_v\in {\mathcal M}_l(E_1,\dots, E_l;F)$
for some Banach spaces $E_1,\dots, E_l, F$ and,  for 
all $j=1,\dots, l,$ the form $M_{v_j}$ takes values in $E_j.$   
\end{enumerate}
Such a labeled rooted tree will be called a {\it tree of multilinear forms}. 
\end{definition}

Note that the terminal vertices 
of $\tau$ have degree $1$ and they should be labeled with $0$-linear forms (vectors in some Banach spaces). For any non-terminal vertex $v$ of $\tau,$ let $\tau_v$ denote its subtree rooted at vertex $v.$
It is also a tree of multilinear forms.

Let now $\tau$ be a tree of multilinear forms with root $v$ and the corresponding label $M_v,$
where $M_v$ is an $l$-linear form with values in a Banach space $F.$ Let $v_1,\dots, v_l$
be the children of $v.$
Define recursively the following 
vector $\mu_{\tau}\in F:$ 
\begin{align*}
\mu_{\tau}:= M_v[\mu_{\tau_{v_1}}, \dots, \mu_{\tau_{v_{l}}}].
\end{align*}
If $v_{i, j}, i=1,\dots , l_j$ are the children of $v_i,$
we can write
\begin{align*}
\mu_{\tau} = M_v\Bigl[\mu_{\tau_{v_1}}\otimes \dots \otimes \mu_{\tau_{v_{l}}}\Bigr]
= M_v\Bigl[(M_{v_1}\otimes \dots \otimes M_{v_l})\Bigl[\otimes_{j=1}^l \otimes_{i=1}^{l_j} \mu_{\tau_{v_{i,j}}}\Bigr]\Bigr].
\end{align*}
Denote $M^{(0)}_{\tau}:= M_v,$ $v$ being the root of $\tau.$ 
For arbitrary $j=1,\dots , L,$ where $L$ is the height 
of $\tau,$ let $v_1^{(j)}, \dots, v_{m_j}^{(j)}$ be the vertices of depth $j$ (in other words, the $j$-th generation 
of descendants of $v$). For $j=0,$ we set $m_0:=1$ and $v_1^{(0)}:=v.$ 
Let 
$$
M^{(j)}_{\tau}:= M_{v_1^{(j)}}\otimes \dots \otimes M_{v_{m_j}^{(j)}}
$$ 
(it is assumed that $v_1^{(j)},\dots, v_{m_j}^{(j)}$ are arranged in the natural order determined by the tree). 
Then, by induction,
$$
\mu_{\tau} = M^{(0)}_{\tau}\circ M^{(1)}_{\tau}\circ \dots \circ M^{(L)}_{\tau}. 
$$
In what follows, the vector $\mu_{\tau}\in F$ will be called {\it the superposition of multilinear forms} 
over the tree $\tau.$ 


It could be easily proved by induction that 
the following property holds:  

\begin{proposition}
\label{bound_on_mu_tau}
Suppose $\tau$ is a tree of multilinear forms defined above.
Then 
$$
\|\mu_{\tau}\|\leq \prod_{j=0}^L \prod_{i=1}^{m_j} \|M_{v_i^{(j)}}\|.
$$
\end{proposition}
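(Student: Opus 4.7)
The plan is to prove Proposition \ref{bound_on_mu_tau} by induction on the height $L$ of the tree $\tau$, exploiting the recursive definition of $\mu_\tau$ together with the standard operator-norm bound for multilinear forms.

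For the base case $L=0$, the tree consists only of the root $v$, which must be a terminal vertex and thus labeled by a $0$-linear form (i.e.\ a vector) $M_v\in F$. By definition $\mu_\tau = M_v$, so $\|\mu_\tau\| = \|M_v\|$, matching the right-hand side (a single factor indexed by $j=0$, $i=1$).

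For the inductive step, suppose the claim holds for all trees of height strictly less than $L$, and let $\tau$ be a tree of multilinear forms of height $L\geq 1$ with root $v$ of degree $l$, so that $M_v\in{\mathcal M}_l(E_1,\dots,E_l;F)$. Let $v_1,\dots,v_l$ be the children of $v$, and let $\tau_{v_i}$ denote the subtree rooted at $v_i$, which has height at most $L-1$ and is itself a tree of multilinear forms with values in $E_i$. From the recursive definition
\[
\mu_\tau = M_v[\mu_{\tau_{v_1}},\dots,\mu_{\tau_{v_l}}]
\]
and the defining inequality $\|M_v[x_1,\dots,x_l]\|\leq \|M_v\|\prod_{i=1}^l\|x_i\|$ for bounded multilinear forms, I obtain
\[
\|\mu_\tau\|\leq \|M_v\|\prod_{i=1}^{l}\|\mu_{\tau_{v_i}}\|.
\]
Applying the inductive hypothesis to each subtree $\tau_{v_i}$, whose vertices at depth $j$ from $v_i$ are precisely the vertices of $\tau$ at depth $j+1$ lying in $\tau_{v_i}$, gives
\[
\|\mu_{\tau_{v_i}}\|\leq \prod_{w\in \tau_{v_i}}\|M_w\|,
\]
where the product runs over every vertex of $\tau_{v_i}$. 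Taking the product over $i=1,\dots,l$ and multiplying by $\|M_v\|$ collects exactly one factor $\|M_w\|$ for each vertex $w$ of $\tau$, and regrouping this product by depth $j=0,1,\dots,L$ yields the claimed bound $\|\mu_\tau\|\leq \prod_{j=0}^L\prod_{i=1}^{m_j}\|M_{v_i^{(j)}}\|$.

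There is no real obstacle here: the only substantive point is the matching between the recursive decomposition of $\tau$ into root plus subtrees and the grouping of the product on the right-hand side by depth, which is a bookkeeping step. The inductive step relies only on the standard submultiplicativity of the operator norm for multilinear forms, which follows at once from the definition of $\|M\|$ given in Section \ref{preliminaries}.
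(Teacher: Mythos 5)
Your induction on the height of the tree is correct, and it is precisely the argument the paper indicates when it states that the bound ``could be easily proved by induction''; the base case and the inductive step via the recursive identity $\mu_\tau = M_v[\mu_{\tau_{v_1}},\dots,\mu_{\tau_{v_l}}]$ and submultiplicativity of the operator norm are handled properly.
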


Let now $(T,d)$ be a metric space and consider a labeling of a rooted tree $\tau$ with multilinear 
forms that continuously depend on point $t\in T.$ 
More precisely, for $t\in T,$ suppose that each vertex $v$
of the tree is labeled with a multilinear form $M_v(t)\in {\mathcal M}_l(E_1,\dots, E_l;F)$ (the spaces 
$l$ and $E_1,\dots, E_l, F$ could depend on $v$) so that, for each $t\in T,$ $\tau$ is a tree of multilinear 
forms and, for each vertex $v,$ $T\ni t\mapsto M_v(t)\in {\mathcal M}_l(E_1,\dots, E_l;F)$ is a continuous 
function. Let $\mu_{\tau}(t), t\in T$ be the corresponding superposition of multilinear forms over the tree.

\begin{proposition}
\label{continuity_mult_lin_tree}
Under the above assumptions, $T\ni t\mapsto \mu_{\tau}(t)\in F$ is a continuous function and 
\begin{align}
\label{continuity_mult_lin_tree_1}
\|\mu_{\tau}\|_{L_{\infty}}\leq \prod_{j=0}^L \prod_{i=1}^{m_j} \|M_{v_i^{(j)}}\|_{L_{\infty}}.
\end{align}
Moreover,
\begin{align}
\label{continuity_mult_lin_tree_2}
\|\mu_{\tau}\|_{{\rm Lip}_d(T)}\leq (L+2) \prod_{j=0}^L (m_j+1)
\prod_{j=0}^L \prod_{i=1}^{m_j} \|M_{v_i^{(j)}}\|_{{\rm Lip}_d(T)}.
\end{align}
\end{proposition}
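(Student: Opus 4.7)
The plan is to work directly with the representation
\[
\mu_{\tau}(t) \;=\; M^{(0)}_{\tau}(t) \circ M^{(1)}_{\tau}(t) \circ \cdots \circ M^{(L)}_{\tau}(t),
\]
established just before the proposition, where $M^{(j)}_{\tau}(t) = M_{v_1^{(j)}}(t)\otimes \cdots \otimes M_{v_{m_j}^{(j)}}(t)$ is the tensor product of the forms labeling the depth-$j$ vertices, and each $M_v(t)$ is assumed continuous in operator norm. The continuity of $t\mapsto \mu_{\tau}(t)$ will follow because (i) tensor products of bounded multilinear forms are jointly continuous in operator norm (by the cross-norm identity $\|M_1\otimes M_2\|=\|M_1\|\|M_2\|$), so each $M^{(j)}_{\tau}(\cdot)$ is continuous, and (ii) composition of bounded multilinear forms is continuous in operator norm, by the elementary estimate $\|A\circ B - A'\circ B'\|\le \|A-A'\|\|B\|+\|A'\|\|B-B'\|$. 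The pointwise bound \eqref{continuity_mult_lin_tree_1} is then just Proposition \ref{bound_on_mu_tau} applied for each fixed $t$ followed by taking the supremum of both sides.

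For the Lipschitz estimate \eqref{continuity_mult_lin_tree_2} I would use a two-layer telescoping argument. First, telescope over the levels of the tree:
\[
\mu_{\tau}(t)-\mu_{\tau}(t') \;=\; \sum_{j=0}^{L} M^{(0)}_{\tau}(t)\circ\cdots\circ M^{(j-1)}_{\tau}(t)\circ \bigl(M^{(j)}_{\tau}(t)-M^{(j)}_{\tau}(t')\bigr)\circ M^{(j+1)}_{\tau}(t')\circ\cdots\circ M^{(L)}_{\tau}(t'),
\]
and bound each summand using the multiplicativity $\|A\circ B\|\le\|A\|\|B\|$ of composition norms together with $\|M^{(k)}_{\tau}\|_{L_{\infty}}\le \prod_i \|M_{v_i^{(k)}}\|_{L_{\infty}}$. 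Second, inside a fixed level $j$, telescope over the $m_j$ factors of $M^{(j)}_{\tau}(t)=\bigotimes_i M_{v_i^{(j)}}(t)$ and invoke the cross-norm identity once more to get
\[
\|M^{(j)}_{\tau}(t)-M^{(j)}_{\tau}(t')\| \;\le\; m_j\, d(t,t')\prod_{i=1}^{m_j}\|M_{v_i^{(j)}}\|_{{\rm Lip}_d(T)}.
\]
Plugging this into the outer telescope and replacing each remaining $\|M_v\|_{L_{\infty}}$ factor by $\|M_v\|_{{\rm Lip}_d(T)}$ yields
\[
{\rm Lip}(\mu_{\tau}) \;\le\; \Bigl(\sum_{j=0}^{L} m_j\Bigr)\prod_{j=0}^L\prod_{i=1}^{m_j}\|M_{v_i^{(j)}}\|_{{\rm Lip}_d(T)}.
\]

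Combining this with the $L_{\infty}$ bound via $\|\mu_{\tau}\|_{{\rm Lip}_d(T)}=\max(\|\mu_{\tau}\|_{L_{\infty}},{\rm Lip}(\mu_{\tau}))$, and using the crude combinatorial inequality $\sum_{j=0}^L m_j \le (L+1)\max_j m_j \le (L+2)\prod_{j=0}^L (m_j+1)$, produces \eqref{continuity_mult_lin_tree_2}. The main difficulty is not analytic but notational: carefully tracking the two telescopes and choosing, in each summand, which factors to estimate by sup-norm and which by Lipschitz seminorm so that everything collapses into a single product of ${\rm Lip}_d$-norms. Everything else reduces to routine manipulation of the projective cross-norm.
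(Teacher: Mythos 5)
Your proof is correct and takes essentially the same route as the paper: a two-level telescoping (first across the depth levels $M_\tau^{(0)}\circ\cdots\circ M_\tau^{(L)}$, then within each level across the $m_j$ tensor factors), using the cross-norm identity for tensor products and submultiplicativity of composition, and then converting each resulting $L_\infty$-factor into a ${\rm Lip}_d$-norm. The paper packages this as the two intermediate bounds $\|\mu_\tau\|_{{\rm Lip}_d(T)}\leq (L+2)\prod_j\|M_\tau^{(j)}\|_{{\rm Lip}_d(T)}$ and $\|M_\tau^{(j)}\|_{{\rm Lip}_d(T)}\leq (m_j+1)\prod_i\|M_{v_i^{(j)}}\|_{{\rm Lip}_d(T)}$, whereas you telescope all the way through in one pass, obtaining the slightly sharper factor $\sum_j m_j$, which you then relax to $(L+2)\prod_j(m_j+1)$ to recover the stated constant; the substance is identical.
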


\begin{proof}
The proof of \eqref{continuity_mult_lin_tree_1} immediately follows from the bound 
of Proposition \ref{bound_on_mu_tau}. The proof of \eqref{continuity_mult_lin_tree_2}
follows from the following two bounds 
\begin{align*}
\|\mu_{\tau}\|_{{\rm Lip}_d(T)} \leq (L+2)\prod_{j=0}^L \|M_{\tau}^{(j)}\|_{{\rm Lip}_d(T)}
\end{align*}
and 
\begin{align*}
\|M_{\tau}^{(j)}\|_{{\rm Lip}_d(T)}\leq (m_j+1) 
\prod_{i=1}^{m_j} \|M_{v_i^{(j)}}\|_{{\rm Lip}_d(T)}
\end{align*}
that are based on rather elementary control of the corresponding Lipschitz norms.
\qed
\end{proof}



\subsection{Partition trees; $S$- and $M$-labelings.}

Let $T_l:= \{t_1,\dots, t_l\}, l=1,\dots, k.$ For a partition $\Delta=(\Delta_1,\dots, \Delta_j)\in {\mathcal D}_{T_l},$ an arbitrary order of subsets $\Delta_i$ will be chosen and fixed.
Let $\Delta:=(\Delta_1,\dots, \Delta_j)\in {\mathcal D}_{T_l}.$ For each $i=1,\dots, j,$ 
let $\Delta_i'$ be a partition of the set $\Delta_i\cap T_{l-1},$ provided that this set 
is nonempty (it is empty only if $\Delta_i=\{t_l\}$). 
This means that 
$$\Delta':=\bigcup_{i: \Delta_i\cap T_{l-1}\neq\emptyset}\Delta_i'$$ 
is a refinement of partition $\Delta \cap T_{l-1}.$
We will write, in this case, that $\Delta' \sqsupset \Delta.$ 

We will now construct a special rooted tree (a partition tree of set $T_k$), which will be labeled with a set 
of Fr\`echet derivatives (multilinear forms).

\begin{definition}{\bf Partition tree and $S$-labeling.}
\label{def_partition_tree}
\normalfont
We will call a labeled rooted tree $\tau$ {\it a partition tree} of set $T_k$ iff the following properties hold:
\begin{enumerate}
\item The height of $\tau$ is $k.$
\item Each vertex $v$ of $\tau$ is labeled with a subset $S_v\subset T_k.$
\item For the root $r$ of $\tau,$ $S_r:=T_k.$
\item For $j=1,\dots, k,$ let $\Delta^{(j)}$ be the set of all the labels of 
the vertices of depth $k-j+1$ and suppose the following 
properties hold. 
\begin{enumerate}
\item $\Delta^{(j)}\in {\mathcal D}_{T_j}.$ 
\item $\Delta^{(j-1)}\sqsupset \Delta^{(j)},$ $j=2,\dots, k.$
\item For $j=1,\dots, k,$ the sets from partition $\Delta^{(j)}$ are assigned to the vertices of depth $k-j+1$
in the order of the vertices of tree $\tau$ (from ``left" to ``right"). 
\item Let $v$ be a vertex of depth $k-j+1$ with label $S_v\in \Delta^{(j)}.$ 
Let $v_1,\dots, v_m$ be the children of $v$ in the tree $\tau.$
Then 
$\{S_{v_1},\dots, S_{v_m}\}$ is a partition of $S_v$ if 
$t_j\not\in S_v,$ or 
a partition of $S_{v}\setminus \{t_j\}$ if 
$t_j\in S_v.$
\end{enumerate} 
\end{enumerate}
\end{definition}

We will call the labeling of partition tree $\tau$ in the last definition (with subsets of $T_k$) an $S$-labeling.
We will also need another labeling of $\tau,$ with multilinear forms (namely, Fr\`echet derivatives of function $f$ and random homotopies $H_j, j=1,\dots, k$), which will be called an $M$-labeling.
For the existence of the derivatives, we assume that $f$ is $k$ times Fr\`echet differentiable in $\Theta$
and that random homotopy $H(\theta;t)$ is $k-1$ times Fr\`echet differentiable with respect 
to $\theta\in \Theta$ a.s., it is also differentiable with respect to $t\in [0,1]$ with derivative 
$\dot H(\theta;t)
$  
and $\dot H(\theta;t)$ is $k-1$ times Fr\`echet differentiable with respect 
to $\theta\in \Theta$ a.s. Moreover, $D^m H(\theta;t), D^m \dot H(\theta;t)$ will denote the $m$-th order 
Fr\`echet derivatives with respect to $\theta.$

\begin{definition}{\bf $M$-labeling.}
\label{M-labeling}
\normalfont
\begin{enumerate}
\item Assign to the root $r$ of $\tau$ the form 
$$
M_{r}:=(D^{{\rm deg}(r)}f)(G_k(\theta, t_1,\dots, t_k)).
$$
\item For $l=1,\dots ,k,$ let $v$ be a vertex of depth $k-l+1$ such that $t_l\not\in S_v.$
Then 
$$
M_v:=(D^{{\rm deg}(v)-1} H_l)(G_{l-1}(\theta; t_1,\dots, t_{l-1}), t_l).
$$ 
\item For $l=1,\dots ,k,$ let $v$ be a vertex of degree $k-l+1$ such that $t_l\in S_v.$
Then 
$$
M_v:=(D^{{\rm deg}(v)-1} \dot H_l)(G_{l-1}(\theta;t_1,\dots, t_{l-1}), t_l)
$$
(if $v$ is a terminal vertex, then ${\rm deg}(v)=1$ and the label is just a vector $\dot H_l(G_{l-1}(\theta;t_1,\dots, t_{l-1}), t_l);$ in particular, this is the case for $l=1$).  
\end{enumerate}
\end{definition}

The $M$-labeling of $\tau$ defines a tree of multilinear forms.
Note that, in principle,  the same $M$-labelings could be generated by different $S$-labelings.
In what follows, ${\mathcal T}_k$ denotes the set of all partition trees of $T_k$
provided with both $S$- and $M$-labeling.

For $\tau \in {\mathcal T}_k,$ let $\partial_{\tau} f(G_k)$ be the superposition 
of multilinear forms ($M$-labels) over the tree $\tau.$ It will be called the derivative of $f(G_k)$
over the tree $\tau.$

\begin{example}
\normalfont
Here is an example of an $S$-labeled and $M$-labeled partition tree $\tau\in {\mathcal T}_4:$\\
\Tree [.$\{t_1,t_2,t_3,t_4\}$  [.$\{t_1,t_2\}$ [.$\{t_1,t_2\}$ [.$\{t_1\}$ $\{t_1\}$ ] $\{t_2\}$ ] ] [.$\{t_3,t_4\}$ $\{t_3\}$ ] ]
\hskip 8mm
\Tree [.$D^2f(G_4)$ [.$DH_4(G_3;t_4)$ [.$D^2H_3(G_2;t_3)$ [.$DH_2(G_1,t_2)$ $\dot H_1(\theta;t_1)$ ] 
$\dot H_2(G_1;t_2)$ ] ] [.$D\dot H_4(G_3;t_4)$ $\dot H_3(G_2;t_3)$ ] ]
\\
For this tree,
\begin{align*}
&
\partial_{\tau}f(G_4) = 
\\
&
D^2 f(G_4)[DH_4(G_3;t_4)[D^2H_3(G_2;t_3)[DH_2(G_1;t_2)[\dot H_1(\theta;t_1)], \dot H_2(G_1;t_2)]], D\dot H_4(G_3;t_4)[\dot H_3(G_2;t_3)]]. 
\end{align*}
\end{example}



\subsection{Representation of partial derivatives of $f(G_k).$}

We will now derive a formula for the partial derivative $\partial_{T_{k}}f(G_k),$ representing 
it as a sum of the derivatives $\partial_{\tau} f(G_k)$ over all labeled partition trees $\tau\in {\mathcal T}_k.$ 


\begin{proposition}
\label{partial_deriv_no_s}
Suppose the following conditions hold:
\begin{enumerate}
\item $\Theta\subset E$ is an open subset; 
\item $f:\Theta \mapsto {\mathbb R}$ is 
$k$ times Fr\`echet continuously differentiable function in $\Theta;$
\item 
Random homotopy $H: \Theta \times [0,1]\mapsto \Theta$ 
is, with probability $1,$ $k$ times Fr\`echet continuously differentiable in $\Theta \times [0,1].$ 
\end{enumerate}
Then, the following representation holds:
\begin{align}
\label{main_represent_A}
\partial_{T_{k}}f(G_k) = \sum_{\tau\in {\mathcal T}_k}\partial_{\tau} f(G_k).
\end{align}
\end{proposition}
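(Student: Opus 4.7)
The plan is to prove the identity by induction on $k$, matching the recursive structure $G_k = H_k(G_{k-1}; t_k)$ with the level structure of partition trees in Definition \ref{def_partition_tree}.

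Base case $k=1$: $\mathcal{T}_1$ contains exactly one tree, namely a root labeled $\{t_1\}$ with a single terminal child also labeled $\{t_1\}$. Since $t_1 \in S_v$ and $\deg(v)=1$ at both vertices, the $M$-labeling by Definition \ref{M-labeling} is $Df(G_1)$ at the root and $\dot H_1(\theta; t_1)$ at the leaf. The claimed identity collapses to the chain rule
\[
\partial_{t_1} f(H_1(\theta; t_1)) = Df(H_1(\theta; t_1))[\dot H_1(\theta; t_1)],
\]
which is immediate from differentiability of $f$ and $H_1$.

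Inductive step: Assume the identity holds for $k-1$. Since $G_{k-1}$ does not depend on $t_k$, differentiating with respect to $t_k$ first yields
\[
\partial_{t_k} f(G_k) = Df(G_k)\bigl[\dot H_k(G_{k-1}; t_k)\bigr].
\]
Applying $\partial_{T_{k-1}}$ to this expression via the multivariable Faà di Bruno and Leibniz rules produces a sum indexed by a partition $\pi$ of $T_{k-1}$ splitting into blocks that feed into successive derivatives of $Df(G_k)$ (through the chain $G_k = H_k(G_{k-1}; t_k)$) versus blocks feeding into successive derivatives of $\dot H_k(G_{k-1}; t_k)$. Each inner chain-rule application generates factors of the form $D^{|\sigma|}H_k(G_{k-1}; t_k) \cdot \bigotimes_{B''} \partial_{B''}G_{k-1}$ or $D^{|\sigma|}\dot H_k(G_{k-1}; t_k) \cdot \bigotimes_{B''} \partial_{B''}G_{k-1}$, and the inductive hypothesis (suitably generalized to handle arbitrary subsets $B'' \subset T_{k-1}$ and vector-valued outer functions) expresses each $\partial_{B''} G_{k-1}$ as a sum over partition sub-trees rooted at $B''$.

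Matching against $\mathcal{T}_k$: the outermost partition of $T_k$ at depth $1$ is exactly $\Delta^{(k)} = \pi \cup \{B_{*} \cup \{t_k\}\}$, where $B_*$ is the union of blocks feeding into $\dot H_k$ (so the unique depth-$1$ vertex containing $t_k$ carries a $D^{\deg(v)-1}\dot H_k$ label, matching the $t_k \in S_v$ case of Definition \ref{M-labeling}), while the remaining depth-$1$ vertices carry $D^{\deg(v)-1} H_k$ labels (matching the $t_k \notin S_v$ case). The sub-trees below each depth-$1$ vertex are precisely the partition trees of the associated $(t_k$-reduced$)$ subset of $T_{k-1}$, producing the chain of refinements $\Delta^{(1)} \sqsupset \Delta^{(2)} \sqsupset \cdots \sqsupset \Delta^{(k)}$ required in Definition \ref{def_partition_tree}. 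The derivative orders $\deg(v)$ at the root and $\deg(v)-1$ at interior vertices come out automatically because the arity of every multilinear form at a Faà di Bruno step equals the number of sub-blocks produced at that step, which is exactly the number of children of the corresponding vertex.

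The main obstacle is the combinatorial bookkeeping to verify that this correspondence is a bijection: each labeled tree $\tau \in \mathcal{T}_k$ is produced exactly once and no term is omitted. This reduces to checking that the sequential refinements generated by iterated applications of Faà di Bruno at levels $l = k, k-1, \dots, 1$ reproduce precisely the admissible chains of partitions in Definition \ref{def_partition_tree}, and that the dichotomy ``$t_l \in S_v$ versus $t_l \notin S_v$'' faithfully records whether, at the $l$-th peeling stage, the variable $t_l$ has been differentiated against the time slot of $H_l$ (producing a $\dot H_l$ factor) or not (leaving an $H_l$ factor). Once this bookkeeping is in place, the proof is complete.
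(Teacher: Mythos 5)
Your outer induction on $k$ is genuinely different from the paper's argument, which instead proves the Fa\`a di Bruno identity \eqref{partialone} once and then unwinds the residual $\partial_{\Delta^{(k-l)}}\bar G_{k-l}$ level by level (induction on the tree depth $l$, with $k$ fixed). The base case $k=1$ is correct. The inductive step, however, contains a concrete gap that is more than bookkeeping: after you apply Leibniz and Fa\`a di Bruno to $\partial_{T_{k-1}}\bigl(Df(G_k)[\dot H_k(G_{k-1};t_k)]\bigr)$, the dangling factors are of the form $\partial_{B''}G_{k-1}$ for proper subsets $B''\subsetneq T_{k-1}$, and these are derivatives of the $\Theta$-valued composite $G_{k-1}$ itself, not of a real-valued functional $f(G_{k-1})$. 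The inductive hypothesis \eqref{main_represent_A} for $k-1$ says nothing about such objects: it requires the full variable set $T_{k-1}$ and a scalar outer function at the root. The ``suitable generalization'' you invoke is really a separate lemma (a tree expansion of $\partial_{B''}G_l$ with an $H_l$- or $\dot H_l$-derivative at the root and allowing any $B''\subset T_l$) that you would have to state and prove; formulating it precisely is exactly what forces the paper to carry the residual $\partial_{\Delta^{(k-l)}}\bar G_{k-l}$ through its iteration rather than re-expanding it from scratch.

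There is a cleaner way to save your plan. After writing $\partial_{t_k}f(G_k)=g(G_{k-1})$ with $g(\theta):=Df(H_k(\theta;t_k))[\dot H_k(\theta;t_k)]$, note that $g$ is a real-valued, $k-1$ times continuously Fr\`echet differentiable function of $\theta$ under the stated hypotheses, so the IH applies directly to $g$ and gives $\partial_{T_{k-1}}g(G_{k-1})=\sum_{\tau'\in\mathcal{T}_{k-1}}\partial_{\tau'}g(G_{k-1})$. Only then expand the root label $D^{\deg(r')}g(G_{k-1})$ by a single Leibniz/Fa\`a di Bruno step: each choice of a bipartition $A\sqcup B$ of the root's $\deg(r')$ argument slots, together with partitions $\pi$ of $A$ and $\rho$ of $B$, replaces the root of $\tau'$ by a two-level fragment with new root $D^{1+|\pi|}f(G_k)$ and new depth-$1$ vertices $D^{|\pi_i|}H_k(G_{k-1};t_k)$ and $D^{|\rho|}\dot H_k(G_{k-1};t_k)$, onto which the existing subtrees of $\tau'$ graft. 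The thing you must then actually verify — and this is the genuine content of the inductive step — is that this grafting is a bijection onto $\mathcal{T}_k$: the regrouped depth-$1$ blocks, with $t_k$ appended to the $\dot H_k$-block, produce exactly a $\Delta^{(k)}$ satisfying $\Delta^{(k-1)}\sqsupset\Delta^{(k)}$, and every $\tau\in\mathcal{T}_k$ arises once. That check must be written out; as it stands the proposal names the obstacle but does not clear it.
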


We will also need similar formulas for the Fr\`echet derivative of the function $\partial_{T_{k}}f(G_k(\theta;t_1,\dots, t_k))$ with respect to $\theta.$
To this end, define, for a fixed $h\in E,$ 
$$
\bar G_0(\theta,s):=\theta+sh, \ \ 
\bar G_1(\theta,s,t_1):=H_1(\theta+sh, t_1), \theta\in \Theta, s\in {\mathbb R}, t_1\in [0,1]
$$
and 
$$
\bar G_k (\theta,s,t_1,\dots, t_k):= H_k (\bar G_{k-1}(\theta,s,t_1,\dots, t_{k-1}), t_k), \theta\in \Theta, s\in {\mathbb R}, 
t_1,\dots, t_k\in [0,1].
$$
Note that $(\bar G_k)_{|s=0}=G_k.$

Assume that $f:\Theta \mapsto {\mathbb R}$ is 
$k+1$ times Fr\`echet continuously differentiable function in $\Theta$
and the random homotopy $H: \Theta \times [0,1]\mapsto \Theta$ 
is, with probability $1,$ $k+1$ times Fr\`echet continuously differentiable in $\Theta\times [0,1].$ 
Then, for all $\theta\in \Theta,$ the function $(s,t_1,\dots, t_k)\mapsto \bar G_k(\theta, s, t_1,\dots, t_k)$ is well defined and $k+1$ times continuously differentiable in the set 
$U_{\theta}\times [0,1]^k,$ where $U_{\theta}:=\{s\in {\mathbb R}: \theta+sh\in \Theta\}$ 
(with $0\in U_{\theta}$ being an interior point).

Let $\bar T_j:= \{s,t_1,\dots t_j\}, j=1,\dots, k,$ $\bar T_0:= \{s\}.$ 
Similarly to Definition \ref{def_partition_tree}, we can define partition trees of set $\bar T_k$
along with their $S$-labelings. The only difference is that the trees are now of height $k+1$
and the labelings are defined by sets in partitions $\Delta^{(0)}\sqsupset  \dots \sqsupset  \Delta^{(k)}.$
We have to assume now that $f$ is $k+1$ times Fr\`echet differentiable 
and that random homotopy $H(\theta;t)$ is $k$ times Fr\`echet differentiable with respect 
to $\theta\in \Theta$ a.s., it is also differentiable with respect to $t\in [0,1]$ with derivative 
$\dot H(\theta;t)$  and $\dot H(\theta;t)$ is $k$ times Fr\`echet differentiable with respect 
to $\theta\in \Theta$ a.s.

We also need to modify slightly Definition \ref{M-labeling}: 

\begin{definition}{\bf $M$-labeling (modified).}
\label{M-labeling_modified}
\normalfont
\begin{enumerate}
\item The label of the root $r$ of $\tau$ and, for $l=1,\dots, k,$ the labels
of the vertices of depth $k-l+1$  
are defined as in steps 1-3 of Definition \ref{M-labeling} (with $\bar G_k$ instead of $G_k$).   
 \item For $l=0,$ the only vertex $v$ of depth $k+1$ is terminal with $S$-label equal to the set $\{s\}$
and $M$-label $M_v:=h.$
\end{enumerate}
\end{definition}

Let $\bar {\mathcal T}_k$ be the set of all partition trees of $\bar T_k$ with both $S$- and $M$-labelings. 
For $\tau \in \bar {\mathcal T}_k,$ let $\partial_{\tau} f(\bar G_k)$ be the superposition 
of multilinear forms over the tree $\tau.$ As before, it is called the derivative of $f(\bar G_k)$
over the tree $\tau.$ It is easy to see that $\partial_{\tau} f(\bar G_k(\theta;s;t_1,\dots, t_k))$ can be written as 
$$
\partial_{\tau} f(\bar G_k(\theta;s;t_1,\dots, t_k)) = D_{\tau} f(\bar G_k(\theta;s;t_1,\dots, t_k))[h],
$$
where $D_{\tau} f(\bar G_k(\theta;s;t_1,\dots, t_k))$ is a linear functional on $E$ 
(recall that the only vertex of depth $k+1$ of tree $\tau$ is labeled with $h$).

With these definitions and notations, the following proposition holds.

\begin{proposition}
\label{partial_deriv_s}
Suppose that 
\begin{enumerate}
\item $\Theta\subset E$ is an open subset; 
\item $f:\Theta \mapsto {\mathbb R}$ is 
$k+1$ times Fr\`echet continuously differentiable function in $\Theta;$
\item 
Random homotopy $H: \Theta \times [0,1]\mapsto \Theta$ 
is, with probability $1,$ $k+1$ times Fr\`echet continuously differentiable in $\Theta\times [0,1].$ 
\end{enumerate}
Then, for all $\theta\in \Theta, s\in U_{\theta}, (t_1,\dots, t_k)\in [0,1]^k,$ the following representation holds:
\begin{align}
\label{main_represent}
\partial_{\bar T_k}f(\bar G_k) = \sum_{\tau\in \bar {\mathcal T}_k} \partial_{\tau} f(\bar G_k)
=\sum_{\tau\in \bar {\mathcal T}_k}
D_{\tau} f(\bar G_k(\theta;s,U_1,\dots, U_k))[h].
\end{align}
\end{proposition}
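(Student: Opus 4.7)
The plan is to reduce Proposition \ref{partial_deriv_s} to Proposition \ref{partial_deriv_no_s} by adjoining a deterministic ``$0$-th homotopy'' to the chain. Concretely, I set $t_0:=s$ and $H_0(\theta,s):=\theta+sh$, so that
\begin{align*}
\bar G_k(\theta,s,t_1,\dots,t_k) = H_k(\ldots H_1(H_0(\theta,s),t_1)\ldots,t_k)
\end{align*}
is the $(k+1)$-fold superposition of the augmented sequence $(H_0,H_1,\dots,H_k)$ applied to $\theta$, and $\bar T_k=\{t_0,t_1,\dots,t_k\}$ plays the role of $T_{k+1}$ for this system. All smoothness hypotheses at level $k+1$ are met: $f\in C^{k+1}$, each $H_j$ with $j\geq 1$ is a.s. $C^{k+1}$, and $H_0$ is $C^\infty$ since it is affine in $\theta$.

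Applying Proposition \ref{partial_deriv_no_s} (with $k$ replaced by $k+1$ and the augmented sequence of homotopies) yields
\begin{align*}
\partial_{\bar T_k}f(\bar G_k)=\sum_{\tau\in\widetilde{\mathcal T}_{k+1}}\partial_\tau f(\bar G_k),
\end{align*}
where $\widetilde{\mathcal T}_{k+1}$ consists of partition trees of $\bar T_k$ of height $k+1$, $S$- and $M$-labeled via the natural analogues of Definitions \ref{def_partition_tree} and \ref{M-labeling}. I would then identify $\widetilde{\mathcal T}_{k+1}$ with $\bar{\mathcal T}_k$. Because $\bar T_0=\{t_0\}$ admits the unique partition $\{\{t_0\}\}$, one is forced to take $\Delta^{(0)}=\{\{t_0\}\}$; hence the unique vertex $v^*$ at depth $k+1$ has $S$-label $\{t_0\}$. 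Its children would correspond to a partition of $S_{v^*}\setminus\{t_0\}=\emptyset$, so $v^*$ is necessarily terminal, ${\rm deg}(v^*)=1$, and its $M$-label is $D^0\dot H_0(\theta,s)=h$. These constraints match Definition \ref{M-labeling_modified} exactly, giving $\widetilde{\mathcal T}_{k+1}=\bar{\mathcal T}_k$. Finally, since the superposition of multilinear forms along a tree is multilinear in each individual $M$-label and the vector $h$ appears only at the terminal vertex $v^*$, one can factor this linear dependence out to write $\partial_\tau f(\bar G_k)=D_\tau f(\bar G_k)[h]$, yielding the claimed identity.

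The main obstacle is justifying the application of Proposition \ref{partial_deriv_no_s} to a chain whose zeroth link $H_0$ is deterministic and distributionally distinct from $H_1,\dots,H_k$. This is not a substantive difficulty: inspection of the inductive chain-rule/Leibniz-rule argument underlying Proposition \ref{partial_deriv_no_s} (analogous to the induction that established \eqref{partialone_A_A}) shows that only pointwise smoothness of each link in the chain enters the differential identities and the combinatorial tree unfolding---no distributional coupling of the $H_j$'s is used. Granting this, the rest of the proof is the direct combinatorial matching described above.
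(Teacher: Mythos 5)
Your proposal is correct, and it takes a genuinely different route from the paper's. The paper proves Proposition \ref{partial_deriv_s} directly, by the same Fa\`a di Bruno-type chain-rule induction used to establish formula \eqref{partialone}, carrying the extra variable $s$ along explicitly through the recursion \eqref{partialbarG_k}--\eqref{partialtwo}; it then observes in passing that Proposition \ref{partial_deriv_no_s} ``is almost identical.'' You instead reduce Proposition \ref{partial_deriv_s} to Proposition \ref{partial_deriv_no_s} by adjoining the deterministic zeroth link $H_0(\theta,s)=\theta+sh$ and renaming $(s,t_1,\dots,t_k)$ as a $(k+1)$-tuple of time variables, so that $\bar G_k$ becomes a $(k+1)$-fold superposition $H_k\bullet\cdots\bullet H_1\bullet H_0$. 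This unifies the two propositions under a single combinatorial core, at the cost of one extra observation: that the representation in Proposition \ref{partial_deriv_no_s} (and its proof) is a purely pointwise differential identity and does not rely on the $H_j$'s being identically distributed, so it applies verbatim to the inhomogeneous chain $(H_0,H_1,\dots,H_k)$. You make this observation explicitly, and it is indeed the only load-bearing extra step. Your matching of the combinatorics also checks out: after the index shift $l'=l+1$, the partitions $\Delta'^{(1)},\dots,\Delta'^{(k+1)}$ of $\{s\},\{s,t_1\},\dots,\bar T_k$ coincide with $\Delta^{(0)},\dots,\Delta^{(k)}$ of Definition \ref{def_partition_tree} in its modified form; the unique depth-$(k+1)$ vertex is forced to be terminal with $S$-label $\{s\}$ and $M$-label $D^0\dot H_0=h$, which reproduces Definition \ref{M-labeling_modified}; and the linear factoring $\partial_\tau f(\bar G_k)=D_\tau f(\bar G_k)[h]$ is exactly the one the paper itself invokes when introducing $D_\tau$. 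The trade-off: the paper's direct argument is fully self-contained; yours is shorter and avoids the ``almost identical'' repetition, but requires re-reading the proof of Proposition \ref{partial_deriv_no_s} to confirm that i.i.d.-ness is never used. Both are sound.
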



We provide below the proof of Proposition \ref{partial_deriv_s} (the proof of Proposition \ref{partial_deriv_no_s} is almost identical).

\begin{proof}
For a partition $\Delta:=(\Delta_1,\dots, \Delta_j)\in \bar {\mathcal T}_k$ 
and a function $F(s,t_1,\dots, t_k)$ with values in $E,$
denote 
$$
\partial_{\Delta} F := \partial_{\Delta_1}F \otimes \dots \otimes \partial_{\Delta_j} F.
$$ 
First note that 
\begin{align}
\label{partialone}
\partial_{\bar T_k}f(\bar G_k)= \sum_{\Delta\in {\mathcal D}_{\bar T_k}} (D^{|\Delta|}f)(\bar G_k)[\partial_{\Delta} \bar G_k]. 
\end{align}
The proof of this formula is identical to the proof of \eqref{deriv_f_H}.
Let $\Delta=(\Delta_1,\dots, \Delta_j)$ be a partition of $\bar T_k$ with $|\Delta|=j.$ 
Then, for all $i=1,\dots, j$ similarly to \eqref{partialone},
\begin{align}
\partial_{\Delta_i} \bar G_k = \sum_{\Delta_i'\in {\mathcal D}_{\Delta_i}} (D^{|\Delta_i'|} H_k)(\tilde G_{k-1})[\partial_{\Delta_i'}\bar G_{k-1}]
\end{align}
provided that $t_k\not\in \Delta_i,$ and 
\begin{align}
\label{partialbarG_k}
\partial_{\Delta_i} \bar G_k = \sum_{\Delta_j'\in {\mathcal D}_{\Delta_i\setminus \{t_k\}}} (D^{|\Delta_i'|} \dot H_k)(\tilde G_{k-1})[\partial_{\Delta_i'}\bar G_{k-1}],
\end{align}
provided that $t_k\in \Delta_i.$
Here $\tilde G_{k-1}(\theta, s, t_1,\dots, t_k)= (\bar G_{k-1}(\theta, s, t_1,\dots, t_{k-1}), t_k).$
Note that in a special case 
when $\Delta_{i}=\{t_k\},$ the set $\Delta_i\cap T_{k-1}=\emptyset$ and $|\Delta_i'|=0.$ 
In this case, 
$$(D^{|\Delta_i'|} \dot H_k)(\tilde G_{k-1})=\dot H_k (\tilde G_{k-1})= \dot H_k(\tilde G_{k-1})[\ ]$$
is a $0$-linear form (a vector in $E$) and $\partial_{\Delta_i'}\bar G_{k-1}$ is an ``empty variable".

Assume now that $t_k\in \Delta_r$ for some $r=1,\dots, j.$ Denote by $\Pi_r$ the following 
permutation operator (acting on tensor products of $j$ elements of arbitrary linear spaces):
$$
\Pi_r \Bigl(\Motimes_{i=1,\dots, j, i\neq r} A_i \Motimes A_r\Bigr):= 
A_1\Motimes \dots \Motimes A_{r-1}\Motimes A_r \Motimes A_{r+1}\Motimes \dots \Motimes A_j.
$$
It immediately follows that 
\begin{align}
\label{partialtwo}
&
\nonumber
\partial_{\Delta} \bar G_k = \partial_{\Delta_1}\bar G_k \Motimes \dots \Motimes \partial_{\Delta_j} \bar G_k
\\
&
\nonumber
= \Pi_r \biggl(\Motimes_{i\neq r} \sum_{\Delta_i'\in {\mathcal D}_{\Delta_i}} (D^{|\Delta_i'|} H_k)(\tilde G_{k-1})[\partial_{\Delta_i'}\bar G_{k-1}] \Motimes \sum_{\Delta_r'\in {\mathcal D}_{\Delta_r}\setminus \{t_k\}} (D^{|\Delta_r'|} \dot H_k)(\tilde G_{k-1})[\partial_{\Delta_r'}\bar G_{k-1}]\biggr)
\\
&
\nonumber
=\sum_{\Delta'\sqsupset \Delta}  \Pi_r \biggl(\Motimes_{i\neq r} (D^{|\Delta_i'|} H_k)(\tilde G_{k-1})[\partial_{\Delta_i'}\bar G_{k-1}] \Motimes (D^{|\Delta_r'|} \dot H_k)(\tilde G_{k-1})[\partial_{\Delta_r'}\bar G_{k-1}]\biggr)
\\
&
= 
\sum_{\Delta'\sqsupset \Delta}  \Pi_r\Bigl(\Motimes_{i\neq r} D^{|\Delta_i'|} H_k)(\tilde G_{k-1})\Motimes 
(D^{|\Delta_r'|} \dot H_k)(\tilde G_{k-1})\Bigr)\Bigl[\Pi_r \Bigl(\Motimes_{i\neq r} \partial_{\Delta_i'}\bar G_{k-1}\Motimes  \partial_{\Delta_r'}\bar G_{k-1}\Bigr)\Bigr].
\end{align}
For $l=0,\dots, k+1,$ let $\tau^{(l)}$ be the subtree of $\tau$ that includes all its vertices of depth $\leq l$ 
and all its edges between these vertices (and also has the same $S$- and $M$-labelings).
Clearly, $\tau^{(k+1)}= \tau.$
Let $\bar {\mathcal T}_k^{(l)}:= \{\tau^{(l)}: \tau \in \bar {\mathcal T}_k\},$
$l\leq k+1.$
Recall the definition of the forms $M_{\tau}^{(i)}, i=0,\dots, k+1.$
It is easy to see that 
$$
M_{\tau^{(l)}}^{(i)}=M_{\tau}^{(i)}, i=0,\dots, l, l=0,\dots, k+1.
$$
Using now the definition of labeled partition tree $\tau,$
it is easy to deduce from \eqref{partialone} and \eqref{partialtwo} that 
\begin{align*}
\partial_{\bar T_k}f(\bar G_k)= 
\sum_{\tau^{(1)}\in \bar {\mathcal T}_k^{(1)}}
(M^{(0)}_{\tau^{(1)}}\circ M^{(1)}_{\tau^{(1)}})[\partial_{\Delta^{(k-1)}} \bar G_{k-1}].
\end{align*}
It could be now shown by induction that, for all $l=1,\dots, k,$ 
\begin{align*}
\partial_{\bar T_k}f(\bar G_k)= 
\sum_{\tau^{(l)}\in \bar {\mathcal T}_k^{(l)}}
(M^{(0)}_{\tau^{(l)}}\circ M^{(1)}_{\tau^{(l)}} \circ \dots \circ M^{(l)}_{\tau^{(l)}})
[\partial_{\Delta^{(k-l)}} \bar G_{k-l}].
\end{align*}
Note that for $k=l,$ 
$$
\partial_{\Delta^{(k-l)}} \bar G_{k-l}=\partial_{\{s\}} \bar G_0=h = M_{\tau^{(k+1)}}^{(k+1)}
=M_{\tau}^{(k+1)}.
$$
Hence, we can conclude that 
\begin{align*}
\partial_{\bar T_k}f(\bar G_k)= \sum_{\tau\in \bar {\mathcal T}_k} (M^{(0)}_{\tau} \circ \dots \circ M^{(k+1)}_{\tau})= 
\sum_{\tau\in \bar {\mathcal T}_k} \mu_{\tau} = \sum_{\tau \in \bar {\mathcal T}_k}\partial_{\tau}f(\bar G_k),
\end{align*}
completing the proof. 
\qed
\end{proof}


\subsection{Representation of ${\mathcal B}^k f.$}

We will combine \eqref{B^k_int} with Proposition \ref{main_represent_A} to get a representation 
formula for $({\mathcal B}^k f)(\theta).$ Recall that $U_1,\dots, U_k$ are i.i.d. uniformly 
distributed in $[0,1]$ r.v. (independent of $H_1,\dots, H_k$).

\begin{proposition}
\label{prop_B^k}
Suppose, for some $k\geq 1,$ $f$ is $k$ times Fr\`echet continuously differentiable in $\Theta$ and 
$H$ is $k$ times Fr\`echet continuously differentiable in $\Theta\times [0,1].$
Moreover, suppose that 
\begin{align}
\label{moment_of_H}
{\mathbb E}\Bigl(\|H\|_{C^{k-1,0}(\Theta\times [0,1])}^{-}\vee 1\Bigr)^{k-1}
\|\dot H\|_{C^{k-1,0}(\Theta\times [0,1])}<\infty.
\end{align}
Then 
\begin{align}
\label{formula_B^k}
({\mathcal B}^k f)(\theta)=\sum_{\tau\in {\mathcal T}_k}{\mathbb E}\partial_{\tau} f(G_k(\theta;U_1,\dots, U_k)), \theta\in \Theta
\end{align}
and 
\begin{align}
\label{bound_on_B^k}
\|{\mathcal B}^k f\|_{L_{\infty}(\Theta)} \leq k^{k^2}\|f\|_{C^{k}(\Theta)}
\biggl({\mathbb E}\Bigl(\|H\|_{C^{k-1,0}(\Theta\times [0,1])}^{-}\vee 1\Bigr)^{k-1}
\|\dot H\|_{C^{k-1,0}(\Theta\times [0,1])}\biggr)^k.
\end{align}
\end{proposition}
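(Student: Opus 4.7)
The plan is to combine formula \eqref{B^k_int} from Proposition \ref{prop_B^k_AAA} with the tree expansion of Proposition \ref{partial_deriv_no_s}, then bound each summand via Proposition \ref{bound_on_mu_tau}. Under the smoothness hypotheses on $f$ and $H$, Proposition \ref{partial_deriv_no_s} gives the pointwise identity
\begin{align*}
\partial_{T_k} f(G_k(\theta; t_1, \ldots, t_k)) = \sum_{\tau \in \mathcal{T}_k} \partial_\tau f(G_k(\theta; t_1, \ldots, t_k)),
\end{align*}
and I then need only justify an interchange of expectation with the finite sum, plus the termwise bound.

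For each $\tau \in \mathcal{T}_k$, the summand $\partial_\tau f(G_k)$ is a superposition of multilinear forms as in Proposition \ref{bound_on_mu_tau}. The root of $\tau$ carries $D^{\deg(r)} f(G_k)$ of operator norm at most $\|f\|_{C^k(\Theta)}$ (since $\deg(r) \le k$). For each $l = 1, \ldots, k$, the $|\Delta^{(l)}|$ vertices at depth $k-l+1$ carry derivatives $D^m H_l$ or $D^m \dot H_l$ with $m \le k-1$; by construction of the $M$-labeling (Definition \ref{M-labeling}), exactly one such vertex (the one whose $S$-label contains $t_l$) carries a $\dot H_l$-label, and the remaining $|\Delta^{(l)}| - 1$ carry $H_l$-labels. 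Applying Proposition \ref{bound_on_mu_tau} and using $|\Delta^{(l)}| - 1 \le k - 1$, I obtain
\begin{align*}
|\partial_\tau f(G_k)| \le \|f\|_{C^k(\Theta)} \prod_{l=1}^k (\|H_l\|_{C^{k-1,0}(\Theta \times [0,1])}^- \vee 1)^{k-1} \, \|\dot H_l\|_{C^{k-1,0}(\Theta \times [0,1])}.
\end{align*}
Taking expectations and using independence of $H_1, \ldots, H_k$ (each distributed as $H$) yields
\begin{align*}
\mathbb{E}|\partial_\tau f(G_k(\theta; U_1, \ldots, U_k))| \le \|f\|_{C^k(\Theta)} \Bigl(\mathbb{E}(\|H\|_{C^{k-1,0}(\Theta\times [0,1])}^- \vee 1)^{k-1} \|\dot H\|_{C^{k-1,0}(\Theta\times [0,1])}\Bigr)^k,
\end{align*}
which is finite by hypothesis \eqref{moment_of_H}. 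Summing over $\tau \in \mathcal{T}_k$ then certifies the integrability condition of Proposition \ref{prop_B^k_AAA}, so that $\mathcal{B}^k f(\theta) = \mathbb{E}\,\partial_{T_k} f(G_k(\theta; U_1, \ldots, U_k))$; interchanging sum and expectation produces formula \eqref{formula_B^k}.

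The bound \eqref{bound_on_B^k} then follows by summing the termwise estimates, provided $|\mathcal{T}_k| \le k^{k^2}$. The main obstacle is justifying this cardinality bound: each element of $\mathcal{T}_k$ is determined by a chain of refinements $\Delta^{(k)} \sqsupset \ldots \sqsupset \Delta^{(1)}$ together with an ordering of the parts at each level (the $M$-labeling is then forced by the $S$-labeling). Since the number of ordered partitions of $T_j$ is at most $j^j$, I get at most $\prod_{j=1}^k j^j \le k^{k^2}$ such trees, with considerable room to spare.
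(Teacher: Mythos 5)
Your proposal is correct and follows essentially the same path as the paper's proof: Proposition \ref{prop_B^k_AAA} for the integral representation, Proposition \ref{partial_deriv_no_s} for the tree expansion, Proposition \ref{bound_on_mu_tau} for the termwise bound via the $M$-labeling structure, independence of the $H_l$ for the expectation bound, and a cardinality estimate for $\mathcal{T}_k$. The only (harmless) difference is that the paper uses the sharper level-dependent exponent $(l-1)$ in its pointwise bound $|\partial_\tau f(G_k)| \le \|f\|_{C^k(\Theta)}\prod_{l=1}^k(\|H_l\|^-_{C^{k-1,0}}\vee 1)^{l-1}\|\dot H_l\|_{C^{k-1,0}}$ (since $|\Delta^{(l)}|-1\le l-1$), whereas you use the uniform $(k-1)$, but both collapse to the same estimate after taking expectations.
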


\begin{remark}
\normalfont 
For $k=1,$ the assumption becomes 
${\mathbb E}\|\dot H\|_{L_{\infty}(\Theta\times [0,1])}<\infty$
and the bound becomes 
$
\|{\mathcal B}^k f\|_{L_{\infty}(\Theta)} \leq \|f\|_{C^1(\Theta)}
{\mathbb E}\|\dot H\|_{L_{\infty}(\Theta\times [0,1])}^k
$
(replacing $\|H\|_{C^{k-1,0}(\Theta\times [0,1])}$ by $1$).
\end{remark}

\begin{proof}
It easily follows from the definition of $M$-labeling and the bound of Proposition \ref{bound_on_mu_tau}
that, for all $\tau\in {\mathcal T}_k,$
\begin{align}
\label{bound_on_partial_tau}
&
\nonumber
|\partial_{\tau} f(G_k(\theta;U_1,\dots, U_k))| 
\\
&
\leq \|f\|_{C^{k}(\Theta)} 
\prod_{l=1}^{k}\Bigl(\|H_l\|_{C^{k-1,0}(\Theta\times [0,1])}^{-}\vee 1\Bigr)^{l-1}
\|\dot H_l\|_{C^{k-1,0}(\Theta\times [0,1])}.
\end{align}
To see this note that $M$-labelings of any tree $\tau\in {\mathcal T}_k$ involve 
the Fr\`echet derivatives of $f$ of order at most $k$ and the Fr\`echet derivatives of 
$H$ and $\dot H$ of order at most $k-1.$ Moreover, for $l=1,\dots, k,$ the number of vertices of 
depth $k-l+1$ labeled with Fr\`echet derivatives of $H$ is, at most, $l-1$ while the number 
of vertices of depth $k-l+1$ labeled by a Fr\`echet derivative of $\dot H$ is exactly $1.$   
Since $\|H_l\|_{C^{k-1,0}(\Theta\times [0,1])}^{-}$ could be smaller than $1,$ we need to take 
a maximum with $1$ to get a valid bound in the case when the number of vertices labeled with Fr\`echet derivatives of $H$ is smaller than $l-1.$

Recalling that $H_1,\dots, H_k$ are i.i.d. copies of $H,$ condition \eqref{moment_of_H} and bound 
\eqref{bound_on_partial_tau} imply 
integrability of $\partial_{\tau} f(G_k(\theta;U_1,\dots, U_k))$ and formula \eqref{formula_B^k}
now follows from propositions \ref{prop_B^k_AAA}  and \ref{partial_deriv_no_s}.
Moreover,
\begin{align*}
&
\nonumber
{\mathbb E}|\partial_{\tau} f(G_k(\theta;U_1,\dots, U_k))| 
\\
&
\leq \|f\|_{C^{k}(\Theta)} 
{\mathbb E}\prod_{l=1}^{k}\Bigl(\|H_l\|_{C^{k-1,0}(\Theta\times [0,1])}^{-}\vee 1\Bigr)^{l-1}
\|\dot H_l\|_{C^{k-1,0}(\Theta\times [0,1])}
\\
&
\leq 
\|f\|_{C^{k}(\Theta)} 
\biggl({\mathbb E}\Bigl(\|H\|_{C^{k-1,0}(\Theta\times [0,1])}^{-}\vee 1\Bigr)^{k-1}
\|\dot H\|_{C^{k-1,0}(\Theta\times [0,1])}\biggr)^k.
\end{align*}
Since ${\rm card}({\mathcal T}_k)\leq k^{k^2},$ 
we get bound \eqref{bound_on_B^k}.
\qed
\end{proof}

Using Proposition \ref{partial_deriv_s}, we can similarly get a representation as well as bounds for Fr\`echet 
derivative of ${\mathcal B}^k f(\theta).$

\begin{proposition}
\label{prop_B^k_deriv}
Suppose, for some $k\geq 1,$ $f$ is $k+1$ times Fr\`echet continuously differentiable in $\Theta$ and 
$H$ is $k+1$ times Fr\`echet continuously differentiable in $\Theta\times [0,1].$
Moreover, suppose that 
\begin{align}
\label{moment_of_H_deriv}
{\mathbb E}\Bigl(\|H\|_{C^{k,0}(\Theta\times [0,1])}^{-}\vee 1\Bigr)^{k}
\|\dot H\|_{C^{k,0}(\Theta\times [0,1])}<\infty.
\end{align}
Then $\theta\ni \Theta\mapsto ({\mathcal B}^k f)(\theta)$ is Fr\`echet continuously differentiable in $\Theta$
with derivative
\begin{align}
\label{formula_B^k_deriv}
({\mathcal B}^k f)^{\prime}(\theta)=\sum_{\tau\in \bar {\mathcal T}_k}
{\mathbb E}D_{\tau} f(\bar G_k(\theta;0,U_1,\dots, U_k)), \theta\in \Theta
\end{align}
and
\begin{align}
\label{bound_on_B^k_deriv}
\|(B^{k} f)^{\prime}\|_{L_{\infty}(\Theta)} \leq (k+1)^{(k+1)^2}\|f\|_{C^{k+1}(\Theta)}
\biggl({\mathbb E}\Bigl(\|H\|_{C^{k,0}(\Theta\times [0,1])}^{-}\vee 1\Bigr)^{k}
\|\dot H\|_{C^{k,0}(\Theta\times [0,1])}\biggr)^k.
\end{align}
If, in addition $f\in C^s(\Theta)$ for some $s=k+1+\rho, \rho\in (0,1]$ 
and 
\begin{align}
\label{moment_of_H_deriv_+}
{\mathbb E}\Bigl(\|H\|_{C^{s-1,0}(\Theta\times [0,1])}^{-}\vee 1\Bigr)^{s-1}
\|\dot H\|_{C^{s-1,0}(\Theta\times [0,1])}<\infty,
\end{align} 
then 
\begin{align}
\label{bound_on_B^k_deriv_Lip}
&
\|B^{k} f\|_{C^{1+\rho}(\Theta)} 
\nonumber
\\
&
\leq 3 (k+2)^{2(k+2)^2}\|f\|_{C^{s}(\Theta)}
\biggl({\mathbb E}\Bigl(\|H\|_{C^{s-1,0}(\Theta\times [0,1])}^{-}\vee 1\Bigr)^{s-1}
\|\dot H\|_{C^{s-1,0}(\Theta\times [0,1])}\biggr)^k.
\end{align}
\end{proposition}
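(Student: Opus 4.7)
The plan is to mirror the scheme of the proof of Proposition~\ref{prop_B^k}, but to use the enriched partition-tree representation of Proposition~\ref{partial_deriv_s} in place of Proposition~\ref{partial_deriv_no_s} and to pick up an extra H\"older bookkeeping at the end. I would start from the identity
$$({\mathcal B}^k f)(\theta)={\mathbb E}\,\partial_{T_k}f(G_k(\theta;U_1,\dots,U_k))$$
supplied by Proposition~\ref{prop_B^k_AAA}, and then differentiate in $\theta$ under the expectation using Proposition~\ref{diff_under_exp}. For this, fix a direction $h\in E$, introduce the perturbation $\bar G_k(\theta,s;t_1,\dots,t_k)=G_k(\theta+sh;t_1,\dots,t_k)$, and apply Proposition~\ref{partial_deriv_s} to $\bar G_k$, which yields the pointwise identity
$$\partial_{\bar T_k}f(\bar G_k)=\sum_{\tau\in\bar{\mathcal T}_k}D_\tau f(\bar G_k)[h].$$
Taking expectations and evaluating at $s=0$ then gives \eqref{formula_B^k_deriv}; the integrability required to commute $\partial_s$ with the expectation is a by-product of the sup-norm estimate discussed next.

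For the sup-norm bound \eqref{bound_on_B^k_deriv}, I would apply Proposition~\ref{bound_on_mu_tau} to each superposition $D_\tau f(\bar G_k(\theta;0,U_1,\dots,U_k))$ exactly as in the proof of \eqref{bound_on_B^k}. The only new feature is that the trees $\tau\in\bar{\mathcal T}_k$ have height $k+1$ (one extra depth is introduced by the $s$-variable) and carry one extra terminal vertex labelled by $h$; consequently, the root label involves a derivative of $f$ of order at most $k+1$, while the remaining labels involve derivatives of the $H_l$ and $\dot H_l$ of order at most $k$. Bounding each factor in the tree product via $\|f\|_{C^{k+1}(\Theta)}$ and via the $\|\cdot\|_{C^{k,0}}^{-}$-control supplied by hypothesis \eqref{moment_of_H_deriv}, using the i.i.d.\ property of $\{H_l\}$ to factor the expectation, and summing over $\bar{\mathcal T}_k$ with the crude cardinality bound ${\rm card}(\bar{\mathcal T}_k)\le(k+1)^{(k+1)^2}$ delivers \eqref{bound_on_B^k_deriv}. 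The same sup-norm control furnishes the Lipschitz-in-$\theta$ estimate needed to justify the differentiation under the expectation in the previous paragraph.

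The H\"older bound \eqref{bound_on_B^k_deriv_Lip} is the main technical step and the part I expect to be the principal obstacle. Under the stronger hypotheses $f\in C^s(\Theta)$ with $s=k+1+\rho$ and \eqref{moment_of_H_deriv_+}, I would prove the desired bound tree-by-tree. For each $\tau\in\bar{\mathcal T}_k$, $\theta\mapsto D_\tau f(\bar G_k(\theta;0,U_1,\dots,U_k))$ is a superposition of multilinear forms evaluated at the points $\bar G_{l-1}(\theta;0,U_1,\dots,U_{l-1})$, each of which is a Lipschitz function of $\theta$ with constant at most $(\|H\|_{C^{s-1,0}}^{-}\vee 1)^{l-1}$. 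I would then propagate $\rho$-H\"older control through the tree via the elementary product rule
$$\|F_1\otimes\cdots\otimes F_m\|_{{\rm Lip}_{d,\rho}}\le\sum_{i=1}^m\|F_i\|_{{\rm Lip}_{d,\rho}}\prod_{j\ne i}\|F_j\|_{L_\infty},$$
which is a H\"older analogue of Proposition~\ref{continuity_mult_lin_tree}, combined with the conversion \eqref{hoeld_lip} between Lipschitz and H\"older norms. This yields a bound of the shape
$$\|D_\tau f(\bar G_k(\cdot;0,U_1,\dots,U_k))\|_{{\rm Lip}_{d,\rho}}\le C(k)\|f\|_{C^s(\Theta)}\prod_{l=1}^{k}(\|H_l\|_{C^{s-1,0}}^{-}\vee 1)^{s-1}\|\dot H_l\|_{C^{s-1,0}}$$
for a combinatorial constant $C(k)$ that, after taking expectations and summing over $\bar{\mathcal T}_k$, can be absorbed into $(k+2)^{2(k+2)^2}$; combining with the sup-norm estimate from the second paragraph then produces \eqref{bound_on_B^k_deriv_Lip}. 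The delicate point is the H\"older bookkeeping: along each tree I must use the top-order H\"older regularity of at most one label (either the top-order derivative of $f$ or the top-order derivative of a single $H_l$ or $\dot H_l$) and treat all remaining labels via their Lipschitz norms, so that the total exponent of $(\|H\|_{C^{s-1,0}}^{-}\vee 1)$ in the product stays bounded by $s-1=k+\rho$ rather than blowing up to $(s-1)k$.
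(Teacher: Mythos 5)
Your proposal is correct and essentially reproduces the paper's proof: the same reliance on Proposition \ref{partial_deriv_s} for the tree representation of the directional derivative, Proposition \ref{diff_under_exp} to differentiate under the expectation, Proposition \ref{bound_on_mu_tau} plus the cardinality bound $\mathrm{card}(\bar{\mathcal T}_k)\le (k+1)^{(k+1)^2}$ for \eqref{bound_on_B^k_deriv}, and a product-rule propagation of H\"older control through the tree for \eqref{bound_on_B^k_deriv_Lip}. The only organizational difference is at the ``delicate point'' you flag: the paper avoids iterated chain-rule conversions by viewing $D_\tau f(\bar G_k(\theta;0,U_1,\dots,U_k))$ as a function of the auxiliary state $\eta(\theta)=(\bar G_j(\theta;0,U_1,\dots,U_j))_{0\le j\le k}\in\Theta^{k+1}$ with metric $\Delta(\eta,\eta')=\max_j\|\eta_j-\eta_j'\|$, applying Proposition \ref{continuity_mult_lin_tree} and \eqref{hoeld_lip} in that metric, and then converting to the $\theta$-metric once via $\Delta(\eta(\theta),\eta(\theta'))\le\prod_{j=1}^k(\|H_j\|^{-}_{C^{1,0}}\vee 1)\,\|\theta-\theta'\|$, which picks up a single factor $\bigl(\prod_j(\|H_j\|^{-}\vee 1)\bigr)^\rho$ and so keeps the exponent on each $\|H_l\|^{-}_{C^{s-1,0}}\vee 1$ at most $l+\rho\le s-1$, consistent with the integrability hypothesis \eqref{moment_of_H_deriv_+} -- the same outcome your telescoping rule with ``one H\"older factor, rest $L_\infty$'' achieves.
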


\begin{proof}
Note that, by Proposition \ref{partial_deriv_s}, 
\begin{align*}
&
D\partial_{T_k} f(G_k(\theta;U_1,\dots, U_k))[h]= \partial_{{\bar T}_k} f(\bar G_k(\theta;0;U_1,\dots, U_k))
\\
&
=\sum_{\tau\in \bar {\mathcal T}_k}
D_{\tau} f(\bar G_k(\theta;0,U_1,\dots, U_k))[h].
\end{align*}
Similarly to \eqref{bound_on_partial_tau},
\begin{align}
\label{bound_on_D_tau}
&
\nonumber
\|D_{\tau} f(\bar G_k(\theta;0,U_1,\dots, U_k))\| 
\\
&
\leq \|f\|_{C^{k+1}(\Theta)} 
\prod_{l=1}^{k}\Bigl(\|H_l\|_{C^{k,0}(\Theta\times [0,1])}^{-}\vee 1\Bigr)^{l}
\|\dot H_l\|_{C^{k,0}(\Theta\times [0,1])}.
\end{align}
Under assumption \eqref{moment_of_H_deriv}, it easily follows that 
$$
{\mathbb E}\|D\partial_{T_k} f(G_k(\theta;U_1,\dots, U_k))\| <\infty.
$$
Since, by representation \eqref{B^k_int},
$$
({\mathcal B}^k f)(\theta)= {\mathbb E}\partial_{T_k} f(G_k(\theta;U_1,\dots, U_k)),
$$
we can now use Proposition \ref{diff_under_exp} to prove continuous differentiability 
of $({\mathcal B^k} f)(\theta)$ and, using \eqref{bound_on_D_tau} again,
prove bound \eqref{bound_on_B^k_deriv}.

Finally, to prove the last bound \eqref{bound_on_B^k_deriv_Lip}, we need  to control the ${\rm Lip}_{d,\rho}$-norm 
of functions  
$\theta\mapsto D_{\tau} f(\bar G_k(\theta;0,U_1,\dots, U_k))$ involved in the expression for 
$({\mathcal B}^{k} f)^{\prime}(\theta)$ as well as ${\rm Lip}_d$-norm of functions 
$\theta\mapsto \partial_{\tau} f(G_k(\theta; U_1,\dots, U_k))$ involved in the expression for 
$({\mathcal B}^{k} f)(\theta).$  
Here $d(\theta,\theta'):= \|\theta-\theta'\|, \theta,\theta'\in E$ is the metric of Banach space $E.$ 
It follows from the definition of $M$-labeling of tree $\tau$ that its $M$-labels and   
the derivative $D_{\tau} f(\bar G_k(\theta;0,U_1,\dots, U_k))[h]$ could be viewed 
as functions of variable 
$$
\eta(\theta):=\Bigl(\bar G_j(\theta;0, U_1,\dots, U_j): 0\leq j\leq k\Bigr)\in \Theta^{k+1}= 
\Theta \times \underset{k+1}{\cdots}  \times \Theta.   
$$
Define the distance between $\eta, \eta'\in \Theta^{k+1}$ as follows:
$$
\Delta(\eta, \eta'):= \max_{0\leq j\leq k}\|\eta_j-\eta_j'\|.
$$
Since functions $\bar G_j$ are defined in terms of superpositions of functions $H_j,$ it is easy 
to check that 
\begin{align*}
\Delta(\eta(\theta),\eta(\theta')) \leq \prod_{j=1}^k(\|H_j\|_{C^{1,0}(\Theta\times [0,1])}^{-}\vee 1) d(\theta,\theta'),
\theta, \theta'\in \Theta.
\end{align*}
On the other hand, using bound \eqref{continuity_mult_lin_tree_2} (and also bound \eqref{hoeld_lip}), 
we get 
\begin{align*}
&
\|D_{\tau} f(\bar G_k(\cdot;0,U_1,\dots, U_k))\|_{{\rm Lip}_{\Delta,\rho}}
\\
&
\leq 
(k+3)!\ 2^{(k+1)(k+2)}\|f\|_{C^{s}(\Theta)} 
\prod_{l=1}^{k}\Bigl(\|H_l\|_{C^{s-1,0}(\Theta\times [0,1])}^{-}\vee 1\Bigr)^{l}
\|\dot H_l\|_{C^{s-1,0}(\Theta\times [0,1])}.
\end{align*}
This implies 
\begin{align*}
&
\|D_{\tau} f(\bar G_k(\cdot;0,U_1,\dots, U_k))\|_{{\rm Lip}_{d,\rho}}
\\
&
\leq 
(k+3)!\ 2^{(k+1)(k+2)} \|f\|_{C^{s}(\Theta)} 
\prod_{l=1}^{k}\Bigl(\|H_l\|_{C^{s-1,0}(\Theta\times [0,1])}^{-}\vee 1\Bigr)^{l+\rho}
\|\dot H_l\|_{C^{s-1,0}(\Theta\times [0,1])}
\end{align*}
and provides a way to bound the ${\rm Lip}_{d,\rho}$-norm 
of the derivative $({\mathcal B}^k f)'(\theta)$ as follows: 
\begin{align*}
&
\|({\mathcal B}^{k} f)'\|_{{\rm Lip}_{d,\rho}(\Theta)} 
\\
&
\leq 3 (k+2)^{2(k+2)^2}\|f\|_{C^{s}(\Theta)}
\biggl({\mathbb E}\Bigl(\|H\|_{C^{s-1,0}(\Theta\times [0,1])}^{-}\vee 1\Bigr)^{s-1}
\|\dot H\|_{C^{s-1,0}(\Theta\times [0,1])}\biggr)^k.
\end{align*}
A similar bound for $\|{\mathcal B}^{k}f\|_{{\rm Lip}_d}$ could be proved with minor modifications 
of the argument. 
Together with \eqref{bound_on_B^k} this yields bound \eqref{bound_on_B^k_deriv_Lip}.
\qed
\end{proof}

\begin{example}
\normalfont
(See \cite{Koltchinskii_Zhilova}).
In the case of random shift model with $\Theta=E,$ $H(\theta,t)=\theta+t\xi,$  
$DH(\theta,t)=I,$ $\dot H(\theta,t)=\xi,$ and all the higher order derivatives of $H$ are equal to zero. 
Let $\xi_1, \xi_2, \dots$ be i.i.d. copies of $\xi.$ Then 
$$
\bar G_k (\theta, s, U_1,\dots, U_k) = \theta +sh + \sum_{j=1}^k U_j \xi_j.
$$
It is easy to see that in this case the only 
partition tree $\tau$ for which the terms in the right hand side of \eqref{main_represent} are 
not equal to zero is the tree corresponding to the following sequence of partitions: 
$(\Delta^{(k)}, \dots, {\Delta^{(0)}})$ with $\Delta^{(j)}:= (\{s\}, \{t_1\}, \dots, \{t_j\}).$
For instance, below is tree $\tau$ with $S$- and $M$-labeling for $k=3:$
\vskip 1mm
\Tree [.$\{s,t_1,t_2,t_3\}$ [.$\{s\}$ [.$\{s\}$ [.$\{s\}$ [.$\{s\}$ ] ] ] ] [.$\{t_1\}$ [.$\{t_1\}$ [.$\{t_1\}$ ] ] ] [.$\{t_2\}$ [.$\{t_2\}$ ] ] [.$\{t_3\}$ ] ]
\hskip 8mm
\Tree [.$f^{(4)}(\bar G_4)$ [.$I$ [.$I$ [.$I$ [.$h$ ] ] ] ] [.$I$ [.$I$ [.$\xi_1$ ] ] ] [.$I$ [.$\xi_2$ ] ] [.$\xi_3$ ] ]
\\
\\
For this tree,
\begin{align*}
&
D_{\tau} f(\bar G_k(\theta;s,U_1,\dots, U_k))[h] 
\\
&
= 
f^{(4)}\biggl(\theta +sh + \sum_{j=1}^k U_j \xi_j\biggr)[I[I[I[h]]],I[I[\xi_1]],I[\xi_2], \xi_3]
\\
&
=
f^{(4)}\biggl(\theta +sh + \sum_{j=1}^k U_j \xi_j\biggr)[h,\xi_1,\xi_2, \xi_3].
\end{align*}
It easily follows from propositions \ref{prop_B^k} and \ref{prop_B^k_deriv} that 
$$
({\mathcal B}^k f)(\theta) = {\mathbb E} f^{(k)}\biggl(\theta + \sum_{j=1}^k U_j \xi_j\biggr)[\xi_1,\dots, \xi_k]
$$
and 
$$
({\mathcal B}^k f)^{\prime}(\theta)[h] = {\mathbb E} f^{(k+1)}\biggl(\theta + \sum_{j=1}^k U_j \xi_j\biggr)[h,\xi_1,\dots, \xi_k]
$$
Of course, in this examples, there is a more direct and much simpler way to obtain these formulas (see \cite{Koltchinskii_Zhilova}).
\end{example}



\section{Proofs of the main results for normal models.} 
\label{normal_proofs}

In sections \ref{sec:approx_homot} and \ref{sec:conc} below, we will prove the following upper bound on the norm $\|f_k(\thetah)-f(\theta)\|_{L_\ell({\mathbb P}_{\theta})}$
for estimators $f_k(\thetah)$ of $f(\theta), k\geq 1:$

\begin{theorem}
\label{theorem_upper_detailed}
Let $\ell\in {\mathcal L}.$ 
Suppose $f\in C^s(\Theta)$ for some $s=k+1+\rho, k\geq 1, \rho\in (0,1]$
and assume that, for some $\epsilon>0,$
\begin{align}
\label{cond_a_epsilon_d_n}
a^{s-1+\epsilon} \sqrt{\frac{d}{n}}\leq c_s,
\end{align}
where $c_s\leq 1/2$ is a sufficiently small constant.
Then the following bound holds:
\begin{align}
\label{bound_on_norm_ell_fthetah}
&
\nonumber
\sup_{\theta\in \Theta(a;d)}\|f_k(\thetah)-f(\theta)\|_{L_{\ell}({\mathbb P}_{\theta})}
\\
&
\lesssim_{s,\epsilon, \ell}
\|f\|_{C^{s}(\Theta)}
\Biggl(\frac{a}{\sqrt{n}}
\bigvee a^{(s-1+\epsilon)k+1+\rho}\biggl(\sqrt{\frac{d}{n}}\biggr)^{s}
\bigvee 
\frac{1}{\ell^{-1}\Bigl((\ell(1)\wedge 1)\exp\Bigl\{\frac{n}{c^2(k+1)^2}\Bigr\}\Bigl)}
\Biggr).
\end{align}
Moreover, if $\ell (u)\leq e^{b u}, u\geq 0$ for some constant $b>0,$ then 
\begin{align}
\label{bound_on_norm_ell_fthetah_simple}
&
\sup_{\theta\in \Theta(a;d)}\|f_k(\thetah)-f(\theta)\|_{L_{\ell}({\mathbb P}_{\theta})}
\lesssim_{s,\epsilon,\ell}
\|f\|_{C^{s}(\Theta)}
\Biggl(\frac{a}{\sqrt{n}}
\bigvee a^{(s-1+\epsilon)k+1+\rho}\biggl(\sqrt{\frac{d}{n}}\biggr)^{s}
\Biggr).
\end{align}
\end{theorem}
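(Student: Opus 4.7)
\medskip
\noindent\textbf{Proof plan.}
The plan is to decompose
\begin{align*}
f_k(\thetah)-f(\theta) = \bigl[f_k(\thetah)-{\mathbb E}_\theta f_k(\thetah)\bigr] + \bigl[{\mathbb E}_\theta f_k(\thetah)-f(\theta)\bigr]
\end{align*}
and to control the two pieces separately. The second (deterministic) term is the bias, which by \eqref{bias_formula} equals $(-1)^k ({\mathcal B}^{k+1}f)(\theta)$. The first (stochastic) term will be handled by Gaussian concentration applied to the Lipschitz function $f_k\circ \thetah$ viewed as a function of the i.i.d.\ standard normal vectors $Z_1,\dots,Z_n$ driving the model.

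For the bias I would use the random homotopy of Example \ref{ex_mean_covariance}, which is $C^\infty$ in $(\mu,\Sigma)\in \Theta(a;d)$, and apply \eqref{bias_XXX}. The dominant work is the two moment estimates: first, for the factor raised to the $k$-th power,
\begin{align*}
{\mathbb E}\bigl(\|H\|_{C^{s-1,0}(\Theta(a;d)\times [0,1])}^{-}\vee 1\bigr)^{s-1}\|\dot H\|_{C^{s-1,0}(\Theta(a;d)\times [0,1])}\lesssim_{s,\epsilon} a^{s-1+\epsilon}\sqrt{d/n};
\end{align*}
second, $\|{\mathbb E}\int_0^1 \dot H(\theta;t)dt\|=0$ because $\bar Z$ and $\hat\Sigma_Z-I_d$ are centered, while ${\mathbb E}\|\dot H\|_{L_\infty(\Theta(a;d)\times [0,1])}^{1+\rho}\lesssim_s a^{1+\rho}(\sqrt{d/n})^{1+\rho}$. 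Substituting these into \eqref{bias_XXX} (with $s=k+1+\rho$) yields the bias bound $\|f\|_{C^s(\Theta)}\,a^{(s-1+\epsilon)k+1+\rho}(\sqrt{d/n})^s$. Condition \eqref{cond_a_epsilon_d_n} ensures the small-step hypothesis \eqref{cond_leq_0.5} of Corollary \ref{cor_bd_f_k_hold}, which then bounds $\|f_k\|_{C^{1+\rho}(\Theta)}\leq 2\|f\|_{C^s(\Theta)}$; this Hölder bound on $f_k$ is what powers both the bias argument (via $({\mathcal B}^k f)'$) and the concentration step below.

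For the stochastic term, I would view $f_k(\thetah)$ as a function $\Phi(Z_1,\dots,Z_n)$ of the standard Gaussian vectors in ${\mathbb R}^{nd}$, and compute the Euclidean Lipschitz constant of $\Phi$ on a large-probability event. On such an event the maps $(Z_j)\mapsto \bar Z$ and $(Z_j)\mapsto\hat\Sigma_Z$ have Lipschitz constants $O(n^{-1/2})$ and $O(\sqrt{d/n})$ respectively (after centering), which, combined with $\|(\mu,\Sigma)\mapsto f_k(\mu,\Sigma)\|_{{\rm Lip}}\leq \|f_k\|_{C^{1+\rho}}\leq 2\|f\|_{C^s}$ and the factor $\|\Sigma^{1/2}\|\leq a^{1/2}$ in front of $\bar Z$ and $a$ in front of $\hat\Sigma_Z-I_d$, yields a total Lipschitz constant $L\lesssim \|f\|_{C^s}\,a/\sqrt{n}$. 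Gaussian concentration for Lipschitz functions then gives the sub-Gaussian tail
\begin{align*}
{\mathbb P}_\theta\bigl\{|f_k(\thetah)-{\mathbb E}_\theta f_k(\thetah)|\geq t\bigr\}\leq 2\exp\bigl(-c\, n t^2/(a^2\|f\|_{C^s}^2)\bigr),
\end{align*}
up to controlling the complement of the ``good'' event (where $\thetah$ stays inside $\Theta$ with dimension-free operator norm bounds for $\hat\Sigma_Z$). Converting this tail to an Orlicz norm $\|\cdot\|_{L_\ell}$ for a general $\ell\in{\mathcal L}$ produces the $a/\sqrt{n}$ term together with the residual $1/\ell^{-1}(\cdots)$ that captures the price of a general (not necessarily subexponential) loss; under $\ell(u)\leq e^{bu}$ the latter term is absorbed into the $a/\sqrt{n}$ bound, giving \eqref{bound_on_norm_ell_fthetah_simple}.

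The main obstacle will be the derivative bookkeeping needed to justify the claim ${\mathbb E}(\|H\|_{C^{s-1,0}}^-\vee 1)^{s-1}\|\dot H\|_{C^{s-1,0}}\lesssim a^{s-1+\epsilon}\sqrt{d/n}$. This forces one to differentiate $\Sigma^{1/2}$ in $\Sigma$ up to order $\lfloor s-1\rfloor$, with Hölder remainder of exponent $\rho$; each such derivative brings in a negative power of the smallest eigenvalue of $\Sigma$, hence a factor $a^{O(1)}$ (rigorously controlled by a Besov-on-${\mathcal S}^d$-type bound in the spirit of the remark after Proposition \ref{diff_under_exp}), and one must couple this with sharp $L_p$-estimates for the sub-Gaussian/Wishart factors $\bar Z$ and $\hat\Sigma_Z-I_d$ in operator norm. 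The small slack $\epsilon>0$ in the exponent of $a$ originates in trading a $\log$ factor from these matrix-deviation bounds for a polynomial one; this is precisely why the clean rate of Theorem \ref{main_theorem_1} is stated with an arbitrary $\beta>s-1$ rather than $\beta=s-1$.
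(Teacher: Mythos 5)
Your high-level plan (split into bias plus fluctuation, treat the bias via the random-homotopy bound \eqref{bias_XXX}, and apply Gaussian concentration for the fluctuation by viewing $f_k(\thetah)$ as a function of the standard-normal driving noise $Z_1,\dots,Z_n$) is the right skeleton, and you are correct that $\|{\mathbb E}\int_0^1\dot H(\theta;t)\,dt\|=0$ because $\bar Z$ and $\hat\Sigma_Z-I_d$ are centered. But there is a genuine gap at the very point you flag as ``the main obstacle,'' and it is not merely a bookkeeping difficulty.

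The random homotopy of Example \ref{ex_mean_covariance} is built from $\Sigma\mapsto\Sigma^{1/2}$, whose derivatives of order $\geq 1$ blow up as $\Sigma$ approaches the boundary of the cone ${\mathcal C}_+^d.$ Consequently the quantity $\|H\|_{C^{s-1,0}(\Theta\times[0,1])}^-$ entering the hypotheses of Theorem \ref{th_H_bound}, Corollary \ref{cor_bd_f_k_hold} and bound \eqref{bias_XXX} is $+\infty$ (these conditions are stated over the whole parameter space $\Theta={\mathbb R}^d\times{\mathcal C}_+^d$). Replacing $\Theta$ by $\Theta(a;d)$ in these norms, as you implicitly do, does not resolve this: the operator ${\mathcal T}g(\theta)={\mathbb E}_\theta g(\thetah)$ involves evaluations at $\thetah$, which is not supported on $\Theta(a;d)$ even when $\theta\in\Theta(a;d)$, and the iterated $\mathcal B^j$'s compound this. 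So you cannot conclude from Corollary \ref{cor_bd_f_k_hold} that $\|f_k\|_{C^{1+\rho}(\Theta)}\leq 2\|f\|_{C^s(\Theta)}$; that bound simply does not hold with the homotopy as defined, and without it both your bias control (via $(\mathcal B^k f)'$) and your concentration step (which requires a Lipschitz constant for $f_k\circ\thetah$) lack a foundation.

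The paper fixes this by replacing $\Sigma^{1/2}$ with a smooth truncation $\gamma(\Sigma)$ that coincides with $\Sigma^{1/2}$ on $\Theta(2a;d)$ but has $\|\gamma\|_{C^{s-1}({\mathcal S}^d)}\lesssim a^{s-3/2+\epsilon}$ globally; this defines an auxiliary kernel $Q$, auxiliary operator $\tilde{\mathcal B}$, and auxiliary functional $\tilde f_k$, for which Theorem \ref{th_H_bound} and Corollary \ref{cor_bd_f_k_hold} genuinely apply. The resulting decomposition is the \emph{three-term} one $\|\tilde f_k(\thetah)-{\mathbb E}_\theta\tilde f_k(\thetah)\|_\ell + \|{\mathbb E}_\theta\tilde f_k(\thetah)-f(\theta)\|_\ell + \|f_k(\thetah)-\tilde f_k(\thetah)\|_\ell$, not your two-term one; the third term is controlled via Proposition \ref{approx_homotopy} and Corollary \ref{corr_approx_homotopy} (a TV comparison of the bootstrap chains for $P$ and $Q$) together with the tail bound for $\|\thetah-\theta\|$, and this is precisely the origin of the $1/\ell^{-1}((\ell(1)\wedge 1)\exp\{n/(c^2(k+1)^2)\})$ term in \eqref{bound_on_norm_ell_fthetah}. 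Your attribution of that term to ``converting a sub-Gaussian tail to an Orlicz norm'' is therefore not quite right. Finally, the concentration in the paper is applied to $\tilde f_k$ (globally Lipschitz with $L_{\tilde f_k}\leq 2\|f\|_{C^s}$) using the Maurey--Pisier inequality and its iterated version \eqref{Maurey_ABC}, which requires a global Lipschitz or Lipschitz-majorant bound; a ``Lipschitz on a good event'' argument for $f_k$ would not interface cleanly with \eqref{Maurey} and would need a separate, rather delicate, truncation argument of its own.
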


This will be the main step in the proof of the bound 
of Theorem \ref{main_theorem_1} stated in Section \ref{MainResults}.
In Section \ref{sec:efficiency}, we will present the proof of Theorem \ref{main_theorem_2}.

\subsection{Approximation by a random homotopy and bounding the bias.}
\label{sec:approx_homot}

A natural choice of random homotopy to represent estimator $\hat \theta$ was described in 
Example \ref{ex_mean_covariance}. 
Using bound \eqref{bias_XXX_AAA'''} for this homotopy, we easily get the following bound 
on the bias of plug-in estimator $f(\thetah)=f_0(\thetah)$
\begin{align}
\label{bd_bias_plug-in}
|{\mathbb E}_{\theta} f(\thetah)-f(\theta)| \lesssim_{\rho} \|f\|_{C^{1+\rho}(\Theta)} a^{1+\rho}\biggl(\sqrt{\frac{d}{n}}\biggr)^{1+\rho}
\end{align}
that holds for all $\rho\in (0,1].$ Together with concentration bounds of Section \ref{sec:conc}
this allows us to complete the proof of Theorem \ref{main_theorem_1} in the simple case of $s=1+\rho,$
$k=0.$

However, the random homotopy of Example \ref{ex_mean_covariance} does not have bounded H\"older norms 
on the whole parameter space $\Theta={\mathbb R}^d \times {\mathcal C}_+^d$ (due to the presence of function $\Sigma^{1/2}$) which makes it impossible to use bounds of Theorem \ref{th_H_bound}
and Corollary \ref{cor_bd_f_k_hold} needed to analyze the estimators $f_k(\thetah)$ for $k\geq 1.$
To overcome this difficulty, we develop a smooth random homotopy that,
as in Proposition \ref{approx_homotopy} and Corollary \ref{corr_approx_homotopy}, 
approximates estimator $\hat \theta$ (and the corresponding bootstrap chain). To this end, 
we replace $\Sigma^{1/2}$ in the definition of $H(\theta;t)$ of Example \ref{ex_mean_covariance}
with a function $\gamma (\Sigma)$ which is smooth on the whole space ${\mathcal S}^d$ of symmetric 
matrices equipped with the operator norm and which coincides with $\Sigma^{1/2}$ on the set of covariance 
matrices $\Sigma$ such that $\sigma(\Sigma)\subset [1/(2a), 2a].$ 
In fact, $\gamma (\Sigma)$ is an application 
to $\Sigma$ of a function $\gamma$ of real variable defined as follows. 
Let $\gamma$ be a $C^{\infty}$
function in ${\mathbb R}$ such that 
\begin{itemize}
\item $0\leq \gamma(u)\leq \sqrt{u}, u\geq 0;$ 
\item $\gamma(u)=\sqrt{u}$ for all $u\in [(2a)^{-1}, 2a];$ 
\item ${\rm supp}(\gamma)\subset [(4a)^{-1}, 4a];$ 
\item $\|\gamma\|_{L_{\infty}}\lesssim \sqrt{a},$ $\|\gamma^{(j)}\|_{L_{\infty}}\lesssim a^{j-1/2}$
for all $1\leq j\leq s-1$ and $\|\gamma\|_{C^{s-1+\epsilon}}\lesssim a^{s-3/2+\epsilon}$ for an arbitrary $\epsilon>0.$
\end{itemize}
Recall that $s=k+1+\rho, k\geq 1,$ so, $s-1=k+\rho>1.$
A possible choice of such a function is $\gamma(u):= \lambda(2au) \sqrt{u} (1-\lambda (u/(4a))),$
where $\lambda$ is a nondecreasing $C^{\infty}$ function with values in $[0,1],$ $\lambda(u)=0, u\leq 1/2$ and 
$\lambda(u)=1, u\geq 1.$ 

It follows from Corollary 2 and other results of Section 2.2 of \cite{Koltchinskii_2017} that $A\mapsto \gamma(A)$ is 
a $C^{s-1}$ operator function from ${\mathcal S}^d$ (equipped with the operator norm) into itself 
and the following properties hold:
\begin{itemize}
\item for an arbitrary $\epsilon>0,$\footnote{Here $\|\cdot\|_{B_{\infty,1}^{s-1}}({\mathbb R})$ is a Besov norm;
it could be bounded in terms of $C^{s-1+\epsilon}({\mathbb R})$ norm, see \cite{Triebel}, Section 2.3.}  
\begin{align}
\label{bd_holder_gamma}
\|\gamma\|_{C^{s-1}({\mathcal S}^d)}\lesssim_s \|\gamma\|_{B_{\infty,1}^{s-1}({\mathbb R})}\lesssim_{\epsilon} 
\|\gamma\|_{C^{s-1+\epsilon}}\lesssim a^{s-3/2+\epsilon}
\end{align}
and 
\begin{align}
\label{bd_deriv_gamma}
\|\gamma\|_{L_{\infty}({\mathcal S}^d)}\lesssim \sqrt{a},\ \ 
\|D^{j}\gamma\|_{L_{\infty}({\mathcal S}^d)}\lesssim_{\epsilon} a^{j-1/2+\epsilon}, 1\leq j\leq s-1;
\end{align} 
\item $\Sigma- \gamma(\Sigma)^2\in {\mathcal C}_+^d;$ 
\item $\|\gamma(\Sigma)\|\leq \|\Sigma\|^{1/2}$ for all $\Sigma\in {\mathcal C}_+^d;$  
\item $\gamma(\Sigma)=\Sigma^{1/2}$ for all $\Sigma$ with $\sigma(\Sigma)\subset [(2a)^{-1}, 2a].$
\end{itemize}

Define now a random homotopy between $\theta$ and $\tilde \theta=H(\theta;1)$ as follows: 
$$
H(\theta;t)= \theta+ tE(\theta), \theta = (\mu, \Sigma)\in \Theta={\mathbb R}^d \times {\mathcal C}_+^d,
$$
where 
$$
E(\theta):= (\gamma (\Sigma)\bar Z, \gamma(\Sigma) (\hat \Sigma_Z-I_d)\gamma(\Sigma).
$$
Note that 
\begin{align*}
&
\Sigma + t  (\gamma(\Sigma) (\hat \Sigma_Z-I_d)\gamma(\Sigma))
=\Sigma - \gamma(\Sigma)^2 + (1-t) \gamma(\Sigma)^2 + t(\gamma (\Sigma)\hat \Sigma_Z \gamma(\Sigma))
\in {\mathcal C}_+^d.
\end{align*}
Indeed, recall that $\gamma^2(\lambda)\leq \lambda, \lambda \in {\mathbb R}_+.$ If $\gamma(\Sigma)^2 \neq \Sigma,$ then there exists $\lambda\in \sigma(\Sigma)$ such that $\gamma^2(\lambda)<\lambda.$ Therefore, $\Sigma-\gamma^2(\Sigma)\in {\mathcal C}_+^d.$ 
On the other hand, 
if $\gamma (\Sigma)^2=\Sigma,$ then $(1-t) \gamma(\Sigma)^2= (1-t)\Sigma\in {\mathcal C}_+^d$
for all $t\in [0,1).$ In each of these two cases, 
$\Sigma + t  (\gamma(\Sigma) (\hat \Sigma_Z-I_d)\gamma(\Sigma))\in {\mathcal C}_+^d.$
Finally, if $\gamma (\Sigma)^2=\Sigma$ and $t=1,$ then 
$$
\Sigma + t  (\gamma(\Sigma) (\hat \Sigma_Z-I_d)\gamma(\Sigma))= \gamma (\Sigma)\hat \Sigma_Z \gamma(\Sigma).
$$
Under assumption \eqref{cond_a_epsilon_d_n}, we have $d\leq n-1.$ It is well known that in this 
case the sample covariance matrix $\hat \Sigma_Z$ is nonsingular a.s.. Since also $\gamma(\Sigma)=\Sigma^{1/2}$ is nonsingular, we conclude that $\gamma (\Sigma)\hat \Sigma_Z \gamma(\Sigma)\in 
{\mathcal C}_+^d,$ implying the claim. 
Thus, it follows that $H(\theta;t)\in \Theta={\mathbb R}^d \times {\mathcal C}_+^d$ for all 
$\theta \in \Theta, t\in [0,1]$ a.s..

\begin{lemma}
\label{bounds_on_EEE}
The following bounds hold:
\begin{align*}
\|E\|_{L_{\infty}(\Theta)} \lesssim \sqrt{a}\|\bar Z\|+ a\|\hat \Sigma_Z-I_d\| 
\end{align*}
and 
\begin{align*}
\|E\|_{C^{s-1}(\Theta)}\lesssim_{\epsilon, s} 
a^{s-3/2+\epsilon}\|\bar Z\|+ a^{s-1+\epsilon}\|\hat \Sigma_Z-I_d\|.
\end{align*}
\end{lemma}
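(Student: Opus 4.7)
The plan is to decompose $E(\theta) = (E_1(\Sigma), E_2(\Sigma))$ with $E_1(\Sigma) := \gamma(\Sigma)\bar Z$ and $E_2(\Sigma) := \gamma(\Sigma)(\hat\Sigma_Z-I_d)\gamma(\Sigma)$, handle each component separately, and add. Since $E$ is independent of $\mu$, every Fr\'echet derivative in the definition of the $C^{s-1}$-norm on $\Theta$ reduces to a derivative with respect to $\Sigma\in\mathcal{C}_+^d\subset\mathcal{S}^d$, and the product norm $\|(w,W)\|=\|w\|+\|W\|$ splits the bound into its two components.

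For the uniform bound, I would simply use the cross-norm property of the operator norm together with the uniform estimate $\|\gamma\|_{L_\infty(\mathcal{S}^d)}\lesssim\sqrt{a}$ from \eqref{bd_deriv_gamma}. This yields $\|E_1(\Sigma)\|\le\|\gamma(\Sigma)\|\|\bar Z\|\lesssim\sqrt{a}\|\bar Z\|$ and $\|E_2(\Sigma)\|\le\|\gamma(\Sigma)\|^2\|\hat\Sigma_Z-I_d\|\lesssim a\|\hat\Sigma_Z-I_d\|$, which sum to the first claim.

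For the $C^{s-1}$-bound, the first component $E_1(\Sigma)=\gamma(\Sigma)\bar Z$ is a right-multiplication of the operator function $\gamma$ by the fixed vector $\bar Z$, so its $C^{s-1}$-norm is immediately controlled by $\|\gamma\|_{C^{s-1}(\mathcal{S}^d)}\|\bar Z\|\lesssim_{\epsilon,s} a^{s-3/2+\epsilon}\|\bar Z\|$ using \eqref{bd_holder_gamma}. For $E_2$ I would apply the interpolation-type Leibniz inequality
\[
\|FG\|_{C^{s-1}}\ \lesssim_s\ \|F\|_{C^{s-1}}\|G\|_{L_\infty}+\|F\|_{L_\infty}\|G\|_{C^{s-1}}
\]
twice to the triple product $\gamma\cdot(\hat\Sigma_Z-I_d)\cdot\gamma$. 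Because $\hat\Sigma_Z-I_d$ is constant in $\Sigma$, this gives $\|E_2\|_{C^{s-1}}\lesssim_s\|\gamma\|_{C^{s-1}}\|\gamma\|_{L_\infty}\|\hat\Sigma_Z-I_d\|$, and substituting $\|\gamma\|_{C^{s-1}}\lesssim a^{s-3/2+\epsilon}$ and $\|\gamma\|_{L_\infty}\lesssim\sqrt{a}$ produces the desired power $a^{s-1+\epsilon}\|\hat\Sigma_Z-I_d\|$. Adding the two contributions finishes the proof.

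The main obstacle is to justify the Leibniz/interpolation inequality above with the correct bookkeeping in the operator-valued setting, so that in the triple product $\gamma\cdot M\cdot\gamma$ one factor of $\gamma$ always carries the full $C^{s-1}$-regularity while the other carries only the $L_\infty$-norm. This is crucial: the naive estimate $\|\gamma M\gamma\|_{C^{s-1}}\lesssim\|\gamma\|_{C^{s-1}}^2\|M\|\asymp a^{2s-3+2\epsilon}\|M\|$ would be much worse than needed. The cleanest way is to combine the generalized Leibniz formula for Fr\'echet derivatives of a triple product with the elementary interpolation inequality $\|\gamma^{(j)}\|_{L_\infty}\lesssim a^{j-1/2+\epsilon}$ from \eqref{bd_deriv_gamma} for intermediate orders $0\le j\le s-1$, and then handle the top-order Hölder seminorm by writing a telescoping difference that leaves one factor of $\gamma$ in $L_\infty$ while the other carries the Hölder modulus. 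Everything else in the lemma is routine.
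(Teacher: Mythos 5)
Your proposal is correct and follows essentially the same route as the paper: reduce to dependence on $\Sigma$ alone, bound $\gamma(\cdot)\bar Z$ directly via $\|\gamma\|_{C^{s-1}(\mathcal{S}^d)}\|\bar Z\|$, and expand derivatives of $\gamma(\Sigma)(\hat\Sigma_Z-I_d)\gamma(\Sigma)$ by a Leibniz formula, bounding each term with \eqref{bd_deriv_gamma} and \eqref{bd_holder_gamma}. The detour through a generic ``tame'' algebra estimate $\|FG\|_{C^{s-1}}\lesssim\|F\|_{C^{s-1}}\|G\|_{L_\infty}+\|F\|_{L_\infty}\|G\|_{C^{s-1}}$ is unnecessary here (and would require an interpolation inequality to justify in general); as you note at the end, the explicit intermediate-order bounds $\|D^j\gamma\|_{L_\infty}\lesssim_\epsilon a^{j-1/2+\epsilon}$ let you estimate each Leibniz term directly, which is exactly what the paper does.
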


\begin{proof}
Note that $E(\theta)$ depends only on the operator component $\Sigma$ of $\theta=(\mu,\Sigma)$
through operator function $\gamma\in C^{s-1}({\mathcal S}^d),$
which allows us to replace $C^{s-1}(\Theta)$-norm with $C^{s-1}({\mathcal S}^d)$-norm.
Using \eqref{bd_holder_gamma}, it is easy to show that 
$$
\|\gamma(\cdot)\bar Z\|_{C^{s-1}({\mathcal S}^d)} \leq \|\gamma\|_{C^{s-1}({\mathcal S}^d)}
\|\bar Z\| \lesssim_{\epsilon, s} a^{s-3/2+\epsilon}\|\bar Z\|.
$$
On the other hand, by a version of Leibnitz formula, 
\begin{align*}
&
D^{k}(\gamma(\Sigma) (\hat \Sigma_Z-I_d)\gamma(\Sigma))[H_1,\dots, H_k]
\\
&
= \sum_{I\subset \{1,\dots, k\}}(D^{|I|} \gamma)(\Sigma)[\otimes_{i\in I}H_i](\hat \Sigma_Z-I_d)(D^{|I^c|} \gamma)(\Sigma)[\otimes_{i\in I^c} H_i], H_1,\dots, H_k \in {\mathcal S}^d. 
\end{align*}
Using bounds \eqref{bd_deriv_gamma} and \eqref{bd_holder_gamma} to control 
the norms of the derivatives of $\gamma$ involved in the last expression, it is easy to complete 
the proof. 
\qed
\end{proof}

Recall the definitions of operators $\tilde {\mathcal T},$ $\tilde {\mathcal B}$ and function $\tilde f_k$
(see Corollary \ref{corr_approx_homotopy}).

\begin{proposition}
Suppose condition \eqref{cond_a_epsilon_d_n} holds.
Then, for all $\theta\in \Theta,$
\begin{align}
\label{bias_ZZZ_XXX}
|{\mathbb E}_{\theta} \tilde f_k(\thetat)-f(\theta)|
\lesssim_{s,\epsilon} 
\|f\|_{C^{s}(\Theta)}
a^{(s-1+\epsilon)k+1+\rho}\biggl(\sqrt{\frac{d}{n}}\biggr)^{s}. 
\end{align}
\end{proposition}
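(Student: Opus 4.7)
The plan is to apply the general bias bound \eqref{bias_ZZZ} directly to the smooth random homotopy $H(\theta;t)=\theta+tE(\theta)$ that was just constructed, with $\tilde\theta=H(\theta;1)$ playing the role of the estimator and $\tilde{\mathcal B},\tilde f_k$ being the corresponding operator and bias-corrected functional. First I will verify the hypotheses of \eqref{bias_ZZZ}. Since $\mathbb{E}\bar Z=0$ and $\mathbb{E}\hat\Sigma_Z=I_d$, we have $\mathbb{E}E(\theta)=0$ for every $\theta\in\Theta$, so $\tilde\theta$ is an unbiased estimator and the additional $\|\mathbb{E}E(\theta)\|$ term drops out, reducing \eqref{bias_UVW} to \eqref{bias_ZZZ}. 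The moment condition \eqref{moment_of_H_deriv_AAA} is an immediate consequence of the Gaussian moment bounds on $\|\bar Z\|$ and $\|\hat\Sigma_Z-I_d\|$ together with Lemma \ref{bounds_on_EEE}.

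The problem then reduces to controlling the two moments appearing in \eqref{bias_ZZZ}, namely $\mathbb{E}\|E\|_{L_\infty(\Theta)}^{1+\rho}$ and $\mathbb{E}(\|E\|_{C^{s-1}(\Theta)}\vee 1)^{s-1}\|E\|_{C^{s-1}(\Theta)}$. By Lemma \ref{bounds_on_EEE}, both norms of $E$ are linear combinations of $\|\bar Z\|$ and $\|\hat\Sigma_Z-I_d\|$ with coefficients depending on $a$, so the entire moment computation reduces to standard high-dimensional concentration: $\|\bar Z\|\sim\sqrt{d/n}$ with subgaussian fluctuations on scale $1/\sqrt{n}$, and $\|\hat\Sigma_Z-I_d\|\lesssim\sqrt{d/n}$ with Davidson--Szarek type subexponential deviations. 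These give $\mathbb{E}\|E\|_{L_\infty}^{1+\rho}\lesssim a^{1+\rho}(\sqrt{d/n})^{1+\rho}$ and $\mathbb{E}\|E\|_{C^{s-1}}^p\lesssim_p(a^{s-1+\epsilon}\sqrt{d/n})^p$ for any fixed $p\geq 1$.

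The delicate point is the factor $(\|E\|_{C^{s-1}}\vee 1)^{s-1}$, since it is not linear in $\|E\|_{C^{s-1}}$. Here the assumption \eqref{cond_a_epsilon_d_n} is crucial: it guarantees $a^{s-1+\epsilon}\sqrt{d/n}\leq c_s\leq 1/2$, so on the ``typical'' event where $\|\bar Z\|$ and $\|\hat\Sigma_Z-I_d\|$ are of order $\sqrt{d/n}$ one has $\|E\|_{C^{s-1}}\leq 1$ and the $\vee\,1$ has no effect. On the complementary rare event, Cauchy--Schwarz together with the exponential tail bounds for $\|\bar Z\|$ and $\|\hat\Sigma_Z-I_d\|$ makes the contribution negligible. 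Combining these estimates yields
\begin{equation*}
\mathbb{E}(\|E\|_{C^{s-1}}\vee 1)^{s-1}\|E\|_{C^{s-1}}\lesssim_{s,\epsilon} a^{s-1+\epsilon}\sqrt{d/n}.
\end{equation*}

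Substituting the two moment estimates into \eqref{bias_ZZZ} gives
\begin{equation*}
|\mathbb{E}_\theta \tilde f_k(\tilde\theta)-f(\theta)|\lesssim_{s,\epsilon}\|f\|_{C^s(\Theta)}\Bigl(a^{s-1+\epsilon}\sqrt{d/n}\Bigr)^{k}\cdot a^{1+\rho}\Bigl(\sqrt{d/n}\Bigr)^{1+\rho},
\end{equation*}
and combining the powers of $a$ and of $\sqrt{d/n}$ (using $s=k+1+\rho$) yields exactly the claimed bound \eqref{bias_ZZZ_XXX}. The main obstacle is the tail argument in paragraph three: one must convert pointwise deviation estimates for $\|\bar Z\|$ and $\|\hat\Sigma_Z-I_d\|$ into moment bounds for a nonlinear functional of these quantities, and it is the smallness condition \eqref{cond_a_epsilon_d_n} that makes this possible without losing the sharp dependence on $a$ and $d/n$.
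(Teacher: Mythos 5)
Your proposal is correct and takes essentially the same route as the paper: apply the bias bound \eqref{bias_ZZZ} to the smooth homotopy $H(\theta;t)=\theta+tE(\theta)$ (noting unbiasedness so \eqref{bias_UVW} collapses to \eqref{bias_ZZZ}), then plug in Lemma \ref{bounds_on_EEE} and the standard moment bounds ${\mathbb E}^{1/p}\|\bar Z\|^p\lesssim_p\sqrt{d/n}$, ${\mathbb E}^{1/p}\|\hat\Sigma_Z-I_d\|^p\lesssim_p\sqrt{d/n}$, and use \eqref{cond_a_epsilon_d_n} to tame the $(\|E\|_{C^{s-1}}\vee 1)^{s-1}$ factor. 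The only cosmetic difference is in that last step: the paper does a single Cauchy--Schwarz, writing ${\mathbb E}(\|E\|\vee 1)^{s-1}\|E\|\leq{\mathbb E}^{1/2}(\|E\|+1)^{2(s-1)}\,{\mathbb E}^{1/2}\|E\|^2$ and bounding the first factor by a constant under \eqref{cond_a_epsilon_d_n}, whereas you describe a typical/rare event split with tail bounds — both yield the same estimate, with the paper's version being slightly more streamlined.
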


\begin{proof}
Recall the following well known bounds: for all $p\geq 1,$ 
$$
{\mathbb E}^{1/p}\|\bar Z\|^p \lesssim_p \sqrt{\frac{d}{n}}
$$
and, under an additional assumption that $d\lesssim n,$
$$
{\mathbb E}^{1/p}\|\hat \Sigma_Z-I_d\|^p \lesssim_p \sqrt{\frac{d}{n}}.
$$
Using bounds of Lemma \ref{bounds_on_EEE}, we get 
\begin{align}
\label{bd_EEE_one}
{\mathbb E}^{1/p} \|E\|_{L_{\infty}(\Theta)}^p
\lesssim_p a \sqrt{\frac{d}{n}}
\end{align}
and 
\begin{align}
\label{bd_EEE_two}
{\mathbb E}^{1/p} \|E\|_{C^{s-1}(\Theta)}^p
\lesssim_{p,s,\epsilon} a^{s-1+\epsilon} \sqrt{\frac{d}{n}}.
\end{align}
It follows from bound \eqref{bias_ZZZ} that 
\begin{align}
\label{bias_ZZZ_AAA}
|{\mathbb E}_{\theta} \tilde f_k(\thetat)-f(\theta)|
\lesssim_s 
\|f\|_{C^{s}(\Theta)}
\biggl({\mathbb E}\Bigl(\|E\|_{C^{s-1}(\Theta)}\vee 1\Bigr)^{s-1}
\|E\|_{C^{s-1}(\Theta)}\biggr)^k {\mathbb E}\|E\|_{L_{\infty}(\Theta)}^{1+\rho}.
\end{align}
As a consequence of bounds \eqref{bd_EEE_one} and \eqref{bd_EEE_two},
\begin{align*}
{\mathbb E}\|E\|_{L_{\infty}(\Theta)}^{1+\rho}\lesssim a^{1+\rho}\biggl(\sqrt{\frac{d}{n}}\biggr)^{1+\rho}
\end{align*}
and, under the assumption \eqref{cond_a_epsilon_d_n},
\begin{align}
\label{bd_EEE_hold}
&
\nonumber
{\mathbb E}\Bigl(\|E\|_{C^{s-1}(\Theta)}\vee 1\Bigr)^{s-1}
\|E\|_{C^{s-1}(\Theta)}
\leq 
{\mathbb E}^{1/2}\Bigl(\|E\|_{C^{s-1}(\Theta)}+1\Bigr)^{2(s-1)}
{\mathbb E}^{1/2}\|E\|_{C^{s-1}(\Theta)}^2
\\
&
\nonumber 
\lesssim_s ({\mathbb E}^{1/2}\|E\|_{C^{s-1}(\Theta)}^{2(s-1)}+1){\mathbb E}^{1/2}\|E\|_{C^{s-1}(\Theta)}^2
\lesssim_{s,\epsilon} \biggl(a^{(s-1+\epsilon)(s-1)}\biggl(\sqrt{\frac{d}{n}}\biggr)^{s-1}+1\biggr)
a^{(s-1+\epsilon)}\sqrt{\frac{d}{n}} 
\\
&
\lesssim_{s,\epsilon} 
a^{(s-1+\epsilon)}\sqrt{\frac{d}{n}}.
\end{align}
Therefore, bound \eqref{bias_ZZZ_AAA} implies \eqref{bias_ZZZ_XXX}.
\qed
\end{proof}

\begin{proposition}
Suppose condition \eqref{cond_a_epsilon_d_n} holds.
Then, for all $\theta\in \Theta(a;d),$
\begin{align}
\label{bias_ZZZ_aaa}
|{\mathbb E}_{\theta} f_k(\thetah)-f(\theta)|
\lesssim_{s,\epsilon} 
\|f\|_{C^{s}(\Theta)}
a^{(s-1+\epsilon)k+1+\rho}\biggl(\sqrt{\frac{d}{n}}\biggr)^{s}. 
\end{align}
\end{proposition}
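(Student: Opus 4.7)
The plan is to reduce the bound to the one just established in \eqref{bias_ZZZ_XXX} for the smoothed random homotopy via Corollary \ref{corr_approx_homotopy}. By \eqref{bias_formula},
\[
\mathbb{E}_\theta f_k(\thetah) - f(\theta) = (-1)^k (\mathcal{B}^{k+1} f)(\theta), \qquad
\mathbb{E}_\theta \tilde f_k(\thetat) - f(\theta) = (-1)^k (\tilde{\mathcal{B}}^{k+1} f)(\theta),
\]
so a triangle inequality splits the quantity of interest as
\[
|\mathbb{E}_\theta f_k(\thetah) - f(\theta)| \leq |\mathbb{E}_\theta \tilde f_k(\thetat) - f(\theta)| + |(\mathcal{B}^{k+1} f)(\theta) - (\tilde{\mathcal{B}}^{k+1} f)(\theta)|.
\]
The first summand is exactly what \eqref{bias_ZZZ_XXX} controls, so it remains to establish the same upper bound for the second summand uniformly over $\theta\in \Theta(a;d)$.

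For that term I would apply Corollary \ref{corr_approx_homotopy} at level $k+1$ in place of $k$, with $A = \Theta(a;d)$ and $\delta = \delta_\ast := 1/(4a(k+1))$. Two structural points make this choice effective. First, Weyl's inequality ensures that any $\Sigma'$ within operator-norm distance $(k+1)\delta_\ast \leq 1/(4a)$ of some covariance $\Sigma$ with $\sigma(\Sigma)\subset [1/a,a]$ satisfies $\sigma(\Sigma') \subset [3/(4a), a + 1/(4a)] \subset [1/(2a), 2a]$, so $A_{(k+1)\delta_\ast}\subset \Theta(2a;d)$. Second, by the defining property of $\gamma$, on $\Theta(2a;d)$ one has $\gamma(\Sigma') = (\Sigma')^{1/2}$, hence $H(\theta;1)$ and $\thetah$ have the same distribution, i.e., $Q(\theta,\cdot) = P(\theta,\cdot)$ for every $\theta\in A_{(k+1)\delta_\ast}$. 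The total-variation term in Corollary \ref{corr_approx_homotopy} therefore vanishes, and only the tail probability $\sup_{\theta\in A_{(k+1)\delta_\ast}}\mathbb{P}_\theta\{\|\thetah - \theta\| \geq \delta_\ast\}$ needs to be bounded.

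To bound that probability I would use the decomposition $\|\thetah - \theta\| \leq \sqrt{2a}\|\bar Z\| + 2a\|\hat\Sigma_Z - I_d\|$ valid on $\Theta(2a;d)$, together with Gaussian concentration for $\|\bar Z\|$ and Davidson--Szarek/Bernstein concentration for $\|\hat\Sigma_Z - I_d\|$. Under \eqref{cond_a_epsilon_d_n} the typical size $a\sqrt{d/n}$ is much smaller than $\delta_\ast$, so these inequalities produce a bound that is super-polynomial in $n$ and therefore dominated by the target rate $a^{(s-1+\epsilon)k+1+\rho}(\sqrt{d/n})^s$; feeding this into Corollary \ref{corr_approx_homotopy} and combining with \eqref{bias_ZZZ_XXX} yields \eqref{bias_ZZZ_aaa}. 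The main technical obstacle is the tension between the two constraints on $\delta$: small enough that $A_{(k+1)\delta}$ lies in the region $\{\sigma(\Sigma)\subset [1/(2a), 2a]\}$ where the regularization coincides with the square root, yet large enough that $\mathbb{P}_\theta\{\|\thetah-\theta\|\geq \delta\}$ decays fast enough to absorb into the target rate. Condition \eqref{cond_a_epsilon_d_n} is precisely what allows the single choice $\delta_\ast = 1/(4a(k+1))$ to satisfy both requirements simultaneously.
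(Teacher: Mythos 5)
Your decomposition via
\[
\mathbb{E}_\theta f_k(\thetah) - f(\theta) = (-1)^k(\tilde{\mathcal{B}}^{k+1}f)(\theta) + (-1)^k\bigl[(\mathcal{B}^{k+1}f)(\theta) - (\tilde{\mathcal{B}}^{k+1}f)(\theta)\bigr]
\]
is clean and valid, and it differs somewhat from the paper, which instead bounds $\sup_{\theta\in\Theta(a+\delta;d)}|f_k(\theta)-\tilde f_k(\theta)|$ via Corollary \ref{corr_approx_homotopy} at level $k$ and handles $\mathbb{E}_\theta f_k(\thetah)-\mathbb{E}_\theta \tilde f_k(\thetah)$ by conditioning on the event $\{\thetah\in\Theta(a+\delta;d)\}$. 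Both routes ultimately hinge on the same probability $\sup_\theta \mathbb{P}_\theta\{\|\thetah-\theta\|\geq\delta\}$, so they are close in spirit. You are also right that your choice $\delta_\ast = 1/(4a(k+1))$ forces the containment $A_{(k+1)\delta_\ast}\subset\Theta(2a;d)$ to hold on both ends of the spectrum, which is the careful thing to do; the paper's much larger $\delta = a/(k+1)$ blows the lower-eigenvalue constraint.

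However, the step where you claim that under \eqref{cond_a_epsilon_d_n} the typical size $a\sqrt{d/n}$ of $\|\thetah-\theta\|$ is much smaller than $\delta_\ast \asymp 1/(a(k+1))$ is not correct, and this is a genuine gap. The requirement $a\sqrt{d/n} \ll 1/(4a(k+1))$ is equivalent to $a^2\sqrt{d/n}$ being small, while \eqref{cond_a_epsilon_d_n} only gives $a^{s-1+\epsilon}\sqrt{d/n}\leq c_s$. Since $a\geq 1$, this controls $a^2\sqrt{d/n}$ only when $s-1+\epsilon\geq 2$, i.e., when $k\geq 2$ or when $k=1$ with $\rho+\epsilon\geq 1$. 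For $k=1$ and $\rho+\epsilon<1$, choosing $d,n$ with $a^{s-1+\epsilon}\sqrt{d/n}=c_s/2$ and letting $a\to\infty$ gives $a\sqrt{d/n}\asymp a^{2-s-\epsilon}\gg 1/a\asymp\delta_\ast$, so $\sup_{\theta\in A_{(k+1)\delta_\ast}}\mathbb{P}_\theta\{\|\thetah-\theta\|\geq\delta_\ast\}$ is bounded away from zero. Since in this same regime the target $a^{(s-1+\epsilon)k+1+\rho}(\sqrt{d/n})^s$ tends to zero, the correction term from Corollary \ref{corr_approx_homotopy} cannot be absorbed. The tension you flagged between the containment constraint (wanting $\delta$ small) and the tail probability (wanting $\delta$ large) is real, but \eqref{cond_a_epsilon_d_n} does not resolve it at your chosen scale $\delta_\ast \asymp 1/a$; the paper resolves it by working at scale $\delta\asymp a$, where the tail probability is controlled by $d\lesssim n$ alone (at the cost of the containment issue you carefully avoided). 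Your proof would need either a different mechanism for controlling $(\mathcal{B}^{k+1}f-\tilde{\mathcal{B}}^{k+1}f)(\theta)$ that does not pass through a single tail probability at scale $1/a$, or an additional argument explaining why the nonzero total-variation term at intermediate scales is still harmless.
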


\begin{proof}
Note that, for $\theta = (\mu,\Sigma)\in \Theta (2a;d),$ we have $\sigma(\Sigma)\subset [1/(2a), 2a]$ and 
$\gamma(\Sigma)=\Sigma^{1/2}.$ Therefore, $\thetat = H(\theta;1)\overset{d}{=} \thetah$
and \eqref{bias_ZZZ_XXX} implies 
\begin{align}
\label{bias_ZZZ_thetah}
|{\mathbb E}_{\theta} \tilde f_k(\thetah)-f(\theta)|
\lesssim_{s,\epsilon} 
\|f\|_{C^{s}(\Theta)}
a^{(s-1+\epsilon)k+1+\rho}\biggl(\sqrt{\frac{d}{n}}\biggr)^{s},\ \theta \in \Theta(2a;d). 
\end{align}

Let $Q(\theta;\cdot)$ be the distribution of $\thetat=H(\theta;1)$ and $P(\theta;\cdot)$ be 
the distribution of $\thetah.$ Recall Proposition \ref{approx_homotopy} and its notations. 
Note that, for all $\theta \in \Theta(2a;d),$ 
$Q(\theta;\cdot)=P(\theta;\cdot).$ Let $\delta:= a/(k+1)$ and $A:= \Theta (a+\delta;d).$ It is easy 
to check that $A_{k\delta}\subset \Theta (2a;d).$
As a result, it follows from the last bound of Corollary \ref{corr_approx_homotopy}
that 
\begin{align}
\label{bd_f_k_tilde_f_k}
&
\sup_{\theta\in \Theta(a+\delta;d)}|f_k(\theta)- \tilde f_k(\theta)|
\leq k^2 2^{k+1} \|f\|_{L_{\infty}(\Theta)}
\sup_{\theta \in \Theta(2a;d)}{\mathbb P}_{\theta}\{\|\hat \theta-\theta\|\geq \delta\}.
\end{align}
We will use now the following exponential bound for $\|\thetah -\theta\|$
that is a corollary of well known bounds for the Euclidean norm $\|\bar X-\mu\|$ and 
the operator norm $\|\hat \Sigma-\Sigma\|:$ 
\begin{align*}
{\mathbb P}_{\theta}\biggl\{\|\hat \theta-\theta\|\geq ca\Bigl(\sqrt{\frac{d}{n}}\vee \sqrt{\frac{t}{n}}\vee 
\frac{t}{n}\Bigr)\biggr\}\leq e^{-t}.
\end{align*}
It holds with some constant $c\geq 1$ for all $d\lesssim n,$ all $\theta \in \Theta(2a;d)$ and all $t\geq 0.$ 
Assuming that $ca\sqrt{\frac{d}{n}}\leq \frac{a}{k+1}=\delta,$ or, equivalently, $d\leq \frac{n}{c^2(k+1)^2},$
and setting $t:= \frac{n}{c^2(k+1)^2},$ we get 
$$
\sup_{\theta \in \Theta(2a;d)}{\mathbb P}_{\theta}\{\|\hat \theta-\theta\|\geq \delta\}\leq 
\exp\Bigl\{-\frac{n}{c^2(k+1)^2}\Bigr\}
$$
and bound \eqref{bd_f_k_tilde_f_k} implies that 
\begin{align}
\label{bd_f_k_tilde_f_k'''}
&
\sup_{\theta\in \Theta(a+\delta;d)}|f_k(\theta)- \tilde f_k(\theta)|
\leq k^2 2^{k+1} \|f\|_{L_{\infty}(\Theta)}\exp\Bigl\{-\frac{n}{c^2(k+1)^2}\Bigr\}.
\end{align}
Note also that, for all $\theta \in \Theta(a;d),$
\begin{align}
\label{thetah_not_in_Theta}
{\mathbb P}_{\theta}\{\thetah \not \in \Theta(a+\delta;d)\}
\leq {\mathbb P}_{\theta}\{\|\hat \theta-\theta\|\geq \delta\}\leq 
\exp\Bigl\{-\frac{n}{c^2(k+1)^2}\Bigr\}.
\end{align}

Note that 
\begin{align*}
\|B^k f\|_{L_{\infty}(\Theta)}= \sup_{\theta\in \Theta}\biggl|{\mathbb E}_{\theta}\sum_{j=0}^k (-1)^{k-j}{k\choose j}
f(\thetah^{(j)})\biggr| \leq 2^k \|f\|_{L_{\infty}(\Theta)},
\end{align*}
\begin{align}
\label{f_k_L_infty}
\|f_k\|_{L_{\infty}(\Theta)}= \biggl\|\sum_{j=0}^k (-1)^j B^{j}f\biggr\|_{L_{\infty}(\Theta)}
\leq \sum_{j=0}^k 2^{j}\|f\|_{L_{\infty}(\Theta)}\leq 2^{k+1}\|f\|_{L_{\infty}(\Theta)}
\end{align}
and, similarly, 
\begin{align}
\label{f_k_L_infty_tilde}
\|\tilde f_k\|_{L_{\infty}(\Theta)}\leq 2^{k+1}\|f\|_{L_{\infty}(\Theta)}.
\end{align}
Using  \eqref{bd_f_k_tilde_f_k'''}, \eqref{thetah_not_in_Theta}, \eqref{f_k_L_infty} and \eqref{f_k_L_infty_tilde}, we can conclude that, 
for all $\theta\in \Theta(a;d),$ 
\begin{align}
\label{compare_f_k_tilde_f_k}
&
\nonumber
\Bigl|{\mathbb E}_{\theta}f_k(\thetah)- {\mathbb E}_{\theta}\tilde f_k(\thetah)\Bigr|
\\
&
\nonumber
\leq {\mathbb E}_{\theta} |f_k(\thetah)- \tilde f_k(\thetah)|I(\thetah \in \Theta(a+\delta;d))
+ (\|f_k\|_{L_{\infty}(\Theta)}+\|\tilde f_k\|_{L_{\infty}(\Theta)})
{\mathbb P}_{\theta}\{\thetah \not \in \Theta(a+\delta;d)\}
\\
&
\nonumber
\leq \sup_{\theta\in \Theta(a+\delta;d)}|f_k(\theta)-\tilde f_k(\theta)|
+(\|f_k\|_{L_{\infty}(\Theta)}+\|\tilde f_k\|_{L_{\infty}(\Theta)})
{\mathbb P}_{\theta}\{\thetah \not \in \Theta(a+\delta;d)\}
\\
&
\leq (k^2 +2) 2^{k+1} \|f\|_{L_{\infty}(\Theta)}\exp\Bigl\{-\frac{n}{c^2(k+1)^2}\Bigr\}.
\end{align}
Combining this with bound \eqref{bias_ZZZ_thetah}, we get that, for all $\theta \in \Theta(a;d),$ 
\begin{align}
\label{bias_ZZZ_thetah_fin}
|{\mathbb E}_{\theta} f_k(\thetah)-f(\theta)|
\lesssim_{s,\epsilon} 
\|f\|_{C^{s}(\Theta)}
\biggl(a^{(s-1+\epsilon)k+1+\rho}\biggl(\sqrt{\frac{d}{n}}\biggr)^{s}+\exp\Bigl\{-\frac{n}{c^2(k+1)^2}\Bigr\}\biggr).
\end{align}
Since, for $a\geq 1, d\geq 1,$ 
\begin{align}
\label{remove_exp}
\exp\Bigl\{-\frac{n}{c^2(k+1)^2}\Bigr\}\lesssim_s n^{-s/2}\lesssim_s a^{(s-1+\epsilon)k+1+\rho}\biggl(\sqrt{\frac{d}{n}}\biggr)^{s},
\end{align}
this concludes the proof in the case when $d\leq \frac{n}{c^2(k+1)^2}.$
In the opposite case, the proof is straightforward in view of bound
\eqref{f_k_L_infty}.

\qed
\end{proof}

\subsection{Concentration.}
\label{sec:conc}

For a locally Lipschitz function $g:S\mapsto {\mathbb R}$ on a metric space $(S;d),$
define 
$$
(Lg)(x):= \inf_{U\ni x}  \sup_{x_1,x_2\in U,x_1\neq x_2} \frac{|g(x_1)-g(x_2)|}{d(x_1,x_2)},
$$
where the infimum is taken over all neighborhoods $U$ of $x.$
If $S={\mathbb R}^N$ (viewed as a Euclidean space) and $g$ is a locally Lipschitz function,
then, by Rademacher's theorem, $g$ is differentiable a.s. in ${\mathbb R}^N$ and it is easy to see that, 
at the points of differentiability, $(Lf)(x)$ coincides with the Euclidean norm 
of the gradient of $f:$ $(Lf)(x)=\|\nabla f(x)\|$ a.s. in ${\mathbb R}^N.$

The following inequality (due to Maurey and Pisier, see, e.g., \cite{Pisier}) is well known.
 
\begin{proposition}
Let $\ell: {\mathbb R}\mapsto {\mathbb R}_+$ be a nonnegative convex function. 
Let $X, X'$ be independent standard normal r.v. in ${\mathbb R}^N$ and let $g:{\mathbb R}^N\mapsto {\mathbb R}$ be a locally Lipschitz function. Then 
\begin{align}
\label{Maurey}
{\mathbb E}\ell\Bigl(g(X)-{\mathbb E}g(X)\Bigr)\leq {\mathbb E}\ell\Bigl(\frac{\pi}{2}\langle \nabla g(X), X'\rangle \Bigr). 
\end{align} 
\end{proposition}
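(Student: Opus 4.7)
The plan is to use the classical interpolation/rotation trick of Maurey and Pisier. First I would symmetrize: since $\mathbb{E}g(X) = \mathbb{E}_Y g(Y)$ where $Y \overset{d}{=} X'$ is an independent standard Gaussian in $\mathbb{R}^N$, Jensen's inequality applied to the conditional expectation gives
\begin{equation*}
\mathbb{E}\ell\bigl(g(X) - \mathbb{E}g(X)\bigr) = \mathbb{E}\ell\bigl(\mathbb{E}_Y[g(X) - g(Y)]\bigr) \leq \mathbb{E}\ell(g(X) - g(Y)).
\end{equation*}
This reduces the problem to controlling $\mathbb{E}\ell(g(X) - g(Y))$ by the right-hand side of \eqref{Maurey}.

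Next I would introduce the rotation family $X_\theta := \sin(\theta) X + \cos(\theta) Y$ and its derivative $X'_\theta := \cos(\theta) X - \sin(\theta) Y$ for $\theta \in [0, \pi/2]$. The key algebraic facts are: (i) $X_{\pi/2} = X$ and $X_0 = Y$; (ii) $\frac{d}{d\theta} X_\theta = X'_\theta$; and (iii) since the map $(X,Y) \mapsto (X_\theta, X'_\theta)$ is an orthogonal transformation of $\mathbb{R}^{2N}$, the pair $(X_\theta, X'_\theta)$ has the same joint distribution as $(X, X')$ for every fixed $\theta$. Assuming for the moment that $g$ is $C^1$, the fundamental theorem of calculus and the chain rule give
\begin{equation*}
g(X) - g(Y) = \int_0^{\pi/2} \frac{d}{d\theta} g(X_\theta)\, d\theta = \int_0^{\pi/2} \langle \nabla g(X_\theta), X'_\theta\rangle\, d\theta.
\end{equation*}
Rewriting this integral as $\frac{\pi}{2}$ times the average over $[0, \pi/2]$ and applying Jensen's inequality to the convex function $\ell$ yields
\begin{equation*}
\ell(g(X) - g(Y)) \leq \frac{2}{\pi}\int_0^{\pi/2} \ell\Bigl(\frac{\pi}{2}\langle \nabla g(X_\theta), X'_\theta\rangle\Bigr)\, d\theta.
\end{equation*}
Taking expectations and using the distributional invariance $(X_\theta, X'_\theta) \overset{d}{=} (X, X')$ produces a constant integrand, and the $\theta$-integration cancels the $2/\pi$ factor, giving exactly $\mathbb{E}\ell\bigl(\frac{\pi}{2}\langle \nabla g(X), X'\rangle\bigr)$. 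Combined with the symmetrization step, this proves \eqref{Maurey} in the smooth case.

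The main obstacle is the locally Lipschitz (non-$C^1$) case, where $\nabla g$ is only defined almost everywhere by Rademacher's theorem and the chain-rule identity above is not immediate. I would handle this by a standard mollification argument: let $g_\varepsilon := g * \varphi_\varepsilon$ be a convolution with a smooth Gaussian mollifier. Then each $g_\varepsilon$ is $C^\infty$, the inequality applies to $g_\varepsilon$, and one passes to the limit. For the left-hand side, $g_\varepsilon(X) \to g(X)$ a.s. and $\mathbb{E}g_\varepsilon(X) \to \mathbb{E}g(X)$ (by local Lipschitz bounds and Gaussian integrability of the local Lipschitz constant, or by truncation). For the right-hand side, $\nabla g_\varepsilon(X) = (\nabla g * \varphi_\varepsilon)(X)$ satisfies $\|\nabla g_\varepsilon(x)\| \leq ((Lg) * \varphi_\varepsilon)(x)$, and $\nabla g_\varepsilon(X) \to \nabla g(X)$ at Lebesgue points of $\nabla g$ (which has full measure). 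Dominated/Fatou convergence with respect to the Gaussian measure on $(X, X')$ then transfers the inequality to $g$, yielding \eqref{Maurey}.
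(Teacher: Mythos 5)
The paper does not supply a proof of this proposition; it is stated as a well-known result attributed to Maurey and Pisier with a reference to \cite{Pisier}. Your proof is exactly the classical interpolation argument from that literature: the symmetrization via Jensen, the rotation family $X_\theta=\sin\theta\,X+\cos\theta\,Y$ with $\dot X_\theta=X'_\theta$ and the orthogonal-invariance $(X_\theta,X'_\theta)\overset{d}{=}(X,X')$, the fundamental theorem of calculus, and a second application of Jensen to move $\ell$ inside the $\theta$-average are all correct and give the stated bound with the constant $\pi/2$ in the smooth case. This is the standard and essentially unique route to this inequality, so there is nothing to compare against a paper-internal argument.

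One place where you should tighten the write-up is the mollification step. Fatou handles the left-hand side, but on the right-hand side you cannot simply invoke ``dominated/Fatou convergence'': Fatou gives a $\liminf$ lower bound on the limit, which is the wrong direction, and domination by $((Lg)*\varphi_\varepsilon)(x)$ does not directly dominate $\ell(\tfrac{\pi}{2}\langle\nabla g_\varepsilon(X),X'\rangle)$ by an $\varepsilon$-free integrable majorant. A cleaner route is to apply Jensen to the convolution itself,
\begin{equation*}
\ell\Bigl(\tfrac{\pi}{2}\langle\nabla g_\varepsilon(x),x'\rangle\Bigr)
=\ell\Bigl(\tfrac{\pi}{2}\int\varphi_\varepsilon(y)\langle\nabla g(x-y),x'\rangle\,dy\Bigr)
\leq\int\varphi_\varepsilon(y)\,\ell\Bigl(\tfrac{\pi}{2}\langle\nabla g(x-y),x'\rangle\Bigr)\,dy,
\end{equation*}
take expectations, substitute $x\mapsto x+y$, and then use continuity of translation in $L^1$ of the Gaussian measure (together with the observation that the claim is trivial when the right-hand side of the proposition is infinite) to pass to the limit $\varepsilon\to 0$. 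With that fix the argument is complete.
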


Given $\ell\in {\mathcal L},$ define 
$$
\ell^{\sharp} (u):= {\mathbb E} \ell (u Z), u\in {\mathbb R},
$$
where $Z\sim N(0,1).$ Clearly, it also holds that 
$
\ell^{\sharp} (u)= {\mathbb E} \ell (u |Z|), u\in {\mathbb R}
$ 
and $\ell^{\sharp}\in {\mathcal L}.$
Note that function $\ell^{\sharp}$ could take infinite values even when $\ell$ is finite. 
The set of points where 
it is finite is a symmetric interval (in principle, it could be $\{0\}$).

The following fact easily follows from \eqref{Maurey}.

\begin{proposition}
For any locally Lipschitz function $g:{\mathbb R}^N \mapsto {\mathbb R},$
\begin{align}
\label{Maurey_A}
{\mathbb E}\ell\Bigl(g(X)-{\mathbb E}g(X)\Bigr)
\leq {\mathbb E}\ell^{\sharp}\Bigl(\frac{\pi}{2}(Lg)(X)\Bigr). 
\end{align}  
\end{proposition}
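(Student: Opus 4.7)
The plan is to derive \eqref{Maurey_A} as a direct consequence of the Maurey--Pisier bound \eqref{Maurey} by conditioning on $X$ and then identifying the conditional expectation in terms of $\ell^{\sharp}$ and $(Lg)(X).$ Since $\ell$ is convex and nonnegative, all the integrals below are well defined in $[0,+\infty],$ so that no integrability issues obstruct the interchange of expectation and conditional expectation.

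First, I would note that since $g$ is locally Lipschitz on ${\mathbb R}^N,$ Rademacher's theorem ensures that $g$ is Fr\'echet differentiable almost everywhere with respect to Lebesgue measure (hence also ${\mathbb P}_X$-almost surely, since the standard Gaussian law on ${\mathbb R}^N$ is absolutely continuous). At any point $x$ of differentiability, one checks by standard calculus that $(Lg)(x) = \|\nabla g(x)\|.$ In particular, the right-hand side of \eqref{Maurey} may equivalently be written as ${\mathbb E}\ell\bigl(\tfrac{\pi}{2}\langle \nabla g(X), X'\rangle\bigr)$ with $\nabla g(X)$ well defined almost surely.

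Next, I would condition on $X.$ Since $X'\sim N(0,I_N)$ is independent of $X,$ the conditional distribution of $\langle \nabla g(X),X'\rangle$ given $X$ is centered Gaussian with variance $\|\nabla g(X)\|^{2};$ thus there exists $Z\sim N(0,1)$ independent of $X$ such that, conditionally on $X,$
\begin{align*}
\langle \nabla g(X), X'\rangle \overset{d}{=} \|\nabla g(X)\|\, Z.
\end{align*}
Using the tower property together with the very definition of $\ell^{\sharp},$
\begin{align*}
{\mathbb E}\ell\Bigl(\tfrac{\pi}{2}\langle \nabla g(X), X'\rangle\Bigr)
&= {\mathbb E}\,{\mathbb E}\Bigl[\ell\bigl(\tfrac{\pi}{2}\|\nabla g(X)\| Z\bigr)\,\Big|\,X\Bigr]
= {\mathbb E}\,\ell^{\sharp}\Bigl(\tfrac{\pi}{2}\|\nabla g(X)\|\Bigr).
\end{align*}

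Finally, substituting $\|\nabla g(X)\| = (Lg)(X)$ (a.s., by the opening remark) and plugging into \eqref{Maurey} yields \eqref{Maurey_A}. The only conceptual point worth being careful about is the almost sure identification $\|\nabla g(X)\| = (Lg)(X);$ this is the sole place where the local Lipschitz hypothesis (via Rademacher) is used, and once this is noted the argument is a one-line computation. No step poses a genuine obstacle.
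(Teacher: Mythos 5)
Your proof is correct and follows essentially the same route as the paper: invoke the Maurey--Pisier inequality \eqref{Maurey}, condition on $X$, recognize that $\langle \nabla g(X),X'\rangle$ is conditionally centered Gaussian with standard deviation $\|\nabla g(X)\|=(Lg)(X)$, and appeal to the definition of $\ell^{\sharp}$. The only cosmetic difference is that the paper factors $\nabla g(X)=\|\nabla g(X)\|v(X)$ with $v(X)$ a unit vector, whereas you state the conditional variance directly; this sidesteps the (harmless) division by $\|\nabla g(X)\|$ when the gradient vanishes, but the underlying argument is identical.
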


\begin{proof}
Indeed, we have 
\begin{align*}
&
{\mathbb E}\ell\Bigl(g(X)-{\mathbb E}g(X)\Bigr)\leq 
{\mathbb E}\ell\Bigl(\frac{\pi}{2}\langle \nabla g(X), X'\rangle \Bigr). 
\\
&
= 
{\mathbb E}{\mathbb E}_{X'}\ell\Bigl(\frac{\pi}{2}\|\nabla g(X)\|\langle v(X), X'\rangle \Bigr)
={\mathbb E}{\mathbb E}_{X'}\ell\Bigl(\frac{\pi}{2}(Lg)(X)\langle v(X), X'\rangle \Bigr),
\end{align*}
where 
$
v(X)=\frac{\nabla g(X)}{\|\nabla g(X)\|}.
$
Conditionally on $X,$ the distribution of r.v. $\langle v(X), X'\rangle$ is standard normal, implying that 
\begin{align*}
{\mathbb E}_{X'}\ell\Bigl(\frac{\pi}{2}(Lg)(X)\langle v(X), X'\rangle \Bigr)= \ell^{\sharp} \Bigl(\frac{\pi}{2}(Lg)(X)\Bigr),
\end{align*}
and bound \eqref{Maurey_A} follows. 

\qed
\end{proof}

For a loss function $\ell\in {\mathcal L},$ define 
$$
c(\ell):= \|1\|_{\ell}= \inf\Bigl\{c>0: \ell\Bigl(\frac{1}{c}\Bigr)\leq 1\Bigr\}.
$$
Bound \eqref{Maurey_A} implies that 
\begin{align}
\label{Maurey_A_Orl}
\|g(X)-{\mathbb E}g(X)\|_{\ell}\leq \frac{\pi}{2}\|(Lg)(X)\|_{\ell^{\sharp}}.
\end{align}
Note that, if $g$ is Lipschitz with constant $L_g,$  bound \eqref{Maurey_A_Orl} implies 
that 
\begin{align}
\label{Maurey_A_A}
\|g(X)-{\mathbb E}g(X)\|_{\ell}
\leq \frac{\pi}{2}c(\ell^{\sharp}) L_g. 
\end{align}  
If $\ell (u):= |u|^p, u\in {\mathbb R}, p\geq 1,$ then 
$
\ell^{\sharp}(u) = {\mathbb E}|Z|^p |u|^p, u\in {\mathbb R}
$
and $c(\ell^{\sharp})=\|Z\|_{L_p}.$
The next Sobolev type bound follows from \eqref{Maurey_A_Orl} 
\begin{align}
\label{Sobolev}
\|g(X)-{\mathbb E}g(X)\|_{L_p} \leq \frac{\pi}{2}\|Z\|_{L_p}\|(Lg)(X)\|_{L_p}
\lesssim \sqrt{p}\|(Lg)(X)\|_{L_p},
\end{align}
yielding in the case of Lipschitz function $g$
\begin{align*}
\|g(X)-{\mathbb E}g(X)\|_{L_p} 
\lesssim \sqrt{p}L_g.
\end{align*}
Using the last bound for $p=t$ and combining it with Markov inequality, 
we easily get that, with probability at least $1-e^{-t},$
\begin{align*}
|g(X)-{\mathbb E}g(X)| \lesssim L_g \sqrt{t},
\end{align*}
which is a version of Gaussian concentration inequality. 

In the case when $g$ is not Lipschitz, inequality \eqref{Maurey_A} could still provide 
useful bounds, for instance, by the following iterative argument (that resembles the method 
already used in \cite{Adamczak}).

\begin{proposition}
Suppose that 
$$(Lg)(x)\leq \varphi (x), x\in {\mathbb R}^N,$$ 
where $\varphi :{\mathbb R}^N\mapsto {\mathbb R}$ is a Lipschitz function with constant $L_{\varphi}.$ 
Then
\begin{align}
\label{Maurey_ABC}
\|g(X)-{\mathbb E}g(X)\|_{\ell}
\leq \frac{\pi}{2}c(\ell^{\sharp}){\mathbb E}\varphi(X)+ \Bigl(\frac{\pi}{2}\Bigr)^2 c((\ell^{\sharp})^{\sharp}) L_{\varphi}.
\end{align}
\end{proposition}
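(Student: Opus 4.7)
The plan is to apply the Maurey--Pisier bound \eqref{Maurey_A_Orl} twice: first to pass from $g$ to its ``gradient upper bound'' $\varphi$, and then to handle the deviation of $\varphi$ itself using its Lipschitz constant. The crucial structural observation is that the map $\ell \mapsto \ell^{\sharp}$ preserves the class $\mathcal L$, so the result of one Maurey--Pisier step is again a convex loss to which the same tool can be reapplied.

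First, since $\ell^{\sharp}\in \mathcal L,$ the Orlicz norm $\|\cdot\|_{\ell^{\sharp}}$ is monotone with respect to pointwise domination of absolute values and satisfies the triangle inequality. Starting from \eqref{Maurey_A_Orl} applied to $g$ with loss $\ell$, and using $(Lg)(x)\leq \varphi(x)\leq |\varphi(x)|$, I get
\begin{align*}
\|g(X)-\mathbb{E} g(X)\|_{\ell}
\;\leq\; \frac{\pi}{2}\|(Lg)(X)\|_{\ell^{\sharp}}
\;\leq\; \frac{\pi}{2}\|\varphi(X)\|_{\ell^{\sharp}}.
\end{align*}
By the triangle inequality for $\|\cdot\|_{\ell^{\sharp}}$ and the identity $\|c\|_{\ell^{\sharp}}=c(\ell^{\sharp})|c|$ for constants, I decompose
\begin{align*}
\|\varphi(X)\|_{\ell^{\sharp}}
\;\leq\; c(\ell^{\sharp})\,\mathbb{E}\varphi(X) + \|\varphi(X)-\mathbb{E}\varphi(X)\|_{\ell^{\sharp}}.
\end{align*}

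For the remaining centered term, I invoke the Lipschitz version \eqref{Maurey_A_A} of the Maurey--Pisier inequality, but with the loss $\ell$ replaced by $\ell^{\sharp}$ (which is a legitimate choice since $\ell^{\sharp}\in \mathcal L$), the function replaced by the $L_{\varphi}$-Lipschitz function $\varphi$, and accordingly the constant $c(\ell^{\sharp})$ replaced by $c((\ell^{\sharp})^{\sharp})$:
\begin{align*}
\|\varphi(X)-\mathbb{E}\varphi(X)\|_{\ell^{\sharp}}
\;\leq\; \frac{\pi}{2}\,c((\ell^{\sharp})^{\sharp})\, L_{\varphi}.
\end{align*}
Chaining the three displays produces exactly \eqref{Maurey_ABC}.

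There is no serious obstacle: the argument is a two-step bootstrap of \eqref{Maurey_A_Orl}, relying only on closure of $\mathcal L$ under $\ell\mapsto \ell^{\sharp}$ and on standard properties (monotonicity, triangle inequality) of Orlicz norms associated with losses in $\mathcal L$. The only minor points to check in passing are that $\mathbb{E}\varphi(X)<\infty$ (automatic since $\varphi$ is Lipschitz on $\mathbb R^N$ and $X$ is Gaussian, hence $\varphi(X)$ has a finite mean and, in fact, finite $\|\cdot\|_{\ell^{\sharp}}$-norm), and that the application of \eqref{Maurey_A_A} to the loss $\ell^{\sharp}$ is valid, for which one needs $c((\ell^{\sharp})^{\sharp})<\infty$; if this quantity is infinite the bound becomes vacuous and there is nothing to prove.
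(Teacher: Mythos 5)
Your proof is correct and takes essentially the same route as the paper: a first Maurey--Pisier step to pass from $g$ to $\varphi$, a triangle-inequality split of $\|\varphi(X)\|_{\ell^{\sharp}}$ into its mean and its fluctuation, and a second Maurey--Pisier step (now with loss $\ell^{\sharp}$) on the Lipschitz function $\varphi$. The only cosmetic difference is that the paper stays with \eqref{Maurey_A_Orl} and then bounds $\|(L\varphi)(X)\|_{(\ell^{\sharp})^{\sharp}}\le c((\ell^{\sharp})^{\sharp})L_{\varphi}$ explicitly, while you invoke the packaged Lipschitz form \eqref{Maurey_A_A}; these are the same step.
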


\begin{proof}
Indeed,
\begin{align*}
&
\|g(X)-{\mathbb E}g(X)\|_{\ell}\leq \frac{\pi}{2}\|(Lg)(X)\|_{\ell^{\sharp}}
\leq \frac{\pi}{2} \|\varphi(X)\|_{\ell_{\sharp}}
\leq \frac{\pi}{2} \|{\mathbb E}\varphi(X)\|_{\ell_{\sharp}}+ \frac{\pi}{2} \|\varphi(X)-{\mathbb E}\varphi(X)\|_{\ell_{\sharp}}
\\
&
\leq \frac{\pi}{2} c(\ell^{\sharp}) {\mathbb E}\varphi(X)+ \Bigl(\frac{\pi}{2}\Bigr)^2 \|(L\varphi)(X)\|_{(\ell^{\sharp})^{\sharp}}
\leq \frac{\pi}{2} c(\ell^{\sharp}) {\mathbb E}\varphi(X)+ \Bigl(\frac{\pi}{2}\Bigr)^2 c((\ell^{\sharp})^{\sharp}) L_{\varphi}.
\end{align*}
\qed

\end{proof}

In particular, it follows from \eqref{Maurey_ABC} that, for all $p\geq 1,$ 
\begin{align*}
\|g(X)-{\mathbb E}g(X)\|_{L_p} 
\lesssim \sqrt{p}{\mathbb E}\varphi(X) + p L_{\varphi}
\end{align*}
that, in turn, implies the following concentration bound: with probability at least $1-e^{-t},$
\begin{align*}
|g(X)-{\mathbb E}g(X)| 
\lesssim \sqrt{t}{\mathbb E}\varphi(X) + t L_{\varphi}.
\end{align*}

We will now apply these inequalities to the function $f\circ \hat \theta,$ where $f$ is a Lipschitz 
function on ${\mathbb R}^d \times {\mathcal C}_+^d$ and $\hat \theta=(\hat \mu, \hat \Sigma)$
($\hat \mu=\bar X$ being the sample mean and $\hat \Sigma$ being the sample covariance)
is a standard estimator of the parameter $\theta = (\mu,\Sigma)$ of a normal model 
$
X_1,\dots, X_n\ {\rm i.i.d.}\ \sim N(\mu,\Sigma)\ {\rm in}\ {\mathbb R}^d.
$ 

\begin{proposition}
\label{conc_bd_smooth}
Let $f:{\mathbb R}^d \times {\mathcal C}_+^d\mapsto {\mathbb R}$ be a Lipschitz
function with Lipschitz constant $L_f>0.$
Then, for $d\lesssim n,$
\begin{align*}
\|f(\thetah)-{\mathbb E}_{\theta}f(\thetah)\|_{\ell}
\lesssim 
\frac{c(\ell^{\sharp})L_f \|\Sigma\|^{1/2}(1+\|\Sigma\|^{1/2})}{\sqrt{n}} + 
\frac{c((\ell^{\sharp})^{\sharp}) L_f \|\Sigma\|}{n}.
\end{align*}
\end{proposition}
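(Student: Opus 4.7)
The plan is to reduce the concentration problem to the Gaussian concentration inequality \eqref{Maurey_ABC} by viewing $f(\hat\theta)$ as a function on $\mathbb{R}^{dn}$ and carefully bounding its pointwise Lipschitz constant by a function that is itself globally Lipschitz with small constant, then controlling the expectation of this upper bound.

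First I would represent the sample as $X_j = \mu + \Sigma^{1/2} Z_j$ with $Z_1,\dots,Z_n$ i.i.d.\ $N(0,I_d)$, and regard $Z := (Z_1,\dots,Z_n)$ as a standard Gaussian vector in $\mathbb{R}^{dn}$. Then $g(Z) := f(\hat\theta(Z))$ is a locally Lipschitz function on $\mathbb{R}^{dn}$, and since $f$ is Lipschitz with constant $L_f$ with respect to $\|(w,W)\|=\|w\|+\|W\|_{\mathrm{op}}$, the chain rule gives
\begin{equation*}
(Lg)(Z) \;\le\; L_f\Bigl(\bigl\|D_Z\hat\mu\bigr\|_{\mathrm{op}} + \bigl\|D_Z\hat\Sigma\bigr\|_{\mathrm{op}}\Bigr),
\end{equation*}
where the operator norms are from $(\mathbb{R}^{dn},\|\cdot\|_2)$ to $(\mathbb{R}^d,\|\cdot\|)$ and to $(\mathcal{S}^d,\|\cdot\|_{\mathrm{op}})$ respectively.

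Next I would estimate these two operator norms. For the mean, $d\hat\mu[h]=\Sigma^{1/2}\bar h$ and the Cauchy--Schwarz bound $\|\bar h\|\le \|h\|_2/\sqrt{n}$ give $\|D_Z\hat\mu\|_{\mathrm{op}}\le \|\Sigma\|^{1/2}/\sqrt{n}$. For the covariance, writing $\hat\Sigma = \Sigma^{1/2}W\Sigma^{1/2}$ with $W=(n-1)^{-1}\sum_j(Z_j-\bar Z)(Z_j-\bar Z)^\top$, the derivative in a direction $h=(h_1,\dots,h_n)$ equals
\begin{equation*}
\frac{1}{n-1}\Sigma^{1/2}\!\left[\sum_j (Z_j-\bar Z)(h_j-\bar h)^\top + (h_j-\bar h)(Z_j-\bar Z)^\top\right]\!\Sigma^{1/2}.
\end{equation*}
The key (and, I expect, the main technical) step is to observe that the operator norm $\|\sum_j u_jv_j^\top\|_{\mathrm{op}} = \|UV^\top\|_{\mathrm{op}} \le \|U\|_{\mathrm{op}}\|V\|_{\mathrm{op}}$ in terms of the $d\times n$ matrices with columns $u_j,v_j$. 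Applying this with $u_j=Z_j-\bar Z$, $v_j=h_j-\bar h$ and using $\|[h_1-\bar h,\dots,h_n-\bar h]\|_{\mathrm{op}}\le \|h\|_2$, I obtain
\begin{equation*}
\bigl\|D_Z\hat\Sigma\bigr\|_{\mathrm{op}} \;\le\; \frac{2\|\Sigma\|}{n-1}\,\bigl\|[Z_1-\bar Z,\dots,Z_n-\bar Z]\bigr\|_{\mathrm{op}}.
\end{equation*}
The crucial point is to use the operator norm (not the Frobenius norm) on the data matrix; the cruder Frobenius bound $\sqrt{\sum_j\|Z_j-\bar Z\|^2}$ would introduce an unwanted $\sqrt{d}$ factor.

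Combining the two estimates yields $(Lg)(Z)\le \varphi(Z)$ with
\begin{equation*}
\varphi(Z) := L_f\!\left(\frac{\|\Sigma\|^{1/2}}{\sqrt n} + \frac{2\|\Sigma\|}{n-1}\bigl\|[Z_1-\bar Z,\dots,Z_n-\bar Z]\bigr\|_{\mathrm{op}}\right).
\end{equation*}
Since the operator norm is $1$-Lipschitz in the Frobenius norm and centering is a linear projection of norm $1$, $\varphi$ is Lipschitz on $\mathbb{R}^{dn}$ with constant $L_\varphi\le 2L_f\|\Sigma\|/(n-1)$. Finally, the standard Gaussian matrix bound $\mathbb{E}\|[Z_1-\bar Z,\dots,Z_n-\bar Z]\|_{\mathrm{op}}\lesssim \sqrt{n-1}+\sqrt{d}\lesssim\sqrt{n}$ for $d\lesssim n$ gives
$\mathbb{E}\varphi(Z)\lesssim L_f\|\Sigma\|^{1/2}(1+\|\Sigma\|^{1/2})/\sqrt n$. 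Plugging $\mathbb{E}\varphi$ and $L_\varphi$ into inequality \eqref{Maurey_ABC} applied to the standard Gaussian $Z\in\mathbb{R}^{dn}$ yields the claimed bound.
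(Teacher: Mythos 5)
Your proof is correct and follows essentially the same route as the paper: represent $X_j=\mu+\Sigma^{1/2}Z_j$, view $f\circ\hat\theta$ as a function of the standard Gaussian $Z\in\mathbb{R}^{nd}$, bound its local Lipschitz constant $(L(f\circ\hat\theta))(Z)$ by a Lipschitz function $\varphi(Z)$, and plug $\mathbb{E}\varphi$ and $L_\varphi$ into \eqref{Maurey_ABC}. The only cosmetic difference is your choice of $\varphi$ in terms of the operator norm of the centered data matrix $[Z_1-\bar Z,\dots,Z_n-\bar Z]$ rather than $\|\hat\Sigma\|^{1/2}$ as in Lemma~\ref{lemma_L_bounds}; the two differ by the factor $\|\Sigma\|^{1/2}/\sqrt{n-1}$ and lead to the same final bound up to absolute constants.
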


\begin{proof}
Since we can represent $X_j$ as $X_j=\mu +\Sigma^{1/2}Z_j,$ where $Z_1,\dots, Z_n$ are i.i.d. $\sim N(0;I_d),$ we can view $f\circ \hat \theta$ as a function 
of a standard normal vector $Z=(Z_1,\dots, Z_n)\in {\mathbb R}^{N},$ where $N=n d.$ 
With a little abuse of notation, we will also write $\hat \theta, \hat \mu, \hat \Sigma$ as 
functions of $Z;$ $Z$ denotes both a random variable 
in ${\mathbb R}^N$ and a point in ${\mathbb R}^n$ (depending on the context).

\begin{lemma}
\label{lemma_L_bounds}
The following bounds hold for all $Z\in {\mathbb R}^N:$
\begin{align}
\label{Lhatmu}
(L\hat \mu)(Z) \leq \frac{\|\Sigma\|^{1/2}}{\sqrt{n}},
\end{align} 
\begin{align}
\label{LhatSigma}
(L\hat \Sigma)(Z)\leq \frac{4\sqrt{2}\|\Sigma\|^{1/2}\|\hat \Sigma(Z)\|^{1/2}}{\sqrt{n-1}},\ 
(L\|\hat \Sigma\|^{1/2})(Z)\leq \frac{2\sqrt{2}\|\Sigma\|^{1/2}}{\sqrt{n-1}},
\end{align}
and 
\begin{align}
\label{Lhattheta}
(L\hat \theta)(Z) \leq \frac{\|\Sigma\|^{1/2}}{\sqrt{n}}+\frac{4\sqrt{2}\|\Sigma\|^{1/2}\|\hat \Sigma(Z)\|^{1/2}}{\sqrt{n-1}}.
\end{align}
\end{lemma}

Note that the bound on $(L\|\hat \Sigma\|^{1/2})(Z)$ means that $\|\hat \Sigma(Z)\|^{1/2}$
is a Lipschitz function.

\begin{proof} 
We will show only the bound on $(L\hat \Sigma)(Z)$ (other proofs are similar). Denoting $Z':=(Z_1',\dots, Z_n'),$
$X_j':=\mu+\Sigma^{1/2}Z_j'$ and $\hat \Sigma'=\hat \Sigma(Z'),$
we have 
\begin{align*}
&
\|\hat \Sigma(Z)-\hat \Sigma(Z')\|= \sup_{\|u\|\leq 1, \|v\|\leq 1}\Bigl|\langle (\hat \Sigma-\hat \Sigma')u,v\rangle\Bigr|
\\
&
= \sup_{\|u\|\leq 1, \|v\|\leq 1} \biggl|
 \frac{1}{n-1}\sum_{j=1}^n \langle X_j-\bar X, u\rangle \langle X_j-\bar X, v\rangle
 -
 \frac{1}{n-1}\sum_{j=1}^n \langle X_j'-\bar X', u\rangle \langle X_j'-\bar X', v\rangle
 \biggr|
 \\
 &
 \leq \frac{1}{\sqrt{n-1}}\sup_{\|u\|\leq 1}\biggl(\sum_{j=1}^n \Bigl\langle X_j-X_j'-(\bar X-\bar X'), u\Bigr\rangle^2\biggr)^{1/2}
 \sup_{\|v\|\leq 1} \biggl(\frac{1}{n-1}\sum_{j=1}^n \langle X_j-\bar X, v\rangle^2\biggr)^{1/2}
 \\
 &
 +
 \frac{1}{\sqrt{n-1}}\sup_{\|v\|\leq 1}\biggl(\sum_{j=1}^n \Bigl\langle X_j-X_j'-(\bar X-\bar X'), v\Bigr\rangle^2\biggr)^{1/2}
 \sup_{\|u\|\leq 1} \biggl(\frac{1}{n-1}\sum_{j=1}^n \langle X_j'-\bar X', u\rangle^2\biggr)^{1/2} 
 \\
 &
 \leq
\frac{\|\Sigma\|^{1/2}(\|\hat \Sigma\|^{1/2}+ \|\hat \Sigma'\|^{1/2})}{\sqrt{n-1}}
\biggl(\sum_{j=1}^n \|Z_j-Z_j'-(\bar Z-\bar Z')\|^2\biggr)^{1/2}
\\
&
\leq 
\frac{\sqrt{2}\|\Sigma\|^{1/2}(\|\hat \Sigma\|^{1/2}+ \|\hat \Sigma'\|^{1/2})}{\sqrt{n-1}}
\biggl(\sum_{j=1}^n \|Z_j-Z_j'\|^2 + n\|\bar Z-\bar Z'\|^2\biggr)^{1/2}
\\
&
\leq 
\frac{\sqrt{2}\|\Sigma\|^{1/2}(\|\hat \Sigma\|^{1/2}+ \|\hat \Sigma'\|^{1/2})}{\sqrt{n-1}}
\biggl(
\biggl(\sum_{j=1}^n \|Z_j-Z_j'\|^2\biggr)^{1/2} + 
\sqrt{n}\|\bar Z-\bar Z'\|\biggr)
\\
&
\leq 
\frac{2\sqrt{2}\|\Sigma\|^{1/2}(\|\hat \Sigma\|^{1/2}+ \|\hat \Sigma'\|^{1/2})}{\sqrt{n-1}}
\|Z-Z'\|,
\end{align*}
which implies the bound on $(L\hat \Sigma)(Z)$ in \eqref{LhatSigma}.
\qed
\end{proof}

It follows from \eqref{Lhattheta} that, for a Lipschitz function $f,$  
\begin{align*}
&
L(f\circ \hat \theta)(Z) \leq 
\frac{L_{f}\|\Sigma\|^{1/2}}{\sqrt{n}}
\biggl(1+4\sqrt{2}\sqrt{\frac{n}{n-1}}\|\hat \Sigma(Z)\|^{1/2}\Biggr)
\\
&
\leq \frac{L_{f}\|\Sigma\|^{1/2}}{\sqrt{n}}(1+8\|\hat \Sigma(Z)\|^{1/2})=:\varphi(Z), n\geq 2.
\end{align*}
Using bound on $(L\|\hat \Sigma\|^{1/2})(Z)$ from \eqref{LhatSigma}, it is also easy to check that function $\varphi$ is Lipschitz with 
$
L_{\varphi}\leq \frac{16\sqrt{2}L_f\|\Sigma\|}{n-1}.
$ 
Note that, for $d\lesssim n,$
\begin{align*}
&
{\mathbb E}\varphi (Z) = \frac{L_{f}\|\Sigma\|^{1/2}}{\sqrt{n}}(1+8{\mathbb E}_{\theta}
\|\hat \Sigma\|^{1/2})
\\
&
\leq 
\frac{L_{f}\|\Sigma\|^{1/2}}{\sqrt{n}}(1+8{\mathbb E}_{\theta}^{1/2}\|\hat \Sigma\|)
\leq \frac{L_{f}\|\Sigma\|^{1/2}}{\sqrt{n}}\Bigl(1+8\|\Sigma\|^{1/2}+
8{\mathbb E}_{\theta}^{1/2}\|\hat \Sigma-\Sigma\|\Bigr)
\\
&
\lesssim 
\frac{L_{f}\|\Sigma\|^{1/2}}{\sqrt{n}}\Bigl(1+\|\Sigma\|^{1/2}+
\|\Sigma\|^{1/2}\Bigl(\sqrt{\frac{d}{n}}\vee \frac{d}{n}\Bigr)^{1/2}\Bigr)
\lesssim \frac{L_{f}\|\Sigma\|^{1/2}}{\sqrt{n}}(1+\|\Sigma\|^{1/2}).
\end{align*}
The result follows from bound \eqref{Maurey_ABC}.
\qed
\end{proof}

Now we are ready to proof Theorem \ref{theorem_upper_detailed}.

\begin{proof}
First note that 
\begin{align}
\label{bd_step_0}
&
\|f_k(\thetah)-f(\theta))\|_{\ell}
\leq 
\|\tilde f_k(\thetah)-{\mathbb E}_{\theta}\tilde f_k(\thetah)\|_{\ell}
+ \|{\mathbb E}_{\theta}\tilde f_k(\thetah)-f(\theta)\|_{\ell}
+ 
\|f_k(\thetah)-\tilde f_k(\thetah)\|_{\ell}.
\end{align} 
Recall that, under condition \eqref{cond_leq_0.5}, by Corollary \ref{cor_bd_f_k_hold}  
we have 
\begin{align*}
L_{\tilde f_k}\leq \|\tilde f_k\|_{C^{1+\rho}(\Theta)}\leq 2\|f\|_{C^{s}(\Theta)}. 
\end{align*}
Condition \eqref{cond_leq_0.5} holds 
if 
\begin{align*}
{\mathbb E}\Bigl(\|E\|_{C^{s-1}(\Theta)}\vee 1\Bigr)^{s-1}
\|E\|_{C^{s-1}(\Theta)}\leq d_s
\end{align*}
for a small enough constant $d_s>0.$
By bound \eqref{bd_EEE_hold}, the last condition holds under assumption \eqref{cond_a_epsilon_d_n}
with a sufficiently small constant $c_s.$
In this case, it follows from Proposition \ref{conc_bd_smooth}
that
\begin{align}
\label{bd_step_1}
&
\nonumber
\|\tilde f_k(\hat \theta)-{\mathbb E}_{\theta}\tilde f_k(\hat \theta)\|_{\ell}
\lesssim 
\frac{c(\ell^{\sharp}) \|f\|_{C^{s}(\Theta)}\|\Sigma\|^{1/2}(1+\|\Sigma\|^{1/2})}{\sqrt{n}} + 
\frac{c((\ell^{\sharp})^{\sharp}) \|f\|_{C^{s}(\Theta)}\|\Sigma\|}{n}
\\
&
\lesssim \frac{c(\ell^{\sharp}) \|f\|_{C^{s}(\Theta)}a}{\sqrt{n}} + 
\frac{c((\ell^{\sharp})^{\sharp}) \|f\|_{C^{s}(\Theta)}a}{n}.
\end{align}
Using bound \eqref{bias_ZZZ_thetah},
we get that, for all $\theta\in \Theta(a;d),$
\begin{align}
\label{bd_step_2}
\|{\mathbb E}_{\theta}\tilde f_k(\thetah)-f(\theta)\|_{\ell}
\lesssim_{s,\epsilon}
c(\ell)\|f\|_{C^{s}(\Theta)}
a^{(s-1+\epsilon)k+1+\rho}\biggl(\sqrt{\frac{d}{n}}\biggr)^{s}.
\end{align}
Note that, for any event $A,$ $\|I_A\|_{\ell}= \frac{1}{\ell^{-1}(1/{\mathbb P}(A))}.$ 
Using this fact along with bounds \eqref{bd_f_k_tilde_f_k'''}, \eqref{f_k_L_infty} and \eqref{thetah_not_in_Theta}, we get
\begin{align*}
&
\|f_k(\thetah)-\tilde f_k(\thetah)\|_{\ell}
\\
&
\leq 
\Bigl\|(f_k(\thetah)-\tilde f_k(\thetah))I(\thetah \in\Theta (a+\delta;d)\Bigr\|_{\ell}
+ 
\Bigl\|(f_k(\thetah)-\tilde f_k(\thetah))I(\thetah \not\in\Theta (a+\delta;d)\Bigr\|_{\ell}
\\
&
\leq 
\Bigl\|\sup_{\theta\in \Theta(a+\delta;d)}|f_k(\theta)- \tilde f_k(\theta)|\Bigr\|_{\ell}
+
(\|f_k\|_{L_{\infty}(\Theta)}+\|\tilde f_k\|_{L_{\infty}(\Theta)}))\Bigl\|I(\thetah \not\in\Theta (a+\delta;d)\Bigr\|_{\ell}
\\
&
\leq 
k^2 2^{k+1} \|f\|_{L_{\infty}(\Theta)}\exp\Bigl\{-\frac{n}{c^2(k+1)^2}\Bigr\}+
\frac{2^{k+2}\|f\|_{L_{\infty}(\Theta)}}{\ell^{-1}\biggl(1/{\mathbb P}_{\theta}\{\thetah \not\in \Theta (a+\delta;d)\}\biggr)}
\\
&
\leq k^2 2^{k+1} \|f\|_{L_{\infty}(\Theta)}\exp\Bigl\{-\frac{n}{c^2(k+1)^2}\Bigr\}+
\frac{2^{k+2}\|f\|_{L_{\infty}(\Theta)}}{\ell^{-1}\biggl(\exp\Bigl\{\frac{n}{c^2(k+1)^2}\Bigr\}\biggr)}.
\end{align*}
It is easy to check that, for $p\in (0,1),$ $p\leq \frac{1}{\ell^{-1}(\ell(1)/p)}.$ Applying this 
to $p=\exp\Bigl\{-\frac{n}{c^2(k+1)^2}\Bigr\},$ we easily get that 
\begin{align}
\label{bd_step_3}
&
\nonumber
\|f_k(\thetah)-\tilde f_k(\thetah)\|_{\ell}
\\
&
\nonumber
\leq k^2 2^{k+1} \|f\|_{L_{\infty}(\Theta)}\exp\Bigl\{-\frac{n}{c^2(k+1)^2}\Bigr\}+
\frac{2^{k+2}\|f\|_{L_{\infty}(\Theta)}}{\ell^{-1}\biggl(\exp\Bigl\{\frac{n}{c^2(k+1)^2}\Bigr\}\biggr)}
\\
&
\leq \frac{(k^2+2)2^{k+1}\|f\|_{L_{\infty}(\Theta)}}{{\ell^{-1}\biggl((\ell(1)\wedge 1)
\exp\Bigl\{\frac{n}{c^2(k+1)^2}\Bigr\}\biggr)}}.
\end{align}
Bound \eqref{bound_on_norm_ell_fthetah} easily follows from \eqref{bd_step_0}, \eqref{bd_step_1}, \eqref{bd_step_2} and \eqref{bd_step_3}. To prove \eqref{bound_on_norm_ell_fthetah_simple},
note that $\ell (u)\leq \ell(1)\wedge 1$ for all $u\in [0, 1\wedge \ell^{-1}(1)].$ Let 
$\lambda:=\frac{\log (1/(\ell(1)\wedge 1))}{1\wedge \ell^{-1}(1)}.$ It is easy to check 
that the condition $\ell(u)\leq e^{bu}, u\geq 0$ implies 
$$\ell (u)\leq (\ell(1)\wedge 1)e^{(b+\lambda)u}, u\geq 0.$$ 
It follows from the last inequality that 
$$
\frac{1}{\ell^{-1}\Bigl((\ell(1)\wedge 1)e^{(b+\lambda)u}\Bigr)}\leq \frac{1}{u}.
$$
Using this bound for $u:= \frac{n}{(b+\lambda)c^2(k+1)^2}$ allows us to get 
\begin{align*}
\frac{(k^2+2)2^{k+1}\|f\|_{L_{\infty}(\Theta)}}{{\ell^{-1}\biggl((\ell(1)\wedge 1)\exp\Bigl\{\frac{n}{c^2(k+1)^2}\Bigr\}\biggr)}}\lesssim_{s,\ell} \frac{\|f\|_{C^s(\Theta)}}{n} \lesssim_{s,l} \|f\|_{C^s(\Theta)}\frac{a}{\sqrt{n}},
\end{align*}
so, the last term of bound \eqref{bound_on_norm_ell_fthetah} could be dropped.

\qed
\end{proof}

Finally, we provide the proof of Theorem \ref{main_theorem_1}. 

\begin{proof}
First, note that, for $s=1+\rho$ with $\rho\in (0,1]$ and for $d\lesssim n,$
the bound of Proposition \ref{conc_bd_smooth} easily implies 
that 
\begin{align*}
\|f(\thetah)-{\mathbb E}_{\theta}f(\thetah)\|_{\ell}
\lesssim_{\ell} \|f\|_{C^s(\Theta)} \frac{a}{\sqrt{n}}, \theta \in \Theta(a;d).
\end{align*}
This could be combined with bound 
\eqref{bd_bias_plug-in} to get that 
\begin{align*}
\|f(\thetah)-f(\theta)\|_{\ell}
\lesssim_{s,\ell} \|f\|_{C^s(\Theta)}\biggl(\frac{a}{\sqrt{n}} \bigvee a\biggl(\sqrt{\frac{d}{n}}\biggr)^s\biggr), \theta \in \Theta(a;d).
\end{align*}
Since also $\|f(\thetah)\|\leq \|f\|_{L_{\infty}(\Theta)}\leq \|f\|_{C^s(\Theta)}$ and 
$\|f(\theta)\|\leq \|f\|_{L_{\infty}(\Theta)}\leq \|f\|_{C^s(\Theta)},$
we conclude that 
\begin{align*}
\|f(\thetah)-f(\theta)\|_{\ell} \lesssim_{\ell} \|f\|_{C^s(\Theta)},
\end{align*}
implying that, for all $\theta\in \Theta(a;d),$
\begin{align*}
\|f(\thetah)-f(\theta)\|_{\ell}
\lesssim_{s,\ell} \|f\|_{C^s(\Theta)}
\biggl[\biggl(\frac{a}{\sqrt{n}} \bigvee a\biggl(\sqrt{\frac{d}{n}}\biggr)^s\biggr)\bigwedge 1\biggr].
\end{align*}

To prove the bound of Theorem \ref{main_theorem_1} for $s=1+k+\rho$ with $k\geq 1, \rho\in (0,1],$ 
first note that by \eqref{f_k_L_infty},
\begin{align}
\label{bd_konec_1}
\|f_k(\thetah)-f(\theta)\|_{\ell} \leq \|f_k(\thetah)\|_{\ell}+ \|f(\theta)\|_{\ell} 
\lesssim_{\ell, k} \|f\|_{L_{\infty}(\Theta)}\lesssim_{\ell,k} \|f\|_{C^{s}(\Theta)},
\theta \in \Theta.
\end{align}
Then observe that, under assumption \eqref{cond_a_epsilon_d_n},
bound \eqref{bound_on_norm_ell_fthetah_simple} of Theorem \ref{theorem_upper_detailed}
implies that 
\begin{align}
\label{bd_konec_2}
\nonumber
\sup_{\theta\in \Theta(a;d)}\|f_k(\thetah)-f(\theta)\|_{L_{\ell}({\mathbb P}_{\theta})}
&
\nonumber
\lesssim_{s,\epsilon,\ell}
\|f\|_{C^{s}(\Theta)}
\Biggl(\frac{a}{\sqrt{n}}
\bigvee a^{(s-1+\epsilon)k+1+\rho}\biggl(\sqrt{\frac{d}{n}}\biggr)^{s}
\Biggr)
\\
&
\lesssim_{s,\epsilon,\ell}
\|f\|_{C^{s}(\Theta)}
\Biggl(\frac{a}{\sqrt{n}}
\bigvee a^{(s-1+\epsilon)s}\biggl(\sqrt{\frac{d}{n}}\biggr)^{s}
\Biggr).
\end{align}
It remains to set $\epsilon:=\beta-(s-1)$ and to combine bounds \eqref{bd_konec_1}
and \eqref{bd_konec_2} to complete the proof.

\qed
\end{proof}

\subsection{Normal approximation and efficiency.}
\label{sec:efficiency}

In this section, we provide the proof of Theorem \ref{main_theorem_2}.

\begin{proof}
For a differentiable function $g:\Theta\mapsto {\mathbb R},$
let 
$$
S_g(\theta;h) := g(\theta+h)-g(\theta)-\langle g^{\prime}(\theta), h\rangle 
$$
be the remainder of its first order Taylor expansion. 
We will use the following obvious representations:
\begin{align}
\label{represent_odin}
f_k(\thetah)-f(\theta) = \tilde f_k(\thetah)- {\mathbb E}_{\theta} \tilde f_k(\thetah) +
{\mathbb E}_{\theta} \tilde f_k(\thetah) - f(\theta) + f_k(\thetah)-\tilde f_k(\thetah)
\end{align}
and 
\begin{align}
\label{represent_dva}
&
\nonumber
\tilde f_k(\thetah)- {\mathbb E}_{\theta} \tilde f_k(\thetah)
\\
&
=\langle f^{\prime}(\theta), \thetah-\theta\rangle + 
\langle \tilde f_k^{\prime}(\theta)- f^{\prime}(\theta), \thetah-\theta\rangle
+ S_{\tilde f_k}(\theta, \thetah-\theta) - {\mathbb E}_{\theta}S_{\tilde f_k}(\theta, \thetah-\theta).
\end{align}
The proof of the next lemma is elementary. 

\begin{lemma}
Denote $\tilde \Sigma_Z:= n^{-1}\sum_{j=1}^n Z_j\otimes Z_j.$
For all $w\in {\mathbb R}^d, W\in {\mathcal S}^d,$ the following bounds hold:
\begin{align}
\label{hat_tilde_Sigma_Z}
\Bigl\|\langle W, \hat \Sigma_Z\rangle - \langle W, \tilde \Sigma_Z\rangle \Bigr\|_{L_2({\mathbb P})}
\leq \frac{2\sqrt{2}\|W\|_2}{(n-1)\sqrt{n}},
\end{align}
\begin{align}
\label{barZSigma_Z}
\biggl|\Bigl\|\langle w,\bar Z\rangle + \langle W, \hat \Sigma_Z-I_d\rangle\Bigr\|_{L_2({\mathbb P})}
- \sqrt{\frac{\|w\|^2}{n}+ \frac{2\|W\|_2^2}{n}}\biggr| \leq \frac{4 \|W\|_2}{\sqrt{(n-1)n}}
\end{align}
and 
\begin{align}
\label{barZSigma_Z_01}
\Bigl\|\langle w,\bar Z\rangle + \langle W, \hat \Sigma_Z-I_d\rangle\Bigr\|_{L_2({\mathbb P})}
\leq \frac{\|w\|}{\sqrt{n}}+\frac{(4+\sqrt{2})\|W\|_2}{\sqrt{n}}.
\end{align}
\end{lemma}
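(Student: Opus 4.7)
The plan is to reduce \eqref{barZSigma_Z} and \eqref{barZSigma_Z_01} to \eqref{hat_tilde_Sigma_Z} by the $L_2$ triangle inequality, and to prove \eqref{hat_tilde_Sigma_Z} directly from a simple algebraic identity relating $\hat\Sigma_Z$ and $\tilde\Sigma_Z$ combined with the classical independence of $\bar Z$ and $\hat\Sigma_Z$ for i.i.d.\ standard normal samples. Expanding $(Z_j-\bar Z)\otimes(Z_j-\bar Z)$ and collecting terms yields the identity
\[
\hat \Sigma_Z - \tilde \Sigma_Z \;=\; \tfrac{1}{n}\hat \Sigma_Z \;-\; \bar Z\otimes \bar Z,
\]
so $\langle W,\hat \Sigma_Z-\tilde \Sigma_Z\rangle = \tfrac{1}{n}\langle W,\hat \Sigma_Z\rangle - \bar Z^{\top}W\bar Z$. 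Both summands have mean $\operatorname{tr}(W)/n$, so the difference is centered, and the key structural point is that these two terms are \emph{independent}.

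For \eqref{hat_tilde_Sigma_Z} I would then compute the $L_2$-norm squared as the sum of two variances. Since $(n-1)\hat \Sigma_Z$ is Wishart$(n-1,I_d)$, the classical identity $\operatorname{Var}(\operatorname{tr}(WS))=2m\|W\|_2^{2}$ for $S\sim W_d(m,I_d)$ with symmetric $W$ gives $\operatorname{Var}(\langle W,\hat \Sigma_Z\rangle)=2\|W\|_2^{2}/(n-1)$. Since $\bar Z\sim N(0,I_d/n)$, the quadratic form $n\bar Z^{\top}W\bar Z$ has the same law as $Z^{\top}WZ$ with $Z\sim N(0,I_d)$, so $\operatorname{Var}(\bar Z^{\top}W\bar Z)=2\|W\|_2^{2}/n^{2}$. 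Independence of the two pieces gives $\|\langle W,\hat\Sigma_Z-\tilde\Sigma_Z\rangle\|_{L_2}^{2}=\tfrac{2\|W\|_2^{2}}{n^{2}(n-1)}+\tfrac{2\|W\|_2^{2}}{n^{2}}=\tfrac{2\|W\|_2^{2}}{n(n-1)}$, which is dominated by the right-hand side of \eqref{hat_tilde_Sigma_Z}.

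For \eqref{barZSigma_Z} I would first compute the $L_2$-norm \emph{exactly} after replacing $\hat\Sigma_Z$ by $\tilde\Sigma_Z$. Writing
\[
\langle w,\bar Z\rangle + \langle W,\tilde \Sigma_Z - I_d\rangle \;=\; \tfrac{1}{n}\sum_{j=1}^{n} Y_j, \qquad Y_j := \langle w,Z_j\rangle + \langle W, Z_j\otimes Z_j - I_d\rangle,
\]
the $Y_j$ are i.i.d.\ and centered; the cross moment ${\mathbb E}[\langle w,Z_j\rangle\,\langle W,Z_j\otimes Z_j\rangle]$ vanishes by oddness under $Z_j\mapsto -Z_j$, so ${\mathbb E}Y_j^{2}=\|w\|^{2}+2\|W\|_2^{2}$ and the $L_2$-norm equals $\sqrt{\|w\|^{2}/n+2\|W\|_2^{2}/n}$. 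The $L_2$ triangle inequality together with \eqref{hat_tilde_Sigma_Z} then produces \eqref{barZSigma_Z}. Finally, \eqref{barZSigma_Z_01} follows from \eqref{barZSigma_Z} via subadditivity $\sqrt{a+b}\le\sqrt{a}+\sqrt{b}$ and the crude inequality $\sqrt{(n-1)n}\ge\sqrt{n}$ for $n\ge 2$. There is no serious obstacle; the main conceptual step is the algebraic identity displayed above, which together with Gaussian independence of $\bar Z$ and $\hat\Sigma_Z$ replaces what could have been a delicate joint moment calculation by the sum of two uncoupled variances.
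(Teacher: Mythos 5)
Your approach is sound and your calculations are correct; the paper itself gives no proof (it dismisses the lemma as ``elementary''), so there is nothing to compare against. However, the exact value you compute, $\|\langle W, \hat\Sigma_Z - \tilde\Sigma_Z\rangle\|_{L_2({\mathbb P})} = \sqrt{2}\|W\|_2/\sqrt{n(n-1)}$, is \emph{not} dominated by the right-hand side of \eqref{hat_tilde_Sigma_Z} as printed. Indeed,
\[
\frac{\sqrt{2}\|W\|_2}{\sqrt{n(n-1)}}\leq \frac{2\sqrt{2}\|W\|_2}{(n-1)\sqrt{n}}
\quad\Longleftrightarrow\quad \sqrt{n-1}\leq 2 \quad\Longleftrightarrow\quad n\leq 5,
\]
so for $n>5$ the printed bound is strictly smaller than the exact value of the left-hand side, and your claim that your computation ``is dominated by the right-hand side of \eqref{hat_tilde_Sigma_Z}'' does not hold. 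What your argument in fact shows is that \eqref{hat_tilde_Sigma_Z} as stated is a typo: the denominator should read $\sqrt{(n-1)n}$ (matching the denominators in \eqref{barZSigma_Z} and \eqref{barZSigma_Z_01}), giving $2\sqrt{2}\|W\|_2/\sqrt{(n-1)n}$, which your exact computation satisfies with a factor-of-$2$ margin. This correction leaves the downstream use in the proof of Theorem \ref{main_theorem_2} unaffected, since only $O(n^{-1})$ decay of this remainder is needed there.

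The rest of your argument is fine. The key structural observation is the identity $\hat\Sigma_Z - \tilde\Sigma_Z = n^{-1}\hat\Sigma_Z - \bar Z\otimes\bar Z$, which --- unlike the alternative decomposition $\hat\Sigma_Z-\tilde\Sigma_Z = (n-1)^{-1}\tilde\Sigma_Z - \tfrac{n}{n-1}\bar Z\otimes\bar Z$, whose two pieces are not independent --- splits the difference into two \emph{independent} centered pieces (by the Gaussian independence of $\bar Z$ and $\hat\Sigma_Z$), reducing the $L_2$ computation to a sum of two variances. Your exact variance computation for $\langle w,\bar Z\rangle + \langle W, \tilde\Sigma_Z - I_d\rangle$ as an i.i.d.\ average (with the cross term killed by the odd-moment argument) gives the $\sqrt{\|w\|^2/n + 2\|W\|_2^2/n}$ term exactly, \eqref{barZSigma_Z} then follows by the $L_2$ triangle inequality, and \eqref{barZSigma_Z_01} follows from \eqref{barZSigma_Z} via $\sqrt{a+b}\leq\sqrt{a}+\sqrt{b}$ and $\sqrt{(n-1)n}\geq\sqrt{n}$ for $n\geq 2$, as you state.
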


Note that, for $\theta=(\mu,\Sigma)$
\begin{align*}
&
\langle f^{\prime}(\theta), \thetah-\theta\rangle = 
\langle f^{\prime}_{\mu}(\mu,\Sigma), \bar X-\mu\rangle
+ \langle f^{\prime}_{\Sigma}(\mu,\Sigma), \hat \Sigma-\Sigma\rangle
= \langle w,\bar Z\rangle + \langle W, \hat \Sigma_Z-I_d\rangle,
\end{align*}
where $w:= \Sigma^{1/2}f^{\prime}_{\mu}(\mu,\Sigma), 
W:=\Sigma^{1/2} f^{\prime}_{\Sigma}(\mu,\Sigma)\Sigma^{1/2}.$
Thus, in view of \eqref{barZSigma_Z} and the definition of $\sigma_f(\theta),$ we get 
\begin{align*}
\Bigl|\Bigl\|\langle f^{\prime}(\theta), \thetah-\theta\rangle\Bigr\|_{L_2({\mathbb P})}- 
\frac{\sigma_f(\theta)}{\sqrt{n}}\Bigr|
\leq \frac{4 \|\Sigma^{1/2} f^{\prime}_{\Sigma}(\mu,\Sigma)\Sigma^{1/2}\|_2}{\sqrt{(n-1)n}}.
\end{align*}
For $\theta \in \Theta(a;d),$ this implies 
\begin{align}
\label{first_term}
\Bigl|\Bigl\|\langle f^{\prime}(\theta), \thetah-\theta\rangle\Bigr\|_{L_2({\mathbb P})}- \frac{\sigma_f(\theta)}{\sqrt{n}}\Bigr|
\leq \frac{4\|f^{\prime}\|_{L_{\infty}(\Theta)}a}{\sqrt{(n-1)n}}
\end{align}
and, by a similar computation, bound \eqref{barZSigma_Z_01} yields 
\begin{align*}
\Bigl\|\langle f^{\prime}(\theta), \thetah-\theta\rangle\Bigr\|_{L_2({\mathbb P})}
\leq \frac{7\|f^{\prime}\|_{L_{\infty}(\Theta)}a}{\sqrt{n}}.
\end{align*}
This could be applied to function $\tilde f_k-f$ to get that 
\begin{align*}
\Bigl\|\langle \tilde f_k^{\prime}(\theta)-f^{\prime}(\theta), \thetah-\theta\rangle\Bigr\|_{L_2({\mathbb P})}
\leq \frac{7\|\tilde f_k^{\prime}-f^{\prime}\|_{L_{\infty}(\Theta)}a}{\sqrt{n}}.
\end{align*}

The next lemma easily follows from Theorem \ref{th_H_bound} and bound \eqref{bd_EEE_hold}.

\begin{lemma}
\label{f_k_f_k_tilde}
If $d\leq c n$ for a sufficiently small constant $c>0,$ then, for all $\epsilon>0,$ 
\begin{align*}
\|\tilde f_k^{\prime}-f^{\prime}\|_{L_{\infty}(\Theta)} \lesssim_{s,\epsilon} \|f\|_{C^s(\Theta)} a^{s-1+\epsilon}
\sqrt{\frac{d}{n}}. 
\end{align*}
\end{lemma}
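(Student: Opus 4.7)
The plan is to bound $\|\tilde f_k^{\prime}-f^{\prime}\|_{L_{\infty}(\Theta)}$ by the stronger quantity $\|\tilde f_k-f\|_{C^{1+\rho}(\Theta)}$ and then estimate the latter term-by-term via the telescoping identity
\begin{align*}
\tilde f_k - f \;=\; \sum_{j=1}^{k} (-1)^j \,\tilde{\mathcal B}^j f .
\end{align*}
Each summand $\tilde{\mathcal B}^j f$ will be controlled using Theorem \ref{th_H_bound} applied to the specific (smoothed) random homotopy
$H(\theta;t) = \theta + t E(\theta)$ used to define $\tilde{\mathcal B}$. Summing the resulting geometric series and combining with the moment bound \eqref{bd_EEE_hold} will produce the stated rate.

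The first step is to convert the $H$-norms appearing in Theorem \ref{th_H_bound} into $C^{s-1}$-norms of the random map $E$. Since $\dot H(\theta;t) = E(\theta)$ is constant in $t$, one has immediately
$\|\dot H\|_{C^{s-1,0}(\Theta\times[0,1])} = \|E\|_{C^{s-1}(\Theta)}$. For $H$ itself, differentiating in $\theta$ gives $D H(\theta;t) = I + t D E(\theta)$ and $D^j H(\theta;t) = t\, D^j E(\theta)$ for $j\geq 2$, so
\begin{align*}
\|H\|_{C^{s-1,0}(\Theta\times[0,1])}^{-} \;\leq\; 1 + \|E\|_{C^{s-1}(\Theta)} ,
\end{align*}
and therefore $\|H\|_{C^{s-1,0}}^{-} \vee 1 \lesssim \|E\|_{C^{s-1}(\Theta)} \vee 1$.

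Plugging these estimates into bound \eqref{bound_on_B^k_deriv_Lip_AAA} of Theorem \ref{th_H_bound} yields, for every $1\leq j\leq k$,
\begin{align*}
\|\tilde{\mathcal B}^j f\|_{C^{1+\rho}(\Theta)}
\;\lesssim_s\;
\|f\|_{C^{s}(\Theta)}\,\bigl(C_s\, q\bigr)^j,
\qquad
q \;:=\; {\mathbb E}\bigl(\|E\|_{C^{s-1}(\Theta)}\vee 1\bigr)^{s-1}\|E\|_{C^{s-1}(\Theta)} ,
\end{align*}
for some constant $C_s$. By \eqref{bd_EEE_hold}, for any $\epsilon>0$, $q \lesssim_{s,\epsilon} a^{s-1+\epsilon}\sqrt{d/n}$, which is $\leq \tfrac{1}{2C_s}$ once the hypothesis $d\leq c n$ (with $c$ small depending on $s,\epsilon$, and implicitly on the regime of $a$ from the ambient Theorem \ref{main_theorem_2}) is invoked. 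In that regime the geometric series $\sum_{j\geq 1}(C_s q)^j$ is dominated by its first term, giving
\begin{align*}
\|\tilde f_k - f\|_{C^{1+\rho}(\Theta)}
\;\leq\; \sum_{j=1}^{k}\|\tilde{\mathcal B}^j f\|_{C^{1+\rho}(\Theta)}
\;\lesssim_{s,\epsilon}\; \|f\|_{C^{s}(\Theta)}\, a^{s-1+\epsilon}\sqrt{\frac{d}{n}} .
\end{align*}
Since $\|\tilde f_k^{\prime} - f^{\prime}\|_{L_{\infty}(\Theta)} \leq \|\tilde f_k - f\|_{C^{1+\rho}(\Theta)}$, the lemma follows.

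The main obstacle is really bookkeeping rather than mathematical: one must verify that $H(\theta;t)=\theta+tE(\theta)$ genuinely meets the differentiability hypotheses of Theorem \ref{th_H_bound}, and track carefully how the weak Lipschitz-type seminorm $\|\cdot\|_{C^{s-1,0}}^{-}$ of $H$ is controlled by $\|E\|_{C^{s-1}}$ despite the presence of the identity $I$ in $DH$. The key point is that the seminorm only sees differences in $\theta$, for which the $I$-term cancels and only $t\,DE$ remains; this is why the factor $\|H\|_{C^{s-1,0}}^{-}\vee 1$ can be replaced by $\|E\|_{C^{s-1}}\vee 1$ exactly matching the form of $q$ controlled by \eqref{bd_EEE_hold}.
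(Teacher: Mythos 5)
Your proof is correct and is, up to bookkeeping, exactly what the paper intends: the paper's entire indication is that the lemma "easily follows from Theorem \ref{th_H_bound} and bound \eqref{bd_EEE_hold}," and you have carried out that reduction via the telescoping sum for $\tilde f_k - f$, the observation $\|\tilde f_k^{\prime}-f^{\prime}\|_{L_{\infty}}\leq \|\tilde f_k - f\|_{C^{1+\rho}}$ (valid since the $C^{1+\rho}$-norm includes the Lipschitz ratio, which dominates $\sup\|\cdot^{\prime}\|$), and the substitution $\dot H = E$, $\|H\|^{-}_{C^{s-1,0}}\leq 1+\|E\|_{C^{s-1}}$ already built into \eqref{bias_UVW}. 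Your parenthetical caveat about the hypothesis is also apt: bound \eqref{bd_EEE_hold} was derived under condition \eqref{cond_a_epsilon_d_n}, so the stated "$d\le cn$" must really be read as $a^{s-1+\epsilon}\sqrt{d/n}\leq c_s$, which is exactly the regime in which the lemma is invoked inside the proof of Theorem \ref{main_theorem_2}.
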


Using the bound of Lemma \ref{f_k_f_k_tilde}, we get 
\begin{align}
\label{second_term}
\Bigl\|\langle \tilde f_k^{\prime}(\theta)-f^{\prime}(\theta), \thetah-\theta\rangle\Bigr\|_{L_2({\mathbb P})}
\lesssim_{s,\epsilon} \frac{\|f\|_{C^s(\Theta)} a^{s+\epsilon}}{\sqrt{n}}\sqrt{\frac{d}{n}}. 
\end{align}

To control the term 
$S_{\tilde f_k}(\theta, \thetah-\theta) - {\mathbb E}_{\theta}S_{\tilde f_k}(\theta, \thetah-\theta),$
the following lemma will be used.

\begin{lemma}
For any $g\in C^{1+\rho}(\Theta),$
\begin{align}
\label{conc_taylor_rem}
\Bigl\|S_{g}(\theta, \thetah-\theta) - {\mathbb E}_{\theta}S_{g}(\theta, \thetah-\theta)\Bigr\|_{L_2({\mathbb P})} 
\lesssim \frac{\|g\|_{C^{1+\rho}(\Theta)}(\|\Sigma\|^{1+\rho}\vee \|\Sigma\|^{(1+\rho)/2})}{\sqrt{n}} 
\Bigl(\sqrt{\frac{d}{n}}\Bigr)^{\rho}.
\end{align}
\end{lemma}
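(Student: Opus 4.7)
The plan is to apply the Gaussian Sobolev-type inequality \eqref{Sobolev} (with $p=2$) to the function
$$
\varphi(Z) := S_g(\theta, \hat\theta(Z) - \theta) = g(\hat\theta(Z)) - g(\theta) - \langle g'(\theta), \hat\theta(Z) - \theta\rangle,
$$
viewed as a function of the underlying standard normal vector $Z=(Z_1,\dots,Z_n) \in \mathbb{R}^{nd}$ used to represent $X_j = \mu + \Sigma^{1/2}Z_j$. Since the constant $g(\theta) + \langle g'(\theta), \theta\rangle$ drops out of the gradient, I would first bound the pointwise gradient norm $(L\varphi)(Z)$ by the chain rule: writing $(L\varphi)(Z) = (L(g\circ\hat\theta - \langle g'(\theta), \hat\theta\rangle))(Z)$, and using that $(g - \langle g'(\theta),\cdot\rangle)'(\theta') = g'(\theta') - g'(\theta)$, I would obtain
$$
(L\varphi)(Z) \le \|g'(\hat\theta(Z)) - g'(\theta)\| \cdot (L\hat\theta)(Z) \le \|g\|_{C^{1+\rho}(\Theta)}\,\|\hat\theta(Z)-\theta\|^{\rho}\,(L\hat\theta)(Z).
$$

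Next, invoking \eqref{Sobolev} with $p=2$ reduces the problem to bounding $\|(L\varphi)(Z)\|_{L_2}$. I would apply Cauchy--Schwarz to split this as
$$
\|(L\varphi)(Z)\|_{L_2} \le \|g\|_{C^{1+\rho}(\Theta)}\,\bigl\|\|\hat\theta-\theta\|^\rho\bigr\|_{L_4}\,\|(L\hat\theta)(Z)\|_{L_4}.
$$
The first factor is handled by the well-known Gaussian moment bounds for the sample mean and sample covariance already used in Section \ref{sec:approx_homot}: namely, $\mathbb{E}_\theta\|\hat\theta-\theta\|^{4\rho} \lesssim_\rho \|\Sigma\|^{2\rho}(d/n)^{2\rho}$ (valid for $d \lesssim n$), which gives $\bigl\|\|\hat\theta-\theta\|^\rho\bigr\|_{L_4} \lesssim \|\Sigma\|^{\rho/2}(d/n)^{\rho/2}$. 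The second factor is controlled by bound \eqref{Lhattheta} of Lemma \ref{lemma_L_bounds}: $(L\hat\theta)(Z) \lesssim \|\Sigma\|^{1/2}(1+\|\hat\Sigma(Z)\|^{1/2})/\sqrt{n}$, combined with $\mathbb{E}\|\hat\Sigma\|^2 \lesssim \|\Sigma\|^2$ for $d\lesssim n$, yielding $\|(L\hat\theta)\|_{L_4} \lesssim \|\Sigma\|^{1/2}(1 + \|\Sigma\|^{1/2})/\sqrt{n}$.

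Multiplying these three estimates together gives
$$
\|\varphi(Z) - \mathbb{E}\varphi(Z)\|_{L_2} \lesssim \|g\|_{C^{1+\rho}(\Theta)}\cdot\frac{\|\Sigma\|^{(1+\rho)/2}(1+\|\Sigma\|^{1/2})}{\sqrt{n}}\cdot\Bigl(\sqrt{d/n}\Bigr)^{\rho},
$$
and the bound $\|\Sigma\|^{(1+\rho)/2}(1+\|\Sigma\|^{1/2}) \lesssim \|\Sigma\|^{(1+\rho)/2} \vee \|\Sigma\|^{1+\rho/2} \lesssim \|\Sigma\|^{1+\rho} \vee \|\Sigma\|^{(1+\rho)/2}$ (where the last inequality follows by considering the cases $\|\Sigma\| \leq 1$ and $\|\Sigma\| \geq 1$ separately, using $\rho \leq 1$) completes the proof.

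The main technical point is verifying that the Sobolev inequality \eqref{Sobolev} applies to $\varphi$, which is only locally Lipschitz because $\hat\Sigma$ enters through its operator norm. This is handled just as in the proof of Proposition \ref{conc_bd_smooth}: the gradient estimate on $(L\hat\theta)$ and hence on $(L\varphi)$ holds at every point of differentiability, and $\varphi(Z)$ is almost surely differentiable by Rademacher's theorem since the mappings $Z\mapsto\hat\mu(Z)$ and $Z\mapsto\hat\Sigma(Z)$ are smooth. Tracking the exact power of $\|\Sigma\|$ to match the form $\|\Sigma\|^{1+\rho}\vee\|\Sigma\|^{(1+\rho)/2}$ is the only place where care is needed with the bookkeeping.
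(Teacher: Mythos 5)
Your approach matches the paper's exactly: represent the statistics as functions of the standard normal vector $Z$, bound the pointwise Lipschitz constant of $\varphi(Z)=S_g(\theta,\hat\theta(Z)-\theta)$ via the chain rule using $\|g'(\hat\theta)-g'(\theta)\|\le\|g\|_{C^{1+\rho}}\|\hat\theta-\theta\|^\rho$ together with the bound on $L\hat\theta$ from Lemma \ref{lemma_L_bounds}, then apply the Gaussian Sobolev inequality \eqref{Sobolev} with $p=2$ and split the $L_2$-norm of the gradient by Cauchy--Schwarz into $L_4$ moments. Your closing remark about local Lipschitzness and a.s.\ differentiability is also the right observation.

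There is, however, a slip in the $L_4$ moment bound for $\|\hat\theta-\theta\|^\rho$. You assert $\mathbb{E}_\theta\|\hat\theta-\theta\|^{4\rho}\lesssim\|\Sigma\|^{2\rho}(d/n)^{2\rho}$, i.e.\ $\bigl\|\|\hat\theta-\theta\|^\rho\bigr\|_{L_4}\lesssim\|\Sigma\|^{\rho/2}(d/n)^{\rho/2}$, which assigns to $\|\hat\theta-\theta\|$ the scaling $\|\Sigma\|^{1/2}\sqrt{d/n}$ that is correct only for the mean component $\hat\mu-\mu=\Sigma^{1/2}\bar Z$. The covariance component satisfies $\hat\Sigma-\Sigma=\Sigma^{1/2}(\hat\Sigma_Z-I_d)\Sigma^{1/2}$, hence $\|\hat\Sigma-\Sigma\|$ scales like $\|\Sigma\|\sqrt{d/n}$, and for $\|\Sigma\|>1$ this dominates. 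The correct bound, and the one the paper uses, is $\bigl\|\|\hat\theta-\theta\|^\rho\bigr\|_{L_4}\lesssim(\|\Sigma\|\vee\|\Sigma\|^{1/2})^\rho(d/n)^{\rho/2}$. Fortunately, plugging the corrected estimate into your final multiplication yields $(\|\Sigma\|\vee\|\Sigma\|^{1/2})^\rho\cdot\|\Sigma\|^{1/2}(1+\|\Sigma\|^{1/2})\lesssim\|\Sigma\|^{1+\rho}\vee\|\Sigma\|^{(1+\rho)/2}$, which is exactly what the lemma asserts, so the argument is repaired with no structural change.
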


\begin{proof}
We use the following elementary Lipschitz bound on the remainder $S_g(\theta;h)$
that holds for all $\theta, \theta+h, \theta+h'\in \Theta$ (see \cite{Koltchinskii_Zhilova}, Lemma 2.1):
\begin{align}
\label{remainder_lipschitz}
|S_g(\theta;h)-S_g(\theta;h')|\lesssim \|g\|_{C^{1+\rho}(\Theta)}(\|h\|\vee \|h'\|)^{\rho}\|h-h'\|.  
\end{align}
Given $\theta\in \Theta,$ denote $G(h):=G_{\theta}(h):= S_g(\theta;h).$ Then, \eqref{remainder_lipschitz}
implies that
\begin{align}
\label{remainder_lipschitz_AB}
(LG)(h) \lesssim \|g\|_{C^{1+\rho}(\Theta)}\|h\|^{\rho}.
\end{align}
As in Section \ref{sec:conc}, we represent $X_j= \mu +\Sigma^{1/2} Z_j,$ where 
$Z_j, j=1,\dots, n$ are i.i.d. $N(0;I_d)$ r.v. and we view 
$
S_g(\theta; \thetah-\theta) = G(\thetah -\theta) = G\circ (\thetah -\theta)
$
as a function of $Z=(Z_1,\dots, Z_n)\in {\mathbb R}^N$ with $N=nd.$
Using bound \eqref{Lhattheta} of Lemma \ref{lemma_L_bounds}, we get for this function 
\begin{align*}
L(G\circ (\thetah -\theta))(Z) \lesssim \|g\|_{C^{1+\rho}(\Theta)} \|\thetah-\theta\|^{\rho}
(1+\|\hat \Sigma\|^{1/2})
\frac{\|\Sigma\|^{1/2}}{\sqrt{n}}.
\end{align*}
Using \eqref{Sobolev} for $p=2,$ we have 
\begin{align*}
&
\Bigl\|S_{g}(\theta, \thetah-\theta) - {\mathbb E}_{\theta}S_{g}(\theta, \thetah-\theta)\Bigr\|_{L_2({\mathbb P})}
\\
&
\lesssim
\|g\|_{C^{1+\rho}(\Theta)} \frac{\|\Sigma\|^{1/2}}{\sqrt{n}}
\Bigl\|\|\thetah-\theta\|^{\rho}\Bigr\|_{L_4({\mathbb P})}
\Bigl(1+\Bigl\|\|\hat \Sigma\|^{1/2}\Bigr\|_{L_4({\mathbb P})}\Bigr)
\end{align*}
Using a relatively standard bound 
\begin{align*}
\Bigl\|\|\thetah-\theta\|^{\rho}\Bigr\|_{L_4({\mathbb P})}
\lesssim (\|\Sigma\|\vee \|\Sigma\|^{1/2})^{\rho} \Bigl(\sqrt{\frac{d}{n}}\Bigr)^{\rho},
\end{align*}
the concentration bound for $\|\hat \Sigma\|^{1/2}$
\begin{align*}
\Bigl\|
\|\hat \Sigma\|^{1/2}-{\mathbb E}_{\theta}\|\hat \Sigma\|^{1/2}
\Bigr\|_{L_4({\mathbb P})}
\lesssim \frac{\|\Sigma\|^{1/2}}{\sqrt{n}}
\end{align*}
and, for $d\lesssim n,$ the bound 
\begin{align*}
{\mathbb E}_{\theta}\|\hat \Sigma\|^{1/2} \leq \|\Sigma\|^{1/2}
+ {\mathbb E}_{\theta}^{1/2}\|\hat \Sigma-\Sigma\|
\lesssim \|\Sigma\|^{1/2}\Bigl(1+\Bigl(\frac{d}{n}\Bigr)^{1/4}\Bigr)\lesssim \|\Sigma\|^{1/2},
\end{align*}
it is easy to complete the proof.

\qed
\end{proof}

Suppose $a^{s-1+\epsilon}\sqrt{\frac{d}{n}}\leq c_s$ for small enough constant $c_s.$ 
This holds under the assumptions of Theorem \ref{main_theorem_2} provided that $n$
is large enough.
It follows from Corollary \ref{cor_bd_f_k_hold} 
and bound \eqref{bd_EEE_hold} that
$\|\tilde f_k\|_{C^{1+\rho}(\Theta)}\leq 2\|f\|_{C^{s}(\Theta)}.$ Thus, bound \eqref{conc_taylor_rem}
implies that, for all $\theta \in \Theta(a;d),$
\begin{align}
\label{conc_taylor_rem_AAA}
\Bigl\|S_{\tilde f_k}(\theta, \thetah-\theta) - {\mathbb E}_{\theta}S_{\tilde f_k}(\theta, \thetah-\theta)\Bigr\|_{L_2({\mathbb P})} 
\lesssim \frac{\|f\|_{C^{s}(\Theta)}a^{1+\rho}}{\sqrt{n}} 
\Bigl(\sqrt{\frac{d}{n}}\Bigr)^{\rho}.
\end{align} 

Similarly to the proof of bound \eqref{compare_f_k_tilde_f_k}, 
we have 
\begin{align}
\label{compare_f_k_tilde_f_k_L_2}
&
\|f_k(\thetah)- \tilde f_k(\thetah)\|_{L_2({\mathbb P})}
\leq (k^2 +2) 2^{k+1} \|f\|_{L_{\infty}(\Theta)}\exp\Bigl\{-\frac{n}{2 c^2(k+1)^2}\Bigr\}.
\end{align}
Using representations \eqref{represent_odin}, \eqref{represent_dva}, 
bounds \eqref{first_term}, \eqref{second_term}, \eqref{conc_taylor_rem_AAA},
\eqref{bias_ZZZ_thetah}, \eqref{compare_f_k_tilde_f_k_L_2} and \eqref{remove_exp},
we get that, for all $\theta\in \Theta (a;d),$
\begin{align*}
&
\Bigl|\|f_k(\thetah)-f_k(\theta)\|_{L_2({\mathbb P})}-\frac{\sigma_f(\theta)}{\sqrt{n}}\Bigr|
\lesssim_{s,\epsilon}
\frac{\|f^{\prime}\|_{L_{\infty}(\Theta)}a}{\sqrt{(n-1)n}}+ 
\frac{\|f\|_{C^s(\Theta)} a^{s+\epsilon}}{\sqrt{n}}\sqrt{\frac{d}{n}}
\\
&
+
\frac{\|f\|_{C^{s}(\Theta)}a^{1+\rho}}{\sqrt{n}} \Bigl(\sqrt{\frac{d}{n}}\Bigr)^{\rho}
+
\|f\|_{C^{s}(\Theta)}a^{(s-1+\epsilon)k+1+\rho}\biggl(\sqrt{\frac{d}{n}}\biggr)^{s}.
\end{align*}
Under the assumption that $d=d_n\leq n^{\alpha}$ for some $\alpha\in (0,1)$
and $s>\frac{1}{1-\alpha},$ the last bound implies that 
\begin{align*}
\sup_{\|f\|_{C^s(\Theta)}\leq 1} \sup_{\theta\in \Theta(a;d_n)}\Bigl|\|f_k(\thetah)-f_k(\theta)\|_{L_2({\mathbb P})}-\frac{\sigma_f(\theta)}{\sqrt{n}}\Bigr|= o(n^{-1/2})
\end{align*}
as $n\to \infty.$ It is also easy to see that 
$
\sup_{\|f\|_{C^s(\Theta)}\leq 1} \sup_{\theta\in \Theta(a;d_n)}\sigma_f(\theta)=O(1),
$
and claim \eqref{efficient_funct} of Theorem \ref{main_theorem_2} follows.

To prove the normal approximation \eqref{normal_approx_funct}, note that 
by representations \eqref{represent_odin}, \eqref{represent_dva} and 
bounds \eqref{second_term}, \eqref{conc_taylor_rem_AAA},
\eqref{bias_ZZZ_thetah}, \eqref{compare_f_k_tilde_f_k_L_2} and \eqref{remove_exp},
\begin{align}
\label{zzz_111}
f_k (\thetah)-f(\theta) = \langle w, \bar Z\rangle + \langle W, \hat \Sigma_Z-I_d\rangle+
\zeta_n, 
\end{align}
where $w=\Sigma^{1/2}f^{\prime}_{\mu}(\theta),$ $W=\Sigma^{1/2}f^{\prime}_{\Sigma}(\theta)\Sigma^{1/2}$
and, for all $\theta \in \Theta(a;d),$ 
\begin{align}
\label{zzz_222}
&
\nonumber
\|\zeta_n\|_{L_2({\mathbb P})} 
\lesssim_{s,\epsilon}
\\
&
\frac{\|f\|_{C^s(\Theta)} a^{s+\epsilon}}{\sqrt{n}}\sqrt{\frac{d}{n}}
+
\frac{\|f\|_{C^{s}(\Theta)}a^{1+\rho}}{\sqrt{n}} \Bigl(\sqrt{\frac{d}{n}}\Bigr)^{\rho}
+
\|f\|_{C^{s}(\Theta)}a^{(s-1+\epsilon)k+1+\rho}\biggl(\sqrt{\frac{d}{n}}\biggr)^{s}.
\end{align}
Recall also that $\bar Z$ and $\hat \Sigma_Z$ are independent r.v. Therefore, if $(Z_1', \dots, Z_n')$
is an independent copy of the sample $(Z_1,\dots, Z_n)$ and $\hat \Sigma_{Z'}$ is 
the sample covariance based on $(Z_1',\dots, Z_n'),$
then 
\begin{align}
\label{zzz_333}
\langle w, \bar Z\rangle + \langle W, \hat \Sigma_Z-I_d\rangle
\overset{d}{=}
\langle w, \bar Z\rangle + \langle W, \hat \Sigma_{Z'}-I_d\rangle.
\end{align}
Using bound \eqref{hat_tilde_Sigma_Z}, we can write 
\begin{align}
\label{zzz_444}
\langle w, \bar Z\rangle + \langle W, \hat \Sigma_{Z'}-I_d\rangle
= \langle w, \bar Z\rangle + \langle W, \tilde \Sigma_{Z'}-I_d\rangle
+ \eta_n,
\end{align}
where 
\begin{align}
\label{zzz_555}
\|\eta_n\|_{L_2({\mathbb P})}
\leq \frac{2\sqrt{2}\|\Sigma^{1/2}f^{\prime}_{\Sigma}(\theta)\Sigma^{1/2}\|_2}{(n-1)\sqrt{n}}
\leq \frac{2\sqrt{2}\|f^{\prime}\|_{L_{\infty}(\Theta)}a}{(n-1)\sqrt{n}}.
\end{align}
To complete the proof of \eqref{normal_approx_funct}, it remains to use the following 
two lemmas that can be proved similarly to lemmas 9 and 10 in \cite{Koltchinskii_2017}.  
The first lemma provides a normal approximation bound for r.v. 
\begin{align*}
\langle w, \bar Z\rangle + \langle W, \tilde \Sigma_{Z'}-I_d\rangle
= n^{-1} \sum_{j=1}^n \langle w, Z_j\rangle + n^{-1} \sum_{j=1}^n (\langle WZ_j',Z_j'\rangle-
{\mathbb E} \langle WZ',Z'\rangle). 
\end{align*}

\begin{lemma}
\label{B_E_type}
The following bound holds:
\begin{align*}
\sup_{x\in {\mathbb R}}\biggl|
{\mathbb P}
\biggl\{\frac{\sqrt{n}\bigl(\langle w, \bar Z\rangle + \langle W, \tilde \Sigma_{Z'}-I_d\rangle\bigr)}{\sqrt{\|w\|^2+ 2\|W\|_2^2}}\leq x\biggr\}-{\mathbb P}\{Z\leq x\}\biggr|
\lesssim \frac{\|W\|}{\|W\|_2}\frac{1}{\sqrt{n}}\lesssim \frac{1}{\sqrt{n}},
\end{align*}
where $Z\sim N(0,1).$
\end{lemma}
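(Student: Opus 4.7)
The plan is to recognize the sum in question as a classical Berry--Esseen setup for sums of independent random variables, with a careful choice of how to index the underlying independent atoms. First I write the statistic as
$$
\sqrt{n}\left(\langle w,\bar Z\rangle+\langle W,\tilde\Sigma_{Z'}-I_d\rangle\right)= \frac{L_n+Q_n}{\sqrt n},
$$
where $L_n:=\sum_{j=1}^n\langle w,Z_j\rangle$ and $Q_n:=\sum_{j=1}^n\left(\langle WZ_j',Z_j'\rangle-\mathrm{tr}(W)\right)$ are independent, with $L_n$ exactly $N(0,n\|w\|^2)$. The total variance is $n(\|w\|^2+2\|W\|_2^2)$. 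Since convolution with a probability distribution does not increase the Kolmogorov distance, it suffices to prove a Berry--Esseen bound for $Q_n$ alone (normalized by $\sqrt{2n}\|W\|_2$) and then convolve the result with the independent Gaussian law of $L_n/\sqrt n$.

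Next, diagonalize $W=\sum_{i=1}^d\lambda_i\, e_i\otimes e_i$ in its eigenbasis; the coordinates $g_{ij}':=\langle e_i,Z_j'\rangle$ are i.i.d.\ $N(0,1)$, and
$$
Q_n=\sum_{i=1}^d\sum_{j=1}^n \lambda_i\bigl((g_{ij}')^2-1\bigr).
$$
I will treat this as a weighted sum of $N:=nd$ \emph{independent scalar} mean-zero random variables $X_{ij}:=\lambda_i((g_{ij}')^2-1)$ satisfying $\mathbb{E}X_{ij}^2=2\lambda_i^2$ and $\mathbb{E}|X_{ij}|^3\le c_0|\lambda_i|^3$ for an absolute constant $c_0$. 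Applying the Lyapunov form of the Berry--Esseen theorem, together with the elementary estimate $\sum_i|\lambda_i|^3\le \|W\|\sum_i\lambda_i^2=\|W\|\|W\|_2^2$, the Lyapunov ratio is bounded by
$$
\frac{\sum_{i,j}\mathbb{E}|X_{ij}|^3}{\bigl(\sum_{i,j}\mathbb{E}X_{ij}^2\bigr)^{3/2}}\le \frac{c_0\,n\|W\|\|W\|_2^2}{(2n\|W\|_2^2)^{3/2}}\lesssim \frac{\|W\|}{\|W\|_2\sqrt n}.
$$
Hence $\sup_x\bigl|\mathbb{P}\{Q_n/(\sqrt{2n}\|W\|_2)\le x\}-\Phi(x)\bigr|\lesssim \|W\|/(\|W\|_2\sqrt n)$.

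Finally, convolving with the independent $L_n/\sqrt n\sim N(0,\|w\|^2)$ preserves this Kolmogorov rate, and rescaling by $\sigma:=\sqrt{\|w\|^2+2\|W\|_2^2}$ yields exactly the assertion of the lemma; the weaker consequence $\lesssim 1/\sqrt n$ follows trivially from the comparison $\|W\|\le\|W\|_2$ between operator and Hilbert--Schmidt norms. The only non-routine step is the choice to view $Q_n$ as a weighted i.i.d.\ sum over all $nd$ \emph{scalar} atoms: the coarser viewpoint that treats $Q_n$ as a sum of $n$ vector-valued summands delivers only the crude rate $1/\sqrt n$, since the third absolute moment of a single $\lambda$-weighted centered chi-square variable obeys only $\mathbb{E}|\sum_i\lambda_i(g_i^2-1)|^3\lesssim\|W\|_2^3$, which loses the $\ell^3$-versus-$\ell^2$ gain on the eigenvalues. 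Atomizing down to scalar weights is what exposes the refined factor $\|W\|/\|W\|_2$.
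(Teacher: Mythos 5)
Your argument is correct and supplies the details that the paper itself defers to Lemmas~9 and~10 of \cite{Koltchinskii_2017}. The two decisive steps are exactly the natural ones: (i) use that $L_n$ and $Q_n$ involve disjoint samples $\{Z_j\}$ and $\{Z_j'\}$ and that $L_n/\sqrt{n}$ is exactly Gaussian, so the convolution-monotonicity of the Kolmogorov distance reduces the problem to a Berry--Esseen bound for $Q_n$ alone; and (ii) diagonalize $W$ and atomize $Q_n$ into the $nd$ independent scalar summands $\lambda_i((g_{ij}')^2-1)$, so that the Lyapunov ratio produces $\sum_i|\lambda_i|^3 \le \|W\|\,\|W\|_2^2$ and hence the sharp factor $\|W\|/\|W\|_2$ rather than the crude $O(1)$ one obtains from treating each $j$-summand as a single block. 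Your closing remark explaining why the block-level viewpoint loses the $\|W\|/\|W\|_2$ gain is exactly the right diagnostic. Since the paper only cites an external reference for this lemma, I cannot point to a literal proof in the text, but the cited Lemma~9 of \cite{Koltchinskii_2017} handles the quadratic part in precisely this eigenvalue-atomized fashion, so your route and the intended one agree in substance.
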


To state the second lemma, denote for r.v. $\xi_1, \xi_2,$
\begin{align*}
\Delta(\xi_1,\xi_2) := \sup_{x\in {\mathbb R}}|{\mathbb P}\{\xi_1\leq x\}-{\mathbb P}\{\xi_2\leq x\}|.
\end{align*}

\begin{lemma}
\label{xi_1xi_2}
If $Z\sim N(0,1),$ then for all r.v. $\xi_1, \xi_2,$
\begin{align*}
\Delta(\xi_1,Z)\leq \Delta(\xi_2,Z)+ 2\|\xi_1-\xi_2\|_{L_2({\mathbb P})}^{2/3}.
\end{align*}
\end{lemma}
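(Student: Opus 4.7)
The plan is to use the classical ``smoothing'' argument exploiting the boundedness of the standard normal density, combined with Chebyshev's inequality to control the discrepancy between $\xi_1$ and $\xi_2$ in probability.

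Fix $x \in \mathbb R$ and $\epsilon>0$. For any two r.v.\ $\xi_1,\xi_2$, the inclusion $\{\xi_1\leq x\}\subset \{\xi_2\leq x+\epsilon\}\cup\{|\xi_1-\xi_2|>\epsilon\}$ combined with Chebyshev's inequality yields
\begin{align*}
\mathbb P\{\xi_1\leq x\} \leq \mathbb P\{\xi_2\leq x+\epsilon\}+\frac{\|\xi_1-\xi_2\|_{L_2(\mathbb P)}^2}{\epsilon^2}.
\end{align*}
Next, one compares $\mathbb P\{\xi_2\leq x+\epsilon\}$ to $\mathbb P\{Z\leq x\}$ by inserting $\mathbb P\{Z\leq x+\epsilon\}$ as an intermediate term: the first difference is bounded by $\Delta(\xi_2,Z)$, and the second by $\epsilon/\sqrt{2\pi}$, using the uniform bound $1/\sqrt{2\pi}$ on the standard normal density. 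The symmetric argument with $\{\xi_2\leq x-\epsilon\}\subset \{\xi_1\leq x\}\cup \{|\xi_1-\xi_2|>\epsilon\}$ yields the matching lower bound, so that after taking supremum in $x$,
\begin{align*}
\Delta(\xi_1,Z)\leq \Delta(\xi_2,Z)+\frac{\epsilon}{\sqrt{2\pi}}+\frac{\|\xi_1-\xi_2\|_{L_2(\mathbb P)}^2}{\epsilon^2}.
\end{align*}

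It remains to optimize over $\epsilon>0$. Minimizing the sum $\epsilon/\sqrt{2\pi}+a/\epsilon^2$ with $a:=\|\xi_1-\xi_2\|_{L_2(\mathbb P)}^2$ yields the optimal choice $\epsilon=(2\sqrt{2\pi}\,a)^{1/3}$, at which the sum equals $\frac{3}{2\pi^{1/3}}\,a^{1/3}<2\,a^{1/3}=2\|\xi_1-\xi_2\|_{L_2(\mathbb P)}^{2/3}$, since $3/(2\pi^{1/3})\approx 1.02<2$. This gives the stated inequality. There is no real obstacle here; the only minor point is checking that the explicit numerical constant $3/(2\pi^{1/3})$ is indeed no larger than $2$ so that one can absorb it into the slightly loose constant stated in the lemma.
\qed
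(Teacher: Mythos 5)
Your proof is correct: the decomposition $\{\xi_1\le x\}\subset\{\xi_2\le x+\epsilon\}\cup\{|\xi_1-\xi_2|>\epsilon\}$, Chebyshev, the bound $1/\sqrt{2\pi}$ on the normal density, and optimization over $\epsilon$ all check out, and the resulting constant $3/(2\pi^{1/3})\approx 1.02$ is indeed below $2$. The paper does not include its own proof of this lemma (it points to analogous lemmas in \cite{Koltchinskii_2017}), but the argument you give is the standard smoothing bound for perturbations in Kolmogorov distance and is surely the intended one.
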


Lemmas \ref{B_E_type} and \ref{xi_1xi_2} are first applied to 
$\xi_1:=\frac{n^{1/2}(\langle w, \bar Z\rangle + \langle W, \hat \Sigma_{Z'}-I_d\rangle)}{\sqrt{\|w\|^2+ 2\|W\|_2^2}}$ and 
$\xi_2:=
\frac{n^{1/2}(\langle w, \bar Z\rangle + \langle W, \tilde \Sigma_{Z'}-I_d\rangle)}{\sqrt{\|w\|^2+ 2\|W\|_2^2}}.$ 
Taking into account \eqref{zzz_333}, \eqref{zzz_444} and \eqref{zzz_555},
this yields 
\begin{align*}
\sup_{x\in {\mathbb R}}\biggl|
{\mathbb P}
\biggl\{
\frac{\sqrt{n}\bigl(\langle w, \bar Z\rangle + \langle W, \hat \Sigma_{Z}-I_d\rangle\bigr)}
{\sqrt{\|w\|^2+ 2\|W\|_2^2}}\leq x\biggr\}-{\mathbb P}\{Z\leq x\}\biggr|
\lesssim \frac{1}{\sqrt{n}}+ \frac{\|f^{\prime}\|_{L_{\infty}(\Theta)}^{2/3}a^{2/3}}{\sigma_f^{2/3}(\theta)(n-1)^{2/3}},
\end{align*}
where we also used that $\sigma_f(\theta)=\sqrt{\|w\|^2+ 2\|W\|_2^2}.$
Next we apply lemmas \ref{B_E_type} and \ref{xi_1xi_2} to $\xi_1:=\frac{n^{1/2}(f_k(\thetah)-f(\theta))}{\sigma_f(\theta)}$ and $\xi_2:=\frac{n^{1/2}(\langle w, \bar Z\rangle + \langle W, \hat \Sigma_{Z'}-I_d\rangle)}{\sqrt{\|w\|^2+ 2\|W\|_2^2}}.$ Along with \eqref{zzz_111} and \eqref{zzz_222} this yields 
\begin{align*}
&
\sup_{x\in {\mathbb R}}\biggl|
{\mathbb P}
\biggl\{
\frac{\sqrt{n}(f_k(\thetah)-f(\theta))}
{\sigma_f(\theta)}\leq x\biggr\}-{\mathbb P}\{Z\leq x\}\biggr|
\\
&
\lesssim_{s,\epsilon} \frac{1}{\sqrt{n}}+ \frac{\|f^{\prime}\|_{L_{\infty}(\Theta)}^{2/3}a^{2/3}}{\sigma_f^{2/3}(\theta)(n-1)^{2/3}}
+\frac{\|f\|_{C^s(\Theta)}^{2/3} a^{2(s+\epsilon)/3}}{\sigma_f^{2/3}(\theta)}\Bigl(\sqrt{\frac{d}{n}}\Bigr)^{2/3}
+
\frac{\|f\|_{C^{s}(\Theta)}^{2/3}a^{2(1+\rho)/3}}{\sigma_f(\theta)^{2/3}} \Bigl(\sqrt{\frac{d}{n}}\Bigr)^{2\rho/3}
\\
&
+
\frac{\|f\|_{C^{s}(\Theta)}^{2/3}a^{(2/3)((s-1+\epsilon)k+1+\rho)}}{\sigma_f^{2/3}(\theta)}\Bigl(\sqrt{n}\biggl(\sqrt{\frac{d}{n}}\biggr)^{s}\Bigr)^{2/3}.
\end{align*}
Under the assumptions of the theorem, the last bound implies  \eqref{normal_approx_funct}.  

\qed
\end{proof}

\end{document}